
\documentclass{compositio}


%


\usepackage[english]{babel}
\usepackage{newlfont}
\usepackage{amssymb,amsfonts}
\usepackage{mathtools,braket,mathrsfs}
\usepackage[all]{xy}

\usepackage[colorinlistoftodos,bordercolor=black,
backgroundcolor=orange!70!white,linecolor=black,]{todonotes}

\newcommand{\bb}{\mathbb}

\newcommand{\msf}{\mathsf}
\newcommand{\scr}{\mathscr}
\newcommand*{\bfcdot}{\scalebox{0.6}{$\bullet$}}




\theoremstyle{definition}
\newtheorem{theorem}{Theorem}[section]
\newtheorem{lemma}[theorem]{Lemma}
\newtheorem{proposition}[theorem]{Proposition}
\newtheorem{corollary}[theorem]{Corollary}
\newtheorem{definition}[theorem]{Definition}

\theoremstyle{remark}
\newtheorem*{remark}{Remark}


\begin{document}

\title{Twisted triple product p-adic L-functions and Hirzebruch-Zagier cycles}
\author{Michele Fornea}
\email{michele.fornea@mail.mcgill.ca}
\address{McGill University, Montreal, Canada.}
\date{October 10, 2017}
\classification{11F41, 11G35, 11G40.} 
\keywords{Twisted triple product $L$-functions, Syntomic Abel-Jacobi map, Hirzebruch-Zagier cycles.}

\begin{abstract}
Let $L/F$ be a quadratic extension of totally real number fields. For any prime $p$ unramified in $L$, we construct a $p$-adic $L$-function interpolating the central values of the twisted triple product $L$-functions attached to a $p$-nearly ordinary family of unitary cuspidal automorphic representations of $\text{Res}_{L\times F/F}(\text{GL}_{2})$. Furthermore, when $L/\bb{Q}$ is a real quadratic number field and $p$ is a split prime, we prove a $p$-adic Gross-Zagier formula relating the value of the $p$-adic $L$-function outside the range of interpolation to the syntomic Abel-Jacobi image of generalized Hirzebruch-Zagier cycles.
\end{abstract}

\maketitle

\vspace*{6pt}\tableofcontents  


\section{Introduction}
\label{sec:introduction}
This article is part of the program pioneered by Darmon and Rotger in \cite{DR}, \cite{DR2} devoted to studying the $p$-adic variation of arithmetic invariants for automorphic representations on higher rank groups, with the aim of shedding some light on the relation between $p$-adic $L$-functions and  Euler systems with applications to the equivariant BSD-conjecture. 

Given a totally real number field $F$, the starting point of the program is to find a reductive group $G$ having $\text{GL}_{2,F}$ as a direct factor together with an automorphic $L$-function for which there is an explicit formula for the central $L$-value. The expectation is that there exists a trascendental period for which the ratio between the special value and the period becomes a meaningful algebraic number varying $p$-adically. More precisely, these modified central $L$-values should determine a $p$-adic meromorphic function by interpolation. 
In the present work, we consider the group $G_{L\times F}=\text{Res}_{L\times F/F}(\text{GL}_{2,L\times F})$ for $L/F$ a quadratic extension of totally real number fields. Piatetski-Shapiro and Rallis \cite{PS-R} studied the analytic properties of the twisted triple product $L$-function attached to cuspidal representations of $G_{L\times F}$ and Ichino \cite{I} proved a formula for its central value, generalizing earlier work of Harris-Kudla \cite{HK}. The first part of the paper is devoted to the construction of a $p$-adic $L$-function, called \emph{twisted triple product $p$-adic $L$-function}.

Several far-reaching conjectures suggest a strong link between automorphic  $L$-functions and algebraic cycles: relevant cycles should live on a Kuga-Sato variety whose \'etale cohomology realizes the Galois representation (conjecturally) attached to the automorphic representation of $G$, out of which one constructs the $L$-function. Furthermore, as the central $L$-values should vary $p$-adically after a modification by an appropriate period, by tinkering these cycles it should be possible to produce Galois cohomology classes that $p$-adically intepolate into a \emph{big cohomology class}, giving rise to the $p$-adic $L$-function via Perrin-Riou's machinery. Note that such $p$-adic $L$-function and big cohomology class are defined using completely different inputs, an automorphic and a geometric one; the sole fact that in certain cases it is possible to prove these approches produce the same object is in itself an amazing confirmation of the power of the existing conjectures. 

The relation between $p$-adic $L$-functions and Euler systems, as we just sketched it, can be very hard to prove since it requires, among various things, a deep understanding of the cohomology of semistable models of Shimura varieties. Therefore, we decided to dedicate the second part of this work to the more humble goal of showing that the $p$-adic $L$-function, built using the automorphic input, encodes geometric information of some kind. More precisely, we compute some values of the $p$-adic $L$-function in terms of the syntomic Abel-Jacobi image of \emph{generalized Hirzebruch-Zagier cyles}. Our result is evidence that the (twisted) triple product $p$-adic $L$-function and the generalized Hirzebruch-Zagier cycles are the right objects to consider in the framework determined by $G_{L\times F}$ and the twisted triple product $L$-function. 

In the remaining of the introduction we present our results in more detail. We fix, once and for all, a $p$-adic embedding $\iota_p:\overline{\bb{Q}}\hookrightarrow\overline{\bb{Q}}_p$ for every rational prime $p$, and a complex embedding $\iota_\infty:\overline{\bb{Q}}\hookrightarrow\bb{C}$.

\subsubsection{The $p$-adic $L$-function.}  Let $L/F$ be a quadratic extension of totally real number fields, $\frak{Q}\triangleleft\cal{O}_L$ and $\frak{N}\triangleleft\cal{O}_F$ ideals. The primitive eigenforms $\msf{g}\in S_{\ell,x}(\frak{Q}, L; \overline{\bb{Q}})$ and $\msf{f}\in S_{k,w}(\frak{N}, F; \overline{\bb{Q}})$ generate irreducible cuspidal automorphic representations $\pi, \sigma$ of $G_L(\bb{A}), G_F(\bb{A})$ respectively. We denote by $\pi^u$, $\sigma^u$ their unitarizations and define a representation of $\text{GL}_2(\bb{A}_{L\times F})$ by $\Pi=\pi^u\otimes\sigma^u$. Let $\rho:\Gamma_F\to S_3$ be the homomorphism mapping the absolute Galois group of $F$ to the symmetric group over $3$ elements associated with the \'etale cubic algebra $(L\times F)/F$. The $L$-group $^L(G_{L\times F})$ is given by the semi-direct product $\widehat{G}\rtimes\Gamma_F$ where $\Gamma_F$ acts on $\widehat{G}=\text{GL}_2(\bb{C})^{\times3}$ through $\rho$. 
When the central character $\omega_\Pi$ of $\Pi$ satisfies ${\omega_\Pi}_{\lvert\bb{A}_F^\times}\equiv 1$, one can define the twisted triple product $L$-function $L(s,\Pi,r)$ of $\Pi$ via the representation $r$ of $^L(G_{L\times F})$ on $\bb{C}^2\otimes\bb{C}^2\otimes\bb{C}^2$, which restricts to the natural $8$-dimensional representation of $\widehat{G}$ and through which $\Gamma_F$ acts via $\rho$ permuting the vectors. 
\begin{definition}
We say that the weights $(\ell-2t_L,t_L-x)\in\bb{N}[I_L]^2$, $(k-2t_F,t_F-w)\in\bb{N}[I_F]^2$  are \emph{unbalanced} if there exists $r\in\bb{N}[I_L]$ such that $k=(\ell+2r)_{\lvert F}$ and $w=(x+r)_{\lvert F}$.
\end{definition}

Let $\eta:\bb{A}_F^\times\to\bb{C}^\times$ be the idele character attached to the quadratic extension $L/F$ by class field theory. Suppose that the weights of $\msf{g}$ and $\msf{f}$ are unbalanced and that the local $\epsilon$-factors satisfy 
\[
\epsilon_v\bigg(\frac{1}{2},\Pi_v,r_v\bigg)\eta_v(-1)=+1\qquad\forall v\ \text{finite}\ \text{place}\ \text{of}\ F.
\] Then Theorem $\ref{thm: Itch}$ and Lemma $\ref{lemma:choice test vector}$ show that the non-vanishing of the central $L$-value $L(\frac{1}{2},\Pi,r)$ is equivalent to the existence of \emph{test vectors} $\breve{\msf{g}},\breve{\msf{f}}$ in $\pi, \sigma$, respectively, of some common level $M$ prime to $\frak{N}\cdot\text{N}_{L/F}(\frak{Q})\cdot d_{L/F}$; that is, $\breve{\msf{g}},\breve{\msf{f}}$ are cuspforms such that the Petersson inner product \begin{equation}\label{period}
\text{I}(\phi)=\left\langle\zeta^*\left(\delta^r\breve{\msf{g}}\right),\breve{\msf{f}}^*\right\rangle
\end{equation}
does not vanish. In other words, we can take $(\ref{period})$ as an avatar of the central $L$-value and use it to construct the $p$-adic $L$-function.
\begin{remark}
	The assumption on local $\epsilon$-factors at the finite places of $F$ can be satisfied by requiring the ideals $\text{N}_{L/F}(\frak{Q})\cdot d_{L/F}$ and $\frak{N}$ to be coprime and by asking all prime ideals dividing $\frak{N}$ to split in $L/F$. 
\end{remark}
 We choose a rational prime $p$ unramified in $L$, coprime to the levels $\frak{Q},\frak{N}$, and $\Theta_{L/F}$ an element in $\prod_{\tau\in I_F}\{\mu\in I_L\lvert\ \mu_{\lvert F}=\tau\}$. The fixed $p$-adic embedding $\iota_p:\overline{\bb{Q}}\hookrightarrow\overline{\bb{Q}}_p$ allows us to associate to $\Theta_{L/F}$ a set $\cal{P}$ of prime $\cal{O}_L$-ideals above $p$. Moreover, we suppose $\msf{g}$ and $\msf{f}$ to be $p$-nearly ordinary and we denote by $\cal{G}$ (resp. $\cal{F}$) the Hida family with coefficients in $\mathbf{I}_\cal{G}$ (resp. $\mathbf{I}_\cal{F}$) passing through the nearly ordinary $p$-stabilization of $\msf{g}$ (resp. $\msf{f}$).
 \begin{definition}
 	The set of \emph{unbalanced crystalline points}, denoted $\Sigma^\msf{f}_\text{cry}=\Sigma^\msf{f}_\text{cry}(\mathbf{I}_\cal{G}, \mathbf{I}_\cal{F})$, is the subset of arithmetic points $(\text{P},\text{Q})\in\cal{A}(\mathbf{I}_\cal{G})\times\cal{A}(\mathbf{I}_\cal{F})$ of unbalanced weights, trivial characters and such that the specialization of the Hida families are old at $p$; that is, they are the $p$-stabilization of eigenforms of prime-to-$p$ level: $\cal{G}_\text{P}=\msf{g}_\text{P}^{(p)}$ and $\cal{F}_\text{Q}=\msf{f}_\text{Q}^{(p)}$. In particular, we have a family of unitary automorphic representations $\{\Pi_{\text{P},\text{Q}}\}_{(\text{P},\text{Q})\in \Sigma^\msf{f}_\text{cry}}$ of  prime-to-$p$ level.
 \end{definition}
 
We set $\mathbf{K}_{\cal{G}}=\mathbf{I}_{\cal{G}}\otimes \bb{Q}$ (resp. $\mathbf{K}_{\cal{F}}=\mathbf{I}_{\cal{F}}\otimes\bb{Q}$) and we define a $\mathbf{K}_{\cal{G}}$-adic cuspform $\breve{\cal{G}}$ (resp. $\mathbf{K}_{\cal{F}}$-adic cuspform $\breve{\cal{F}}$) passing through the nearly ordinary $p$-stabilization of the test vectors $\breve{\msf{g}},\breve{\msf{f}}$ as in \cite{DR} Section 2.6. Set $\mathbf{R}_{\cal{G}}=O[[\cal{O}_{L,\cal{P}}^\times]]\otimes_O\mathbf{K}_{\cal{G}}$, Lemma $\ref{existence}$ ensure the existence of a meromorphic function $
\mathscr{L}_p(\breve{\cal{G}},\breve{\cal{F}})\in \mathbf{R}_{\cal{G}}\otimes_{\mathbf{A}_F}\text{Frac}(\mathbf{K}_{\cal{F}})$ whose value at a point $(\text{P},\text{Q})\in\Sigma^\msf{f}_\text{cry}$ is
\[
\mathscr{L}_p\big(\breve{\cal{G}},\breve{\cal{F}}\big)(\text{P},\text{Q})=\frac{1}{\msf{E}(\msf{f}_\text{Q}^*)}\frac{\big\langle e_\text{n.o.}\zeta^*\big(d^{r}\breve{\msf{g}}_\text{P}^{[\cal{P}]}\big), \breve{\msf{f}}_\text{Q}^{*}\big\rangle}{\big\langle \msf{f}_\text{Q}^{*}, \msf{f}_\text{Q}^{*}\big\rangle}.
\] 
Here the number $\msf{E}(\msf{f}_\text{Q}^*)$ is defined by $\msf{E}(\msf{f}_\text{Q}^*)=(1-\beta_{\msf{f}_\text{Q}^*}\alpha_{\msf{f}_\text{Q}^*}^{-1})$ for $\alpha_{\msf{f}_\text{Q}^*},\beta_{\msf{f}_\text{Q}^*}$ the inverses of the roots of the Hecke polynomial for $T(p)$ and $r\in\bb{N}[I_L]$ is the unique element, with $r_\mu=0$ if $\mu\notin\Theta_{L/F}$, that makes the weights of $\zeta^*(d^r\breve{\msf{g}}_\text{P})$ and $\breve{\msf{f}}_\text{Q}$ match.
We are justified in calling $\mathscr{L}_p\big(\breve{\cal{G}},\breve{\cal{F}}\big)$ a $p$-adic $L$-function because it interpolates the algebraic avatar $(\ref{period})$ of central $L$-values $L(\frac{1}{2},\Pi_{\text{P},\text{Q}},r)$ for $(\text{P},\text{Q})\in\Sigma^\msf{f}_\text{cry}$, as the next theorem shows.
\begin{theorem}
Consider the partition $\cal{Q}_\text{inert}\coprod\cal{Q}_\text{split}$ of the set of $\cal{O}_F$-prime ideals above $p$ determined by the splitting behavior of the primes in the quadratic extension $L/F$. For all $(\text{P},\text{Q})\in\Sigma^\msf{f}_\text{cry}$, the twisted triple product $p$-adic $L$-function   $
\mathscr{L}_p(\breve{\cal{G}},\breve{\cal{F}})\in \mathbf{R}_{\cal{G}}\otimes_{\mathbf{A}_F}\text{Frac}(\mathbf{K}_{\cal{F}})$ satisfies the equality 
\[
\mathscr{L}_p\big(\breve{\cal{G}},\breve{\cal{F}}\big)(\text{P},\text{Q})=\frac{1}{\msf{E}(\msf{f}_\text{Q}^*)}
\left(\prod_{\wp\in\cal{Q}_\text{inert}}\cal{E}_\wp(\msf{g}_\text{P},\msf{f}_\text{Q}^*)
\prod_{\wp\in\cal{Q}_\text{split}}\frac{\scr{E}_\wp(\msf{g}_\text{P},\msf{f}_\text{Q}^*)}{\scr{E}_{0,\wp}(\msf{g}_\text{P},\msf{f}_\text{Q}^*)}\right)
\frac{\big\langle \zeta^*\left(\delta^r\breve{\msf{g}}_\text{P}\right), \breve{\msf{f}}_\text{Q}^{*}\big\rangle}{\big\langle \msf{f}_\text{Q}^{*} \msf{f}_\text{Q}^{*}\big\rangle},
\]
where the Euler factors appearing in the formula are defined in Lemmas $\ref{lemma: inert}$ and $\ref{lemma: split}$, and $r\in\bb{N}[I_L]$ is the unique element, with $r_\mu=0$ if $\mu\notin\Theta_{L/F}$, that makes the weights of $\zeta^*(\delta^r\breve{\msf{g}}_\text{P})$ and $\breve{\msf{f}}_\text{Q}$ match. 
\end{theorem}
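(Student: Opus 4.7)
The proof factors through the definition of $\mathscr{L}_p\big(\breve{\cal{G}},\breve{\cal{F}}\big)(\text{P},\text{Q})$ given above. Both sides share the factors $\msf{E}(\msf{f}_\text{Q}^*)^{-1}$ and $\langle\msf{f}_\text{Q}^*,\msf{f}_\text{Q}^*\rangle^{-1}$, so the theorem reduces to the identity
\[
\big\langle e_\text{n.o.}\zeta^*\big(d^{r}\breve{\msf{g}}_\text{P}^{[\cal{P}]}\big),\breve{\msf{f}}_\text{Q}^{*}\big\rangle
=\left(\prod_{\wp\in\cal{Q}_\text{inert}}\cal{E}_\wp(\msf{g}_\text{P},\msf{f}_\text{Q}^*)\prod_{\wp\in\cal{Q}_\text{split}}\frac{\scr{E}_\wp(\msf{g}_\text{P},\msf{f}_\text{Q}^*)}{\scr{E}_{0,\wp}(\msf{g}_\text{P},\msf{f}_\text{Q}^*)}\right)\big\langle\zeta^*(\delta^r\breve{\msf{g}}_\text{P}),\breve{\msf{f}}_\text{Q}^*\big\rangle.
\]
Three differences between the two sides must be accounted for: the Atkin--Serre theta operator $d^r$ on $p$-adic forms versus the Maass--Shimura operator $\delta^r$ on nearly holomorphic forms, the $\cal{P}$-depletion $[\cal{P}]$ of $\breve{\msf{g}}_\text{P}$, and the nearly ordinary projector $e_\text{n.o.}$. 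The plan is to treat them in this order, extracting the Euler factors one prime $\wp\mid p$ at a time.

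First I would reduce to a comparison in which only $\delta^r$ appears. On a $\cal{P}$-depleted $q$-expansion the operator $d^r$ coincides with $\delta^r$ modulo contributions in the kernel of the unit-root splitting, and these are killed by $e_\text{n.o.}$; this is the Hilbert generalization of the mechanism used in \cite{DR}. Moreover $\breve{\msf{f}}_\text{Q}^*$ has prime-to-$p$ level and is $p$-nearly ordinary, so $e_\text{n.o.}$ is self-adjoint for the Petersson pairing and acts as the identity on the isotypic component to which $\breve{\msf{f}}_\text{Q}^*$ belongs. Both operators can therefore be removed at once, reducing the left-hand side to $\langle\zeta^*(\delta^{r}\breve{\msf{g}}_\text{P}^{[\cal{P}]}),\breve{\msf{f}}_\text{Q}^{*}\rangle$.

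Next I would undo the depletion. Writing $[\cal{P}]$ as the product $\prod_{\mathfrak{P}\in\cal{P}}(1-V_\mathfrak{P}U_\mathfrak{P})$ in standard Hecke operators and expanding, $\breve{\msf{g}}_\text{P}^{[\cal{P}]}$ becomes a linear combination of translates of $\breve{\msf{g}}_\text{P}$ governed by the Satake parameters of $\msf{g}_\text{P}$ at the primes above $p$. After applying $\zeta^*$ and pairing against $\breve{\msf{f}}_\text{Q}^*$, the local contribution at each $\wp\mid p$ is a polynomial in the Satake parameters $\alpha_{\msf{f}_\text{Q}^*},\beta_{\msf{f}_\text{Q}^*}$ of $\breve{\msf{f}}_\text{Q}^*$ and in the parameters of $\msf{g}_\text{P}$. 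The shape of this polynomial is dictated by the splitting type of $\wp$ in $L$: at an inert $\wp$ there is a unique prime $\mathfrak{P}\mid\wp$ forced into $\cal{P}$, the depletion is total, and the contribution collapses to the single Euler factor $\cal{E}_\wp(\msf{g}_\text{P},\msf{f}_\text{Q}^*)$ of Lemma \ref{lemma: inert}; at a split $\wp=\mathfrak{P}_1\mathfrak{P}_2$ with $\Theta_{L/F}$ forcing only $\mathfrak{P}_1\in\cal{P}$, the depletion is partial and the asymmetric contribution yields the ratio $\scr{E}_\wp/\scr{E}_{0,\wp}$ of Lemma \ref{lemma: split}, the denominator precisely removing the spurious factor attached to the un-depleted prime $\mathfrak{P}_2$.

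The main obstacle is the split case. Unlike the inert case, where the full depletion simply produces the Euler factor that cancels part of the local Rankin--Selberg computation, in the split case one must carefully track how the operators $V_{\mathfrak{P}_1}$ and $U_{\mathfrak{P}_1}$ interact with $\zeta^*$ and with the $\mathrm{GL}_2/F$-eigenform $\breve{\msf{f}}_\text{Q}^*$, showing that the expanded sum organizes itself into exactly the ratio $\scr{E}_\wp/\scr{E}_{0,\wp}$ and not into some unrelated combination. Once this local calculation is pinned down at every $\wp\mid p$ via Lemmas \ref{lemma: inert} and \ref{lemma: split}, assembling the contributions yields the formula verbatim.
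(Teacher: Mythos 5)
Your skeleton is the same as the paper's (start from the expression $(\ref{eq: second expression p-adic L-function})$ for the value at $(\text{P},\text{Q})$, extract Euler factors prime by prime via Lemmas $\ref{lemma: inert}$ and $\ref{lemma: split}$, compare $d^r$ with $\delta^r$), but the order in which you remove the operators creates a genuine gap. You claim that, since $\breve{\msf{f}}_\text{Q}^*$ is $p$-nearly ordinary of prime-to-$p$ level, $e_\text{n.o.}$ is self-adjoint and acts as the identity on the relevant isotypic component, so that it can be dropped (together with the passage from $d^r$ to $\delta^r$) \emph{before} undoing the depletion, reducing the left-hand side to $\big\langle\zeta^*\big(\delta^r\breve{\msf{g}}_\text{P}^{[\cal{P}]}\big),\breve{\msf{f}}_\text{Q}^*\big\rangle$. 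At that stage, however, the left-hand form is $\cal{P}$-depleted and hence has level divisible by $p$; the projector there is built from $U_0(p)$, not $T_0(p)$, it is not self-adjoint for the Petersson product at that level, and the old vector $\breve{\msf{f}}_\text{Q}^*$ is not a $U_0(p)$-eigenvector, so $e_\text{n.o.}$ does not fix its "isotypic component" but rather projects onto the line of the ordinary $p$-stabilization; dropping it changes the value. A symmetry count shows the reduction cannot be correct: the ratio $\big\langle\zeta^*\big(\delta^r\breve{\msf{g}}_\text{P}^{[\cal{P}]}\big),\breve{\msf{f}}_\text{Q}^*\big\rangle\big/\big\langle\zeta^*\big(\delta^r\breve{\msf{g}}_\text{P}\big),\breve{\msf{f}}_\text{Q}^*\big\rangle$ that your second step would then have to evaluate depends on $\breve{\msf{f}}_\text{Q}^*$ only through its Hecke eigenvalues, hence is symmetric in $\alpha_{\msf{f}_\text{Q}^*}$ and $\beta_{\msf{f}_\text{Q}^*}$, whereas the factors $\cal{E}_\wp$ and $\scr{E}_\wp/\scr{E}_{0,\wp}$ involve the unit root $\alpha_{\msf{f}_\text{Q}^*}$ alone; no expansion of $[\cal{P}]=\prod(1-V\circ U)$ paired against the unstabilized form can isolate that root. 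Relatedly, after removing $e_\text{n.o.}$ and $d^r$ you can no longer invoke Lemmas $\ref{lemma: inert}$ and $\ref{lemma: split}$: they are identities for $e_{\msf{f},\text{n.o.}}\zeta^*(d^r\,\cdot\,)$ with the projector in place, and their proofs rest precisely on the relation $e_{\msf{f},\text{n.o.}}\circ q_\wp^{-1}V(\varpi_\wp)=\alpha_\msf{f}^{-1}q_\wp^{-1}e_{\msf{f},\text{n.o.}}$ (plus Lemma $\ref{thor}$ in the split case), i.e.\ on the operator you have already discarded.

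The fix is to keep the paper's order. First apply Lemmas $\ref{lemma: inert}$ and $\ref{lemma: split}$ to $e_{\msf{f}_\text{Q}^*,\text{n.o.}}\zeta^*\big(d^r\breve{\msf{g}}_\text{P}^{[\cal{P}]}\big)$: this undoes the depletion and produces exactly the Euler factors. The remaining form $\zeta^*\big(d^r\breve{\msf{g}}_\text{P}\big)$ now comes from a classical form of prime-to-$p$ level, and only then does Lemma $\ref{lemma: auto comparison}$ give $e_{\msf{f}_\text{Q}^*,\text{n.o.}}\zeta^*\big(d^r\breve{\msf{g}}_\text{P}\big)=e_{\msf{f}_\text{Q}^*,\text{n.o.}}\Pi^\text{hol}\zeta^*\big(\delta^r\breve{\msf{g}}_\text{P}\big)$; at that prime-to-$p$ level the nearly ordinary projector is built from $T_0(p)$, fixes the $\msf{f}_\text{Q}^*$-isotypic component, and together with $\Pi^\text{hol}$ may be dropped against the holomorphic form $\breve{\msf{f}}_\text{Q}^*$, yielding the stated formula.
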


\subsubsection{A $p$-adic Gross-Zagier formula.}
The second part of the paper deals with the evaluation of the $p$-adic $L$-function outside the range of interpolation. From now on, we assume $L/\bb{Q}$ to be a real quadratic number field.
 \begin{definition}
	A triple of integers $(a,b,c)\in\bb{Z}^3$, $a,b,c\ge2$, is said to be balanced if none among $a,b,c$ is greater or equal than the sum of the other two. We say that the weights $(\ell-2t_L,t_L-x)\in\bb{N}[I_L]^2$, $(k-2,1-w)\in\bb{N}[I_\bb{Q}]^2$ are \emph{balanced} if there exists $r\in\bb{N}[I_L]$, $r>0$, such that $k=(\ell-2r)_{\lvert F}$, $w=(x-r)_{\lvert F}$ and the triple of integers $(\ell_1,\ell_2,k)$ is balanced.
\end{definition}
\begin{definition}
	The set of \emph{balanced crystalline points}, denoted $\Sigma^\text{bal}_\text{cry}=\Sigma^\text{bal}_\text{cry}(\mathbf{I}_\cal{G}, \mathbf{I}_\cal{F})$, is the subset of arithmetic points $(\text{P},\text{Q})\in\cal{A}(\mathbf{I}_\cal{G})\times\cal{A}(\mathbf{I}_\cal{F})$ of balanced weights, trivial characters and such that the specialization of the Hida families are old at $p$. The set of balanced old points is a disjoint union, indexed by balanced triples $(\ell,k)$, of subsets $\Sigma^\text{bal}_\text{cry}(\ell,k)$ consisting of points whose weights have the form $(\ell-2t_L,t_L-x)\in\bb{N}[I_L]^2$, $(k-2,1-w)\in\bb{N}[I_\bb{Q}]^2$.
 \end{definition}

For old balanced points $(\text{P},\text{Q})\in\Sigma^\text{bal}_\text{cry}$, the global sign of the functional equation of $L(s,\Pi_{\text{P},\text{Q}},r)$ is $-1$. This forces the vanishing of the central value, which one expects to be accounted for by the family of generalized Hirzebruch-Zagier cycles . Interestingly, the (twisted) triple product $p$-adic $L$-function is not forced to vanish on $\Sigma^\text{bal}_\text{cry}$ and we can try to compute its values there. Let $(\ell,k)$  be a balanced triple such that either $\ell$ is not parallel or $(\ell,k)=(2t_L,2)$. Let $\cal{A}\to\mathbf{Sh}_M(G^*_L)$ be the universal abelian surface over the Shimura variety for $G^*_L$ of level $M$ and let $\cal{E}\to\mathbf{Sh}_M(\text{GL}_{2,\bb{Q}})$ be the universal elliptic curve over the modular curve of level $M$, both defined over some open subset of $\text{Spec}(\cal{O}_E)$, where $E/\bb{Q}$ is a large enough finite Galois extension. For all but finitely many primes $p$, let $\wp\triangleleft\cal{O}_E$ be the prime above $p$ induced by the fixed $p$-adic embedding $\iota_p$, and consider $\scr{U}_{\ell-4}\times_{\cal{O}_{E,\wp}}\scr{W}_{k-2}$ a smooth and proper compactification of $\cal{A}^{\lvert\ell\rvert-4}\times\cal{E}^{k-2}$. The generalized Hirzebruch-Zagier cycle of weight $(\ell,k)$ is a De Rham null-homologous cycle \[\Delta_{\ell,k}\in \text{CH}_{r+1}(\scr{U}_{\ell-4}\times_{\cal{O}_{E,\wp}}\scr{W}_{k-2})_0\otimes_\bb{Z}L\]
of dimension $r+1=\frac{\lvert\ell\rvert+k-4}{2}$. Given a pair of eigenforms $\breve{\msf{g}}_\text{P}\in S_{\ell,x}(M, L; E)$ and $\breve{\msf{f}}_\text{Q}\in S_{k,w}(M; E)$ we can produce cohomology classes $\omega_\text{P}$ and $\eta_\text{Q}$,  as in Definition $\ref{definition of classes}$, such that $\pi_1^*\omega_\text{P}\cup\pi_2^*\eta_\text{Q}\in \text{F}^{\lvert\ell\rvert-2-s} H_\text{dR}^{\lvert\ell\rvert+k-3}\big(U_{\ell-4}\times_{E_\wp}W_{k-2}\big)$ where $s=\frac{\lvert\ell\rvert-k-2}{2}$; that is, the cohomology class $\pi_1^*\omega_\text{P}\cup\pi_2^*\eta_\text{Q}$ lives in the domain of the syntomic Abel-Jacobi image of $\Delta_{\ell,k}$,
\[
\text{AJ}_p(\Delta_{\ell,k}):\text{F}^{\lvert\ell\rvert-2-s} H_\text{dR}^{\lvert\ell\rvert+k-3}\big(U_{\ell-4}\times_{E_\wp}W_{k-2}\big)\longrightarrow E_\wp,
\]
and we can compute the number $ \text{AJ}_p(\Delta_{\ell,k})(\pi_1^*\omega_\text{P}\cup\pi_2^*\eta_\text{Q})$ as follows.
\begin{theorem}\label{intro GZ formula}
	Let $L/\bb{Q}$ be a real quadratic field and $(\ell,k)$ a balanced triple. Let $p$ be a prime splitting in $L$ for which the generalized Hirzebruch-Zagier cycle $\Delta_{\ell,k}$ is defined. Then for all $(\text{P},\text{Q})\in\Sigma^\text{bal}_\text{cry}(\ell,k)$  we have
	\[
	\scr{L}_p(\breve{\cal{G}},\breve{\cal{F}})(\text{P},\text{Q})=\frac{(-1)^s}{s!\msf{E}(\msf{f}_\text{Q}^*)}\frac{\scr{E}_p(\msf{g}_\text{P},\msf{f}_\text{Q}^*)}{\scr{E}_{0,p}(\msf{g}_\text{P},\msf{f}_\text{Q}^*)}\text{AJ}_p(\Delta_{\ell,k})(\pi_1^*\omega_\text{P}\cup\pi_2^*\eta_\text{Q}).
	\]
\end{theorem}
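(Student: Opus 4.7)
The plan is to adapt the Darmon-Rotger strategy for triple-product $p$-adic $L$-functions and diagonal cycles to the twisted Hilbert-modular setting. The first step is to re-express $\mathscr{L}_p(\breve{\cal{G}},\breve{\cal{F}})(\text{P},\text{Q})$ at a balanced point in a form amenable to comparison with the syntomic Abel-Jacobi map. The interpolation formula at unbalanced points features $e_\text{n.o.}\zeta^*(d^{r}\breve{\msf{g}}_\text{P}^{[\cal{P}]})$ with $r \in \bb{N}[I_L]$ determined by the unbalanced relation $k=(\ell+2r)_{\lvert F}$. Under the balanced relation $k=(\ell-2r)_{\lvert F}$, this shift becomes negative, and the natural interpretation of $d^{r}$ is a Coleman primitive $d^{-1-r}$ of the $p$-depleted Hilbert modular form, which lives in a space of nearly overconvergent forms. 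The nearly ordinary projection $e_\text{n.o.}$ still produces a classical cuspform, so the Petersson pairing against $\breve{\msf{f}}_\text{Q}^*$ continues to make sense, and it is this $p$-adic analytic continuation of $\mathscr{L}_p$ to the balanced locus that I would compute geometrically.

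Next, I would invoke Besser's formalism of rigid syntomic cohomology to rewrite $\text{AJ}_p(\Delta_{\ell,k})(\pi_1^*\omega_\text{P} \cup \pi_2^*\eta_\text{Q})$ as an explicit $p$-adic integral. Realizing $\Delta_{\ell,k}$ as the image of a suitable diagonal under the Hirzebruch-Zagier embedding of the modular curve into the Hilbert modular surface, lifted to Kuga-Sato varieties, the Abel-Jacobi image becomes a Poincar\'e pairing on the Kuga-Sato variety over the Hilbert modular surface between a Frobenius-trivialized primitive of the pull-back of $\omega_\text{P}$ and the pull-back of $\eta_\text{Q}$ along $\zeta$. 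Isolating the unit-root subquotient of the relative $p$-adic De Rham cohomology and using the fact that $\msf{f}_\text{Q}$ is $p$-ordinary while $\msf{g}_\text{P}$ is nearly $p$-ordinary, I can identify this Poincar\'e pairing with a cup product of classical modular forms against a Coleman primitive of $\breve{\msf{g}}_\text{P}^{[\cal{P}]}$.

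The decisive comparison step is then to match the Frobenius primitive appearing in the Besser formula with $d^{-1-r}$ applied to the $p$-depletion of $\breve{\msf{g}}_\text{P}$. This rests on the compatibility between the theta operator and the Gauss-Manin connection on the relative De Rham cohomology of the universal abelian surface, together with the description of Frobenius eigenvalues on the unit-root subspace in terms of the $U_\wp$-eigenvalues $\alpha_{\msf{g}_\text{P}},\beta_{\msf{g}_\text{P}}$ for $\wp \in \cal{P}$. The sign and factorial $(-1)^s/s!$ emerge from the $s$-fold application of the primitive operator, while the Euler factor $\scr{E}_p/\scr{E}_{0,p}$ records the discrepancy between the nearly ordinary projection and the full Frobenius-stable splitting, slot by slot over the primes in $\cal{P}$ above the split prime $p$.

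The main obstacle will be the rigorous execution of this matching on the Hilbert side: one must develop the theory of nearly overconvergent Hilbert modular forms well enough to invert the tuple $d=(d_\mu)_{\mu \in I_L}$ on $p$-depleted forms, verify that the nearly ordinary projector is defined on this larger space and respects the expected filtration, and give an explicit formula for the syntomic Abel-Jacobi map on the relevant Kuga-Sato variety over the Hilbert modular surface. Once these ingredients are in place, unwinding both sides into the same Poincar\'e pairing on the Hilbert modular surface and matching the resulting Euler factors yields the theorem.
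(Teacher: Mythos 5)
Your overall route is the one the paper follows: value of $\scr{L}_p$ at a balanced point expressed through the same Hida-theoretic formula $(\ref{eq: second expression p-adic L-function})$ with a negative power of $d$ applied to the $\cal{P}$-depleted form, Besser's fp-cohomology to turn $\text{AJ}_p(\Delta_{\ell,k})$ into a Poincar\'e pairing of a Frobenius-trivialized primitive of $\pi_1^*\omega$ against $\pi_2^*\eta$, and the unit-root splitting to compare that primitive with $p$-adic modular forms. (One small correction: no ``$p$-adic analytic continuation'' or Coleman-primitive interpretation is needed to \emph{define} $\scr{L}_p(\text{P},\text{Q})$ at balanced points --- the interpolation lemma applies at every crystalline $\text{Q}$, so the formula $(\ref{eq: second expression p-adic L-function})$ is literally valid there; overconvergence only becomes an issue on the geometric side.)

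The genuine gap is in your decisive matching step, which you defer to a hoped-for theory of nearly overconvergent Hilbert modular forms in which the operators $d_\mu$ can be inverted on $p$-depleted forms and the nearly ordinary projector extended. No such theory is available (the paper's own remark points out that removing the split hypothesis would require generalizing Andreatta--Iovita), and this is exactly where the assumption that $p$ splits in $L$ does real work. The paper's substitute consists of three concrete devices you do not supply: (i) Corollary $\ref{overconvergence in the split case}$, a Loeffler--Skinner--Zerbes-style slope-bound argument proving that $d_1^{1-\ell_1}(\breve{\msf{g}}^{[\frak{p}_1,\frak{p}_2]})$ is overconvergent; (ii) the specific polynomial $P(T_1,T_2)^2$ with the decomposition $P^2=(a_1a_2-b_1b_2)P_1P_2+P(b_2P_1+b_1P_2)$, which writes $P(V(p))^2\breve{\msf{g}}$ as $d_1^{\ell_1-1}(\msf{h})+d_1^{\ell_1-1}(\msf{h}_1)+d_2^{\ell_2-1}(\msf{h}_2)$ with all inputs genuinely overconvergent, so that an explicit $\nabla$-primitive can be built by telescoping sums and identified, via the unit-root splitting, with $(-1)^s s!$ times $\zeta^*$ of explicit $p$-adic forms; and (iii) the verification (using Lemma $\ref{thor}$) that the spurious terms $\msf{h}_1,\msf{h}_2$ are killed by $e_{\msf{f}^*,\text{n.o.}}\zeta^*$ while the main term reproduces $e_{\text{n.o.}}\zeta^*(d_1^{-1-s}\breve{\msf{g}}^{[\frak{p}_1]})$ up to the factor $\scr{E}_{0,p}/\scr{E}_p$ --- which is where the Euler factor actually comes from (as the ratio of $[a_1a_2-b_1b_2](V(p))$ acting on the $\msf{f}^*$-isotypic part to the eigenvalue $Q(\text{Fr}_p)^2$), not from a general ``discrepancy between the nearly ordinary projection and the Frobenius-stable splitting.'' Without these ingredients, or an equivalent replacement, the comparison between the syntomic primitive and $d_1^{-1-s}\breve{\msf{g}}_\text{P}^{[\frak{p}_1]}$ in Theorem $\ref{p-adic GZ}$ remains a statement of intent rather than a proof; you should also note that the existence of the canonical, $\tilde{\theta}_\ell^*$-invariant lift $\tilde{\omega}^{\text{can}}$ rests on the vanishing of $\bb{H}^{1}(\text{Sh}_K(G_L^*),\text{DR}^{\bullet}(\cal{F}_{G^*_L}^{(\ell,\ell-t)}))$, a point your sketch passes over.
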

\begin{remark}
The assumption on the splitting behaviour of $p$ in $L/\bb{Q}$ should not be necessary. It could be dispensed with by showing the overconvergence of the $p$-adic cuspform $d_\mu^{1-\ell_\mu}(\breve{\msf{g}}_\text{P}^{[p]})$ for $\mu\in I_L$. It seems resonable to believe that by generalizing the recent work of Andreatta and Iovita \cite{preprintAI} one could prove such a result.
\end{remark}

From our Theorem $\ref{intro GZ formula}$ and the impressive work of Liu \cite{YLiu}, we can deduce a corollary in the spirit of the Block-Kato conjecture. Let $A$ be an elliptic curve over $L$ of conductor $\frak{Q}$ and $E$ a rational elliptic curve of conductor $\frak{N}$, both without complex multiplication over $\overline{\bb{Q}}$; let $\msf{g}_A,\msf{f}_E$ be the cuspforms attached to them by modularity. For a rational prime $p$, if $\msf{g}_A,\msf{f}_E$ are $p$-nearly ordinary we denote by $\cal{G}, \cal{F}$ the Hida families passing through $\cal{G}_{\text{P}_A}=\msf{g}_A$ and $\cal{F}_{\text{Q}_E}=\msf{f}_E$, respectively. Following \cite{YLiu} we denote by $(\text{M}_{A,E})_p$ the Galois representation $\text{As}\text{V}_p(A)(-1)\otimes_{\bb{Q}_p}\text{V}_p(E)$ of the absolute Galois group of $\bb{Q}$. 
\begin{corollary}
Suppose that $\frak{N}$ and $\text{N}_{L/\bb{Q}}(\frak{Q})\cdot d_{L/\bb{Q}}$ are coprime ideals and that all the primes dividing $\frak{N}$ split in $L$. For all but finitely many primes $p$ that are split in $L$ and such that $\msf{g}_A,\msf{f}_E$ are $p$-nearly ordinary we have 
\[
\scr{L}_p(\breve{\cal{G}},\breve{\cal{F}})(\text{P}_A,\text{Q}_E)\not=0\quad\implies\quad \dim_{\bb{Q}_p}H_f^1(\bb{Q},(\text{M}_{A,E})_p)=1.
\]
\end{corollary}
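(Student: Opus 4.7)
The plan is to deduce the corollary by combining Theorem \ref{intro GZ formula} with the Euler system bounds of Liu \cite{YLiu}. The point $(\text{P}_A,\text{Q}_E)$ is arithmetic of balanced weight $(\ell,k)=(2t_L,2)$, since $A$ and $E$ correspond to Hilbert/elliptic modular forms of parallel weight $2$; it is crystalline old at every prime $p$ of good reduction for $A$ and $E$ at which $\msf{g}_A,\msf{f}_E$ are $p$-nearly ordinary, so it lies in $\Sigma^{\text{bal}}_{\text{cry}}(2t_L,2)$. The splitting assumption on primes dividing $\frak{N}$, together with the coprimality of $\frak{N}$ and $\text{N}_{L/\bb{Q}}(\frak{Q})\cdot d_{L/\bb{Q}}$, guarantees (via the remark following the definition of unbalanced weights) that the local $\epsilon$-factors are all $+1$, which is the hypothesis needed to construct the test vectors used throughout.

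First I would apply Theorem \ref{intro GZ formula} at $(\text{P}_A,\text{Q}_E)$: it expresses $\scr{L}_p(\breve{\cal{G}},\breve{\cal{F}})(\text{P}_A,\text{Q}_E)$ as a product of $p$-adic Euler factors and $\msf{E}(\msf{f}_E^*)^{-1}$ times the syntomic Abel--Jacobi value $\text{AJ}_p(\Delta_{2t_L,2})(\pi_1^*\omega_{\text{P}_A}\cup\pi_2^*\eta_{\text{Q}_E})$. The factors $\msf{E}(\msf{f}_E^*)$, $\scr{E}_p(\msf{g}_A,\msf{f}_E^*)$ and $\scr{E}_{0,p}(\msf{g}_A,\msf{f}_E^*)$ are explicit polynomials in the $p$-adic unit roots of $A$ and $E$; by a standard Weil-bounds argument they can vanish for at most finitely many $p$ among those under consideration. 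Consequently, for all but finitely many admissible $p$, the hypothesis $\scr{L}_p(\breve{\cal{G}},\breve{\cal{F}})(\text{P}_A,\text{Q}_E)\neq 0$ forces $\text{AJ}_p(\Delta_{2t_L,2})(\pi_1^*\omega_{\text{P}_A}\cup\pi_2^*\eta_{\text{Q}_E})\neq 0$.

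Next I would transfer this non-vanishing from the syntomic to the \'etale setting. By the comparison between the syntomic and the $p$-adic \'etale Abel--Jacobi maps, the non-triviality of the syntomic image implies the non-triviality of the $p$-adic \'etale Abel--Jacobi class of $\Delta_{2t_L,2}$ after pairing with $\pi_1^*\omega_{\text{P}_A}\cup\pi_2^*\eta_{\text{Q}_E}$. The Hirzebruch--Zagier cycle is null-homologous and defined over a number field, so its \'etale Abel--Jacobi image produces a class $\kappa\in H^1_f(\bb{Q},(\text{M}_{A,E})_p)$ after projecting to the $(\msf{g}_A,\msf{f}_E)$-isotypic component, and the pairing with the de Rham classes computed above detects its non-triviality. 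Hence $\kappa\neq 0$ and $\dim_{\bb{Q}_p}H^1_f(\bb{Q},(\text{M}_{A,E})_p)\geq 1$.

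Finally, the matching upper bound $\dim_{\bb{Q}_p}H^1_f(\bb{Q},(\text{M}_{A,E})_p)\leq 1$ is supplied by the work of Liu \cite{YLiu}, which establishes a Kolyvagin-type Euler system argument using exactly the generalized Hirzebruch--Zagier cycles: the non-triviality of the bottom class at $p$ (equivalently, the non-vanishing of its Bloch--Kato logarithm) bounds the Selmer group above by one. Combining the two inequalities yields $\dim_{\bb{Q}_p}H^1_f(\bb{Q},(\text{M}_{A,E})_p)=1$. The main obstacle in this plan is the passage from the syntomic output of Theorem \ref{intro GZ formula} to the input of Liu's Euler system machine: one has to check carefully that the cohomology class whose non-vanishing Liu uses is indeed the one cut out by pairing against $\pi_1^*\omega_{\text{P}_A}\cup\pi_2^*\eta_{\text{Q}_E}$, and that the finite set of primes excluded to make the Euler factors non-zero is compatible with the finite set excluded in Liu's bound.
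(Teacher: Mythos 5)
Your proposal follows essentially the same route as the paper: Theorem \ref{intro GZ formula} turns non-vanishing of $\scr{L}_p$ at $(\text{P}_A,\text{Q}_E)$ into non-vanishing of the syntomic Abel--Jacobi value (the Euler factors being harmless), the syntomic/\'etale comparison (the paper cites \cite{BDP}, Section 3.4) transfers this to the $p$-adic \'etale Abel--Jacobi class, and Liu's theorem (\cite{YLiu}, Theorem 1.5) then yields $\dim_{\bb{Q}_p}H^1_f(\bb{Q},(\text{M}_{A,E})_p)=1$. The compatibility issue you flag at the end is exactly the point the paper disposes of by remarking that its weight-$(2t_L,2)$ Hirzebruch--Zagier cycle and Liu's, though defined differently, give the same value $\text{AJ}_p(\Delta_{2t_L,2})(\pi_1^*\omega_{\text{P}_A}\cup\pi_2^*\eta_{\text{Q}_E})$.
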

\begin{proof}
	The non-vanishing of the $p$-adic $L$-function implies the non-vanishing of the syntomic Abel-Jacobi image by Theorem $\ref{intro GZ formula}$, which in turn forces the non-vanishing of the $p$-adic \'etale Abel-Jacobi image of the Hirzebruch-Zagier cycle (\cite{BDP}, Section 3.4). Finally, Liu's theorem (\cite{YLiu}, Theorem 1.5) gives the dimension of the Block-Kato Selmer group. Note that even though there is a difference between our definition of the HZ-cycle in weight $(2t_L,2)$ and Liu's, both give the same value $\text{AJ}_p(\Delta_{2t_L,2})(\pi_1^*\omega_{\text{P}_A}\cup\pi_2^*\eta_{\text{Q}_E})$.
\end{proof}

While writing this paper we became aware that other people were working on similar projects: Ishikawa constructed twisted triple product $p$-adic $L$-functions over $\bb{Q}$ in his Ph.D. thesis \cite{Ishikawa}, while Blanco and Sols were computing the syntomic Abel-Jacobi image of Hirzebruch-Zagier cycles in \cite{BlancoSols}.

\begin{acknowledgements}
I am grateful to my Ph.D. advisors, Prof. Henri Darmon and Prof. Adrian Iovita, for sharing their passion and expertise through the years and for suggesting the problems considered in this paper. I would like to thank Giovanni Rosso, Nicolas Simard and Jan Vonk for the numerous conversations without which my understanding of mathematics would not be the same. Finally, I appreciated David Loeffler and Daniel Disegni's comments on the first draft of the paper.
\end{acknowledgements}


\section{Automorphic forms}
\label{sec:auto_part} 

\subsection{Adelic Hilbert modular forms}
\label{subsec:ah_mod_form}
Let $F/\bb{Q}$ be a totally real number field and let $I$ be the set of field embeddings of $F$ into $\overline{\bb{Q}}$. We denote by $G$ the algebraic group $\text{Res}_{F/\bb{Q}}\text{GL}_{2,F}$. We choose a square root $i$ of $-1$ which allows us to define the Poincar\'e half-plane $\frak{H}$, we consider the complex manifold $\frak{H}^I$ which is endowed with a transitive action of $G(\bb{R})^+\cong\prod_I\text{GL}_2(\bb{R})^+$ and contains the point $\mathbf{i}=(i,\dots,i)$. For any $K\triangleleft G(\bb{A}^\infty)$ compact open subgroup we denote by $M_{k,w}(K;\bb{C})$ the space of holomorphic Hilbert modular forms of weight $(k,w)\in\bb{Z}[I]^2$ and level $K$. It is defined as the space of functions $\msf{f}:G(\bb{A})\to\bb{C}$ that satisfy the following list of properties: 
\begin{itemize}
\item[$\bullet$] $\msf{f}(\alpha x u)=\msf{f}(x)j_{k,w}(u_\infty,\mathbf{i})^{-1}$ where $\alpha\in G(\bb{Q})$, $u\in K\cdot C_{\infty}^+$ for $C_{\infty}^+$ the stabilizer of $\mathbf{i}$ in $G(\bb{R})^+$ and the automorphy factor is
 $j_{k,w}\bigg(\begin{pmatrix}a& b\\
c&d\end{pmatrix},z\bigg)=(ad-bc)^{-w}(cz+d)^k$ for 
$\begin{pmatrix}a& b\\
c&d\end{pmatrix}
\in G(\bb{R})$, $z\in\mathfrak{H}^I$;
\item[$\bullet$] for every finite adelic point $ x\in G(\bb{A}^\infty)$ the well-defined function $\msf{f}_x:\frak{H}^I\to\bb{C}$ given by $\msf{f}_x(z)=\msf{f}(xu_\infty)j_{k,w}(u_\infty,\mathbf{i})$ is holomorphic, where for each $z\in\mathfrak{H}^I$ we choose $u_\infty\in G(\bb{R})^+$ such that $u_\infty\mathbf{i}=z$.
\item If the totally real field is just the field of rational numbers, $F=\bb{Q}$, we need to impose the extra condition that for all finite adelic point $x\in G(\bb{A}^\infty)$ the function $\lvert \text{Im}(z)^\frac{k}{2}\msf{f}_x(z)\rvert$ is uniformily bounded on $\frak{H}$.
\end{itemize}
We denote by $S_{k,w}(K;\bb{C})$ the $\bb{C}$-linear subspace of $M_{k,w}(K;\bb{C})$ of all the functions that satisfy the additional property:
\begin{itemize}
\item[$\bullet$] for all adelic points $x\in G(\bb{A})$ and for all additive measures on 
$F\backslash\bb{A}_F$ we have
 \[\int_{F\backslash\bb{A}_F}\msf{f}\bigg(\begin{pmatrix}1&a\\0&1\end{pmatrix}x\bigg)da=0.\]
\end{itemize}
\begin{remark}
Fix $(n,v)\in\bb{Z}[I]^2$ with $n,v\ge0$ and set $k=n+2t$, $w=t-v$ where $t=\sum_{\tau\in I}\tau\in\bb{Z}[I]$. To have a non-trivial modular form we need to assume $k-2w=mt$ for some $m\in\bb{Z}$.
\end{remark}

\begin{definition}
	We denote by $G^*$ the algebraic group $\text{Res}_{F/\bb{Q}}\text{GL}_{2,F}\times_{\text{Res}_{F/\bb{Q}}\bb{G}_{m,F}}\bb{G}_m$. By replacing $G$ by $G^*$ in the previous definition, we define $S^*_{k,w}(K;\bb{C})$ to be the space of automorphic forms for $G^*$ of weight $(k,w)\in\bb{Z}[I]^2$ and level $K$, for any $K\triangleleft\ G^*(\bb{Q})$ compact open subgroup.
\end{definition}
A nice feature of these automorphic forms is that we do not need to require $k-2w$ to be parallel to have non-trivial forms and that for all couple of weights $(k,w), (k,w')\in \bb{Z}[I]^2$ there is a natural isomorphism
\begin{equation}\label{prop: change central character}
	\Psi_{w,w'}: S_{k,w}^*(K;\bb{C})\overset{\sim}{\longrightarrow} S_{k,w'}^*(K;\bb{C})\end{equation}
given by $\msf{f}(x)\mapsto\msf{f}(x)\lvert\det(x)\rvert_{\bb{A}_\bb{Q}}^{w'-w}$.

Each irreducible automorphic representation $\pi$ spanned by some form in $S_{k,w}(K;\bb{C})$ has non-unitary central character equal to $\lvert-\rvert_{\bb{A}_F}^{-m}$ up to finite order characters. The twist $\pi^u:=\pi\otimes\lvert-\rvert_{\bb{A}_F}^{\frac{m}{2}}$ is called the unitarization of $\pi$. Note that there is an isomorphism of function spaces (not of $G(\bb{A})$-modules) \begin{equation}\label{eq:unitarization}
\begin{split}
&\pi\overset{\sim}{\longrightarrow}\pi^u \\
&\msf{f}\mapsto \msf{f}^u
\end{split}\qquad\qquad
 \text{where}\qquad\msf{f}^u(x)=\msf{f}(x)\lvert\det(x)\rvert_{\bb{A}_F}^{\frac{m}{2}}.
 \end{equation}
 Choose Haar measures $\text{d}^\times x_v$ on $F_v^\times\backslash \text{GL}_2(F_v)$ for all places $v$ of $F$ so that the product measure $\text{d}^\times x=\prod_v\text{d}^\times x_v$ is the Tamagawa measure on $\bb{A}_F^\times\backslash G_F(\bb{A})$ and the volume of $\cal{O}_{F_v}^\times\backslash\text{GL}_2(\cal{O}_{F_v})$ is equal to $1$ for all but finitely many $v$'s. For any two cuspforms $\msf{f}_1,\msf{f}_2\in S_{k,w}(\frak{N},\bb{C})$ with $k-2w=mt$  we define their Petersson inner product to be
\begin{equation}\label{eq: Petersson inner product}
\langle\msf{f}_1,\msf{f}_2\rangle=\int_{[G_F(\bb{A})]}\msf{f}_1(x)\overline{\msf{f}_2(x)}\lvert\det(x)\rvert_{\bb{A}_F}^m\text{d}^\times x=\langle\msf{f}_1^u,\msf{f}_2^u\rangle
.\end{equation}
where $[G_F(\bb{A})]=\bb{A}_F^\times G_F(\bb{Q})\backslash G_F(\bb{A})$.
\newline
Let $\cal{O}_F$ be the ring of integers of $F$ and $\frak{N}$ one of its ideals. We are interested in the following compact open sugroups of $G(\widehat{\bb{Z}})$:
\begin{itemize}
\item $U_0(\frak{N})=\bigg\{\begin{pmatrix}a&b\\c&d\end{pmatrix}\in G(\widehat{\bb{Z}})\bigg\lvert\ c\in\frak{N}\widehat{\cal{O}_F}\bigg\}$,
\item $V_1(\frak{N})=\bigg\{\begin{pmatrix}a&b\\c&d\end{pmatrix}\in U_0(\frak{N})\bigg\lvert\ d\equiv1 \pmod{\frak{N}\widehat{\cal{O}_F}}\bigg\}$,
\item $U(\frak{N})=\bigg\{\begin{pmatrix}a&b\\c&d\end{pmatrix}\in V_1(\frak{N})\bigg\lvert\ a\equiv1 \pmod{\frak{N}\widehat{\cal{O}_F}}\bigg\}$.
\end{itemize}
One can decompose the ideles of $F$ as 
\[
\bb{A}_F^\times=\coprod_{i=1}^{h_F^+
(\frak{N})}F^\times a_i\det U(\frak{N}) F^\times_{\infty,+}
\]
where $a_i\in\bb{A}_F^{\infty,\times}$ and $h_F^+
(\frak{N})$ is the cardinality of $\text{cl}_F^+(\frak{N}):= F_+^\times\backslash\bb{A}_F^{\infty,\times}/\det U(\frak{N})$, the narrow class group of $U(\frak{N})$. The ideles decomposition induces a decomposition of the adelic points of $G$
\[
G(\bb{A})=\coprod_{i=1}^{h_F^+
(\frak{N})}G(\bb{Q})t_iU(\frak{N})G(\bb{R})^+\qquad \text{for}\qquad t_i=\begin{pmatrix}a_i^{-1}&0\\ 0&1\end{pmatrix}.
\]
\subsubsection{Adelic q-expansion.} The Shimura variety $\mathbf{Sh}_K(G)$, determined by $G$ and a compact open $K$, is not compact, therefore there is a notion of $q$-expansion for Hilbert modular forms. Even more, Shimura found a way to package the $q$-expansions of each connected component of $\mathbf{Sh}_K(G)$ into a unique adelic $q$-expansion.
Fix $\mathsf{d}\in\bb{A}_F^{\infty,\times}$ such that $\mathsf{d}\cal{O}_F=\frak{d}$ is the absolute different ideal of $F$. Let $F^\text{Gal}$ be the Galois closure of $F$ in $\overline{\bb{Q}}$ and write $\mathcal{V}$ for the ring of integers or a valuation ring of a finite extension $F_0$ of $F^\text{Gal}$ such that for every ideal $\frak{a}$ of $\cal{O}_F$, for all $ \tau\in I$, the ideal $\frak{a}^\tau\mathcal{V}$ is principal.
Choose a generator $\{\frak{q}^\tau\}\in\mathcal{V}$ of $\frak{q}^\tau\mathcal{V}$ for each prime ideal $\frak{q}$ of $\cal{O}_F$ and by multiplicativity define $\{\frak{a}^v\}\in\cal{V}$ for each fractional ideal $\frak{a}$ of $F$ and each $v\in\bb{Z}[I]$.
\newline
Given a Hilbert cuspform $\msf{f}\in S_{k,w}(U(\frak{N});\bb{C})$, one can consider for every index $i\in\{1,\dots, h_F^+(\frak{N})\}$, the holomorphic function $\msf{f}_i:\frak{H}^I\to\bb{C}$
\[
\msf{f}_i(z)=y_\infty^{-w}\msf{f}\left(t_i\begin{pmatrix}
y_\infty& x_\infty\\
0&1
\end{pmatrix}\right)=\underset{\xi\in(\frak{a}_i\frak{d}^{-1})_+}{\sum}a(\xi,\msf{f}_i)e_F(\xi z)
\]
for $z=x_\infty+\mathbf{i}y_\infty$, $\frak{a}_i=a_i\cal{O}_F$ and $e_F(\xi z)=\text{exp}\left(2\pi i\sum_{\tau\in I}\tau(\xi)z_\tau\right)$.
Every idele $y$ in $\bb{A}^\times_{F,+}:=\bb{A}_F^{\infty,\times}(F^\times_{\infty,+})$ can be written as $y=\xi a_i^{-1}\mathsf{d}u$ for $\xi\in F_+^\times$ and $u\in\det U(\frak{N})F^\times_{\infty,+}$; the following functions 
\[
\msf{a}(-,\msf{f}):\bb{A}^\times_{F,+}\longrightarrow\bb{C},\qquad\qquad \msf{a}_p(-,\msf{f}):\bb{A}^\times_{F,+}\longrightarrow\overline{\bb{Q}}_p
\]
are defined by $\msf{a}(y,\msf{f}):=a(\xi,\msf{f}_i)\{y^{-v}\}\xi^v\lvert a_i\rvert_{\bb{A}_F}$ and $\msf{a}_p(y,\msf{f}):=a(\xi,\msf{f}_i)y_p^{-v}\xi^v\cal{N}_F(a_i)^{-1}$ if $y\in\widehat{\cal{O}_F}F^\times_{\infty,+}$ and zero otherwise. Here $\cal{N}_F:Z(1)\to\overline{\bb{Q}}_p^\times$ is the map from the group $Z(1)=\bb{A}^\times_F/\overline{F^\times\det U(p^\infty)F^\times_{\infty,+}}$ defined by $y\mapsto y_p^{-t}\lvert y^\infty\rvert_{\bb{A}_F}^{-1}$. Clearly, the function $\mathsf{a}_p(-,f)$ makes sense only if the coefficients $a(\xi,f_i)\in\overline{\bb{Q}}$ are algebraic $\forall \xi,i$. 
 
 \begin{theorem}{(\cite{pHida} Theorem 1.1)}
	Consider the map $e_F:\bb{C}^I\longrightarrow\bb{C}^\times$ defined by $e_F(z)=\text{exp}\left(2\pi i\sum_{\tau\in I}z_\tau\right)$ and the additive character of the ideles $\chi_F:\bb{A}/F\longrightarrow\bb{C}^\times$ which satisfies $\chi_F(x_\infty)=e_F(x_\infty)$, then each cuspform $\msf{f}\in S_{k,w}(U(\frak{N});\bb{C})$ has an adelic $q$-expansion of the form
	 \[
	 \msf{f}\left(\begin{pmatrix}
	 y & x \\ 
	 0 & 1
	 \end{pmatrix}\right)=\lvert y\rvert_\bb{A}\underset{\xi\in F_+}{\sum}\mathsf{a}(\xi y\mathsf{d},\msf{f})\{(\xi y\mathsf{d})^v\}(\xi y_\infty)^{-v}e_F(\mathbf{i}\xi y_\infty)\chi_F(\xi x) 
	 \]
	 where $\mathsf{a}(-,\msf{f}):\bb{A}^\times_{F,+}\longrightarrow \bb{C}$ vanishes outside $\widehat{\cal{O}_F}F^\times_{\infty,+}$ and depends only on the coset $y^\infty\det U(\frak{N})$. Furthermore, by formally replacing $e_F(\mathbf{i}\xi y_\infty)\chi_F(\xi x)$ by $q^\xi$ and $(\xi y_\infty)^{-v}$ by $(\xi\mathsf{d}_p y_p)^{-v}$, we have the $q$-expansion
	 \[
	 \mathsf{f}=\cal{N}(y)^{-1}\underset{\xi\in F_+}{\sum}\mathsf{a}_p(\xi y\mathsf{d},\msf{f})q^{\xi}
	 \]
where 	 $\mathsf{a}_p(-,\msf{f}):\bb{A}^\times_{F,+}\longrightarrow \overline{\bb{Q}}_p$ depends only on the coset $y^\infty\det U(\frak{N}p^\infty)$ for $\det U(\frak{N}p^\infty)=\{u\in\det U(\frak{N})\lvert\ u_p=1\}$.
	 \end{theorem}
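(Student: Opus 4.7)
The plan is to reduce the statement to the classical Fourier expansion of each component $\msf{f}_i$ on $\frak{H}^I$ by using the adelic decomposition $G(\bb{A})=\coprod_i G(\bb{Q})t_iU(\frak{N})G(\bb{R})^+$ that has just been recalled. First I would write a general matrix $\begin{pmatrix}y&x\\0&1\end{pmatrix}\in G(\bb{A})$ as $\alpha t_i u u_\infty$ with $\alpha\in G(\bb{Q})$, $u\in U(\frak{N})$ and $u_\infty\in G(\bb{R})^+$; the left $G(\bb{Q})$-invariance and right $U(\frak{N})C_\infty^+$-equivariance of $\msf{f}$ reduce the evaluation to a value of $\msf{f}_i$ at a point of $\frak{H}^I$ multiplied by the automorphy factor $j_{k,w}(u_\infty,\mathbf{i})^{-1}$. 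Tracking this reduction, in particular matching $y$ with $\xi a_i^{-1}\mathsf{d}u$ for some $\xi\in F_+^\times$, already explains why $\msf{a}(-,\msf{f})$ is supported in $\widehat{\cal{O}_F}F^\times_{\infty,+}$ and depends only on the coset $y^\infty\det U(\frak{N})$.

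Next I would establish the classical Fourier expansion of each $\msf{f}_i$. The invariance of $\msf{f}$ under the rational unipotent matrices $\begin{pmatrix}1&\alpha\\0&1\end{pmatrix}$ with $\alpha\in F$, combined with its right $U(\frak{N})$-invariance, forces $\msf{f}_i$ to be periodic in the archimedean variable under a lattice of $F_\infty$; Fourier analysis with respect to this lattice yields an expansion indexed by its trace-dual, which a short computation identifies with $\frak{a}_i\frak{d}^{-1}$:
\[
\msf{f}_i(z)=\sum_{\xi\in \frak{a}_i\frak{d}^{-1}}a(\xi,\msf{f}_i)e_F(\xi z).
\]
Cuspidality kills the $\xi=0$ term, and holomorphy together with Koecher's principle (automatic unless $F=\bb{Q}$, in which case it is enforced by the boundedness condition imposed in the definition) restricts the sum to totally positive $\xi$. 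Substituting this Fourier expansion back into the expression produced in the first step and unwinding the definition $\msf{a}(y,\msf{f})=a(\xi,\msf{f}_i)\{y^{-v}\}\xi^v\lvert a_i\rvert_{\bb{A}_F}$ yields precisely the first equality of the theorem, with the factor $\lvert y\rvert_\bb{A}$ absorbing the normalizations coming from $u_\infty$ and $\lvert a_i\rvert_{\bb{A}_F}$.

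The $p$-adic $q$-expansion is a purely formal rewriting of the first. The prescribed substitutions $e_F(\mathbf{i}\xi y_\infty)\chi_F(\xi x)\mapsto q^\xi$ and $(\xi y_\infty)^{-v}\mapsto(\xi\mathsf{d}_p y_p)^{-v}$ exchange the archimedean normalization $\{(\xi y\mathsf{d})^v\}(\xi y_\infty)^{-v}$ for its $p$-adic counterpart $y_p^{-v}\xi^v$, up to the factor $\cal{N}(y)^{-1}$ coming from the map $\cal{N}_F:Z(1)\to\overline{\bb{Q}}_p^\times$. The only nontrivial point is the refined level of invariance: introducing $y_p^{-v}$ destroys invariance under the $p$-component of $\det U(\frak{N})$, so one must pass to the finer subgroup $\det U(\frak{N}p^\infty)=\{u\in\det U(\frak{N})\lvert\ u_p=1\}$ on which $y_p^{-v}$ is trivialized. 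I expect this final bookkeeping, correctly matching the archimedean and $p$-adic normalizations and tracking the precise coset of invariance, to be the main technical obstacle in a clean write-up; the underlying analytic content is nothing more than the classical Fourier expansion combined with the definitional unwinding.
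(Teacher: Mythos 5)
The paper does not actually prove this statement: it is recalled verbatim from Hida (\cite{pHida}, Theorem 1.1), so there is no internal proof to compare against. Your outline reproduces the standard argument behind Hida's theorem, and it is the right one: decompose $G(\bb{A})=\coprod_i G(\bb{Q})t_iU(\frak{N})G(\bb{R})^+$ so that evaluation of $\msf{f}$ at $\left(\begin{smallmatrix}y&x\\0&1\end{smallmatrix}\right)$ reduces to a value of the component $\msf{f}_i$; use invariance under the rational unipotents together with right $U(\frak{N})$-invariance to get periodicity under a lattice, Fourier-expand along its trace dual $\frak{a}_i\frak{d}^{-1}$, kill the constant term by cuspidality and restrict to totally positive $\xi$ by Koecher's principle (or the imposed boundedness when $F=\bb{Q}$); then the adelic expansion is definitional unwinding of $\msf{a}(y,\msf{f})=a(\xi,\msf{f}_i)\{y^{-v}\}\xi^v\lvert a_i\rvert_{\bb{A}_F}$, and the $p$-adic $q$-expansion is the formal substitution with $\msf{a}_p$, whose dependence on $y_p^{-v}$ is exactly why the invariance group must shrink from $\det U(\frak{N})$ to $\det U(\frak{N}p^\infty)$. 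The one caveat is that the genuinely fiddly part — matching $y_\infty^{-w}$, the automorphy factor $j_{k,w}(u_\infty,\mathbf{i})$, $\lvert y\rvert_{\bb{A}}$, the normalizers $\{(\xi y\msf{d})^v\}$ and $\cal{N}_F(a_i)^{-1}$, and checking that $\msf{a}(-,\msf{f})$ depends only on the stated cosets — is flagged rather than carried out; since you correctly identify where this bookkeeping lives and why it is the only obstacle, your sketch is a faithful account of the proof one finds in the cited source.
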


\subsubsection{Nearly holomorphic cuspforms}
For any $K\triangleleft G(\bb{A}^\infty)$ compact open subgroup we denote by $N_{k,w,m}(K;\bb{C})$ the space of nearly holomorphic cuspforms of weight $(k,w)\in\bb{Z}[I]^2$ and order less or equal to $m\in\bb{N}[I]$ with respect to $K$. It is the space of functions $\msf{f}:G(\bb{A})\to\bb{C}$ that satisfy the following list of properties: 
\begin{itemize}
\item[$\bullet$] $\msf{f}(\alpha x u)=\msf{f}(x)j_{k,w}(u_\infty,\mathbf{i})^{-1}$ where $\alpha\in G(\bb{Q})$, $u\in K\cdot C_{\infty}^+$;
\item[$\bullet$] for each $ x\in G(\bb{A}^\infty)$ the well-defined function $\msf{f}_x(z)=\msf{f}(xu_\infty)j_{k,w}(u_\infty,\mathbf{i})$ can be written as 
\[
\msf{f}_x(z)=\sum_{\xi\in L(x)_+}a(\xi,\msf{f}_x)((4\pi y)^{-1})e_F(\xi z)
\] for polynomials $a(\xi,\msf{f}_x)(Y)$ in the variables $(Y_\tau)_{\tau\in I}$ of degree less than $m_\tau$ in $Y_\tau$ for each $\tau\in I$ and for $L(x)$ a lattice of $F$.
\end{itemize}
As before $\msf{f}_i$ stands for $\msf{f}_{t_i}$ and we consider adelic Fourier coefficients 
\[
\msf{a}(y,\msf{f})(Y)=\{y^{-v}\}\xi^v\lvert a_i\rvert_{\bb{A}_F}a(\xi,\msf{f}_i)(Y),\qquad \msf{a}_p(y,\msf{f})(Y)=y_p^{-v}\xi^v\cal{N}_F(a_i)^{-1}a(\xi,\msf{f}_i)(Y)
\]
if $y=\xi a_i^{-1}\msf{d}u\in\widehat{\cal{O}_F}F^\times_{\infty,+}$ and zero otherwise. Then the adelic Fourier expansion of a nearly holomorphic cuspform $\msf{f}$ is given by
\[
	 \msf{f}\left(\begin{pmatrix}
	 y & x \\ 
	 0 & 1
	 \end{pmatrix}\right)=\lvert y\rvert_{\bb{A}_F}\underset{\xi\in F_+}{\sum}\mathsf{a}(\xi y\mathsf{d},\msf{f})(Y)\{(\xi y\mathsf{d})^v\}(\xi y_\infty)^{-v}e_F(\mathbf{i}\xi y_\infty)\chi_F(\xi x) 
	 \]
for $Y=(4\pi y_\infty)^{-1}$. If $A$ is a subring of $\bb{C}$ and the compact open subgroup $K\triangleleft G(\bb{A}^\infty)$ satisfies $U(\frak{N})\subset K\subset U_0(\frak{N})$ for some $\cal{O}_F$-ideal $\frak{N}$, one can consider
\[
N_{k,w,m}(K;A)=\{\msf{f}\in N_{k,w,m}(K;\bb{C})\lvert\ \msf{a}(y,\msf{f})\in A\ \forall y\in\bb{A}_{F,+}^\times\}.
\]
Moreover, there are  Mass-Shimura differential operators for $r\in\bb{N}[I]$, $k\in\bb{Z}[I]$ defined as
\[
\delta^r_k=\prod_{\tau\in I}(\delta^\tau_{k_\tau+2r_\tau-2}\circ\dots\circ\delta^\tau_{k_\tau}),\qquad\text{where}\qquad\delta_\lambda^\tau=\frac{1}{2\pi i}\left(\frac{\lambda}{2iy_\tau}+\frac{\partial}{\partial z_\tau}\right).
\]
They act on a nearly holomorphic cuspform $\msf{f}\in N_{k,w,m}(K;\bb{C})$ via the expression $\msf{a}(y,\delta^r_k\msf{f})(Y)=\{y^{-v+r}\}\xi^{v-r}\lvert a_i\rvert_{\bb{A}_F}a(\xi, \delta^r_k\msf{f}_i)(Y)$.
Suppose that $\bb{Q}\subset A$, then Hida showed (\cite{pHida} Proposition 1.2) the differential operator $\delta^r_k$ maps $N_{k,w,m}(K;A)$ to $N_{k+2r,w+r,m+r}(K;A)$ and if $k>2m$ there is holomorphic projector $
\Pi^\text{hol}:N_{k,w,m}(K;A)\longrightarrow S_{k,w}(K;A)$.


\subsubsection{Hecke theory}
Consider a compact open subgroup $K\triangleleft G(\bb{A}^\infty)$ of the finite adelic points of $G$ that satisfies $U(\frak{N})\subset K\subset U_0(\frak{N})$. Suppose that $\cal{V}$ is the valuation ring corresponding to the fixed embedding $\iota_p:F^\text{Gal}\hookrightarrow\overline{\bb{Q}}_p$, so that we may assume $\{y^v\}=1$ whenever the ideal $y\cal{O}_F$ generated by $y$ is prime to $p\cal{O}_F$.
Let $\varpi$ be a uniformizer of the completion $\cal{O}_{F,\frak{q}}$ of $\cal{O}_F$ at a prime $\frak{q}$ and let $m\in\bb{N}$ be a positive integer. We are interested in Hecke operators defined by the following double cosets 
\[
T_0(\varpi)=\{\varpi^{-v}\}\left[U(\frak{N})\begin{pmatrix}
\varpi & 0 \\ 
0 & 1
\end{pmatrix}U(\frak{N})\right] \qquad \text{if}\ \frak{q}\nmid\frak{N},
\]
\[
U_0(\varpi^m)=\{\varpi^{-mv}\}\left[U(\frak{N})\begin{pmatrix}
\varpi^m & 0 \\ 
0 & 1
\end{pmatrix}U(\frak{N})\right] \qquad \text{if}\ \frak{q}\mid\frak{N},
\]
and for $a,b\in\cal{O}_{F,\frak{N}}^\times:=\prod_{\frak{q}\mid\frak{N}}\cal{O}_{F,\frak{q}}^\times$ the double cosets
\[
T(a,b)=\left[U(\frak{N})\begin{pmatrix}
a & 0 \\ 
0 & b
\end{pmatrix} U(\frak{N})\right].
\]
If the prime $\frak{q}$ is coprime to the level, then the Hecke operator $T_0(\varpi)$ acting on modular forms is independent of the choice of the uniformizer $\varpi$ and we simply denote it $T_0(\frak{q})$. For any finite adelic point $z\in Z_G(\bb{A}^\infty)$ of the center of $G$ we define the diamond operator associated to it by $\msf{f}_{\lvert\langle z\rangle}(x)=\msf{f}(xz)$. For a prime ideal $\frak{q}$ such that $\text{GL}_2(\cal{O}_{F,\frak{q}})\subset K$, we write $\langle\frak{q}\rangle$ for the operator $\langle \varpi\rangle$, where $\varpi$ is a uniformizer of $\cal{O}_{F,\frak{q}}$. The action of the operators on adelic q-expansion is given by the following formulas. If $\frak{q}\nmid\frak{N}$ one can compute 
\[
\mathsf{a}_p(y, \msf{f}_{\lvert T_0(\frak{q})})=\mathsf{a}_p(y\varpi,\msf{f})\{\varpi^{-v}\}\varpi_p^v+\text{N}_{F/\bb{Q}}(\frak{q})\{\frak{q}^{-2v}\}\mathsf{a}_p(y\varpi^{-1},\msf{f}_{\lvert\langle\frak{q}\rangle})\{\varpi^v\}\varpi_p^{-v}
\] and 
\[
\mathsf{a}(y, \msf{f}_{\lvert T_0(\frak{q})})=\mathsf{a}(y\varpi,\msf{f})+\text{N}_{F/\bb{Q}}(\frak{q})\{\frak{q}^{-2v}\}\mathsf{a}(y\varpi^{-1}, \msf{f}_{\lvert\langle\frak{q}\rangle}).
\]
If $\frak{q}\mid\frak{N}$ one can compute 
\[
\mathsf{a}_p(y, \msf{f}_{\lvert U_0(\varpi^m)})=\mathsf{a}_p(y\varpi^m,\msf{f})\{\varpi^{-mv}\}\varpi_p^{mv}
\]
and 
\[
\mathsf{a}(y, \msf{f}_{\lvert U_0(\varpi^m)})=\mathsf{a}(y\varpi^m,\msf{f}).
\]
Finally, for $a,b\in\cal{O}_{F,\frak{N}}^\times$ one finds $\msf{a}_p(y,\msf{f}_{\lvert T(a,1)})=\msf{a}_p(ya,\msf{f})a_p^v$.

The Hecke algebra $\mathsf{h}_{k,w}(K;\cal{V})$ is defined to be the $\cal{V}$-subalgebra of $\text{End}_\bb{C}\left(S_{k,w}(K;\bb{C})\right)$ generated by the Hecke operators $T_0(\frak{q})$'s for primes outside the level $\frak{q}\nmid\frak{N}$, $U_0(\varpi^m)$'s for primes dividing the level $\frak{q}\mid\frak{N}$, and $T(a,b)$'s for $a,b\in\cal{O}^\times_{F,\frak{N}}$. For each $\cal{V}$-algebra $A$ contained in $\bb{C}$ one defines $\mathsf{h}_{k,w}(K;A)=\mathsf{h}_{k,w}(K;\cal{V})\otimes_\cal{V}A$ and denotes by $S_{k,w}(K;A)$ the set $\big\{\msf{f}\in S_{k,w}(K;\bb{C})\big\lvert\ \mathsf{a}(y,\msf{f})\in A\ \forall y\in\bb{A}^\times_{F,+}\big\}$. The objects just defined are very well-behaved. 

\begin{theorem}{(\cite{pHida} Theorem 2.2)}
For any finite field extension $L/F^\text{Gal}$ and any $\cal{V}$-subalgebra $A$ of $L$, there is a natural isomorphism $S_{k,w}(K;L)\cong S_{k,w}(K;A)\otimes_AL$. Moreover, if $A$ an integrally closed domain containing $\cal{V}$, finite flat over either $\cal{V}$ or $\bb{Z}_p$, then $S_{k,w}(K;A)$ is stable under $\mathsf{h}_{k,w}(K;A)$ and the pairing $(,):S_{k,w}(K;A)\times \mathsf{h}_{k,w}(K;A)\to A$ given by $(f,h)=\mathsf{a}(1,f_{\lvert h})$ induces isomorphisms of $A$-modules
\[
\mathsf{h}_{k,w}(K;A)\cong S_{k,w}(K;A)^*\ \ \text{and}\ \ \ S_{k,w}(K;A)\cong \mathsf{h}_{k,w}(K;A)^*,
\]
where $(-)^*$ denotes the $A$-linear dual $\text{Hom}_A\left(-,A\right)$.
\end{theorem}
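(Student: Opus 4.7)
The plan is to exploit the injectivity of the adelic $q$-expansion map on $S_{k,w}(K;\bb{C})$ throughout, since all three assertions essentially detect rationality and duality through Fourier coefficients.

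First I would prove the base change isomorphism $S_{k,w}(K;L)\cong S_{k,w}(K;A)\otimes_AL$. The inclusion of the right-hand side into the left is automatic from the definition of $A$-rationality. For the opposite inclusion, I would use that $\mathsf{h}_{k,w}(K;L)$ acts semisimply on $S_{k,w}(K;L)$, so after possibly enlarging $L$ it admits a normalized eigenform basis; each normalized eigenform $\msf{f}$ (with $\mathsf{a}(1,\msf{f})=1$) has Fourier coefficients equal to Hecke eigenvalues, which are integral over $\cal{V}$ and hence lie in $A$ after base change. A Galois descent along $\text{Gal}(L/\text{Frac}(A))$ then produces an $A$-basis of $S_{k,w}(K;L)$ inside $S_{k,w}(K;A)$.

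Next I would verify Hecke stability and the well-definedness of the pairing. The explicit action of $T_0(\frak{q})$, $U_0(\varpi^m)$, and $T(a,b)$ on Fourier coefficients given above expresses each $\mathsf{a}(y,\msf{f}_{\lvert T})$ as a $\cal{V}$-linear combination of coefficients of $\msf{f}$ and its diamond twists: the scalars $\{\frak{q}^{-2v}\}$ and $\varpi_p^v$ all lie in $\cal{V}\subset A$ by our choice of valuation ring, while diamond operators permute cosets $y^\infty\det U(\frak{N})$ and scale by units of $A$. Consequently $S_{k,w}(K;A)$ is preserved by $\mathsf{h}_{k,w}(K;A)$, and the pairing $(\msf{f},h)=\mathsf{a}(1,\msf{f}_{\lvert h})$ takes values in $A$.

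The heart of the proof is non-degeneracy. I would construct, for each $y\in\widehat{\cal{O}_F}F^\times_{\infty,+}$, an operator $\cal{T}_y\in\mathsf{h}_{k,w}(K;A)$ with the property $\mathsf{a}(1,\msf{f}_{\lvert\cal{T}_y})=\mathsf{a}(y,\msf{f})$ for all $\msf{f}$, built as a monomial in the $T_0$'s, $U_0$'s, and diamond operators according to the factorization of $y\cal{O}_F$, with an explicit scalar absorbing the level-prime factors. Then $(\msf{f},h)=0$ for all $h$ forces $\mathsf{a}(y,\msf{f})=(\msf{f},\cal{T}_y)=0$ for every $y$, hence $\msf{f}=0$ by the $q$-expansion principle; conversely $(\msf{f},h)=0$ for all $\msf{f}$ implies $\mathsf{a}(y,\msf{f}_{\lvert h})=(\msf{f}_{\lvert h},\cal{T}_y)=(\msf{f},h\cal{T}_y)=0$ for all $\msf{f},y$, so $\msf{f}_{\lvert h}=0$ universally and $h=0$ by faithfulness of the Hecke action on $S_{k,w}(K;L)$. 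To upgrade the resulting $A$-linear injections $\mathsf{h}_{k,w}(K;A)\hookrightarrow S_{k,w}(K;A)^*$ and $S_{k,w}(K;A)\hookrightarrow\mathsf{h}_{k,w}(K;A)^*$ to isomorphisms, I would tensor with $L$ to obtain injections of finite-dimensional $L$-vector spaces of equal dimension (using the first assertion and its analogue for the Hecke algebra), then descend using that $A$ is integrally closed and Noetherian of Krull dimension $\le 1$, so every finitely generated torsion-free $A$-module is reflexive and any $A$-linear injection between such modules that becomes an isomorphism after $\otimes_AL$ is itself an isomorphism. The main obstacle I anticipate is the explicit construction of the operators $\cal{T}_y$ at primes $\frak{q}\mid\frak{N}$, where only $U_0(\varpi^m)$ is available and the $\{\frak{q}^{-v}\}$-type factors must be tracked carefully to preserve $A$-integrality.
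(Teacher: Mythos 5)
The paper itself does not prove this statement: it is imported verbatim from Hida (\cite{pHida}, Theorem 2.2), so the only proof to compare with is Hida's. Your central mechanism does match that proof — the pairing is controlled through operators with $\msf{a}(1,\msf{f}_{\lvert T_0(y)})=\msf{a}(y,\msf{f})$, where $T_0(y)=\{y^{-v}\}T(y)$ (these are exactly the $\cal{T}_y$ you propose to build; the paper even records this identity right after the theorem), combined with the $q$-expansion principle. So the non-degeneracy step is sound in outline. But two other steps fail as written. For the rationality statement the only hypothesis is that $A$ be a $\cal{V}$-subalgebra of $L$: the Hecke eigenvalues are integral over $\cal{V}$ but need not lie in $A$ (take $A=\cal{V}$ and a form whose eigenvalues generate a nontrivial extension of $\text{Frac}(\cal{V})$), the algebra containing the $U_0(\varpi^m)$ for $\frak{q}\mid\frak{N}$ need not act semisimply, and $L/\text{Frac}(A)$ need not be Galois; the usual argument instead shows $S_{k,w}(K;\cal{V})$ is a $\cal{V}$-lattice spanning $S_{k,w}(K;\bb{C})$ and sandwiches $S_{k,w}(K;\cal{V})\subset S_{k,w}(K;A)\subset S_{k,w}(K;L)$.

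The more serious gap is your last step. Over a Dedekind domain (or a finite flat integrally closed $\bb{Z}_p$-algebra) an injection of finitely generated torsion-free modules that becomes an isomorphism after $\otimes_A L$ need not be an isomorphism: multiplication by a uniformizer on $A$ itself is a counterexample, and reflexivity does not help. Thus perfectness of the pairing over $A$ — which is precisely the content of the theorem, as opposed to its generic fiber — does not follow from your dimension count. What is needed is a saturation argument: given $\phi\in\text{Hom}_A(\msf{h}_{k,w}(K;A),A)$, generic duality produces $\msf{f}\in S_{k,w}(K;L)$ representing it, and then $\msf{a}(y,\msf{f})=\msf{a}(1,\msf{f}_{\lvert T_0(y)})=\phi(T_0(y))\in A$ forces $\msf{f}\in S_{k,w}(K;A)$ by definition of that space; dually, if $h\in\msf{h}_{k,w}(K;L)$ pairs integrally against $S_{k,w}(K;A)$, then $\msf{a}(y,\msf{f}_{\lvert h})=(\msf{f}_{\lvert T_0(y)},h)\in A$ shows $h$ preserves $S_{k,w}(K;A)$, and one must still invoke the standing hypotheses (that $A$ is integrally closed and finite flat over $\cal{V}$ or $\bb{Z}_p$, so that $\msf{h}_{k,w}(K;A)\cong\msf{h}_{k,w}(K;\cal{V})\otimes_{\cal{V}}A$ is saturated in $\msf{h}_{k,w}(K;L)$) to conclude $h\in\msf{h}_{k,w}(K;A)$. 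Without this replacement for your final descent, the duality over $A$ is not established.
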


A cuspform that is an eigenvector for all the Hecke operators is called an eigenform and it is normalized when $\mathsf{a}(1,\msf{f})=1$. Shimura proved (\cite{Shimura} Proposition 2.2) that the eigenvalues for the Hecke operators are algebraic numbers,  hence a normalized eigenform $\msf{f}\in S_{k,w}(K;\bb{C})$ is an element of $ S_{k,w}(K;\overline{\bb{Q}})$ since the $T_0(y)$-eigenvalue is $\mathsf{a}(y,\msf{f})$ for every idele $y$ where $T_0(y)=\{y^{-v}\}T(y)$ for $T(y)$ as in $(\ref{general Hecke operator})$.

\begin{definition}
Let $\frak{p}\mid p$ be a prime of $\cal{O}_F$ coprime to the level $K$. A normalized eigenform $\msf{f}\in S_{k,w}(K;\overline{\bb{Q}})$ is nearly ordinary at $\frak{p}$ if the $T_0(\frak{p})$-eigenvalue is a $p$-adic unit with respect to the specified embedding $\iota_p:\overline{\bb{Q}}\hookrightarrow\overline{\bb{Q}}_p$.
If $\msf{f}$ is nearly ordinary at $\frak{p}$ for all $\frak{p}\mid p$ we say that $\msf{f}$ is $p$-nearly ordinary.
\end{definition}

\begin{definition}
For every idele $m\in\bb{A}_F^\times$ there is an operator $V(m)$ on cuspforms of weight $(k,w)$ defined by 
\[
\msf{f}_{\lvert V(m)}(x)=\text{N}_{F/\bb{Q}}(m\cal{O}_F)\msf{f}\left(x\begin{pmatrix}
m^{-1}&0\\
0&1
\end{pmatrix}\right)
\]
 that acts on $p$-adic $q$-expansions as $\mathsf{a}_p(y,\msf{f}_{\lvert V(m)})=m_p^{-v}\mathsf{a}_p(ym^{-1},\msf{f})$ (this operator is denoted by $[m]$ in \cite{pHida} Section $7\text{B}$). Its normalization $[m]=\{m^v\}V(m)$ acts on $q$-expansions by $\mathsf{a}(y,\msf{f}_{\lvert[m]})=\mathsf{a}(ym^{-1},\msf{f})$. 
\end{definition}
\begin{lemma}\label{lemma: central character and V(p)}
	For all couple of weights $(k,u), (k,u')\in \bb{Z}[I]^2$ we the equality 
	\[
	V(p)\circ\Psi_{u,u'}=p^{u'-u} \Psi_{u,u'}\circ V(p)
	\]
	of maps  from $S^*_{k,u}(K,\bb{C})$ to $S^*_{k,u'}(K,\bb{C})$.
\end{lemma}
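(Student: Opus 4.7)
The plan is a direct calculation: unwind the two definitions and compare the compositions applied to an arbitrary form at an arbitrary adelic point. Fix $\msf{f}\in S^*_{k,u}(K;\bb{C})$ and $x\in G^*(\bb{A})$. The operator $V(p)$ sends any function $\msf{h}$ to $\text{N}_{F/\bb{Q}}(p\cal{O}_F)\,\msf{h}(x\,\text{diag}(p^{-1},1))$, while $\Psi_{u,u'}$ multiplies the value at $x$ by $\lvert\det(x)\rvert_{\bb{A}_\bb{Q}}^{u'-u}$. The asymmetry I want to exploit comes from the fact that $V(p)$ evaluates at the right-translated point, whereas the weighting of $\Psi_{u,u'}$ is computed at the untranslated variable point.

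Computing the compositions, I find
\[
(V(p)\circ\Psi_{u,u'})\msf{f}(x)=\text{N}_{F/\bb{Q}}(p\cal{O}_F)\,\msf{f}(x\,\text{diag}(p^{-1},1))\,\lvert\det(x)\rvert_{\bb{A}_\bb{Q}}^{u'-u}\,\lvert p^{-1}\rvert_{\bb{A}_\bb{Q}}^{u'-u},
\]
using multiplicativity of $\lvert\cdot\rvert_{\bb{A}_\bb{Q}}$ together with $\det\text{diag}(p^{-1},1)=p^{-1}$, whereas
\[
(\Psi_{u,u'}\circ V(p))\msf{f}(x)=\text{N}_{F/\bb{Q}}(p\cal{O}_F)\,\msf{f}(x\,\text{diag}(p^{-1},1))\,\lvert\det(x)\rvert_{\bb{A}_\bb{Q}}^{u'-u}.
\]
The two sides therefore differ by the scalar $\lvert p^{-1}\rvert_{\bb{A}_\bb{Q}}^{u'-u}$, and the remaining task is to identify this scalar with $p^{u'-u}$.

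Regarding $p$ as the idele in $\bb{A}_\bb{Q}^\times$ with $p$-adic component equal to $p$ and all other components trivial — compatibly with the explicit action $\msf{a}_p(y,\msf{f}_{\lvert V(m)})=m_p^{-v}\msf{a}_p(ym^{-1},\msf{f})$ on $p$-adic $q$-expansions — one has $\lvert p\rvert_{\bb{A}_\bb{Q}}=p^{-1}$, hence $\lvert p^{-1}\rvert_{\bb{A}_\bb{Q}}^{u'-u}=p^{u'-u}$ under the same convention used to interpret the multi-index exponent $u'-u\in\bb{Z}[I]$ in the definition of $\Psi_{w,w'}$. This is ultimately a bookkeeping exercise with no genuine obstacle: the content is just that $\Psi_{u,u'}$ fails to commute with right translation by $\text{diag}(p^{-1},1)$, and the discrepancy is exactly the power of $p$ stated in the lemma.
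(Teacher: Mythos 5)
Your computation is correct and is essentially the same argument as the paper's: unwind both compositions at an adelic point, observe that the only discrepancy is the factor $\lvert\det(\mathrm{diag}(p^{-1},1))\rvert_{\bb{A}_\bb{Q}}^{u'-u}$ coming from evaluating the weighting at the translated point, and identify it with $p^{u'-u}$ using that $p$ is the finite idele with $\lvert p\rvert_{\bb{A}_\bb{Q}}=p^{-1}$. Since the normalization factor $\text{N}_{F/\bb{Q}}(p\cal{O}_F)$ appears identically on both sides, your handling of it is consistent and the proof is complete.
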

\begin{proof}
	Let $\msf{f}\in S^*_{k,u}(K,\bb{C})$, then 
	\[\begin{split}
	\Psi_{u,u'}(\msf{f})_{\lvert V(p)}(x)&=\text{N}_{F/\bb{Q}}(p)^{-1}\msf{f}\left(x\begin{pmatrix}
	p_p^{-1} &0\\
	0& 1
	\end{pmatrix}\right)\left\lvert\det\left(x\begin{pmatrix}
	p_p^{-1} &0\\
	0& 1
	\end{pmatrix}\right)\right\rvert_{\bb{A}_\bb{Q}}^{u'-u}\\
	&=p^{u'-u}\Psi_{u,u'}(\msf{f}_{\lvert V(p)})(x).
	\end{split}\]
\end{proof}
Let $\msf{f}\in S_{k,w}(K,\overline{\bb{Q}})$ be a normalized eigenform of level prime to $\frak{p}$. Set $\langle\frak{p}\rangle_0:=\{\varpi_\frak{p}^{-2v}\}\langle\frak{p}\rangle$, then the $\langle\frak{p}\rangle_0$-eigenvalue of $\msf{f}$ is $\psi_{\msf{f},0}(\frak{p})=\{\varpi_\frak{p}^{-2v}\}\psi_\msf{f}(\frak{p})$ and we can write the Hecke polynomial for $T_0(\frak{p})$ as
\[
1-\mathsf{a}(\frak{p},\msf{f})X+\text{N}_{F/\bb{Q}}(\frak{p})\psi_{\msf{f},0}(\frak{p})X^2=(1-\alpha_{0,\frak{p}}X)(1-\beta_{0,\frak{p}}X).
\]
If $\msf{f}$ is nearly ordinary at $\frak{p}$, $\mathsf{a}(\frak{p},\msf{f})$ is a $p$-adic unit and we can assume that $\alpha_{0,\frak{p}}$ is a $p$-adic unit too. The nearly ordinary $\frak{p}$-stabilization of $\msf{f}$ is the cuspform $\msf{f}^{(\frak{p})}=(1-\beta_{0,\frak{p}}[\varpi_\frak{p}])\msf{f}$ that has the same Hecke eigenvalues of $\msf{f}$ away from $\frak{p}$ and whose $U_0(\varpi_\frak{p})$-eigenvalue is $\alpha_{0,\frak{p}}$. For $\cal{Q}$ a finite set of prime $\cal{O}_F$-ideals, the $\cal{Q}$-depletion of a cuspform $\msf{f}$ is the cuspform $
\msf{f}^{[\cal{Q}]}=\prod_{\frak{p}\in\cal{P}}\left(1-V(\varpi_\frak{p})\circ U(\varpi_\frak{p})\right)\msf{f}$ whose Fourier coefficient $\mathsf{a}_p(y,\msf{f}^{[\cal{Q}]})$ equals $\mathsf{a}_p(y,\msf{f})$ if $y_\cal{Q}\in \cal{O}_{F,\cal{Q}}^\times$ and $0$ otherwise.

\subsection{Hida families}
\label{subsec: HidaFamilies}
Fix a prime $p$ and a valuation ring $O$ in $\overline{\bb{Q}}_p$ finite flat over $\bb{Z}_p$ containing $\iota_p(\cal{V})$. We consider compact open subgroups that satisfy $V_1(\frak{N})\subseteq K\subseteq U_0(\frak{N})$ for an ideal $\frak{N}$ of $\cal{O}_F$ prime to $p$. Let $K(p^\alpha)=K\cap U(p^\alpha)$ and for $A\in\{O,\text{Frac}(O),\bb{C}_p\}$ consider the space of $p$-adic cuspforms
\[
S_{k,w}(K(p^\infty); A)=\underset{\alpha, \to}{\lim}\ S_{k,w}(K(p^\alpha);A)
\]
on which the $p$-adic Hecke algebra
\[
\msf{h}_{k,w}(K(p^\infty); A)=\underset{\leftarrow, \alpha}{\lim}\ \msf{h}_{k,w}(K(p^\alpha);A)
\]
naturally acts. Every idele $y\in\widehat{\cal{O}_F}\cap\bb{A}_F^\times$ can be written uniquely as $y=a\prod_\frak {q}\varpi_\frak{q}^{e(\frak{q})}u$ with $u\in\det U(\frak{N})$, $a\in\cal{O}_{F,\frak{N}}^\times$. Write $\frak{n}$ for the ideal $\big(\prod_{\frak{q}\nmid\frak{N}}\varpi_\frak{q}^{e(\frak{q})}\big)\cal{O}_F$ and then define
\begin{equation}\label{general Hecke operator}
T(y)= T(a,1)T(\frak{n})\prod_{\frak{q}\mid\frak{N}}U(\varpi_\frak{q}^{e(\frak{q})}).
\end{equation}
The Hecke operators defined by $\mathbf{T}(y)=\underset{\leftarrow}{\lim}\ T(y)y_p^{-v}$ play an important role in the theory.
There is a $p$-adic norm on the space of $p$-adic cuspforms $S_{k,w}(K(p^\infty); A)$  defined by $\lvert\msf{f}\rvert_p=\text{sup}_y\{\lvert \msf{a}_p(y,\msf{f})\rvert_p\}$; the resulting completed space is denoted by $\overline{S}_{k,w}(K(p^\infty); A)$ and it has a natural perfect $O$-pairing with the $p$-adic Hecke algebra (\cite{pHida} Theorem 3.1).
Each element $\msf{f}\in\overline{S}_{k,w}(K(p^\infty); A)$ induces a continuous function $\msf{f}:\frak{J}\longrightarrow A$, defined by $y\mapsto\msf{a}_p(y,\msf{f})$, on the topological semigroup
\[
\frak{J}=\widehat{\cal{O}_F}F^\times_{\infty,+}/\det U(p^\infty)F^\times_{\infty,+},
\]
isomorphic to $\cal{O}_{F,p}^\times\times \scr{I}_F$ for $\scr{I}_F$ the free semigroup of integral ideals of $F$. Hence, there is a continuous embedding $\overline{S}_{k,w}(K(p^\infty); A)\hookrightarrow\cal{C}(\frak{J};A)$ of the completed space of $p$-adic cuspforms into the continuous functions from $\frak{J}$ to $A$. The image of the embedding $\overline{\mathbf{S}}(K)$ is independent of the weight $(k,w)$ since there exists a canonical algebra isomorphism $
\mathsf{h}_{k,w}\left(K(p^\infty);O\right)\cong \mathsf{h}_{2t,t}\left(K(p^\infty);O\right)$
which takes $\mathbf{T}(y)$ to $\mathbf{T}(y)$ (\cite{nearlyHida} Theorem 2.3).
From now on, $\overline{\mathbf{S}}(\frak{N})$ and $\mathbf{h}(\frak{N};O)$ stand for  $\overline{\mathbf{S}}(V_1(\frak{N}))$ and $\mathbf{h}(V_1(\frak{N});O)$ respectively.

One can decompose the compact ring $\mathbf{h}(K;O)$ as a direct sum of algebras $\mathbf{h}(K;O)=\mathbf{h}^{\text{n.o.}}(K;O)\oplus\mathbf{h}^{\text{ss}}(K;O)$ in such a way that $\mathbf{T}(p)$ is a unit in $\mathbf{h}^{\text{n.o.}}(K;O)$ and it is topologically nilpotent in $\mathbf{h}^{\text{ss}}(K;O)$. Furthermore, the idempotent $e_{\text{n.o.}}$ of the nearly ordinary part $\mathbf{h}^{\text{n.o.}}(K;O)$ has the familiar expression $e_{\text{n.o.}}=\underset{n\to\infty}{\lim}\ \mathbf{T}(p)^{n!}$. Let $\overline{\mathbf{S}}^{\text{n.o.}}(K)=e_{\text{n.o.}}\overline{\mathbf{S}}(K)$ be the space of nearly ordinary $p$-adic cuspforms. We consider the topological group
\[
\bb{G}(\frak{N})=Z(\frak{N})\times\cal{O}_{F,p}^\times=\bb{A}^\times_F/\overline{F^\times\det U(\frak{N}p^\infty)F^\times_{\infty,+}}\times\cal{O}_{F,p}^\times
\]
equipped with a continuous group homomorphism $\bb{G}(\frak{N})\longrightarrow \mathbf{h}^{\text{n.o.}}(K;O)^\times$ given by $\langle z,a\rangle\mapsto T(a^{-1},1)\langle z\rangle$. Fix a decomposition $\bb{G}(\frak{N})\cong\bb{G}_\text{tor}(\frak{N})\times\mathbf{W}$ consisting of the torsion subgroup $\bb{G}_\text{tor}(\frak{N})$ and some $\bb{Z}_p$-torsion free $\mathbf{W}$. Then $\mathbf{W}\cong\bb{Z}_p^r$ for $r=[F:\bb{Q}]+1+\delta$ where $\delta$ is Leopoldt's defect for $F$ and we denote by $O[[\mathbf{W}]]\cong O[[X_1,\dots,X_r]]$ the completed group ring.
\begin{theorem}{(\cite{nearlyHida} Theorem 2.4)}
The universal nearly ordinary Hecke algebra $\mathbf{h}^{\text{n.o.}}(K;O)$ is finite and torsion-free over $\mathbf{A}=O[[\mathbf{W}]]$.
\end{theorem}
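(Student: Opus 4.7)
The plan is to follow Hida's classical strategy for ordinary modular forms, adapted to the nearly ordinary Hilbert setting. The two key ingredients are the $p$-adic duality between $\overline{\mathbf{S}}(K)$ and $\mathbf{h}(K;O)$ just quoted from \cite{pHida}, and a control theorem that identifies the specialization of $\overline{\mathbf{S}}^{\text{n.o.}}(K)$ at any arithmetic character of $\mathbf{W}$ with a finite-dimensional space of classical nearly ordinary cuspforms.

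First I would work on the module $\overline{\mathbf{S}}^{\text{n.o.}}(K)=e_{\text{n.o.}}\overline{\mathbf{S}}(K)$. The action of $\bb{G}(\frak{N})$ described just before the statement factors through $\mathbf{W}$ up to torsion and endows the Pontryagin $O$-dual of $\overline{\mathbf{S}}^{\text{n.o.}}(K)$ with a natural structure of $\mathbf{A}$-module. The duality of \cite{pHida} Theorem 3.1 then identifies this dual with $\mathbf{h}^{\text{n.o.}}(K;O)$ as a compact $\mathbf{A}$-module, so it suffices to prove that $\overline{\mathbf{S}}^{\text{n.o.}}(K)^\vee$ is a finitely generated free $\mathbf{A}$-module.

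Next, for every arithmetic character $\kappa\in\text{Spec}(\mathbf{A})(\overline{\bb{Q}}_p)$ of algebraic weight $(k,w)$ and $p$-power conductor $p^\alpha$, I would establish a control isomorphism of the form
\[
\overline{\mathbf{S}}^{\text{n.o.}}(K)[\ker\kappa]\overset{\sim}{\longrightarrow}e_{\text{n.o.}}\,S_{k,w}\bigl(K(p^\alpha);O_\kappa\bigr),
\]
the right hand side being finite over $O_\kappa$ by the classical theory of Hilbert cuspforms. Dually this shows that the fibre of $\mathbf{h}^{\text{n.o.}}(K;O)$ at every arithmetic prime of $\mathbf{A}$ is finite, of uniformly bounded rank over $O$. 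Topological Nakayama applied to the compact $\mathbf{A}$-module $\mathbf{h}^{\text{n.o.}}(K;O)$ then upgrades this fibrewise finiteness to finite generation over $\mathbf{A}$. For torsion-freeness, I would observe that $\mathbf{h}^{\text{n.o.}}(K;O)$ acts faithfully by construction on $\overline{\mathbf{S}}^{\text{n.o.}}(K)^\vee$, which the previous step identifies with a finite free $\mathbf{A}$-module; any sub-$\mathbf{A}$-algebra of the endomorphism ring of a free $\mathbf{A}$-module of finite rank is automatically $\mathbf{A}$-torsion-free.

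The main obstacle is the control theorem itself: one must show that the idempotent $e_{\text{n.o.}}$ annihilates all the non-classical contributions in each graded piece of the weight filtration, which requires handling simultaneously the slope-zero eigenspaces for every operator $U(\varpi_\frak{p})$ with $\frak{p}\mid p$ and tracking the central component of the decomposition $\bb{G}(\frak{N})\cong\bb{G}_\text{tor}(\frak{N})\times\mathbf{W}$. Since exactly this assertion is \cite{nearlyHida} Theorem 2.4, I would invoke it directly rather than re-derive it, and devote the remaining effort to verifying that the formulation in \emph{loc.\ cit.} matches the present setup under the chosen normalizations of the Hecke operators $\mathbf{T}(y)$.
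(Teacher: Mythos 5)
The statement you are proving is not proved in the paper at all: the author simply quotes it as \cite{nearlyHida}, Theorem 2.4, and moves on. So there is no in-paper argument to compare against, and your task effectively becomes reconstructing Hida's own proof from scratch.

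Your high-level outline does match Hida's classical strategy (duality between the completed cuspform space and the Hecke algebra, a weight-by-weight control isomorphism at arithmetic characters, topological Nakayama for finiteness, faithful action for torsion-freeness). Two things go wrong, though. First, the argument is circular as written: you state that the key step --- showing $e_{\text{n.o.}}$ kills the non-classical contributions so that the arithmetic fibres are classical and of bounded rank --- ``is exactly \cite{nearlyHida} Theorem 2.4,'' and propose to invoke it directly. But Theorem 2.4 \emph{is} the statement being proved; you cannot appeal to it for the step that carries all of the content. In Hida's paper the vertical control theorem is a separate and earlier result (proved by explicit analysis of $q$-expansions and the $U(p)$-operator in varying weight), and it is that result, not Theorem 2.4 itself, which should be cited or reproved at this juncture. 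Second, you assert that the dual $\overline{\mathbf{S}}^{\text{n.o.}}(K)^\vee$ is a \emph{free} $\mathbf{A}$-module; the cited theorem only claims finiteness and torsion-freeness, and freeness over $\mathbf{A}=O[[X_1,\dots,X_r]]$ is a strictly stronger statement that is not automatic for $r>1$ (and in the nearly ordinary Hilbert case is typically not known). Fortunately, your torsion-freeness argument can be salvaged: it is enough to know the dual is finite and torsion-free, since a finite torsion-free module $M$ over the domain $\mathbf{A}$ embeds into $M\otimes_{\mathbf{A}}\mathrm{Frac}(\mathbf{A})$, and any sub-$\mathbf{A}$-module of $\mathrm{End}_{\mathrm{Frac}(\mathbf{A})}(M\otimes\mathrm{Frac}(\mathbf{A}))$ is torsion-free. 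You should replace ``free'' by ``torsion-free'' and derive the torsion-freeness of the dual from the control theorem (constant rank of the arithmetic fibres), rather than assuming freeness.
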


\begin{definition}\label{def: I-adic cuspforms}
Let $K$ be a compact open subgroup which satisfies $V_1(\frak{N})\subseteq K\subseteq U_0(\frak{N})$ for an ideal $\frak{N}$ of $\cal{O}_F$ prime to $p$. For an $\mathbf{A}$-algebra $\mathbf{I}$ we define the space of nearly ordinary $\mathbf{I}$-adic cuspforms of tame level $K$ to be
$\overline{\mathbf{S}}^{\text{n.o.}}(K;\mathbf{I})=\text{Hom}_{\mathbf{A}-\text{mod}}(\mathbf{h}^{\text{n.o.}}(K;O),\mathbf{I})$. We call Hida families those homomorphisms that are homomorphisms of $\mathbf{A}$-algebras .
\end{definition}
When $\mathbf{I}$ is an $\mathbf{A}$-algebra domain, $\overline{\mathbf{S}}^{\text{n.o.}}(K;\mathbf{I})$ is a finite torsion-free $\mathbf{I}$-module. Indeed it is torsion-free because $\mathbf{I}$ is a domain, and $\overline{\mathbf{S}}^{\text{n.o.}}(K;\mathbf{A})$ is a finite $\mathbf{A}$-module being the dual of a finite module over a noetherian ring.

We want to motivate the definition of nearly ordinary $\mathbf{I}$-adic cuspforms. For each pair of characters $\psi: \text{cl}^+_F(\frak{N}p^\alpha)\longrightarrow O^\times$, $\psi': \left(\cal{O}_F/p^\alpha\right)^\times\longrightarrow O^\times$ there is a space of cuspforms
\[
S_{k,w}(\frak{N}p^\alpha,\psi,\psi';O)=\left\{\msf{f}\in S_{k,w}(U(\frak{N}p^\alpha);O)\bigg\lvert\ \forall (z,a)\in\bb{G}(\frak{N})\ \msf{f}_{\lvert\langle z,a\rangle}=\psi(z)\psi'(a)\cal{N}(z)^{[n+2v]}a^v\msf{f} \right\},
\]
and given a pair of characters $(\psi,\psi')$ and elements $n,v\in\bb{N}[I]^2$, one can define the algebra homomorphism $\text{P}_{n,v,\psi,\psi'}:O[[\bb{G}(\frak{N})]]\to O$ which corresponds to the character $\bb{G}(\frak{N})\to O^\times$ determined by the rule $(z,a)\mapsto\psi(z)\psi'(a)\cal{N}(z)^{[n+2v]}a^v$.
Let's fix $\overline{\mathbf{L}}$ an algebraic closure of the fraction field $\mathbf{L}$ of $\mathbf{A}$, and an embedding of $\overline{\bb{Q}}_p$ into $\overline{\mathbf{L}}$. Suppose  $\cal{F}:\mathbf{h}^{\text{n.o.}}(K;O)\to \overline{\mathbf{L}}$ is an $\mathbf{A}$-linear map; since the universal nearly ordinary Hecke algebra is finite over $\mathbf{A}$, the image of $\cal{F}$ is contained in the integral closure $\mathbf{I}$ of  $\mathbf{A}$ in a finite extension $\mathbf{K}$ of $\mathbf{L}$. Let $\cal{A}(\mathbf{I})$ be the set of \emph{arithmetic points}, i.e., the subset of $\text{Hom}_{O-\text{alg}}(\mathbf{I},\overline{\bb{Q}}_p)$ consisting of homomorphisms that coincide with $\text{P}_{n,v,\psi,\psi'}$ on $\mathbf{A}$ for some $n,v\geq0$ and characters $\psi,\psi'$. Then for all $P\in\cal{A}(\mathbf{I})$ the composite $\cal{F}_P=P\circ\cal{F}$  induces a $\overline{\bb{Q}}_p$-linear map for suitable $\alpha>0$,
\[
\cal{F}_P:\msf{h}^{\text{n.o.}}_{k,w}(\frak{N}p^\alpha;\overline{\bb{Q}}_p)\longrightarrow\overline{\bb{Q}}_p
\]
and  $k=n(P)+2$, $w=t-v(P)$ (\cite{nearlyHida} Theorem 2.4). Therefore, the duality between Hecke algebra and cuspforms produces a unique $p$-adic cuspform $\msf{f}_P\in S^{\text{n.o.}}_{k,w}(\frak{N}p^\alpha;\overline{\bb{Q}}_p)$ that satisfies $\msf{a}_p(y,\msf{f}_P)=\cal{F}_P(\mathbf{T}(y))$ for all integral ideles $y$. In other words, any nearly ordinary $\mathbf{I}$-adic cuspform $\cal{F}\in\overline{\mathbf{S}}^{\text{n.o.}}(K;\mathbf{I})$ gives rise to a family of cuspforms parametrized by $\cal{A}(\mathbf{I})$. Furthermore, if $\cal{F}$ is Hida family, each specialization $\msf{f}_P$ is an eigenform and so classical i.e. an element of $S^{\text{n.o.}}_{k,w}(\frak{N}p^\alpha,\psi,\psi';\overline{\bb{Q}})$.
Moreover, if $\msf{f}\in S_{k,w}(\frak{N}p^\alpha;\overline{\bb{Q}})$ is an eigenform for all Hecke operators  such that its $U_0(p)$-eigenvalue is a $p$-adic unit with respect to the fixed $p$-adic embedding $\iota_p$, then there exists a finite integrally closed extension $\mathbf{I}$ of $\mathbf{A}$ and a nearly ordinary $\mathbf{I}$-adic cuspform of tame level $\frak{N}$ passing through $\msf{f}$ (\cite{nearlyHida} Theorem 2.4).
\begin{remark}
A Hida family $\cal{F}\in\overline{\mathbf{S}}^\text{n.o.}(\frak{N};\mathbf{I})$ defines a projector $e_\cal{F}$ on $\overline{\mathbf{S}}^\text{n.o.}(\frak{N};\mathbf{I}\otimes_{\mathbf{A}}\mathbf{L})$. Indeed, the universal Hecke algebra is reduced being a projective limit of reduced algebras (\cite{Miyake} Theorem A), thus $\mathbf{h}^\text{n.o.}(\frak{N};O)\otimes_{\mathbf{A}}\mathbf{L}$ is a reduced Artinian $\mathbf{L}$-algebra, i.e., a product of finite field extensions of $\mathbf{L}$ indexed by the Hida families of tame level $\frak{N}$.
\end{remark}


\subsection{Diagonal restriction}
\label{subsec: Diagonal restriction}
We modify some of the notations for distinguishing between spaces of Hilbert modular forms over different totally real number fields. For a totally real number field $F$ we denote by $S_{k,w}(U(\frak{N}),F;\bb{C})$ the space of cuspforms over $F$ of weight $(k,w)\in\bb{Z}[I_F]^2$ and level $U(\frak{N})$. We set $t_F=\sum_{\tau\in I_F}\tau$ and we denote by $\msf{d}_F$ an idele that generates the absolute different of $F$, $\msf{d}_F\cal{O}_F=\frak{d}_F$. 
If $L/F$ is an extension of totally real fields, there is a restriction map $I_L\to I_F$ which induces a group homomorphism $\bb{Z}[I_L]\to\bb{Z}[I_F]$ denoted by $k\mapsto k_{\lvert F}$ and has the property $(t_L)_{\lvert F}=[L:F]\cdot t_F$. Let $\frak{N}$ an ideal of $\cal{O}_F$, the natural inclusion $\zeta:\text{GL}_2(\bb{A}_F)\hookrightarrow\text{GL}_2(\bb{A}_L)$ defines by composition a \emph{diagonal restriction} map $
\zeta^*:S_{k,w}(U(\frak{N}),L;\bb{C})\longrightarrow S_{k_{\lvert F},w_{\lvert F}}(U(\frak{N}),F;\bb{C})$.
Write 
\[
\bb{A}^\times_F=\coprod_{i=1}^{h^+_F(\frak{N})}F^\times a'_i\msf{d}_F^{-1}\det U(\frak{N})F^\times_{\infty,+},\qquad\qquad\bb{A}^\times_L=\coprod_{j=1}^{h^+_L(\frak{N})}L^\times b'_j\msf{d}_L^{-1}\det U(\frak{N})L^\times_{\infty,+}
\]
for $a'_i\in\widehat{\cal{O}_F}\cap\bb{A}_F^{\times}$, $b'_j\in\widehat{\cal{O}_L}\cap\bb{A}_L^{\times}$ and set $a_i=a'_i\msf{d}_F^{-1}$ and $b_j=b'_j\msf{d}_L^{-1}$ for all $i,j$. If $p$ is unramified in $L$ then we can assume that $a_{i,p}=1$, $b_{j,p}=1$ for all $i,j$. The inclusion $\bb{A}_F^\times\hookrightarrow\bb{A}^\times_L$ induces a group homomorphism $\zeta: \text{cl}^+_F(\frak{N})\rightarrow \text{cl}^+_L(\frak{N})$ that determines a map of sets $\zeta:\{1,\dots,h^+_F(\frak{N})\}\to\{1,\dots,h^+_L(\frak{N})\}$ by the rule $\zeta[a_i]=[b_{\zeta(i)}]$. We name $\frak{a}_i$ the fractional ideal of $F$ generated by $a_i$.

\begin{lemma}\label{lemma: autom diag}
 The diagonal restriction of a cuspform $\msf{g}\in S_{k,w}(U(\frak{N}),L;\bb{C})$ is a cuspform $\zeta^*\msf{g}\in S_{k_{\lvert F},w_{\lvert F}}(U(\frak{N}),F;\bb{C})$ whose Fourier coefficients are given by the formula
\[
a\left(\eta,\left(\zeta^*\msf{g}\right)_i\right)=\underset{\xi\in(\frak{a}_{i}\cal{O}_L)_+,\ \text{tr}_{L/F}(\xi)=\eta}{\sum} a(\xi,\msf{g}_{\zeta(i)})
\]
for every $\eta\in(\frak{a}_i)_+$ and every $i=1,\dots, h^+_F(\frak{N})$.
\end{lemma}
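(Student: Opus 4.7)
\emph{Well-definedness.} First I verify that $\zeta^*\msf{g}\in S_{k_{\lvert F},w_{\lvert F}}(U(\frak{N}),F;\bb{C})$. The embedding $\zeta:G_F(\bb{A})\hookrightarrow G_L(\bb{A})$ carries $U(\frak{N})_F$ into $U(\frak{N})_L$ and preserves holomorphy. A componentwise computation shows $j_{k,w}(u,\mathbf{i}_L)=j_{k_{\lvert F},w_{\lvert F}}(u,\mathbf{i}_F)$ for $u\in G_F(\bb{R})^+$, since each factor $j_{k_\mu,w_\mu}(u_{\mu_{\lvert F}},i)$ with $\mu\in I_L$ multiplies, after grouping by $\tau=\mu_{\lvert F}\in I_F$, to $j_{k_{\lvert F,\tau},w_{\lvert F,\tau}}(u_\tau,i)$. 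Cuspidality of $\zeta^*\msf{g}$ follows a posteriori from the Fourier expansion below: the constant term at the $i$-th cusp is a sum of coefficients over $\xi\in L_+^\times$ with $\text{tr}_{L/F}(\xi)=0$, an empty set because totally positive elements have positive trace.

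\emph{Fourier expansion.} For $z\in\frak{H}^{I_F}$ I compute $(\zeta^*\msf{g})_i(z)=y_\infty^{-w_{\lvert F}}\msf{g}(t_i\cdot(y_\infty,x_\infty))$ by expressing $t_i\in G_L(\bb{A})$ through the coset decomposition of $\bb{A}_L^\times$. Since $\zeta[a_i]=[b_{\zeta(i)}]$ in $\text{cl}_L^+(\frak{N})$, pick $\alpha\in L^\times_+$ and $u_L\in\det U(\frak{N})_L$ with $a_i=\alpha b_{\zeta(i)}u_L$ as finite ideles (the archimedean discrepancy is then forced to be $v_+=\alpha_\infty^{-1}\in L^\times_{\infty,+}$). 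Successive applications of left $G_L(\bb{Q})$-invariance under $\text{diag}(\alpha^{-1},1)$ and right $U(\frak{N})_L$-invariance under $\text{diag}(u_L^{-1},1)$ reduce the evaluation to $\msf{g}(t_i\cdot(y_\infty,x_\infty))=(\alpha_\infty y_\infty)^w\cdot\msf{g}_{\zeta(i)}(\alpha_\infty\iota(z))$, where $\iota:\frak{H}^{I_F}\hookrightarrow\frak{H}^{I_L}$ is the diagonal embedding $\iota(z)_\mu:=z_{\mu_{\lvert F}}$. Substituting the classical Fourier expansion of $\msf{g}_{\zeta(i)}$ and invoking the trace identity $e_L(\xi'\cdot\alpha_\infty\iota(z))=e_F(\text{tr}_{L/F}(\alpha\xi')z)$, which rests on $\sum_{\mu\in I_L}\mu(\alpha\xi')z_{\mu_{\lvert F}}=\sum_{\tau\in I_F}z_\tau\,\tau(\text{tr}_{L/F}(\alpha\xi'))$, the expansion becomes a Fourier series in $e_F(\cdot z)$. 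The change of variable $\xi:=\alpha\xi'$ bijects $(\frak{b}_{\zeta(i)})_+$ with $(\alpha\frak{b}_{\zeta(i)})_+=(\frak{a}_i\cal{O}_L)_+$; regrouping by $\eta=\text{tr}_{L/F}(\xi)\in(\frak{a}_i)_+$ then yields the claimed formula.

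\emph{Main obstacle.} The crux is the bookkeeping of the archimedean factor $\alpha_\infty^w$ against the cusp-dependent normalization of the classical coefficients: a change of cusp representative from $b_{\zeta(i)}$ to $\alpha b_{\zeta(i)}$ scales the classical coefficient by the reciprocal $\alpha^{-w}$, precisely cancelling $\alpha_\infty^w$ and leaving the clean expression $a(\xi,\msf{g}_{\zeta(i)})$ in the final formula. Once this normalization is verified, the underlying conceptual identity $e_L(\xi\iota(z))=e_F(\text{tr}_{L/F}(\xi)z)$ together with the positivity of $\alpha$ makes the remainder of the argument immediate.
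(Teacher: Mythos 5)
Your proof is correct and follows essentially the same route as the paper's: unfold $(\zeta^*\msf{g})_i$ through the diagonal embedding of symmetric domains, apply the identity $e_L(\xi\,\zeta(z))=e_F(\text{tr}_{L/F}(\xi)z)$, and regroup by trace, using that totally positive elements have totally positive (in particular nonzero) trace. The only difference is that you make explicit the cusp-representative bookkeeping (the factor $\alpha^{w}$ and the reindexing between $(\frak{b}_{\zeta(i)})_+$ and $(\frak{a}_i\cal{O}_L)_+$), which the paper's one-line computation leaves implicit.
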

\begin{proof}
The inclusion $\zeta:\text{GL}_2(\bb{A}_F)\hookrightarrow\text{GL}_2(\bb{A}_L)$ of adelic points induces the map $\zeta:\frak{H}^{I_F}\to\frak{H}^{I_L}$ of symmetric domains whose $\nu$-component, $\nu\in I_L$, is $\zeta(z)_\nu=z_{\nu_{\lvert F}}$. We compute
\[
\left(\zeta^*\msf{g}\right)_i(z)=\underset{\xi\in (\frak{a}_{i}\cal{O}_L)_+}{\sum} 		a(\xi,\msf{g}_{\zeta(i)})e_L(\xi\zeta(z))=\underset{\eta\in (\frak{a}_i)_+}{\sum}\left(\underset{\text{tr}_{L/F}(\xi)=\eta}{\sum} a(\xi,\msf{g}_{\zeta(i)})\right) e_F(\eta z).
\]
since $\text{tr}_{L/F}(\frak{a}_i\cal{O}_L)=\frak{a}_i\text{tr}_{L/F}(\cal{O}_L)\subset\frak{a}_i$ and totally positive elements are mapped to totally positive elements by the trace map.
\end{proof}

\begin{lemma}\label{lemma: p-adic diag}
Let $p$ be an unramified prime in $L$. For $\msf{g}\in S_{k,w}(U(\frak{N}),L;\overline{\bb{Q}})$ a cuspform, we can compute the $p$-adic Fourier coefficients of its diagonal restriction $\zeta^*\msf{g}\in S_{k_{\lvert F},w_{\lvert F}}(U(\frak{N}),F;\overline{\bb{Q}})$ as follows. Let $y=\eta a_i^{-1}u\in\widehat{\cal{O}_F} F^\times_{\infty,+}$ then
\[
\mathsf{a}_p(y,\zeta^*\msf{g})=C(a_i,u)\underset{\xi\in(\frak{a}_i\cal{O}_L)_+,\ \text{tr}_{L/F}(\xi)=\eta}{\sum} \mathsf{a}_p(y_\xi,\msf{g})
\]
for the ideles $y_\xi=\xi a_i^{-1}u\in\widehat{\cal{O}_L} L^\times_{\infty,+}$ indexed by $\xi\in(\frak{a}_i\cal{O}_L)_+$ with $\text{tr}_{L/F}(\xi)=\eta$, and where the constant $C(a_i,u)$ is equal to $C(a_i,u)=\cal{N}_L(a_i\msf{d}_L)\cal{N}_F(a_i\msf{d}_F)^{-1}u_p^{([L:F]-1)t_F}$.
\end{lemma}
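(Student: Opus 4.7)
My plan is to combine the previous lemma (which supplies the complex Fourier expansion of $\zeta^*\msf{g}$) with the definition $\msf{a}_p(y,\msf{f})=a(\xi,\msf{f}_i)\,y_p^{-v}\,\xi^{v}\,\cal{N}_F(a_i\msf{d}_F)^{-1}$ converting complex Fourier coefficients into $p$-adic ones, and then to simplify the resulting constants using that $p$ is unramified in $L$.

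First I would apply this definition on the $F$-side: setting $v_F:=t_F-w_{\lvert F}$, the decomposition $y=\eta a_i^{-1}u$ yields
\[
\msf{a}_p(y,\zeta^*\msf{g})=a(\eta,(\zeta^*\msf{g})_i)\,y_p^{-v_F}\eta^{v_F}\cal{N}_F(a_i\msf{d}_F)^{-1},
\]
and the previous lemma expands the complex coefficient as a sum over $\xi\in(\frak{a}_i\cal{O}_L)_+$ with $\text{tr}_{L/F}(\xi)=\eta$. Choosing representatives so that $b_{\zeta(i)}=a_i$ as an $L$-idele, the analogous identity on the $L$-side for $\msf{g}$, with weight parameter $v_L:=t_L-w$, inverts to
\[
a(\xi,\msf{g}_{\zeta(i)})=\msf{a}_p(y_\xi,\msf{g})\,y_{\xi,p}^{v_L}\xi^{-v_L}\cal{N}_L(a_i\msf{d}_L),
\]
for $y_\xi=\xi a_i^{-1}u\in\widehat{\cal{O}_L}L^\times_{\infty,+}$. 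Substitution then produces the announced prefactor $\cal{N}_L(a_i\msf{d}_L)\cal{N}_F(a_i\msf{d}_F)^{-1}$ and reduces the lemma to showing that the residual factor $y_p^{-v_F}\eta^{v_F}\,y_{\xi,p}^{v_L}\xi^{-v_L}$ collapses to $u_p^{([L:F]-1)t_F}$.

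For this last step I would invoke three inputs: (i) $p$ unramified in $L$ permits the choice $a_{i,p}=1$, giving $y_p=\eta_p u_p$ and $y_{\xi,p}=\xi_p u_p$; (ii) the fixed embedding $\iota_p$ identifies $\eta_p^{v_F}$ with $\eta^{v_F}$ and $\xi_p^{v_L}$ with $\xi^{v_L}$, producing the cancellations $y_p^{-v_F}\eta^{v_F}=u_p^{-v_F}$ and $y_{\xi,p}^{v_L}\xi^{-v_L}=u_p^{v_L}$; and (iii) the weight relation $w_{\lvert F}=(t_L-v_L)_{\lvert F}=[L:F]\,t_F-(v_L)_{\lvert F}$, equivalently $(v_L)_{\lvert F}-v_F=([L:F]-1)t_F$. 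Since $u$ is an $F$-idele, the reading $u_p^{v_L}=u_p^{(v_L)_{\lvert F}}$ (through the restriction $I_L\to I_F$) is legitimate, so the residual factor simplifies to $u_p^{(v_L)_{\lvert F}-v_F}=u_p^{([L:F]-1)t_F}$, as required. The main obstacle is bookkeeping of conventions rather than a genuine difficulty: one must keep straight the two normalizations of $a_i$ (with and without $\msf{d}_F^{-1}$), interpret $u_p^{v_L}$ correctly for an $F$-idele $u$, and verify that the choice $b_{\zeta(i)}=a_i$ is compatible with the statement of the previous lemma.
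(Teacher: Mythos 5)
Your proposal is correct and follows essentially the same route as the paper's proof: unwind the definition of $\msf{a}_p$ on the $F$-side, expand the complex coefficient via Lemma \ref{lemma: autom diag}, invert the definition on the $L$-side to reintroduce $\msf{a}_p(y_\xi,\msf{g})$ with the factor $\cal{N}_L(a_i\msf{d}_L)\cal{N}_F(a_i\msf{d}_F)^{-1}$, and cancel the residual powers using $a_{i,p}=1$ and the relation $(t_L-w)_{\lvert F}-(t_F-w_{\lvert F})=([L:F]-1)t_F$. Your explicit handling of $u_p^{v_L}=u_p^{(v_L)_{\lvert F}}$ and of the choice of representatives is only a more careful spelling-out of steps the paper performs implicitly.
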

\begin{proof}
The proof is a direct computation. 
\[\begin{split}
\mathsf{a}_p(y,\zeta^*\msf{g})&= a(\eta,(\zeta^*\msf{g})_i)y_p^{w_{\lvert F}-t_F}\eta^{t_F-w_{\lvert F}}\cal{N}_F(a_i\msf{d}_F)^{-1}\\
	&=\underset{\xi\in(\frak{a}_i\cal{O}_L)_+,\ \text{tr}_{L/F}(\xi)=\eta}{\sum} a(\xi,\msf{g}_{\zeta(i)})y_p^{w_{\lvert F}-t_F}\eta^{t_F-w_{\lvert F}}\cal{N}_F(a_i\msf{d}_F)^{-1}\\
	&=\underset{\text{tr}_{L/F}(\xi)=\eta}{\sum} \mathsf{a}_p(y_\xi,\msf{g})(y_\xi)_p^{t_L-w}\xi^{w-t_L}\cal{N}_L(a_i\msf{d}_L)y_p^{w_{\lvert F}-t_F}\eta^{t_F-w_{\lvert F}}\cal{N}_F(a_i\msf{d}_F)^{-1}\\
	&=\underset{\text{tr}_{L/F}(\xi)=\eta}{\sum} \mathsf{a}_p(y_\xi,\msf{g})(\xi a_i^{-1}u)_p^{t_L-w}\xi^{w-t_L}(\eta a_i^{-1}u)_p^{w_{\lvert F}-t_F}\eta^{t_F-w_{\lvert F}}\cal{N}_L(a_i\msf{d}_L)\cal{N}_F(a_i\msf{d}_F)^{-1}\\
	&=\cal{N}_L(a_i\msf{d}_L)\cal{N}_F(a_i\msf{d}_F)^{-1}u_p^{([L:F]-1)t_F}\underset{\text{tr}_{L/F}(\xi)=\eta}{\sum} \mathsf{a}_p(y_\xi,\msf{g}).
\end{split}\]
\end{proof}

\begin{proposition}\label{prop: V operator}
Let $p$ be an unramified prime in $L$ and for $\wp\mid p$ a prime $\cal{O}_F$-ideal $\varpi_\wp$ be a uniformizer of $\cal{O}_{F,\wp}$. Any cuspform $\msf{g}\in S_{k,w}(\frak{N}p,L;\overline{\bb{Q}}_p)$ with $\frak{N}$ any $\cal{O}_F$-ideal satisfies
\[
\zeta^*(\msf{g}_{\lvert V(\varpi_\wp)})=\text{N}_{F/\bb{Q}}(\wp)^{1-[L:F]}(\zeta^*\msf{g})_{\lvert V(\varpi_\wp)}.
\]
\end{proposition}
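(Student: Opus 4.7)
The plan is to verify the identity at the level of $p$-adic Fourier expansions, using Lemma \ref{lemma: p-adic diag} together with the formula $\msf{a}_p(y,\msf{f}_{\lvert V(m)})=m_p^{-v}\msf{a}_p(ym^{-1},\msf{f})$ recalled above. Fix an idele $y=\eta a_i^{-1}u\in\widehat{\cal{O}_F}F^\times_{\infty,+}$. Applying Lemma \ref{lemma: p-adic diag} to the $L$-cuspform $\msf{g}_{\lvert V(\varpi_\wp)}$ and substituting $\msf{a}_p(y_\xi,\msf{g}_{\lvert V(\varpi_\wp)})=(\varpi_\wp)_{L,p}^{-v_L}\msf{a}_p(y_\xi\varpi_\wp^{-1},\msf{g})$ gives
\[
\msf{a}_p\bigl(y,\zeta^*(\msf{g}_{\lvert V(\varpi_\wp)})\bigr)=C(a_i,u)\,(\varpi_\wp)_{L,p}^{-v_L}\sum_{\xi}\msf{a}_p(y_\xi\varpi_\wp^{-1},\msf{g}),
\]
where the requirement $y_\xi\varpi_\wp^{-1}\in\widehat{\cal{O}_L}L^\times_{\infty,+}$ restricts the sum to totally positive $\xi\in\frak{a}_i\wp\cal{O}_L$ with $\text{tr}_{L/F}(\xi)=\eta$ (using the assumption $a_{i,p}=1$ and that $u_p$, being unramified away from $\frak{N}$, is a $p$-adic unit). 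In particular both sides of the proposition vanish unless $\wp\mid\eta$.

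Assume now $\wp\mid\eta$. On the right-hand side, $\msf{a}_p(y,(\zeta^*\msf{g})_{\lvert V(\varpi_\wp)})=(\varpi_\wp)_{F,p}^{-v_F}\msf{a}_p(y\varpi_\wp^{-1},\zeta^*\msf{g})$, and I would expand the second factor via Lemma \ref{lemma: p-adic diag} after putting $y\varpi_\wp^{-1}$ in the standard form $\eta'a_j^{-1}u'$. To this end I write $a_i\varpi_\wp=\alpha a_ju''$ with $\alpha\in F^\times_+$, $a_j$ the chosen representative of the class $[a_i\varpi_\wp]\in\text{cl}^+_F(\frak{N})$, and $u''\in\det U(\frak{N})F^\times_{\infty,+}$; since $a_i\varpi_\wp$ has trivial infinity component, the infinity part of $u''$ is forced to equal $1/\alpha$. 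Setting $\eta'=\eta/\alpha$ and $u'=(u'')^{-1}u$ realizes $y\varpi_\wp^{-1}=\eta'a_j^{-1}u'$, and the change of variables $\xi=\alpha\xi'$ induces a bijection between the indexing set on the right-hand side ($\xi'\in\frak{a}_j\cal{O}_L$ with $\text{tr}_{L/F}(\xi')=\eta'$) and the one on the left (using $(\alpha)=\frak{a}_i\wp\frak{a}_j^{-1}$), with matching summands $\msf{a}_p(y_\xi\varpi_\wp^{-1},\msf{g})$.

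The proposition then reduces to the explicit identity
\[
\frac{C(a_i,u)\,(\varpi_\wp)_{L,p}^{-v_L}}{C(a_j,u')\,(\varpi_\wp)_{F,p}^{-v_F}}=\text{N}_{F/\bb{Q}}(\wp)^{1-[L:F]}.
\]
Expanding $C$ via its definition and using $|\cdot|_{\bb{A}_L}=|\cdot|_{\bb{A}_F}^{[L:F]}$ on $F$-ideles, the ideal relation $\text{N}_{F/\bb{Q}}(\alpha)=\text{N}_{F/\bb{Q}}(\frak{a}_i)\text{N}_{F/\bb{Q}}(\wp)/\text{N}_{F/\bb{Q}}(\frak{a}_j)$, and the formula $\sum_{\nu_{\lvert F}=\tau}v_{L,\nu}=[L:F]-w_{\lvert F,\tau}$ for the weight contributions, the factors of $\text{N}_{F_\wp/\bb{Q}_p}(\varpi_\wp)^{[L:F]-1}$ arising from the $(\varpi_\wp)_p^v$-terms cancel those produced by the $(u'')_p^{t_F}$-piece of $C$, and the class-group norms $\text{N}_{F/\bb{Q}}(\frak{a}_i)/\text{N}_{F/\bb{Q}}(\frak{a}_j)$ cancel those hidden in $\text{N}_{F/\bb{Q}}(\alpha)$, leaving exactly $\text{N}_{F/\bb{Q}}(\wp)^{1-[L:F]}$. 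The main difficulty is the adelic bookkeeping: in particular, one must recognize that $\alpha\in F^\times_+$ being a global element forces $u''$ to have a non-trivial infinity component $1/\alpha$, and it is precisely this that produces the $\text{N}_{F/\bb{Q}}(\alpha)$-factor whose cancellation delivers the correct exponent $1-[L:F]$ rather than the naively expected $[L:F]-1$.
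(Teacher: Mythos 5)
Your proposal is correct and follows essentially the same route as the paper's proof: both compare $p$-adic Fourier coefficients via Lemma \ref{lemma: p-adic diag}, use the decomposition $a_i\varpi_\wp=\varrho a_j\nu$ (your $\alpha a_j u''$) in the narrow class group, match the two index sets by the bijection $\xi\mapsto\xi\varrho^{-1}$, and reduce to the same explicit ratio of constants $C(a_i,u)/C(a_j,\nu^{-1}u)$ times the weight factor $\varpi_\wp^{(1-[L:F])t_F}$, which evaluates to $\text{N}_{F/\bb{Q}}(\wp)^{1-[L:F]}$. The only cosmetic difference is that the paper takes the auxiliary unit in $\det U_F(\frak{N}p)$ (the level of $\msf{g}$) rather than $\det U(\frak{N})$, but this does not affect the argument.
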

\begin{proof}
Let $y=\eta a_i^{-1}u\in\widehat{\cal{O}_F} F^\times_{\infty,+}$ an idele of $F$, note that $(a_i^{-1}u)_p$ is unit under our assumptions. Therefore $y\varpi_\wp^{-1}\in\widehat{\cal{O}_F} F^\times_{\infty,+}$ if and only if $\eta\in\wp\cal{O}_F$ and that similarly for any $\xi\in L_+^\times$  we have that $y_\xi\varpi_\wp^{-1}\in\widehat{\cal{O}_L}L^\times_{\infty,+}$ if and only if $\xi\in\wp\cal{O}_L$. The class $[a_i][\varpi_\wp]\in\text{cl}^+_F(\frak{N}p)$ is represented by some $a_j$ so we can write $a_i\varpi_\wp=\varrho a_j\nu$ for $\varrho\in F_+^\times$, $\nu\in\det U_F(\frak{N}p)$. On the one hand, we compute that 
\[\begin{split}
\mathsf{a}_p(y,\zeta^*(\msf{g}_{\lvert V(\varpi_\wp)})&=C(a_i,u)\underset{\xi\in(\frak{a}_i\cal{O}_L)_+,\ \text{tr}_{L/F}(\xi)=\eta}{\sum} \mathsf{a}_p(y_\xi,\msf{g}_{\lvert V(\varpi_\wp)})\\
	&=C(a_i,u)\underset{\text{tr}_{L/F}(\xi)=\eta}{\sum} \varpi_\wp^{-v}\mathsf{a}_p(y_\xi \varpi_\wp^{-1},\msf{g})\\
	&=C(a_i,u)\underset{\xi\in(\frak{a}_i\cal{O}_L)_+\cap\wp\cal{O}_L,\ \text{tr}_{L/F}(\xi)=\eta}{\sum} \varpi_\wp^{-v}\mathsf{a}_p(\xi\varrho^{-1}a_j^{-1}\nu^{-1}u,\msf{g}).
\end{split}\]
On the other hand, if $\eta\in \wp\cal{O}_F$ we have $y\varpi_\wp^{-1}=\eta\varrho^{-1}a_j^{-1}\nu^{-1}u$ and
\[\begin{split}
\mathsf{a}_p(y,\left(\zeta^*\msf{g}\right)_{\lvert V(\varpi_\wp)})&=\varpi_\wp^{w_{\lvert F}-t_F}\mathsf{a}_p(\eta\varrho^{-1}a_j^{-1}\nu^{-1}u,\zeta^*\msf{g})\\
	&=\varpi_\wp^{w_{\lvert F}-t_F}C(a_j,\nu^{-1}u)\underset{\epsilon\in(\frak{a}_j\cal{O}_L)_+;\ \text{tr}_{L/F}(\epsilon)=\eta\varrho^{-1}}{\sum} \msf{a}_p(\epsilon a_j^{-1}\nu^{-1}u ,\msf{g}),
\end{split}\]
while $\mathsf{a}_p(y,\left(\zeta^*\msf{g}\right)_{\lvert V(\wp)})=0$ if $\eta\notin \wp\cal{O}_F$. 

When $\eta\in\wp\cal{O}_F$ there is a bijection between $\{\xi\in (\frak{a}_i\cal{O}_L)_+\cap\wp\cal{O}_L\lvert \ \text{tr}_{L/F}(\xi)=\eta\}$ and $\{\epsilon\in (\frak{a}_j\cal{O}_L)_+\lvert\  \text{tr}_{L/F}(\epsilon)=\eta\varrho^{-1}\}$ given by $\xi\mapsto \xi\varrho^{-1}$, therefore we find that
\[\begin{split}
\mathsf{a}_p(y,\zeta^*(\msf{g}_{\lvert V(\varpi_\wp)})&=\frac{C(a_i,u)}{C(a_j,\nu^{-1}u)}\varpi_\wp^{(1-[L:F])t_F}\mathsf{a}_p(y,\left(\zeta^*\msf{g}\right)_{\lvert V(\varpi_\wp)})\\
	&=\big(\cal{N}_F(\nu)\cal{N}_F(\varpi_\wp)^{-1}\big(\frac{\nu_p}{\varpi_\wp}\big)^{t_F}\big)^{[L:F]-1}\mathsf{a}_p(y,\left(\zeta^*\msf{g}\right)_{\lvert V(\varpi_\wp)})\\
	&=\text{N}_{F/\bb{Q}}(\wp)^{1-[L:F]}\mathsf{a}_p(y,\left(\zeta^*\msf{g}\right)_{\lvert V(\varpi_\wp)}).
\end{split}
\]
\end{proof}

We claim that diamond operators and operators $T(a,1)$ for $a\in\cal{O}_{F,p}^\times$ commute with diagonal restriction. Indeed, the claim is clear for diamond operators and following the proof of Proposition $\ref{prop: V operator}$ with $V(\varpi_\wp)$ replaced by $T(a,1)$ and by noticing that $a\cal{O}_F=\cal{O}_F$ we see that for any cuspform $\msf{g}$ we have 
\begin{equation}\label{eq: diamonds}
\left(\zeta^*\msf{g}\right)_{\lvert\langle z\rangle}=\zeta^*\left(\msf{g}_{\lvert\langle z\rangle}\right)\qquad\text{and}\qquad \left(\zeta^*\msf{g}\right)_{\lvert T(a,1)}=\zeta^*\left(\msf{g}_{\lvert T(a,1)}\right).
\end{equation}

\begin{definition}\label{def: universal diag}
Let $L/F$ be an extension of totally real number fields unramified at $p$ and let $\frak{N}$ be an $\cal{O}_F$-ideal prime to $p$. Assume that the valuation ring $O$ contains all the completions of $\cal{O}_F$ at primes above $p$. The diagonal restriction of cuspforms induces by duality an $O$-modules map between universal Hecke algebras
\[
\zeta:\mathbf{h}_F(\frak{N};O)\longrightarrow\mathbf{h}_L(\frak{N};O).
\] 
The image $\zeta(\mathbf{T}(y))$ is determined by the equality 
\[
\msf{a}_p\big(1,\msf{g}_{\lvert\zeta(\mathbf{T}(y))}\big)=\msf{a}_p\big(1,(\zeta^*\msf{g})_{\lvert\mathbf{T}(y)}\big)\qquad \forall \msf{g}\in\overline{\mathbf{S}}_L(\frak{N}).
\]
If we endow $\mathbf{A}_L$ with the natural $\mathbf{A}_F$-algebra structure, then one can check that $\zeta$ is also $\mathbf{A}_F$-linear using $(\ref{eq: diamonds})$.
\end{definition}

\subsubsection{Differential operators}
For each $\nu\in I_L$ there is an operator on $p$-adic cuspforms $d_\nu:\overline{\mathbf{S}}(\frak{N},L)\to \overline{\mathbf{S}}(\frak{N},L)$ defined by $\msf{a}_p(y,d_\nu\msf{g})=y_p^\nu\msf{a}_p(y,\msf{g})$, the definition can be extended to all  $r\in\bb{N}[I_L]$ by setting $d^r=\prod_{\nu\in I_L}d_\nu^{r_\nu}$ (\cite{pHida} Section $6\text{G}$). 
\begin{lemma}\label{lemma: auto comparison}
Let $r\in\bb{N}[I_L]$ and let $\msf{g}\in S_{\ell,x}(\frak{N},L;\overline{\bb{Q}})$ be a cuspform,  then 
\[
e_\text{n.o.}\zeta^*(d^r\msf{g})=e_\text{n.o.}\Pi^\text{hol}\zeta^*(\delta^r_\ell\msf{g}).
\]
\end{lemma}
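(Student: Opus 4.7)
The plan is to establish the equality by comparing $p$-adic $q$-expansions on both sides, after noting that both $\zeta^*(d^r\msf{g})$ and $\Pi^\text{hol}\zeta^*(\delta^r_\ell\msf{g})$ are classical cuspforms of weight $((\ell+2r)_{|F},(x+r)_{|F})$ on $G_F$, so by the duality between $p$-adic cuspforms and the Hecke algebra it suffices to verify the identity on the nearly ordinary part. The real content is the comparison between the Maass--Shimura operator $\delta^r_\ell$ and the $p$-adic theta operator $d^r$; both raise weight by $(2r,r)$ and both multiply the $\xi$-th Fourier coefficient of $\msf{g}$ by $\xi^r$ at leading order, but $\delta^r_\ell$ introduces additional nearly holomorphic corrections (polynomials in $Y=(4\pi y_\infty)^{-1}$) of strictly lower $Y$-degree.

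First I would apply Lemma \ref{lemma: p-adic diag} to get the explicit $p$-adic Fourier expansion of $\zeta^*(d^r\msf{g})$: for $y=\eta a_i^{-1}u\in\widehat{\cal{O}_F}F_{\infty,+}^\times$, one obtains $\msf{a}_p(y,\zeta^*(d^r\msf{g}))=C(a_i,u)\sum_{\xi}(y_\xi)_p^r\,\msf{a}_p(y_\xi,\msf{g})$, where the sum runs over $\xi\in(\frak{a}_i\cal{O}_L)_+$ with $\text{tr}_{L/F}(\xi)=\eta$. Next, using the iterated formula $\delta^r_\ell q^\xi=\prod_\tau\prod_{j=0}^{r_\tau-1}(\xi_\tau-(\ell_\tau+2j)Y_\tau)\,q^\xi$, I would expand $\delta^r_\ell\msf{g}$ as $d^r\msf{g}$ (the leading term in $Y$) plus a sum of strictly lower-weight holomorphic cuspforms multiplied by positive powers of $Y$; the diagonal restriction $\zeta^*$ then expresses $\zeta^*(\delta^r_\ell\msf{g})$ as $\zeta^*(d^r\msf{g})$ plus analogous nearly holomorphic correction terms on $F$ of the form $Y^s\cdot h_s$ with $s>0$.

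Applying $\Pi^\text{hol}$ to $\zeta^*(\delta^r_\ell\msf{g})$, the first term returns $\zeta^*(d^r\msf{g})$ unchanged, while the lower-order $Y$-terms project via Shimura's explicit formula for holomorphic projection (involving gamma factor shifts) to a sum of classical cuspforms of the correct weight. The crucial step is then to show that $e_\text{n.o.}$ kills these correction contributions. This reflects the general fact in Hida's theory of nearly holomorphic Hilbert modular forms that the inclusion of holomorphic cuspforms into nearly holomorphic ones becomes an isomorphism after $e_\text{n.o.}$: the nearly ordinary projector $\lim T(p)^{n!}$ annihilates the filtration quotient generated by Maass--Shimura operators applied to forms of strictly lower weight, because on these error terms $T(p)$ acts with strictly smaller $p$-adic slope (equivalently, the Fourier coefficients are forced, through the holomorphic projection formula, to lie in the image of $V(\varpi_\wp)$ for primes $\wp\mid p$, on which $T(p)$ is topologically nilpotent).

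The main obstacle is precisely this last step. In the $\text{GL}_2/\bb{Q}$ case treated in \cite{DR} it follows from the explicit unfolding of $\Pi^\text{hol}\delta^r$ combined with the vanishing of non-ordinary slopes; in the present Hilbert setting one invokes the analogous statement from Hida's treatment of nearly ordinary nearly holomorphic forms (\cite{nearlyHida}), namely that $e_\text{n.o.}\circ\Pi^\text{hol}\circ\delta^r$ agrees with $e_\text{n.o.}\circ d^r$ as operators on the relevant spaces of $p$-adic cuspforms. Since diagonal restriction commutes with $e_\text{n.o.}$ (as it is compatible with the Hecke operator $\mathbf{T}(p)$, cf. Definition \ref{def: universal diag}), the desired equality follows.
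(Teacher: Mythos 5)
Your outline correctly identifies the two main ingredients: the $q$-expansion computation showing that the constant-in-$Y$ term of $\delta^r_\ell\msf{g}$ matches $d^r\msf{g}$, and Hida's result that $e_\text{n.o.}\circ\Pi^\text{hol}$ is insensitive to the positive-$Y$-degree part of a nearly holomorphic form. The paper's proof cites the latter as (\cite{pHida} Proposition 7.3), formulated as $e_\text{n.o.}\Pi^\text{hol}=e_\text{n.o.}c$ where $c$ is the truncation operator sending a nearly holomorphic form to the $p$-adic $q$-expansion $\sum_\xi\msf{a}_p(\xi y\msf{d},\msf{f})(0)q^\xi$. Introducing $c$ explicitly is what makes the argument close: the paper then verifies $c\circ\zeta^*=\zeta^*\circ c$ (via Lemma \ref{lemma: p-adic diag}) and $c\circ\delta^r_\ell=d^r\circ c$, and chains these together.

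Your final step is where the gap lies. You invoke "diagonal restriction commutes with $e_\text{n.o.}$" to conclude, but this is both incorrect as stated and not the move you need. The nearly ordinary projectors over $L$ and over $F$ are different operators built from different Hecke algebras, and Definition \ref{def: universal diag} records how $\zeta$ intertwines $\mathbf{h}_F$ and $\mathbf{h}_L$, not that $\zeta^*$ commutes with $e_\text{n.o.}$. More to the point, the lemma has $e_\text{n.o.}$ on the outside of $\zeta^*$ on both sides, so commuting it is irrelevant; what you actually need to commute past $\zeta^*$ is the comparison between $\Pi^\text{hol}\circ\delta^r_\ell$ (a mix of operators over $F$ and over $L$) and $d^r$ (over $L$). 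Neither $\delta^r$ nor $d^r$ commutes with $\zeta^*$ — the weight bookkeeping is different over the two fields — so you cannot just cite an identity $e_\text{n.o.}\Pi^\text{hol}\delta^r=e_\text{n.o.}d^r$ over a single base field and push $\zeta^*$ around. The fix is precisely the paper's: the truncation $c$, unlike $\delta^r$, $d^r$, or $\Pi^\text{hol}$, \emph{does} commute with $\zeta^*$, and it is $c$ (not $d^r$) that gets compared against $\Pi^\text{hol}$ via Hida's theorem, all on the $F$ side, after restriction.

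Two smaller points: the statement $\zeta^*(\delta^r_\ell\msf{g})=\zeta^*(d^r\msf{g})+(\text{corrections})$ does not parse literally, since $d^r\msf{g}$ is a $p$-adic cuspform and $\delta^r_\ell\msf{g}$ a nearly holomorphic form; they live in different spaces and are only related through $c$. And the relevant Hida reference is \cite{pHida} Proposition 7.3, not \cite{nearlyHida}.
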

\begin{proof}
For each nearly holomorphic cuspform $\msf{f}\in N_{k,w,m}(\frak{N}p^\alpha;A)$ one can define a $p$-adic cuspform by $
c(\msf{f})=\cal{N}(y)^{-1}\sum_{\xi\in F^\times_+}\msf{a}_p(\xi y\msf{d},\msf{f})(0)q^\xi\in\overline{\mathbf{S}}(\frak{N})$.
It's easy to see that $c(\zeta^*\msf{g})=\zeta^*c(\msf{g})$ using Lemma $\ref{lemma: p-adic diag}$. Moreover $c(\delta_k^r\msf{g})=d^rc(\msf{g})$ as 
\[\begin{split}
\msf{a}_p(y,c(\delta_k^r\msf{g}))=\msf{a}_p(y,\delta_k^r\msf{g})(0)&=y_p^{-v+r}\cal{N}(a_i)^{-1}\xi^{v-r}a(\xi,\delta_k^r\msf{g}_i)(0)\\
	&=y_p^{-v+r}\cal{N}(a_i)^{-1}\xi^{v}a(\xi,\msf{g}_i)(0)=\msf{a}_p(y,d^rc(\msf{g})).
\end{split}\]
Therefore to conclude we use (\cite{pHida} Proposition 7.3) to get the equality $e_\text{n.o.}\Pi^\text{hol}\zeta^*(\delta^r_\ell\msf{g})=e_\text{n.o.}c(\zeta^*(\delta^r_\ell\msf{g}))=e_\text{n.o.}\zeta^*(d^rc(\msf{g}))$. 
\end{proof}



\section{Twisted triple product $L$-functions}
\subsection{Complex L-functions}
\label{subsubsec: complex L-functions}
 Let $L/F$ be a quadratic extension of totally real number fields, $\frak{Q}\triangleleft\cal{O}_L$ and $\frak{N}\triangleleft\cal{O}_F$ ideals. The primitive eigenforms $\msf{g}\in S_{\ell,x}(\frak{Q}, L; \overline{\bb{Q}})$ and $\msf{f}\in S_{k,w}(\frak{N}, F; \overline{\bb{Q}})$ generate irreducible cuspidal automorphic representations $\pi, \sigma$ of $G_L(\bb{A}), G_F(\bb{A})$ respectively. Let $\pi^u=\pi\otimes\lvert\bfcdot\rvert_{\bb{A}_L}^{n/2}$, $\sigma^u=\sigma\otimes\lvert\bfcdot\rvert_{\bb{A}_F}^{m/2}$ their unitarizations, where $n,m$ are the integers satisfying $n\cdot t_L=\ell-2x$, $m\cdot t_F=k-2w$, and let $\msf{g}^\circ$, $\msf{f}^\circ$ be the primitive forms associated to them. One can define a unitary representation of $\text{GL}_2(\bb{A}_{L\times F})$ by $\Pi=\pi^u\otimes\sigma^u$. Let $\rho:\Gamma_F\to S_3$ be the homomorphism mapping the absolute Galois group of $F$ to the symmetric group over $3$ elements associated with the \'etale cubic algebra $(L\times F)/F$. The $L$-group $^L(G_{L\times F})$ is given by the semi-direct product $\widehat{G}\rtimes\Gamma_F$ where $\Gamma_F$ acts on $\widehat{G}=\text{GL}_2(\bb{C})^{\times3}$ through $\rho$. 

\begin{definition}
Assume the central character $\omega_\Pi$ of $\Pi$ satisfies ${\omega_\Pi}_{\lvert\bb{A}_F^\times}\equiv 1$, then the twisted triple product $L$-function associated with the unitary autormophic representation $\Pi$ is given by the Euler product
\[
L(s,\Pi,r)=\prod_v L_v(s,\Pi_v,r)^{-1}
\]
where $\Pi_v$ is the local representation at the place $v$ of $F$ appearing in the restricted tensor product decomposition $\Pi=\bigotimes_v'\Pi_v$ and representation $r$ gives the action of the $L$-group of $G_{L\times F}$ on $\bb{C}^2\otimes\bb{C}^2\otimes\bb{C}^2$ which restricts to the natural $8$-dimensional representation of $\widehat{G}$ and for which $\Gamma_F$ acts via $\rho$ permuting the vectors.
\end{definition}

The complex $L$-function $L(s,\Pi,r)$ was studied in (\cite{PS-R} Theorem 5.1 , 5.2 , 5.3), where it was proved it has meromorphic continuation to $\bb{C}$ with possible poles at $0,\frac{1}{4},\frac{3}{4},1$ and functional equation
\[
L(s,\Pi,r)=\epsilon(s,\Pi,r)L(1-s,\Pi,r).
\]
The functional equation has $\frac{1}{2}$ as central point at which the $L$-function is holomorphic. The Euler factors at the unramified primes can be easily described. Suppose $v$ is a prime of $F$ unramified in $L$ for which $\Pi_v$ is an unramified principal series, that is $v\nmid\frak{N}\cdot\text{N}_{L/F}(\frak{Q})\cdot d_{L/F}$. We distinguish the two following cases.

\paragraph{Split case.} Suppose that $v=\scr{V}\cdot\overline{\scr{V}}$ splits in $L$, write $\varpi_v$ for a uniformizer of $F_v$ and $q_v$ for the size of its residue field. The representation $\Pi_v$ of $\text{GL}_2(F_v)^{\times3}$ can be written as
$\Pi_v=\pi(\chi_{1,\scr{V}},\chi_{2,\scr{V}})\otimes \pi(\chi_{1,\overline{\scr{V}}},\chi_{2,\overline{\scr{V}}})\otimes \pi(\psi_{1,v},\psi_{2,v})$
using normalized parabolic induction. One can attach to $\Pi_v$ a conjugacy class $C_v$ in the $L$-group of $G_{L\times F}$ via the Satake isomorphism
\[
C_v=\left[\begin{pmatrix}\chi_{1,\scr{V}}(\varpi_v)& 0\\
0& \chi_{2,\scr{V}}(\varpi_v)\end{pmatrix},
\begin{pmatrix}\chi_{1,\overline{\scr{V}}}(\varpi_v)& 0\\
0& \chi_{2,\overline{\scr{V}}}(\varpi_v)\end{pmatrix},
\begin{pmatrix}\psi_{1,v}(\varpi_v)& 0\\
0& \psi_{2,v}(\varpi_v)\end{pmatrix}; 1\right].
\]
Then the local $L$-factor is given by the formula $L_v(s,\Pi_v,r)=\det(\bb{I}_8-\text{r}(C_v)q_v^{-s})$, which can be compute to be equal to
\begin{equation}\label{eq: L-factor split}
L_v(s,\Pi_v,r)=\prod_{i,j,k}(1-\chi_{i,\scr{V}}(\varpi_v)\chi_{j,\overline{\scr{V}}}(\varpi_v)\psi_{k,v}(\varpi_v)q_v^{-s}).
\end{equation}
\paragraph{Inert case.} Suppose that $v$ is inert in $L$. The representation $\Pi_v$ of $\text{GL}_2(L_v)\times\text{GL}_2(F_v)$ can be written as $
\Pi_v=\pi(\chi_{1,v},\chi_{2,v})\otimes \pi(\psi_{1,v},\psi_{2,v})$.
The Satake isomorphism attach to it a conjugacy class $C_v$ in the $L$-group of $G_{L\times F}$
\[
C_v=\left[\begin{pmatrix}\chi_{1,v}(\varpi_v)& 0\\
0& \chi_{2,v}(\varpi_v)\end{pmatrix},
\begin{pmatrix}1& 0\\
0& 1\end{pmatrix},
\begin{pmatrix}\psi_{1,v}(\varpi_v)& 0\\
0& \psi_{2,v}(\varpi_v)\end{pmatrix}; \text{Fr}_v\right].
\]
 The value of the representation $r$ on this conjugacy class is
\[
r(C_v)=\begin{pmatrix}
\chi_{1,v}(\varpi_v) &&&\\
&&\chi_{1,v}(\varpi_v)&\\
&\chi_{2,v}(\varpi_v) &&\\
&&& \chi_{2,v}(\varpi_v)
\end{pmatrix}\otimes\begin{pmatrix}
\psi_{1,v}(\varpi_v)&\\
& \psi_{2,v}(\varpi_v)
\end{pmatrix}
\]
and the local $L$-factor can be computed to be equal to
\begin{equation}\label{eq: L-factor inert}
L_v(s,\Pi_v,r)=\ \det(\bb{I}_8-\text{r}(C_v)q_v^{-s})= P(q_v^s)\times\prod_{i,j}(1-\chi_{i,v}(\varpi_v)\psi_{j,v}(\varpi_v)q_v^{-s})
\end{equation}
where $P(q_v^s)=\big[1-\chi_{1,v}\chi_{2,v}(\varpi_v)(\psi_{1,v}(\varpi_v)^2+\psi_{2,v}(\varpi_v)^2)q_v^{-2s}+(\chi_{1,v}\chi_{2,v}(\varpi_v)\psi_{1,v}\psi_{2,v}(\varpi_v))^2q_v^{-4s}\big]$.

\begin{remark}\label{relation}
The relation between Satake parameters of $\pi^u$, $\sigma^u$ and Hecke eigenvalues of the primitive eigenforms $\msf{g}^\circ,\msf{f}^\circ$ can be given explicitly as follows. 
Suppose $v\nmid\frak{Q}$ and $v=\scr{V}\overline{\scr{V}}$ splits in $L$, then
\begin{equation}\label{eq:Satake2}
{\msf{g}^\circ}_{\lvert T(\scr{V})}=q_v^{1/2}\left(\chi_{1,\scr{V}}(\varpi_v)+\chi_{2,\scr{V}}(\varpi_v)\right)\msf{g}^\circ,\qquad {\msf{g}^\circ}_{\lvert T(\overline{\scr{V}})}=q_v^{1/2}\left(\chi_{1,\overline{\scr{V}}}(\varpi_v)+\chi_{2,\overline{\scr{V}}}(\varpi_v)\right)\msf{g}^\circ.
\end{equation}
Moreover, if $v\nmid\frak{Q}$ and $v$ is inert in $L$ then
\begin{equation}\label{eq:Satake3}
{\msf{g}^\circ}_{\lvert T(v)}=q_v\left(\chi_{1,v}(\varpi_v)+\chi_{2,v}(\varpi_v)\right)\msf{g}^\circ.
\end{equation}
Finally, if $v\nmid\frak{N}$ a finite place of $F$ we have
\begin{equation}\label{eq:Satake1}
{\msf{f}^\circ}_{\lvert T(v)}=q_v^{1/2}\left(\psi_{1,v}(\varpi_v)+\psi_{2,v}(\varpi_v)\right)\msf{f}^\circ.
\end{equation}
\end{remark}


\subsection{Central $L$-values and period integrals}
\label{subsec: Central values and period integrals}
Let $D_{/F}$ be a quaternion algebra. We denote be $\Pi^D$ the irreducible unitary cuspidal automorphic representation of $D^\times(\bb{A}_E)$ associated with $\Pi$ by the Jacquet-Langlands correpondence. 
For a vector $\phi\in\Pi^D$ one defines its period integral as
\[
\text{I}^D(\phi)=\int_{[D^\times(\bb{A}_F)]}\phi(x)\text{d}^\times x
\]
where $[D^\times(\bb{A}_F)]=\bb{A}_F^\times D^\times(F)\backslash D^\times(\bb{A}_F)$. To simplify the notation we write $\text{I}(\phi)$ to denote the period integral for the quaternion algebra $\text{M}_2(F)$.

\begin{theorem}\label{thm: Itch}
Let $\eta:\bb{A}_F^\times\to\bb{C}^\times$ be the quadratic character attached to $L/F$ by class field theory. Then the following are equivalent:
\begin{itemize}

\item[(1)] The central $L$-value $L(\frac{1}{2},\Pi,r)$ does not vanish, and for every place $v$ of $F$ the local $\epsilon$-factor satisfies $\epsilon_v(\frac{1}{2},\Pi_v,r)\cdot\eta_v(-1)=1$.
\item[(2)] There exists a vector $\phi\in\Pi$, called test vector, whose period integral $\text{I}(\phi)$ does not vanish. Moreover, we can assume the vector $\phi$ is defined over a Galois number field $E$ containing $L,F$ as well as the Hecke eigenvalues of $\msf{g}^\circ$ and $\msf{f}^\circ$ .

\end{itemize}
\end{theorem}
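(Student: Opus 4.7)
The plan is to deduce the equivalence from Ichino's central-value formula for the twisted triple product $L$-function, which for a pure tensor $\phi = \otimes_v \phi_v \in \Pi$ expresses the square of the period as
\[
\frac{\lvert\text{I}(\phi)\rvert^2}{\langle\phi,\phi\rangle} = C\cdot\frac{L(\tfrac{1}{2},\Pi,r)}{L(1,\Pi,\text{Ad})}\prod_v\frac{I_v(\phi_v)}{\langle\phi_v,\phi_v\rangle_v},
\]
with $I_v$ a normalized local matrix-coefficient integral and $C$ a non-zero constant. Since $L(1,\Pi,\text{Ad})\neq 0$ by Shahidi, the formula reduces the global statement to: (a) a local dichotomy identifying the places where $I_v$ can be non-zero, and (b) a rationality argument for the test vector.

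First I would establish the local step. For each place $v$ of $F$, the trilinear form $I_v$ on $\Pi_v\otimes\overline{\Pi}_v$ is non-zero precisely when $\epsilon_v(\tfrac{1}{2},\Pi_v,r)\cdot\eta_v(-1) = +1$. In the split case $L_v \cong F_v\times F_v$ this is Prasad's classical dichotomy for $\text{GL}_2(F_v)^{\times 3}$; in the inert case one needs its twisted analogue for $\text{GL}_2(L_v)\times\text{GL}_2(F_v)$ due to Prasad and extended by Harris--Kudla. In both cases the dichotomy asserts that exactly one of the two inner forms of the ambient group supports the trilinear functional, and the chosen form is the split one $\text{M}_2(F_v)$ exactly under the $\epsilon$-condition; at archimedean places the condition is automatic for the given weights.

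With this local input in hand, the equivalence follows directly. For $(2)\Rightarrow(1)$, non-vanishing of $\text{I}(\phi)$ forces the right-hand side of Ichino's formula to be non-zero, hence $L(\tfrac{1}{2},\Pi,r)\neq 0$ and $I_v(\phi_v)\neq 0$ at every $v$, so the $\epsilon$-condition holds everywhere by the dichotomy. For $(1)\Rightarrow(2)$, the $\epsilon$-condition provides, at each place, a local vector $\phi_v\in\Pi_v$ with $I_v(\phi_v)\neq 0$; taking almost all $\phi_v$ equal to the spherical vector (where $I_v$ is automatically non-vanishing by the unramified computation) and assembling a pure tensor produces $\phi$ with $\text{I}(\phi)\neq 0$.

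For the rationality claim, I would invoke Shimura's theorem that $\pi$ and $\sigma$ admit models over the number field generated by the Hecke eigenvalues of $\msf{g}^\circ$ and $\msf{f}^\circ$; let $E$ be a Galois closure of this field containing $L$ and $F$. The condition $\text{I}(\phi)\neq 0$ cuts out a Zariski-open subset of the space of pure tensors, and the local non-vanishing conditions at each ramified place likewise define open $E$-rational subvarieties; since these loci are non-empty over $\bb{C}$ they are non-empty over $E$, yielding a test vector defined over $E$. The main obstacle, in my view, is the local dichotomy at inert finite places: one must correctly identify the $\epsilon$-factor of the tensor product representation under restriction from $\text{GL}_2(L_v)$ to $\text{GL}_2(F_v)$ and track how the twist by $\eta_v$ exchanges the two quaternion forms. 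Once this local ingredient is pinned down, Ichino's formula does the rest of the work.
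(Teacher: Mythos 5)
Your direction $(2)\Rightarrow(1)$ is the same as the paper's: non-vanishing of $\text{I}(\phi)$ feeds into Ichino's formula, which forces $L(\tfrac12,\Pi,r)\neq0$ and non-triviality of the local functionals, and the Prasad/Gan--Gross--Prasad dichotomy then yields $\epsilon_v(\tfrac12,\Pi_v,r)\eta_v(-1)=+1$ at every place. The rationality claim is also unproblematic: the paper itself does not argue it inside this proof (it is absorbed into the later lemma on the explicit choice of test vector), and your observation that a non-zero linear functional cannot vanish on a spanning set of $E$-rational vectors is the right kind of argument, provided one is careful that ``defined over $E$'' refers to the finite part (the archimedean component being the fixed lowest-weight vector).

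The genuinely different step is $(1)\Rightarrow(2)$, and as written it has a gap. The paper does not construct the test vector from local data: it first invokes the proven form of Jacquet's conjecture to produce a quaternion algebra $D$ and a vector $\phi\in\Pi^D$ with $\text{I}^D(\phi)\neq0$, and only then uses the $\epsilon$-hypothesis, via the local Hom-space dichotomy inside Ichino's formula, to rule out any ramified $D$, forcing $D=\text{M}_2(F)$. Your route instead assembles a global test vector on the split group directly, asserting that at each ramified finite place and at infinity the $\epsilon$-condition ``provides a local vector $\phi_v$ with $I_v(\phi_v)\neq0$.'' But the dichotomy only tells you that $\text{Hom}_{\text{GL}_2(F_v)\times\text{GL}_2(F_v)}\big(\Pi_v\otimes\widetilde{\Pi}_v,\bb{C}\big)$ is one-dimensional; a priori the specific normalized matrix-coefficient integral $I_v$ could be the zero element of that one-dimensional space, in which case Ichino's identity would read $0=0$ and give no global non-vanishing. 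The statement you need --- that $I_v\neq0$ as a functional whenever the Hom space is non-zero (including at archimedean places for the discrete-series weights at hand) --- is true but is a non-trivial local theorem of Ichino--Ikeda type (resting on positivity/temperedness arguments), and it must be cited or proved; it is exactly the ingredient the paper avoids by using the global Jacquet-conjecture input, and which it only establishes later, for the specific newvector choices, in the lemma on test vectors. Once you supply that local non-vanishing (or revert to the paper's global citation), your argument closes, and in fact it essentially reproves the relevant direction of Jacquet's conjecture rather than quoting it.
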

\begin{proof}
$(1)\implies(2)$ By Jacquet conjecture, as proved in (\cite{P-SP} Theorem 1.1), the non-vanishing of the central value implies that there exists a quaternion algebra $D{/F}$ and a vector $\phi\in\Pi^D$ such that its period integral is non-zero, i.e., $\text{I}^D(\phi)\not=0$. We want to show that the assumption on local $\epsilon$-factors forces the quaternion algebra to be split everywhere. Ichino's formula (\cite{I} Theorem 1.1) gives an equality, up to non-zero constants,
\[
\text{I}^D\cdot \widetilde{\text{I}^D}\overset{.}{=} L\big(\frac{1}{2},\Pi,r\big)\cdot\prod_v \text{I}^D_v
\]
of linear forms in $\text{Hom}_{D^\times(\bb{A}_F)\times D^\times(\bb{A}_F)}\left(\Pi^D\otimes\widetilde{\Pi^D},\bb{C}\right)$ where the $\text{I}^D_v$'s are local linear forms in $\text{Hom}_{D^\times(\bb{A}_{F_v})\times D^\times(\bb{A}_{F_v})}\left(\Pi_v^D\otimes(\widetilde{\Pi^D})_v,\bb{C}\right)$. 
Suppose $v$ is a place of $F$ at which the quaternion algebra $D$ ramifies, i.e. $v\mid \text{disc}D$. Requiring the value of the expression $\epsilon_v(\frac{1}{2},\Pi_v,r)\cdot\eta_\ell(-1)$ to be equal to $1$ forces the local Hom-space $\text{Hom}_{D^\times(\bb{A}_{F_v})\times D^\times(\bb{A}_{F_v})}\left(\Pi_v^D\otimes(\widetilde{\Pi^D})_v,\bb{C}\right)$ to be trivial  (\cite{Gan} Theorem 1.2); in particular it forces the local linear form $\text{I}^D_v$ to be trivial. This produces a contradiction because the LHS of Ichino's formula is non-trivial. Indeed, choosing the complex conjugate $\bar{\phi}\in\overline{\Pi^D}\cong\widetilde{\Pi^D}$ of the test vector $\phi$ we compute that 
\[
\text{I}^D\cdot \widetilde{\text{I}^D}(\phi\otimes\bar{\phi})=\left\lvert \text{I}^D(\phi)\right\rvert^2\not=0.
\]
 Hence, the discriminant of $D$ has to be trivial, i.e., $D=M_2(F)$.
\newline
$(2)\implies(1)$ The existence of a test vector $\phi\in\Pi$ implies the non-vanishing of the central value $L(\frac{1}{2},\Pi,r)$ by Jacquet conjecture. Moreover, Ichino's formula provides us with non-trivial local linear forms, the $\text{I}_v$'s, in the local Hom-spaces $\text{Hom}_{\text{GL}_2(\bb{A}_{F_v})\times \text{GL}_2(\bb{A}_{F_v})}\left(\Pi_v\otimes(\widetilde{\Pi})_v,\bb{C}\right)$ which force the equality $\epsilon_v(\frac{1}{2},\Pi_v,r)\cdot\eta_v(-1)=1$ for every place $v$ of $F$ (\cite{Gan} Theorem 1.1).
\end{proof}

\begin{remark}
We can give sufficient conditions on the eigenforms $\msf{g}\in S_{\ell,x}(\frak{Q}, L; \overline{\bb{Q}})$ and $\msf{f}\in S_{k,w}(\frak{N}, F; \overline{\bb{Q}})$ such that the local $\epsilon$-factors of the automorphic representation $\Pi$ satisfy the hypothesis of Theorem $\ref{thm: Itch}$. The local $\epsilon$-factor at the archimedean places of $F$ satisfy the hypothesis of the theorem if the weights of $\msf{g}$ and $\msf{f}$ are unbalanced. Moreover, the same is true for the $\epsilon$-factors at the finite places if we assume that $\text{N}_{L/F}(\frak{Q})\cdot d_{L/F}$ and $\frak{N}$ are coprime and that every finite prime $v$ dividing the level $\frak{N}$ of $\msf{f}$ splits in $L$ (\cite{epsilonprasad} Theorems $\text{B},\text{D}$ and Remark 4.1.1).
\end{remark}


\subsection{$p$-adic $L$-functions}
\label{subsec:p-adic L-functions}
Let $L/F$ be a quadratic extension of totally real number fields and let $\msf{g}\in S_{\ell,x}(\frak{Q}, L; \overline{\bb{Q}})$, $\msf{f}\in S_{k,w}(\frak{N}, F; \overline{\bb{Q}})$ be primitive eigenforms of unbalanced weights. We assume the central character $\omega_\Pi$ of $\Pi$ to be trivial when restricted to $\bb{A}_F^\times$, that the central $L$-value $L(\frac{1}{2},\Pi,r)$ does not vanish, and that for every place $v$ of $F$ we have the condition $\epsilon_v(\frac{1}{2},\Pi_v,r)\eta_v(-1)=1$ on local $\epsilon$-factors satisfied. Then there exists a vector $\phi\in\Pi$ such that the period integral $\text{I}(\phi)$ is non-zero ( Theorem $\ref{thm: Itch}$). Let $J$ be the element
\[
J=\begin{pmatrix}
-1 & 0 \\ 
0 & 1
\end{pmatrix}^{I_F}\in\text{GL}_2(\bb{R})^{I_F};
\]
for any $\msf{h}\in\sigma^u$ we define $\msf{h}^J\in\sigma^u$ to be the vector obtained by right translation $\msf{h}^J(g)=\msf{h}(gJ)$. If $\msf{h}$ has weight $k\in\bb{Z}[I_F]$ then $\msf{h}^J(h)$ has weight $-k$.
\begin{lemma}\label{lemma:choice test vector}
Let $r\in\bb{N}[I_L]$ be such that $k=(\ell+2r)_{\lvert F}$ and $w=(x+r)_{\lvert F}$. Then the test vector $\phi$ can be chosen to be of the form $\phi=(\delta^r\breve{\msf{g}})^u\otimes (\breve{\msf{f}}^J)^u$ for $\breve{\msf{g}}\in S_{\ell,x}(M,L;E)$ and $\breve{\msf{f}}\in S_{k,w}(M,F;E)$. The $\cal{O}_F$-ideal $M$ has only prime factors among those of $\frak{N}\cdot\text{N}_{L/F}(\frak{Q})\cdot d_{L/F}$, and $\breve{\msf{g}}, \breve{\msf{f}}$ are eigenforms for all Hecke operators outside $\frak{N}\cdot\text{N}_{L/F}(\frak{Q})\cdot d_{L/F}$ with the same Hecke eigenvalues of $\msf{g}$ and $\msf{f}$ respectively. 
\end{lemma}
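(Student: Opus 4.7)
The plan is to combine Theorem \ref{thm: Itch} with the factorizability of Ichino's formula to localize the problem, and then to realize the resulting local choices as classical cuspforms of some common level $M$. Under our hypotheses, Theorem \ref{thm: Itch} yields a pure tensor $\phi = \otimes_v \phi_v \in \Pi$ with $\text{I}(\phi) \neq 0$. Ichino's formula presents $\text{I}(\phi) \cdot \widetilde{\text{I}}(\bar{\phi})$ as a non-zero multiple of $L(\tfrac{1}{2},\Pi,r) \prod_v \text{I}_v(\phi_v \otimes \bar\phi_v)$, and the non-vanishing of the left-hand side forces every local period functional $\text{I}_v$ to be non-trivial. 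Consequently, at each $v$ I may replace $\phi_v$ by any vector pairing non-trivially under $\text{I}_v$ without destroying the global non-vanishing, which gives the freedom to build $\phi$ from purely classical data.

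First I would handle the finite places. At each $v \nmid \frak{N} \cdot \text{N}_{L/F}(\frak{Q}) \cdot d_{L/F}$ the local representation $\Pi_v$ is unramified and the spherical new vector pairs non-trivially under $\text{I}_v$, so I take $\phi_v$ to be that new vector. At a finite $v$ dividing $\frak{N} \cdot \text{N}_{L/F}(\frak{Q}) \cdot d_{L/F}$, smoothness of $\Pi_v$ together with non-triviality of $\text{I}_v$ produces a vector fixed by some deep enough compact open subgroup $K_v$ with non-zero local period. The intersection of these $K_v$ with the new-vector level groups at the good places cuts out a global compact open subgroup of level $M$, all of whose prime factors divide $\frak{N} \cdot \text{N}_{L/F}(\frak{Q}) \cdot d_{L/F}$, as required. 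At the archimedean places, $\pi^u_\infty$ has lowest weight $\ell \in \bb{Z}[I_L]$ while $\sigma^u_\infty$ has weight $k \in \bb{Z}[I_F]$; the unbalanced hypothesis $k = (\ell + 2r)_{\lvert F}$ exactly matches the weight produced by applying the Maass--Shimura operator $\delta^r$ to $\msf{g}$, since the diagonal restriction of $\delta^r \msf{g}$ has weight $k$, aligning it with $\msf{f}$ for the archimedean period. The unitarization $(-)^u$ and the right-translation by $J$ are the normalizations needed to reinterpret the adelic period integral as the Petersson pairing $(\ref{period})$, with $(-)^J$ flipping a weight $k$ cuspform to weight $-k$ in order to realize the contragredient $\widetilde{\sigma^u}$ on the same space of functions as $\sigma^u$.

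Assembling these choices gives a test vector of the desired form $(\delta^r \breve{\msf{g}})^u \otimes (\breve{\msf{f}}^J)^u$ with $\breve{\msf{g}} \in S_{\ell,x}(M,L;\bb{C})$ and $\breve{\msf{f}} \in S_{k,w}(M,F;\bb{C})$. Because both cuspforms sit inside the irreducible representations $\pi$ and $\sigma$, they are automatically eigenvectors for the unramified Hecke algebra away from $\frak{N} \cdot \text{N}_{L/F}(\frak{Q}) \cdot d_{L/F}$, with eigenvalues determined by the Satake parameters of $\pi$ and $\sigma$, which agree with the Hecke eigenvalues of the primitive forms $\msf{g}$ and $\msf{f}$. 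The main obstacle is the descent from $\bb{C}$ to a Galois number field $E$ containing $L$, $F$, and the Hecke eigenvalues of $\msf{g}^\circ, \msf{f}^\circ$: here I would exploit the fact that $\pi$ and $\sigma$ admit models over $E$ and that the non-vanishing of $\text{I}(\phi)$ is a Zariski-open condition on the finite-dimensional $E$-vector space of $M$-level vectors inside $\Pi^\infty$, so that either averaging over $\text{Gal}(\overline{\bb{Q}}/E)$ or picking a generic $E$-rational element in this open locus yields a test vector defined over $E$ without killing the period. This rationality step, rather than the local non-vanishing itself, is the most delicate part of the argument.
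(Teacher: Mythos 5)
There is a genuine gap at the crux of the argument: you assert that at each good finite place $v\nmid\frak{N}\cdot\text{N}_{L/F}(\frak{Q})\cdot d_{L/F}$ ``the spherical new vector pairs non-trivially under $\text{I}_v$,'' but this is precisely the non-trivial content of the lemma and cannot be read off from the mere non-vanishing of the local functional $\text{I}_v$ on $\Pi_v$. Knowing $\text{I}_v\neq 0$ lets you pick \emph{some} vector with non-zero local period, but then nothing guarantees it is spherical, and it is exactly the sphericity at the good places that yields both the statement that $M$ is supported on the bad primes and the statement that $\breve{\msf{g}},\breve{\msf{f}}$ are eigenforms away from the bad set with the same eigenvalues as $\msf{g},\msf{f}$. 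In the paper this step is where all the work lies: in the split case it is Prasad's theorem on trilinear forms (a genuine theorem, not a formal consequence of non-vanishing), and in the inert case no reference exists, so the paper adapts Prasad's argument by realizing the unramified principal series $V_\chi$ on functions on $\bb{P}^1_{L_v}$, using the $\text{GL}_2(F_v)$-orbit decomposition $\bb{P}^1_{L_v}=(\bb{P}^1_{L_v}\setminus\bb{P}^1_{F_v})\coprod\bb{P}^1_{F_v}$ and the resulting exact sequence $0\to\text{ind}_{L_v^\times}^{\text{GL}_2(F_v)}(\chi')\to V_\chi\to\text{Ind}_{B(F_v)}^{\text{GL}_2(F_v)}(\chi\delta_{F_v}^{1/2})\to 0$, then showing via an explicit Hecke-operator manipulation of $1_{\bb{P}^1_{L_v}}$ and the description of $\text{Hom}_{\text{GL}_2(F_v)}\bigl(\text{ind}_{L_v^\times}^{\text{GL}_2(F_v)}(\chi'),\widetilde{(\sigma^u)_v}\bigr)$ by integration over orbits that the functional cannot kill the spherical vector. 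Your proposal contains no substitute for this local analysis, so the claimed form of the test vector is not established.

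Two smaller remarks: the archimedean discussion is fine (lowest-weight vectors and the weight-matching forced by the unbalanced condition, as in the paper), and your worry about $E$-rationality, while legitimate, is not the delicate point — it is handled by the $E$-structure on the new-vector lines and is already built into Theorem \ref{thm: Itch}(2); the place where the argument can actually fail without further input is the spherical test-vector claim at the unramified places, especially the inert ones.
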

\begin{proof}
By linearity of the period integral we can assume $\phi$ to be a simple tensor of the form $\phi=\delta^r\vartheta\otimes\nu^J\in\Pi$. Indeed, we can choose the holomorphic lowest weight vectors for $\vartheta$ and $\nu$ at the archimedean places because the archimedean linear functional appearing in Ichino's formula is non-zero if and only if the sum of the weights of the local vectors is zero. We claim that for every finite place $v$ of $F$ not dividing $\frak{N}\cdot\text{N}_{L/F}(\frak{Q})\cdot d_{L/F}$, the newvector in $\Pi_v$ is a choice for which Ichino's local linear functional doesn't vanish. If $v$ is a place that splits in $L$, the claim follows from (\cite{prasadtrilinear} Theorem 5.10), the proof given by Prasad can be adapted to the inert case as follows. As in (\cite{epsilonprasad} Section 4) we can assume that $(\pi^u)_v$ is the principal series $V_\chi$ for  the character of the Borel $\chi(ab0d)=\alpha(a)/\beta(d)$ induced by unramified characters $\alpha,\beta:L_v^\times\to\bb{C}^\times$, then the representation $V_\chi$ can be realized in the functions of $\bb{P}^1_{L_v}$. The projective line $\bb{P}^1_{L_v}$ can be decomposed into an open and a closed orbit for the action of $\text{GL}_2(F_v)$, 
\[\bb{P}^1_{L_v}=\left(\bb{P}^1_{L_v}\setminus\bb{P}^1_{F_v}\right)\coprod \bb{P}^1_{F_v},
\]
which produces an exact sequence of $\text{GL}_2(F_v)$-modules
\begin{equation}\label{ses of representations}\xymatrix{
0\ar[r]& \text{ind}_{L_v^\times}^{\text{GL}_2(F_v)}(\chi')\ar[r]&V_\chi\ar[r]&\text{Ind}_{\text{B}(F_v)}^{\text{GL}_2(F_v)}(\chi\delta_{F_v}^{1/2})\ar[r]&0
}\end{equation}
for $\chi':L_v^\times\to\bb{C}^\times$ the character defined by $\chi'(x)=\alpha(x)\beta(\overline{x})$. If $\text{Ind}_{\text{B}(F_v)}^{\text{GL}_2(F_v)}(\chi\delta_{F_v}^{1/2})$ is isomorphic to the contragradient representation $\widetilde{(\sigma^u)_v}$  then we are done. Otherwise, we assume that the image of the newvector $1_{\bb{P}^1_{L_v}}$ is zero under the local linear functional $\Upsilon:V_\chi\to\widetilde{(\sigma^u)_v}$ appearing in Ichino's formula and we find a contradiction. Let $T_v$ be the Hecke operator given by the double coset 
\[
T_v=\left[\text{GL}_2(\cal{O}_{F_v})\begin{pmatrix}
\varpi_v&0\\
0&1
\end{pmatrix}\text{GL}_2(\cal{O}_{F_v})\right],
\]
then the function
\begin{equation}\label{function}
\frac{1}{(q_v+1)\chi\delta^{1/2}(\varpi_v)}\left(T_v(1_{\bb{P}^1_{L_v}})-q_v\chi\delta^{1/2}(\varpi_v)-\chi\delta^{1/2}(1/\varpi_v)\right)
\end{equation}
is the constant function $1$ on the $\text{GL}_2(\cal{O}_{F_v})$-orbit of $\bb{P}^1_{L_v}$ consisting of those points that reduce to a point in $\bb{P}^1(\cal{O}_{L_v}/\varpi_v)\setminus \bb{P}^1(\cal{O}_{F_v}/\varpi_v)$, and the constant function zero everywhere else. Therefore, the function $(\ref{function})$ is an element of $\text{ind}_{L_v^\times}^{\text{GL}_2(F_v)}(\chi')$ because of the short exact sequence $(\ref{ses of representations})$. The contraddiction is that the function $(\ref{function})$ is sent to zero by $\Upsilon$ by $\text{GL}_2(F_v)$-equivariance, but at the same time that is not possible because we can explicitly describe the elements of $\text{Hom}_{\text{GL}_2(F_v)}\left(\text{ind}_{L_v^\times}^{\text{GL}_2(F_v)}(\chi'),\widetilde{(\sigma^u)_v} \right)$ in terms of integration over $\text{GL}_2(\cal{O}_{F_v})$-orbits of $\bb{P}^1_{L_v}$.

To conclude we note that spherical vectors are mapped to spherical vectors by the isomorphism  $\pi\otimes\sigma\overset{\sim}{\to}\pi^u\otimes\sigma^u$ as in $(\ref{eq:unitarization})$. Therefore we can write $\phi=\delta^r\vartheta\otimes\nu^J$ as $(\delta^r\breve{\msf{g}})^u\otimes(\breve{\msf{f}}^J)^u$, for $\msf{g}\in\pi$ and $\msf{f}\in\sigma$ as claimed. 
\end{proof}
Therefore we can rewrite the period integral $\text{I}(\phi)$ as a Petersson inner product
\begin{equation}\label{eq: period-Petersson}
\text{I}(\phi)=\int_{[\text{GL}_2(\bb{A}_F)]}(\delta^r\breve{\msf{g}})^u\otimes(\breve{\msf{f}}^\frak{J})^u\text{d}^\times x=\left\langle\zeta^*\left(\delta^r\breve{\msf{g}}\right),\breve{\msf{f}}^*\right\rangle
\end{equation}
where $\breve{\msf{f}}^*=\overline{\big(\breve{\msf{f}}^\frak{J}\big)}$ is the cuspform in $S_{k,w}(M,F;E)$ whose Fourier coefficients are complex conjugates of those of $\breve{\msf{f}}$. We conclude the section with a proposition showing that a good trascendental period for the central $L$-value of the twisted triple product $L$-function is the Petersson norm of the eigenform $\msf{f}^*$.

\begin{proposition}\label{prop: rationality}
Let $E$ be a number field and let $\breve{\msf{f}}\in S_{k,w}(M;E)$ be a vector in an irreducible cuspidal automorphic representation $\sigma$ spanned by a primitive cuspform $\msf{f}\in S_{k,w}(\frak{N};E)$. Then for any $\phi\in S_{k,w}(M;E)$ the Petersson inner product $\langle\phi,\breve{\msf{f}}\rangle$ is a $E$-rational multiple of $\langle\msf{f},\msf{f}\rangle$.
\end{proposition}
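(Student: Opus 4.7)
The plan is to reduce to a pairing inside the $\sigma$-isotypic component and then to exhibit an $E$-rational basis on which the Petersson form is explicitly computable. Let $\bb{T}_E$ be the $E$-subalgebra of $\text{End}_E(S_{k,w}(M;E))$ generated by the Hecke operators $T_0(\frak{q})$ for primes $\frak{q}\nmid M$. Since $\msf{f}\in S_{k,w}(\frak{N};E)$, its Hecke eigenvalues at all such $\frak{q}$ lie in $E$, so the system of eigenvalues of $\msf{f}$ defines an $E$-algebra homomorphism $\lambda_\msf{f}:\bb{T}_E\to E$, and hence an $E$-rational idempotent $e_\sigma\in\bb{T}_E\otimes_E\mathbf{K}$ (for a suitable finite extension $\mathbf{K}/E$), which one checks actually lies in $\bb{T}_E$. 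I would first write $\phi=e_\sigma\phi+(1-e_\sigma)\phi$; using the self-adjointness (up to diamond operators with eigenvalues in $E$) of the $T_0(\frak{q})$'s with respect to the Petersson pairing, the summand $(1-e_\sigma)\phi$ is orthogonal to every Hecke eigenvector with eigensystem $\lambda_\msf{f}$, in particular to $\breve{\msf{f}}$. Thus one may replace $\phi$ by $e_\sigma\phi\in V_\sigma:=e_\sigma S_{k,w}(M;E)$, which is a finite-dimensional $E$-subspace containing $\breve{\msf{f}}$.

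Next I would produce an explicit $E$-basis of $V_\sigma$. By strong multiplicity one for $\text{GL}_2/F$, the base change $V_\sigma\otimes_E\bar{E}$ is naturally isomorphic to the space of $U_0(M)$-fixed vectors in the finite part of $\sigma$, and the latter is spanned by the operators $V(d)$ (and, at primes dividing $\frak{N}$, by Atkin-Lehner-type operators) applied to the newform $\msf{f}$; since all of these level-raising operators are defined over $E$ and act on $E$-rational forms, a basis $\{\msf{f}_i=\msf{f}_{\lvert\mathsf{A}_i}\}_{i=1}^r$ of $V_\sigma$ with $\mathsf{A}_i\in\bb{T}_E\cdot\bb{Z}[V(d):d\mid M/\frak{N}]$ can be chosen rational over $E$. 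Expand
\[
e_\sigma\phi=\sum_{i=1}^r a_i\,\msf{f}_i,\qquad \breve{\msf{f}}=\sum_{j=1}^r b_j\,\msf{f}_j,\qquad a_i,b_j\in E,
\]
which gives
\[
\langle\phi,\breve{\msf{f}}\rangle=\sum_{i,j} a_i\overline{b_j}\,\langle\msf{f}_i,\msf{f}_j\rangle.
\]
It therefore suffices to prove that each Gram matrix entry $\langle\msf{f}_i,\msf{f}_j\rangle/\langle\msf{f},\msf{f}\rangle$ belongs to $E$.

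For this last step I would argue locally. After unwinding the adelic Petersson integral, $\langle\msf{f}_i,\msf{f}_j\rangle/\langle\msf{f},\msf{f}\rangle$ is a product over the primes dividing $M/\frak{N}$ of local pairings of the form $\langle\sigma_v(\mathsf{A}_i)\phi_v^\circ,\sigma_v(\mathsf{A}_j)\phi_v^\circ\rangle_v/\langle\phi_v^\circ,\phi_v^\circ\rangle_v$, where $\phi_v^\circ$ is the local newvector and $\langle\cdot,\cdot\rangle_v$ is the unique-up-to-scalar local invariant bilinear form on $\sigma_v\otimes\widetilde{\sigma_v}$. These ratios are computed explicitly (for unramified principal series and special/supercuspidal cases) as polynomial expressions in the Satake or Langlands parameters attached to $\sigma_v$, which are themselves polynomial in the Hecke eigenvalues of $\msf{f}$ at $v$; since those eigenvalues lie in $E$, each local ratio lies in $E$, and hence so does the product.

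The main technical obstacle is the last paragraph: while conceptually the unicity of the local invariant pairing forces the ratio to be $\text{Aut}(\bb{C}/E)$-invariant, turning this into a clean $E$-rationality statement rigorously requires either the explicit local computations at the places dividing $M/\frak{N}$, or an equivariance argument invoking a suitable $E$-rational model of $\sigma_v^{U_0(M_v)}$ together with compatibility of the Petersson form with Galois action on Hecke eigenvalues. Either path is standard (going back to work of Shimura on algebraicity of Petersson inner products), but it is where the bulk of the verification lies.
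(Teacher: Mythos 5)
Your first two steps track the paper's quite closely: reduce to the $\sigma$-isotypic component $e_\msf{f}S_{k,w}(M;E)$, and exhibit it as the $E$-span of translates of the newform (the paper uses the right translates $\msf{f}_\frak{a}(x)=\msf{f}(xs_\frak{a})$, $\frak{a}\mid M/\frak{N}$, rather than your $V(d)$-language, but these span the same space). The divergence is in the final step, where the paper takes a more elementary global route: it proves $\langle\msf{f}_{\frak{a}_1},\msf{f}_{\frak{a}_2}\rangle/\langle\msf{f},\msf{f}\rangle\in E$ by induction on the prime divisors of $\frak{a}_1,\frak{a}_2$. If a prime $\frak{p}$ divides both, translation-invariance of the Haar measure gives $\langle\msf{f}_{\frak{a}_1},\msf{f}_{\frak{a}_2}\rangle=\langle\msf{f}_{\frak{a}_1/\frak{p}},\msf{f}_{\frak{a}_2/\frak{p}}\rangle$; otherwise, moving $T_0(\varpi_\frak{p})$ (or $U_0(\varpi_\frak{p})$) across the pairing yields an identity of the form $\msf{a}(\varpi_\frak{p},\msf{f})\langle\msf{f}_{\frak{a}_1},\msf{f}_{\frak{a}_2}\rangle=(q_\frak{p}+1)\langle\msf{f}_{\frak{a}_1},\msf{f}_{\frak{a}_2/\frak{p}}\rangle$, and dividing closes the induction. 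Only Hecke relations and adjointness enter; no local representation theory is needed.

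Your proposed final step — factor the Gram ratio into local pairings over $v\mid M/\frak{N}$ and invoke Shimura-style algebraicity of local inner-product ratios — is a legitimate route, and it is arguably more conceptual because it isolates exactly where the nontriviality lives. But as you yourself flag, it punts on the essential content (the explicit local computations or an equivariance argument), so as written the proof is not complete. There is also a small imprecision worth fixing if you pursue it: the Satake parameters are not individually polynomial in the Hecke eigenvalues — they are the two conjugate roots of a quadratic with coefficients in $E$ — so you must additionally observe that the local ratio is symmetric in those roots before concluding it lies in $E$. The paper's inductive Hecke identity is, in effect, the closed form of precisely that symmetric local computation, carried out globally, and is the shorter path to the same conclusion.
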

\begin{proof}
The Petersson inner product $\langle\breve{\msf{f}},\phi\rangle$ depends only on the projection $e_\msf{f}\phi$ of $\phi$ to $\sigma$. The $E$-vector space $e_\msf{f}S_{k,w}(M;E)$ is spanned by the cuspforms  
\[\left\{\msf{f}_{\frak{a}}\lvert\ \msf{f}_{\frak{a}}(x)=\msf{f}(xs_\frak{a})\right\}_{\frak{a}\mid\frac{M}{\frak{N}}},
\qquad s_\frak{a}=\begin{pmatrix}
1&0\\
0& a
\end{pmatrix},\ \ a\cal{O}_F=\frak{a}
\] for all ideals $\frak{a}$ dividing $M/\frak{N}$ (\cite{Shimura} Proposition 2.3). Thus, it suffices to prove the statement for $\msf{f}_{\frak{a}_1}$ and $\msf{f}_{\frak{a}_2}$ when $\frak{a}_1,\frak{a}_2\mid M/\frak{N}$. We prove the claim by induction on the prime divisors of $\frak{a}_1,\frak{a}_2$. If  $\frak{a}_1\frak{a}_2=\cal{O}_F$ then the claim is clear. Suppose there is a prime ideal $\frak{p}$ that divides both $\frak{a}_1$ and $\frak{a}_2$, then $\langle\msf{f}_{\frak{a}_1},\msf{f}_{\frak{a}_2}\rangle=\langle\msf{f}_{\frak{a}_1/\frak{p}},\msf{f}_{\frak{a}_2/\frak{p}}\rangle$ because the Haar measure is invariant under translation. Thus, without loss of generality, we can assume $\frak{p}$ divides $\frak{a}_2$ but not $\frak{a}_1$. We compute the equalities \[
\msf{a}(\varpi_\frak{p},\msf{f})\langle\msf{f}_{\frak{a}_1},\msf{f}_{\frak{a}_2}\rangle=\begin{cases}
\langle T_0(\varpi_\frak{p})\msf{f}_{\frak{a}_1},\msf{f}_{\frak{a}_2}\rangle=(q_\frak{p}+1)\langle\msf{f}_{\frak{a}_1},\msf{f}_{\frak{a}_2/\frak{p}}\rangle \qquad &\text{if}\ \frak{p}\nmid\frak{N}\\
\langle U_0(\varpi_\frak{p})\msf{f}_{\frak{a}_1},\msf{f}_{\frak{a}_2}\rangle=q_\frak{p}\langle\msf{f}_{\frak{a}_1},\msf{f}_{\frak{a}_2/\frak{p}}\rangle\qquad &\text{if}\ \frak{p}\mid\frak{N},
\end{cases}\]
that show \[
\langle\msf{f}_{\frak{a}_1},\msf{f}_{\frak{a}_2}\rangle=\begin{cases}
\frac{\msf{a}(\varpi_\frak{p},\msf{f})}{q_\frak{p}+1}\langle\msf{f}_{\frak{a}_1},\msf{f}_{\frak{a}_2/\frak{p}}\rangle\qquad \text{if}\ \frak{p}\nmid\frak{N}\\
\\
\frac{\msf{a}(\varpi_\frak{p},\msf{f})}{q_\frak{p}}\langle\msf{f}_{\frak{a}_1},\msf{f}_{\frak{a}_2/\frak{p}}\rangle\qquad \text{if}\ \frak{p}\mid\frak{N},
\end{cases}\]
concluding the inductive step.
\end{proof}
\subsubsection{Construction.}
 We choose a rational prime $p$ unramified in $L$, coprime to the levels $\frak{Q},\frak{N}$, and $\Theta_{L/F}$ an element in $\prod_{\tau\in I_F}\{\mu\in I_L\lvert\ \mu_{\lvert F}=\tau\}$. The fixed $p$-adic embedding $\iota_p:\overline{\bb{Q}}\hookrightarrow\overline{\bb{Q}}_p$ allows us to associate to $\Theta_{L/F}$ a set $\cal{P}$ of prime $\cal{O}_L$-ideals above $p$. Moreover, we suppose $\msf{g}$ and $\msf{f}$ to be $p$-nearly ordinary and we denote by $\cal{G}$ (resp. $\cal{F}$) the Hida family with coefficients in $\mathbf{I}_\cal{G}$ (resp. $\mathbf{I}_\cal{F}$) passing through the nearly ordinary $p$-stabilization of $\msf{g}$ (resp. $\msf{f}$).

We set $\mathbf{K}_{\cal{G}}=\mathbf{I}_{\cal{G}}\otimes \bb{Q}$ (resp. $\mathbf{K}_{\cal{F}}=\mathbf{I}_{\cal{F}}\otimes\bb{Q}$) and we define a $\mathbf{K}_{\cal{G}}$-adic cuspform $\breve{\cal{G}}$ (resp. $\mathbf{K}_{\cal{F}}$-adic cuspform $\breve{\cal{F}}$) passing through the nearly ordinary $p$-stabilization of the test vectors $\breve{\msf{g}},\breve{\msf{f}}$ as in \cite{DR} Section 2.6.  Writing $\mathbf{R}_{\cal{G}}=O[[\cal{O}_{L,\cal{P}}^\times]]\otimes_O\mathbf{K}_{\cal{G}}$, we define a homomorphism of $\mathbf{A}_L$-modules $d^{\bfcdot}\breve{\cal{G}}^{[\cal{P}]}:\mathbf{h}_L(M;O)\to\mathbf{R}_{\cal{G}}$  by
\[
d^{\bfcdot}\breve{\cal{G}}^{[\cal{P}]}\left(\langle z\rangle\mathbf{T}(y)\right)=\begin{cases}[y_\cal{P}]\otimes\breve{\cal{G}}\left(\langle z\rangle\mathbf{T}(y)\right)\qquad &\text{if}\ y_\cal{P}\in\cal{O}_{L,\cal{P}}^\times\\
0\qquad&\text{otherwise}
\end{cases}
\]
where $\mathbf{R}_{\cal{G}}$ is given the $\mathbf{A}_L$-algebra structure defined by $[(z,a)]\mapsto[a_\cal{P}^{-1}]\otimes\breve{\cal{G}}(\langle z\rangle T(a^{-1},1))$. The nearly ordinary $\mathbf{R}_{\cal{G}}$-adic cuspform $e_{\text{n.o.}}\zeta^*\big(d^{\bfcdot}\breve{\cal{G}}^{[\cal{P}]}\big):\mathbf{h}^\text{n.o.}_F(M;O)\to\mathbf{R}_{\cal{G}}$ over $F$  is given by the composition
\[\xymatrix{
\mathbf{h}^\text{n.o.}_F(M;O)\ar[r]&\mathbf{h}_F(M;O)\ar[r]^\zeta&\mathbf{h}_L(M;O)\ar[rr]^{d^{\bfcdot}\breve{\cal{G}}^{[\cal{P}]}}&&\mathbf{R}_{\cal{G}}.
}\]


\begin{definition}
For a Hida family $\cal{F}\in\overline{\mathbf{S}}^\text{n.o.}(\frak{N};\mathbf{I}_\cal{F})$, we define the set of \emph{crystalline points} $\Sigma_\text{cry}(\mathbf{I}_\cal{F})$ to be the subset of arithmetic points $\text{Q}\in\cal{A}(\mathbf{I}_\cal{F})$ of trivial characters and such that the specialization of the Hida family is old at $p$, i.e., $\cal{F}_\text{Q}=\msf{f}_\text{Q}^{(p)}$ for some eigenform $\msf{f}_\text{Q}$ of prime-to-$p$ level. 
\end{definition}
\begin{lemma}\label{existence}
Let $\breve{\cal{F}}\in e_\cal{F}\overline{\mathbf{S}}^\text{n.o.}(M;\mathbf{K}_\cal{F})$ be a $\mathbf{K}_\cal{F}$-adic cuspform in the $\cal{F}$-isotypic part of $\overline{\mathbf{S}}^\text{n.o.}(M;\mathbf{K}_\cal{F})$. For any $\mathbf{A}_F$-algebra $\mathbf{R}$ and  $\mathbf{R}$-adic cuspform $\cal{H}\in\overline{\mathbf{S}}^\text{n.o.}(M;\mathbf{R})$, there exists an element $J(\cal{H},\breve{\cal{F}})\in\mathbf{R}\otimes_{\mathbf{A}_F}\text{Frac}(\mathbf{K}_{\cal{F}})$ such that for all $(\text{P},\text{Q})\in\cal{A}(\mathbf{R})\times_{\cal{A}(\mathbf{A}_F)}\Sigma_\text{cry}(\mathbf{I}_{\cal{F}})$ we have
\begin{equation}\label{eq: expression}
J(\cal{H},\breve{\cal{F}})(\text{P},\text{Q})=\frac{\big\langle \msf{h}_{\text{P},\text{Q}}^{(p)},\breve{\msf{f}}^{(p)}_\text{Q}\big\rangle}{\big\langle\msf{f}_\text{Q}^{(p)},\msf{f}_\text{Q}^{(p)}\big\rangle},
\end{equation}
where $\msf{h}_{\text{P},\text{Q}}^{(p)}$ denotes the $\msf{f}^{(p)}_\text{Q}$-isotypic projection of the cuspform  $\cal{H}_\text{P}$. The projection is the $p$-stabilization of a cuspform $\msf{h}_{\text{P},\text{Q}}$.
\end{lemma}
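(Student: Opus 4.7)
The plan is to realize $J(\cal{H},\breve{\cal{F}})$ as the value of $\cal{H}$ on a canonical Hecke element $\eta_{\breve{\cal{F}}}$ lying in the $\cal{F}$-isotypic component of the nearly ordinary tame-level-$M$ Hecke algebra, extended to $\text{Frac}(\mathbf{K}_{\cal{F}})$-coefficients. In other words, I would re-express the Petersson ratio appearing in $(\ref{eq: expression})$ as an algebraic operation in Hecke theory that varies $p$-adically over $\text{Spec}(\mathbf{I}_{\cal{F}})$. Since $\mathbf{h}^{\text{n.o.}}(M;O)$ is reduced and finite over $\mathbf{A}_F$ (as in the Remark after Definition $\ref{def: I-adic cuspforms}$), the extension $\mathbf{h}^{\text{n.o.}}(M;O)\otimes_{\mathbf{A}_F}\mathbf{K}_{\cal{F}}$ is a finite product of field extensions of $\mathbf{K}_{\cal{F}}$, one of which corresponds to $\cal{F}$; let $e_{\cal{F}}$ be the associated idempotent and set $\mathbf{T}_{\cal{F}}=e_{\cal{F}}\bigl(\mathbf{h}^{\text{n.o.}}(M;O)\otimes_{\mathbf{A}_F}\mathbf{K}_{\cal{F}}\bigr)$. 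Hida's $p$-adic duality, extended to $\mathbf{K}_{\cal{F}}$-coefficients, yields a perfect pairing $e_{\cal{F}}\overline{\mathbf{S}}^{\text{n.o.}}(M;\mathbf{K}_{\cal{F}})\times\mathbf{T}_{\cal{F}}\to\mathbf{K}_{\cal{F}}$ defined by $(\msf{f},h)\mapsto\msf{a}_p(1,\msf{f}_{\lvert h})$, under which both $\cal{F}$ and $\breve{\cal{F}}$ give $\mathbf{K}_{\cal{F}}$-linear functionals on $\mathbf{T}_{\cal{F}}$.

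The next step is to construct $\eta_{\breve{\cal{F}}}\in\mathbf{T}_{\cal{F}}\otimes_{\mathbf{K}_{\cal{F}}}\text{Frac}(\mathbf{K}_{\cal{F}})$ whose specialization at each crystalline $\text{Q}\in\Sigma_{\text{cry}}(\mathbf{I}_{\cal{F}})$ acts on the $\msf{f}_\text{Q}^{(p)}$-isotypic component of $S_{k,w}^{\text{n.o.}}(Mp^{\alpha};\overline{\bb{Q}})$ as the operator $\phi\mapsto\langle\phi,\breve{\msf{f}}_\text{Q}^{(p)}\rangle\bigl/\langle\msf{f}_\text{Q}^{(p)},\msf{f}_\text{Q}^{(p)}\rangle$. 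At each fixed $\text{Q}$ such an operator exists classically because Proposition $\ref{prop: rationality}$ expresses every Petersson ratio $\langle\msf{f}_{\text{Q},\mathfrak{a}}^{(p)},\breve{\msf{f}}_\text{Q}^{(p)}\rangle/\langle\msf{f}_\text{Q}^{(p)},\msf{f}_\text{Q}^{(p)}\rangle$ as an explicit rational function in the Hecke eigenvalues $\msf{a}(\varpi_\mathfrak{p},\msf{f}_\text{Q})$ at primes $\mathfrak{p}\mid M/\mathfrak{N}$; since these eigenvalues are the values of $\cal{F}$ on the corresponding Hecke operators, they depend $\mathbf{K}_{\cal{F}}$-adically on $\text{Q}$, and reassembling them through the duality above produces the desired $\eta_{\breve{\cal{F}}}$. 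I then define $J(\cal{H},\breve{\cal{F}}):=\cal{H}(\eta_{\breve{\cal{F}}})\in\mathbf{R}\otimes_{\mathbf{A}_F}\text{Frac}(\mathbf{K}_{\cal{F}})$, after extending $\cal{H}$ $\mathbf{A}_F$-linearly to the Hecke algebra with $\mathbf{K}_{\cal{F}}$-coefficients.

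The interpolation identity $(\ref{eq: expression})$ is then obtained by unwinding the construction at $(\text{P},\text{Q})\in\cal{A}(\mathbf{R})\times_{\cal{A}(\mathbf{A}_F)}\Sigma_{\text{cry}}(\mathbf{I}_{\cal{F}})$: the $\cal{F}$-component $e_{\cal{F}}\cal{H}_\text{P}$ specializes to the projection $\msf{h}_{\text{P},\text{Q}}^{(p)}$, and applying $\eta_{\breve{\cal{F}}}(\text{Q})$ recovers the normalized Petersson pairing against $\breve{\msf{f}}_\text{Q}^{(p)}$. The main obstacle is the coherent assembly of the pointwise classical formulas coming from Proposition $\ref{prop: rationality}$ into a well-defined element of $\mathbf{T}_{\cal{F}}\otimes_{\mathbf{K}_{\cal{F}}}\text{Frac}(\mathbf{K}_{\cal{F}})$: concretely, this amounts to inverting the Gram matrix of the Petersson pairing in the $\cal{F}$-isotypic oldform basis $\{[\mathfrak{a}]\cal{F}\}_{\mathfrak{a}\mid M/\mathfrak{N}}$, whose determinant is a congruence-number-like element of $\mathbf{K}_{\cal{F}}$ that is generically non-zero but only invertible after passing to $\text{Frac}(\mathbf{K}_{\cal{F}})$, which explains the appearance of the fraction field in the statement.
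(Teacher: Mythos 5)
Your argument is correct and rests on precisely the same computational input as the paper's proof: the explicit rationality, coming from Proposition~\ref{prop: rationality}, of the Petersson ratios $\langle\msf{f}_{\frak{a}_1,\text{Q}}^{(p)},\msf{f}_{\frak{a}_2,\text{Q}}^{(p)}\rangle/\langle\msf{f}_\text{Q}^{(p)},\msf{f}_\text{Q}^{(p)}\rangle$ as functions of Hecke eigenvalues at primes dividing $M/\frak{N}$, together with $p\nmid M$. The difference is one of packaging. The paper expands both $\breve{\cal{F}}$ and $e_{\cal{F}}\cal{H}$ as $\mathbf{R}\otimes_{\mathbf{A}_F}\mathbf{K}_{\cal{F}}$-linear combinations of the oldforms $\cal{F}_{\frak{a}}$ and interpolates the Gram entries directly; you instead dualize, encoding the normalized Petersson pairing against $\breve{\cal{F}}$ as a Hecke element $\eta_{\breve{\cal{F}}}$ in the $\cal{F}$-component of the Hecke algebra and setting $J(\cal{H},\breve{\cal{F}})=\cal{H}(\eta_{\breve{\cal{F}}})$. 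Your version makes the necessity of $\text{Frac}(\mathbf{K}_{\cal{F}})$ (inverting the Gram determinant) more visible and cleanly isolates the linearity in $\cal{H}$, but both reduce to the same rationality statements. One small imprecision: $\mathbf{h}^{\text{n.o.}}(M;O)\otimes_{\mathbf{A}_F}\mathbf{K}_{\cal{F}}$ is not a product of fields, since $\mathbf{K}_{\cal{F}}=\mathbf{I}_{\cal{F}}\otimes\bb{Q}$ is a domain but not a field; the idempotent $e_{\cal{F}}$ is only defined over $\mathbf{L}=\text{Frac}(\mathbf{A}_F)$ or $\text{Frac}(\mathbf{K}_{\cal{F}})$, which is consistent with where your $\eta_{\breve{\cal{F}}}$ ends up living.
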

\begin{proof}
We follow the same argument of \cite{DR} Lemma 2.19. Both $\breve{\cal{F}}$ and the $\cal{F}$-isotypic projection $e_\cal{F}\cal{H}$ are $\mathbf{R}\otimes_{\mathbf{A}_F}\mathbf{K}_\cal{F}$-linear combinations of the $\mathbf{K}_\cal{F}$-adic cuspforms $\cal{F}_\frak{a}$ for $\frak{a}\mid M/\frak{N}$; hence, the element $J(\cal{H},\breve{\cal{F}})$ exists because we can interpolate  expressions of the form 
\[
\big\langle\msf{f}_{\frak{a}_1, \text{Q}}^{(p)},\msf{f}_{\frak{a}_2, \text{Q}}^{(p)}\big\rangle/\big\langle\msf{f}_{ \text{Q}}^{(p)},\msf{f}_{\text{Q}}^{(p)}\big\rangle
\]
for $\text{Q}\in\Sigma_\text{cry}(\mathbf{I}_\cal{F})$ because of the explicit computations in the proof of Proposition $\ref{prop: rationality}$ and the fact that $M$ is prime to $p$. 
\end{proof}
\begin{definition}
The twisted triple product $p$-adic $L$-function attached to the couple $(\breve{\cal{G}},\breve{\cal{F}})$ adic cuspforms and the element $\Theta_{L/F}\in\prod_{\tau\in I_F}\{\mu\in I_L\lvert\ \mu_{\lvert F}=\tau\}$ is
\[
\mathscr{L}_p\big(\breve{\cal{G}},\breve{\cal{F}}\big)=J\big(e_{\text{n.o.}}\zeta^*\big(d^{\bfcdot}\breve{\cal{G}}^{[\cal{P}]}\big),\breve{\cal{F}}^*\big)\in \mathbf{R}_{\cal{G}}\otimes_{\mathbf{A}_F}\text{Frac}(\mathbf{K}_{\cal{F}}),
\] 
where $\breve{\cal{F}}^*=\breve{\cal{F}}\otimes\psi_\msf{\msf{f}}^{-1}$ is the $\mathbf{K}_\cal{F}$-adic cuspform that satisfies $(\breve{\cal{F}}^*)_\text{Q}=(\breve{\cal{F}}_\text{Q})^*$ for all $\text{Q}\in\Sigma_\text{cry}(\mathbf{I}_\cal{F})$.
\end{definition}
The twisted triple product $p$-adic $L$-function takes values
\begin{equation}\label{eq: value p-adic}
\mathscr{L}_p\big(\breve{\cal{G}},\breve{\cal{F}}\big)(\text{P},\text{Q})=\frac{\big\langle e_\text{n.o.}\zeta^*\big(d^{r}\breve{\msf{g}}_\text{P}^{(p),[\cal{P}]}\big), \breve{\msf{f}}_\text{Q}^{*(p)}\big\rangle}{\big\langle \msf{f}_\text{Q}^{*(p)}, \msf{f}_\text{Q}^{*(p)}\big\rangle}=\frac{\big\langle e_\text{n.o.}\zeta^*\big(d^{r}\breve{\msf{g}}_\text{P}^{[\cal{P}]}\big), \breve{\msf{f}}_\text{Q}^{*(p)}\big\rangle}{\big\langle \msf{f}_\text{Q}^{*(p)}, \msf{f}_\text{Q}^{*(p)}\big\rangle}
\end{equation}
at all $(\text{P},\text{Q})\in\Sigma^\msf{f}_\text{cry}$, where $r\in\bb{Z}[I_L]$ is the unique element, with $r_\mu=0$ if $\mu\notin\Theta_{L/F}$, that makes the weights of $e_\text{n.o.}\zeta^*(d^r\breve{\msf{g}}^{[\cal{P}]}_\text{P})$ and $\breve{\msf{f}}^*_\text{Q}$ match. The second equality of $(\ref{eq: value p-adic})$ follows from Lemma $\ref{thor}$ which shows that $e_\text{n.o.}\zeta^*\big(d^{r}\breve{\msf{g}}_\text{P}^{(p),[\cal{P}]}\big)=e_\text{n.o.}\zeta^*\big(d^{r}\breve{\msf{g}}_\text{P}^{[\cal{P}]}\big)$.

Let $\msf{h}_{\text{P},\text{Q}}=e_{\msf{f}_\text{Q}^*,\text{n.o.}}\zeta^*(d^r \breve{\msf{g}}_\text{P}^{[\cal{P}]})$ with nearly ordinary $p$-stabilization $\msf{h}_{\text{P},\text{Q}}^{(p)}=(1-\beta_{\msf{f}_\text{Q}^*}V(p))\msf{h}_\text{P,Q}$. By definition $e_\text{n.o.}\msf{h}_\text{P,Q}^{(p)}=\msf{h}_\text{P,Q}^{(p)}$, that results in the equality
$\msf{h}_\text{P,Q}^{(p)}=e_\text{n.o.}\msf{h}_\text{P,Q}^{(p)}=(1-\beta_{\msf{f}_\text{Q}^*}\alpha_{\msf{f}_\text{Q}^*}^{-1})e_\text{n.o.}\msf{h}_\text{P,Q}.$
More explicitly, if we set $\msf{E}(\msf{f}_\text{Q}^*)=(1-\beta_{\msf{f}_\text{Q}^*}\alpha_{\msf{f}_\text{Q}^*}^{-1}),$
		\[\left(e_{\msf{f}_\text{Q}^*,\text{n.o.}}\zeta^*(d^r \breve{\msf{g}}_\text{P}^{[\cal{P}]})\right)^{(p)}=\msf{E}(\msf{f}_\text{Q}^*)\cdot e_{\msf{f}_\text{Q}^*,\text{n.o.}}\zeta^*(d^r \breve{\msf{g}}_\text{P}^{[\cal{P}]})
	\]
that allows to rewrite the values of the $p$-adic $L$-function at every $(\text{P},\text{Q})\in\Sigma^\msf{f}_\text{cry}$ as
	\begin{equation}\label{eq: second expression p-adic L-function}
		\mathscr{L}_p\big(\breve{\cal{G}},\breve{\cal{F}}\big)(\text{P},\text{Q})=\frac{1}{\msf{E}(\msf{f}_\text{Q}^*)}\frac{\big\langle e_\text{n.o.}\zeta^*\big(d^{r}\breve{\msf{g}}_\text{P}^{[\cal{P}]}\big), \breve{\msf{f}}_\text{Q}^{*}\big\rangle}{\big\langle \msf{f}_\text{Q}^{*}, \msf{f}_\text{Q}^{*}\big\rangle}.
	\end{equation}

\subsection{Interpolation formulas}
The interpolation formulas satisfied by the twisted triple product $p$-adic $L$-function include Euler factors that depend on whether the primes in $\cal{P}$ are above a prime of $F$ that is split or inert in the extension $L/F$. We denote by $\cal{Q}$ the set of primes of $F$  above $p$ and we partition it, $\cal{Q}=\cal{Q}_\text{inert}\coprod\cal{Q}_\text{split}$, accordingly to the splitting behavior in $L/F$. For every prime $\cal{O}_F$-ideal $\wp\in\cal{Q}$  we denote by $q_\wp$ the cardinality of its residue field.

\paragraph{Inert case.} For a prime ideal $\frak{p}\in\cal{P}$  with $\wp=\frak{p}\cap\cal{O}_F\in\cal{Q}_\text{inert}$, we write $\frak{p}=\wp\cal{O}_L$.
\begin{lemma}\label{lemma: inert}
Let $\frak{N}$ be an $\cal{O}_F$-ideal  coprime to $p$, and $\wp\mid p$ a prime $\cal{O}_F$-ideal inert in $L/F$. Let $(\ell,x)\in\bb{Z}[I_L]^2$, $(k,w)\in\bb{Z}[I_F]^2$ and $r\in\bb{N}[I_L]$ be such that $(k,w)=((\ell+2r)_{\lvert F},(x+r)_{\lvert F})$. Let $\msf{g}\in S_{\ell,x}(\frak{N},L;E)$ be an eigenvector for the Hecke operator $T(\frak{p})$ and let $\msf{f}\in S_{k,w}(\frak{N},F;E)$ be a $p$-nearly ordinary  eigenform. If we denote by $e_{\msf{f},\text{n.o.}}=e_\msf{f}e_\text{n.o.}$ the composition of the $\msf{f}$-isotypic projection with the nearly ordinary projector, we have 
\[
e_{\msf{f},\text{n.o.}}\zeta^*\big(d^r\msf{g}^{[\frak{p}]}\big)=\cal{E}_\wp(\msf{g},\msf{f})e_{\msf{f},\text{n.o.}}\zeta^*\left(d^r\msf{g}\right)\qquad \text{for}\qquad \cal{E}_\wp(\msf{g},\msf{f})=\big(1-\alpha_\msf{g}\alpha_{\msf{f}}^{-1}q_\wp^{-1}\big)\big(1-\beta_\msf{g}\alpha_{\msf{f}}^{-1}q_\wp^{-1}\big),
\]
where $\varpi_\frak{p}^{-v}\alpha_\msf{g},\varpi_\frak{p}^{-v}\beta_\msf{g}$ are the inverses of the roots of the Hecke polynomial $1-\mathsf{a}(\frak{p},\msf{g})X+\text{N}_{L/\bb{Q}}(\frak{p})\psi_{\msf{g},0}(\frak{p})X^2$ of $\msf{g}$  with respect to $T_0(\frak{p})$ and $\alpha_\msf{f}$ is determined by $(e_\text{n.o.}\msf{f})_{\lvert U(\varpi_\wp)}=\alpha_\msf{f}\cdot e_\text{n.o.}\msf{f}$.
\end{lemma}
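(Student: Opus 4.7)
The plan is to begin from the identity
\[\msf{g}^{[\frak{p}]}=\msf{g}-V(\varpi_\frak{p})U(\varpi_\frak{p})\msf{g}\]
and eliminate $U(\varpi_\frak{p})\msf{g}$ using the Hecke action on $\msf{g}$. From the $q$-expansion formulas recorded earlier, the decomposition $T_0(\frak{p})=U_0(\varpi_\frak{p})+\text{N}_{L/\bb{Q}}(\frak{p})\{\varpi_\frak{p}^{-v}\}V(\varpi_\frak{p})\langle\frak{p}\rangle_0$ holds on forms of level prime to $\frak{p}$; applied to the eigenform $\msf{g}$, it yields
\[U_0(\varpi_\frak{p})\msf{g}=\mathsf{a}(\frak{p},\msf{g})\msf{g}-\text{N}_{L/\bb{Q}}(\frak{p})\psi_{\msf{g},0}(\frak{p})\{\varpi_\frak{p}^{-v}\}V(\varpi_\frak{p})\msf{g},\]
so that $V(\varpi_\frak{p})U(\varpi_\frak{p})\msf{g}$ becomes an explicit linear combination of $V(\varpi_\frak{p})\msf{g}$ and $V(\varpi_\frak{p})^{2}\msf{g}$.

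Next I apply $d^r$, $\zeta^*$ and $e_{\msf{f},\text{n.o.}}$ in sequence. A direct check on $p$-adic Fourier coefficients shows that $d^r$ commutes with $V(\varpi_\frak{p})$: the weight shift by $2r$ produced by $d^r$ is compensated exactly by the weight-dependent factor $\varpi_{\frak{p},p}^{-v}$ in the definition of $V(\varpi_\frak{p})$. The diagonal restriction $\zeta^*$ converts each $V(\varpi_\frak{p})$ on the $L$-side into $V(\varpi_\wp)$ on the $F$-side, multiplied by $q_\wp^{-1}$, by Proposition \ref{prop: V operator} applied with $[L:F]=2$ (in the inert case $\varpi_\frak{p}$ may be chosen equal to $\varpi_\wp$). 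The universal relation $U_0(\varpi_\wp)V(\varpi_\wp)=\{\varpi_\wp^{-v}\}\cdot\text{id}$ on $p$-adic cuspforms, together with the fact that $U_0(\varpi_\wp)$ acts as $\alpha_\msf{f}$ on the image of $e_{\msf{f},\text{n.o.}}$, forces the operator identity
\[e_{\msf{f},\text{n.o.}}\circ V(\varpi_\wp)=\{\varpi_\wp^{-v}\}\alpha_\msf{f}^{-1}\cdot e_{\msf{f},\text{n.o.}},\]
and iterating, $e_{\msf{f},\text{n.o.}}\circ V(\varpi_\wp)^{2}=\{\varpi_\wp^{-v}\}^{2}\alpha_\msf{f}^{-2}\cdot e_{\msf{f},\text{n.o.}}$.

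Assembling the three operators yields
\[e_{\msf{f},\text{n.o.}}\zeta^*\bigl(d^r\msf{g}^{[\frak{p}]}\bigr)=\Bigl[1-C_1\,\mathsf{a}(\frak{p},\msf{g})\,\alpha_\msf{f}^{-1}q_\wp^{-1}+C_2\,\text{N}_{L/\bb{Q}}(\frak{p})\psi_{\msf{g},0}(\frak{p})\,\alpha_\msf{f}^{-2}q_\wp^{-2}\Bigr]\,e_{\msf{f},\text{n.o.}}\zeta^*(d^r\msf{g}),\]
where $C_1,C_2$ are explicit products of the normalization factors $\{\varpi_\frak{p}^{-v}\}$, $\{\varpi_\wp^{-v}\}$ and $\varpi_{\frak{p},p}^{v}$-type quantities accumulated from the three steps. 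By the lemma's normalization, $\varpi_\frak{p}^{-v}\alpha_\msf{g}$ and $\varpi_\frak{p}^{-v}\beta_\msf{g}$ are the inverse roots of $1-\mathsf{a}(\frak{p},\msf{g})X+\text{N}_{L/\bb{Q}}(\frak{p})\psi_{\msf{g},0}(\frak{p})X^{2}$, whence $\alpha_\msf{g}+\beta_\msf{g}=\varpi_{\frak{p},p}^{v}\mathsf{a}(\frak{p},\msf{g})$ and $\alpha_\msf{g}\beta_\msf{g}=\varpi_{\frak{p},p}^{2v}\text{N}_{L/\bb{Q}}(\frak{p})\psi_{\msf{g},0}(\frak{p})$; the bracket thus factors as $(1-\alpha_\msf{g}\alpha_\msf{f}^{-1}q_\wp^{-1})(1-\beta_\msf{g}\alpha_\msf{f}^{-1}q_\wp^{-1})=\cal{E}_\wp(\msf{g},\msf{f})$ precisely when $C_1=\varpi_{\frak{p},p}^{v}$ and $C_2=\varpi_{\frak{p},p}^{2v}$. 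The main obstacle is consequently not conceptual but computational: one must track every factor of the form $\{\varpi^v\}$ (generator of $\frak{p}^v\cal{V}$ used in the arithmetic $q$-expansion) and $\varpi_p^{v}$ ($p$-component of an idele, used in the $p$-adic $q$-expansion) through the preceding steps, and verify that they combine to reconstruct exactly the $\varpi_\frak{p}^{v}$-twist built into the definition of $\alpha_\msf{g}$ and $\beta_\msf{g}$.
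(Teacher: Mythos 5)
Your strategy---expand $\msf{g}^{[\frak{p}]}=\msf{g}-V(\varpi_\frak{p})U(\varpi_\frak{p})\msf{g}$, eliminate $U(\varpi_\frak{p})\msf{g}$ by the level-raising relation at $\frak{p}$, then push the result through $d^r$, $\zeta^*$ (Proposition $\ref{prop: V operator}$ with $[L:F]=2$ and $\varpi_\frak{p}=\varpi_\wp$) and $e_{\msf{f},\text{n.o.}}$---is viable and is in substance the same computation as the paper's, organized differently: the paper writes $\msf{g}=\tfrac{1}{\alpha_\msf{g}-\beta_\msf{g}}\big(\alpha_\msf{g}\msf{g}_{\alpha}^{(\frak{p})}-\beta_\msf{g}\msf{g}_{\beta}^{(\frak{p})}\big)$ in terms of the two $\frak{p}$-stabilizations, uses that both have depletion $\msf{g}^{[\frak{p}]}$, and applies $e_{\msf{f},\text{n.o.}}\zeta^*\circ V(\varpi_\frak{p})=\alpha_\msf{f}^{-1}q_\wp^{-1}\,e_{\msf{f},\text{n.o.}}\zeta^*$ once per stabilization, so the quadratic Euler factor appears already factored; your version produces it in expanded form in one shot. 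The real advantage of the paper's packaging is that it never leaves the variables $\alpha_\msf{g},\beta_\msf{g}$, so no normalization constants ever arise.

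That is exactly where your write-up falls short of a proof: it terminates with undetermined constants $C_1,C_2$ and the assertion that the lemma holds ``precisely when'' they take the right values, and the normalizations you do record are inconsistent with the lemma's conventions. Concretely, $\alpha_\msf{f}$ is defined as the $U(\varpi_\wp)$-eigenvalue, not the $U_0(\varpi_\wp)$-eigenvalue, and on $p$-adic $q$-expansions $U(\varpi_\wp)\circ V(\varpi_\wp)=\mathrm{id}$, so $e_{\msf{f},\text{n.o.}}\circ V(\varpi_\wp)=\alpha_\msf{f}^{-1}e_{\msf{f},\text{n.o.}}$ with no factor $\{\varpi_\wp^{-v}\}$; likewise $[\varpi_\frak{p}]=\{\varpi_\frak{p}^{v}\}V(\varpi_\frak{p})$, not $\{\varpi_\frak{p}^{-v}\}V(\varpi_\frak{p})$. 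All the bookkeeping you defer disappears if you phrase the level-raising relation directly in the lemma's variables, as the paper does in the proof of Lemma $\ref{lemma: split}$: since $\alpha_\msf{g}+\beta_\msf{g}=\varpi_{\frak{p},p}^{v}\,\msf{a}(\frak{p},\msf{g})$ and $\alpha_\msf{g}\beta_\msf{g}=\text{N}_{L/\bb{Q}}(\frak{p})\psi_\msf{g}(\frak{p})$, a prime-to-$p$ level eigenform satisfies $U(\varpi_\frak{p})\msf{g}=(\alpha_\msf{g}+\beta_\msf{g})\msf{g}-\alpha_\msf{g}\beta_\msf{g}V(\varpi_\frak{p})\msf{g}$, hence $\msf{g}^{[\frak{p}]}=\msf{g}-(\alpha_\msf{g}+\beta_\msf{g})V(\varpi_\frak{p})\msf{g}+\alpha_\msf{g}\beta_\msf{g}V(\varpi_\frak{p})^{2}\msf{g}$. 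Applying $d^r$ (which does commute with $V(\varpi_\frak{p})$ on $q$-expansions, as you note), then $\zeta^*$ and $e_{\msf{f},\text{n.o.}}$, yields the bracket $1-(\alpha_\msf{g}+\beta_\msf{g})\alpha_\msf{f}^{-1}q_\wp^{-1}+\alpha_\msf{g}\beta_\msf{g}\alpha_\msf{f}^{-2}q_\wp^{-2}=\cal{E}_\wp(\msf{g},\msf{f})$ directly, with no residual constants to verify; with that substitution your argument closes.
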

\begin{proof}
Let $\msf{g}_{\alpha}^{(\frak{p})}=(1-{\beta_{\msf{g}}}V(\varpi_\frak{p}))\msf{g}$, $\msf{g}_{\beta}^{(\frak{p})}=(1-{\alpha_\msf{g}}V(\varpi_\frak{p}))\msf{g}$ be the two $\frak{p}$-stabilizations of $\msf{g}$, they satisfy $U(\varpi_\frak{p})\msf{g}_{\bfcdot}^{(\frak{p})}=(\bfcdot) \msf{g}_{\bfcdot}^{(\frak{p})}$ and $\msf{g}=1/({\alpha_\msf{g}}-{\beta_\msf{g}})\big({\alpha_\msf{g}} \msf{g}_{\alpha}^{(\frak{p})}-{\beta_\msf{g}} \msf{g}_{\beta}^{(\frak{p})}\big)$.
Using Proposition $\ref{prop: V operator}$, we compute 
\[\begin{split}
e_{\msf{f},\text{n.o.}}\zeta^*\left[d^r\left(1-V(\varpi_\frak{p})\circ U(\varpi_\frak{p})\right)\msf{g}_{\bfcdot}^{(\frak{p})}\right]&= e_{\msf{f},\text{n.o.}}\zeta^*\left[\left(1-(\bfcdot) V(\varpi_\frak{p})\right)d^r\msf{g}_{\bfcdot}^{(\frak{p})}\right]\\
	&=e_{\msf{f},\text{n.o.}}\left(1-(\bfcdot) q_\wp^{-1}V(\varpi_\wp)\right)\zeta^*(d^r\msf{g}_{\bfcdot}^{(\frak{p})})\\
	&=\left(1-(\bfcdot) q_\wp^{-1}\alpha_\msf{f}^{-1}\right)e_{\msf{f},\text{n.o.}}\zeta^*(d^r\msf{g}_{\bfcdot}^{(\frak{p})}).
\end{split}\]
Noting that the $\frak{p}$-depletions of the $\frak{p}$-stabilizations are equal, $(\msf{g}_{\alpha}^{(\frak{p})})^{[\frak{p}]}=(\msf{g}_{\beta}^{(\frak{p})})^{[\frak{p}]}=\msf{g}^{[\frak{p}]}$, we deduce the claim:
\[\begin{split}
e_{\msf{f},\text{n.o.}}\zeta^*(d^r\msf{g})&=\frac{1}{{\alpha_\msf{g}}-{\beta_\msf{g}}}\left({\alpha_\msf{g}} e_{\msf{f},\text{n.o.}}\zeta^*(d^r\msf{g}_{\alpha}^{(\frak{p})})-{\beta_\msf{g}} e_{\msf{f},\text{n.o.}}\zeta^*(d^r\msf{g}_{\beta}^{(\frak{p})})\right)\\
	&=\frac{1}{\left(1-{\alpha_\msf{g}}{\alpha_\msf{f}}^{-1} q_\wp^{-1}\right)\left(1-{\beta_\msf{g}}{\alpha_\msf{f}}^{-1} q_\wp^{-1}\right)}e_{\msf{f},\text{n.o.}}\zeta^*(d^r\msf{g}^{[\frak{p}]}).
\end{split}\]
\end{proof}

\paragraph{Split case.}  For a prime ideal $\frak{p}\in\cal{P}$ with $\wp=\frak{p}\cap\cal{O}_F\in\cal{Q}_\text{split}$, we write $\wp\cal{O}_L=\frak{p}_1\frak{p}_2$.
\begin{lemma}\label{thor}
Let $\frak{N}$ be $\cal{O}_F$-ideal and $\msf{g}\in S_{\ell,x}(\frak{N}p,L;E)$ a cuspform. If $i\not=j$ then 
\[
U(p)\zeta^*\big((\msf{g}^{[\frak{p}_j]})_{\lvert V(\varpi_{\frak{p}_i})}\big)=0\qquad\text{which implies}\qquad
e_\text{n.o.}\zeta^*(\msf{g}_{\lvert V(\varpi_{\frak{p}_i})})=e_\text{n.o.}\zeta^*( (U(\varpi_{\frak{p}_j})\msf{g})_{\lvert V(\varpi_\wp)}).
\]
\end{lemma}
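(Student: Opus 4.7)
The plan is to establish the Fourier-coefficient vanishing statement first, and then deduce the equality of the nearly ordinary projections from a telescoping decomposition of $\msf{g}$ at $\frak{p}_j$. For the first assertion, I would compute the Fourier coefficient $\msf{a}_p(y,U(p)\zeta^*(\msf{h}))$ for $\msf{h}=(\msf{g}^{[\frak{p}_j]})_{\lvert V(\varpi_{\frak{p}_i})}$ using Lemma~\ref{lemma: p-adic diag} together with the formulas from Section 2.1.3 describing the action of $U$ and $V$ on $p$-adic $q$-expansions, and show that every such coefficient is zero.

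Unwinding the definitions, $\msf{a}_p(y,U(p)\zeta^*(\msf{h}))$ is, up to a harmless unit, the diagonal-restriction sum for $\zeta^*(\msf{h})$ evaluated at $yp$. This picks out terms indexed by $\xi\in(\frak{a}_i\cal{O}_L)_+$ satisfying $\text{tr}_{L/F}(\xi)\in\wp\cal{O}_F$ (from the shift by $p$), $\xi\in\frak{p}_i\cal{O}_{L,\frak{p}_i}$ (from $V(\varpi_{\frak{p}_i})$), and $\xi\in\cal{O}_{L,\frak{p}_j}^\times$ (from the $\frak{p}_j$-depletion). The key observation is that these three conditions are jointly incompatible when $i\ne j$. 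Indeed, the non-trivial element of $\text{Gal}(L/F)$ swaps $\frak{p}_1$ and $\frak{p}_2$, so $\xi\in\frak{p}_i\setminus\frak{p}_j$ forces $\bar{\xi}\in\frak{p}_j\setminus\frak{p}_i$; at the prime $\frak{p}_i$, $\xi$ vanishes while $\bar\xi$ is a unit, so $\text{tr}_{L/F}(\xi)=\xi+\bar\xi$ is itself a unit at $\frak{p}_i$. But $\wp\cal{O}_F\subset\frak{p}_1\frak{p}_2\subset\frak{p}_i$, contradicting the trace condition. Hence every summand is excluded and $U(p)\zeta^*(\msf{h})=0$.

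For the implication, I would start from the tautology $\msf{g}=\msf{g}^{[\frak{p}_j]}+V(\varpi_{\frak{p}_j})U(\varpi_{\frak{p}_j})\msf{g}$, apply $V(\varpi_{\frak{p}_i})$ and $\zeta^*$, and use that $V(\varpi_{\frak{p}_i})$ commutes with both $V(\varpi_{\frak{p}_j})$ and $U(\varpi_{\frak{p}_j})$ when $i\ne j$ (an immediate check on $q$-expansions from Section 2.1.3), together with the identity $V(\varpi_{\frak{p}_i})V(\varpi_{\frak{p}_j})=V(\varpi_\wp)$ on the $L$-side. This yields
\[
\zeta^*(\msf{g}_{\lvert V(\varpi_{\frak{p}_i})})\ =\ \zeta^*((\msf{g}^{[\frak{p}_j]})_{\lvert V(\varpi_{\frak{p}_i})})\ +\ \zeta^*((U(\varpi_{\frak{p}_j})\msf{g})_{\lvert V(\varpi_\wp)}).
\]
Applying $e_\text{n.o.}$ annihilates the first summand on the right, because the operator $\mathbf{T}(p)$ (and hence $U(p)$, which differs from it only by a $p$-adic unit) is invertible on the image of $e_\text{n.o.}$, so any element in the kernel of $U(p)$ is killed by $e_\text{n.o.}$; the second summand is exactly the right-hand side of the claimed equality.

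The main obstacle is the careful adelic bookkeeping in the first step: one must track how the shift by $p$ changes the coset representative $a_i$ of $\text{cl}_F^+(\frak{N}p)$ and verify that the multiplicative constants $C(a_i,u)$ appearing in Lemma~\ref{lemma: p-adic diag} play no role in the vanishing. Once this is handled, the argument reduces to the elementary Galois-theoretic congruence above.
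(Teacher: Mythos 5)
Your proof is correct and follows essentially the same route as the paper's. The vanishing argument is identical: the paper also reduces to showing that $\xi\in\frak{p}_i$, $\xi\notin\frak{p}_j$, and $p\mid\text{tr}_{L/F}(\xi)$ are jointly impossible, and concludes as you do by observing that $\xi^\sigma$ then lies in $\frak{p}_j\setminus\frak{p}_i$, making $\xi+\xi^\sigma$ a unit at both $\frak{p}_i$ and $\frak{p}_j$. The only difference is presentational: the paper stops after the vanishing and treats the implication as immediate, whereas you spell it out via the decomposition $\msf{g}=\msf{g}^{[\frak{p}_j]}+V(\varpi_{\frak{p}_j})U(\varpi_{\frak{p}_j})\msf{g}$, commutativity of the $V$-operators at distinct primes, $V(\varpi_{\frak{p}_i})V(\varpi_{\frak{p}_j})=V(\varpi_\wp)$, and the fact that $e_{\text{n.o.}}$ kills anything in $\ker U(p)$ because $\mathbf{T}(p)$ is a unit multiple of $U(p)$ and is invertible on the nearly ordinary part. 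That unpacking is the intended content of "which implies" in the paper, so this is a faithful reconstruction rather than a genuinely different argument.
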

\begin{proof}
By definition, the Fourier coeffient $\mathsf{a}_p(y,(\msf{g}^{[\frak{p}_j]})_{\lvert V(\varpi_{\frak{p}_i})})=\varpi_{\frak{p}_i}^{x-t_L}\mathsf{a}_p(y\varpi_{\frak{p}_i}^{-1}, \msf{g}^{[\frak{p}_j]})=0$ if either $(y\varpi_{\frak{p}_i}^{-1})_{\frak{p}_j}\in\frak{p}_j\cal{O}_{L,\frak{p}_j}$ or $y_{\frak{p}_i}\not\in\frak{p}_i\cal{O}_{L,\frak{p}_i}$. If we write an idele of $L$ as $y=\xi b_j^{-1} \nu\in\widehat{\cal{O}_L}L_{\infty,+}^\times$, where $(b_j^{-1} \nu)_p$ is a unit under our assumptions, we can rephrase the conditions on $y$ for which $\mathsf{a}_p(y,(\msf{g}^{[\frak{p}_j]})_{\lvert V(\varpi_{\frak{p}_i})})=0$ by writing that either $\xi\in\frak{p}_j\cal{O}_{L,\frak{p}_j}$ or $\xi\not\in \frak{p}_i\cal{O}_{L,\frak{p}_i}$. We claim that for all $y=\eta a_i^{-1}u\in\widehat{\cal{O}_F}F_{\infty,+}^\times$ we have $\msf{a}_p\big(y, U(p)[\zeta^*\big((\msf{g}^{[\frak{p}_j]})_{\lvert V(\varpi_{\frak{p}_i})}\big)]\big)=0$. Indeed,
\[\begin{split}
\msf{a}_p\big(y, U(p)[\zeta^*\big((\msf{g}^{[\frak{p}_j]})_{\lvert V(\varpi_{\frak{p}_i})}\big)]\big)&=p_p^{t_L-x}\mathsf{a}_p\big(p\eta a_i^{-1}u,\zeta^*\big((\msf{g}^{[\frak{p}_j]})_{\lvert V(\varpi_{\frak{p}_i})}\big)\big)\\
	&=C(a_i,u)\underset{\xi\in(\frak{a}_i\cal{O}_L)_+,\ \text{tr}_{L/F}(\xi)=p\eta}{\sum} p_p^{t_L-x}\mathsf{a}_p(\xi a_i^{-1}u,(\msf{g}^{[\frak{p}_j]})_{\lvert V(\varpi_{\frak{p}_i})}),
\end{split}\]
so it is enough to check that if $\xi\not\in\frak{p}_j\cal{O}_{L,\frak{p}_j}$ and $\xi\in \frak{p}_i\cal{O}_{L,\frak{p}_i}$ then $p\nmid\text{tr}_{L/F}(\xi)$. Suppose $\xi\not\in\frak{p}_j\cal{O}_{L,\frak{p}_j}$ and $\xi\in \frak{p}_i\cal{O}_{L,\frak{p}_i}$, then the Galois conjugate $\xi^\sigma$ satisfies $\xi^\sigma\in\frak{p}_j\cal{O}_{F,\frak{p}_j}$ and $\xi^\sigma\not\in \frak{p}_i\cal{O}_{F,\frak{p}_i}$ and the trace $\text{tr}_{L/F}(\xi)=\xi+\xi^\sigma$ is both a $\frak{p}_i$ and a $\frak{p}_j$-unit.
\end{proof}

\begin{lemma}\label{lemma: split}
	Let $\frak{N}$ be an $\cal{O}_F$-ideal  coprime to $p$, and $\wp\mid p$ a prime $\cal{O}_F$-ideal splitting in $L/F$. Let $(\ell,x)\in\bb{Z}[I_L]^2$, $(k,w)\in\bb{Z}[I_F]^2$ and $r\in\bb{N}[I_L]$ be such that $(k,w)=((\ell+2r)_{\lvert F},(x+r)_{\lvert F})$. Let $\msf{g}\in S_{\ell,x}(\frak{N},L;E)$ be an eigenvector for the Hecke operators $T(\frak{p}_1),T(\frak{p}_2)$ and let $\msf{f}\in S_{k,w}(\frak{N},F;E)$ be a $p$-nearly ordinary  eigenform. If we denote by $e_{\msf{f},\text{n.o.}}=e_\msf{f}e_\text{n.o.}$ the composition of the $\msf{f}$-isotypic projection with the nearly ordinary projector, we have
\[
e_{\msf{f},\text{n.o.}}\zeta^*\big(d^r\msf{g}^{[\frak{p}_i]}\big)=\frac{\scr{E}_{\wp}(\msf{g},\msf{f})}{\scr{E}_{0,\wp}(\msf{g},\msf{f})} e_{\msf{f},\text{n.o.}}\zeta^*(d^r\msf{g})
\]
where 
\[
\scr{E}_{\wp}(\msf{g},\msf{f})=\underset{\bfcdot,\star\in\{\alpha,\beta\}}{\prod}\left(1-\bfcdot_1\star_2 \alpha_\msf{f}^{-1}q_\wp^{-1}\right),\qquad\scr{E}_{0,\wp}(\msf{g},\msf{f})=1-\alpha_1\beta_1\alpha_2\beta_2(\alpha_\msf{f}^{-1}q_\wp^{-1})^2,
\]
for $\varpi_{\frak{p}_\iota}^{-v}\alpha_\iota,\varpi_{\frak{p}_\iota}^{-v}\beta_\iota$ the inverses of the roots of the Hecke polynomial $1-\mathsf{a}(\frak{p}_\iota,f)X+\text{N}_{L/\bb{Q}}(\frak{p}_\iota)\psi_{\msf{g},0}(\frak{p}_\iota)X^2$ with respect to $T_0(\varpi_{\frak{p}_\iota})$, $\iota=1,2$, and $\alpha_\msf{f}$ is determined by $(e_\text{n.o.}\msf{f})_{\lvert U(\varpi_\wp)}=\alpha_\msf{f}\cdot e_\text{n.o.}\msf{f}$.
\end{lemma}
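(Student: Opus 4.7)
The plan is to mimic the proof of Lemma \ref{lemma: inert} using the four common $p$-stabilizations of $\msf{g}$ at $(\frak{p}_1, \frak{p}_2)$ in place of the two stabilizations at the inert prime, and using Lemma \ref{thor} in place of Proposition \ref{prop: V operator}. Iterating the $\frak{p}$-stabilization construction produces the four common eigenforms $\msf{g}_{\bfcdot_1, \star_2}^{(p)} = (1 - \bfcdot_1^c V(\varpi_{\frak{p}_1}))(1 - \star_2^c V(\varpi_{\frak{p}_2}))\msf{g}$ (with $\alpha^c = \beta$, $\beta^c = \alpha$), each with $U(\varpi_{\frak{p}_1})$-eigenvalue $\bfcdot_1$ and $U(\varpi_{\frak{p}_2})$-eigenvalue $\star_2$. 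Inverting this $2\times 2$ system expresses
\[
\msf{g} = \frac{1}{(\alpha_1-\beta_1)(\alpha_2-\beta_2)}\sum_{\bfcdot, \star \in \{\alpha,\beta\}} \epsilon_\bfcdot \epsilon_\star \, \bfcdot_1 \star_2 \, \msf{g}_{\bfcdot_1, \star_2}^{(p)}
\]
with $\epsilon_\alpha = +1$, $\epsilon_\beta = -1$, and by linearity of depletion the same combination gives $\msf{g}^{[\frak{p}_i]}$ in terms of the $(\msf{g}_{\bfcdot_1, \star_2}^{(p)})^{[\frak{p}_i]}$.

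For each stabilization, the identity $U(\varpi_{\frak{p}_i})\msf{g}_{\bfcdot_1, \star_2}^{(p)} = (\bfcdot_i) \msf{g}_{\bfcdot_1, \star_2}^{(p)}$ simplifies the $\frak{p}_i$-depletion to $(1 - (\bfcdot_i) V(\varpi_{\frak{p}_i}))\msf{g}_{\bfcdot_1, \star_2}^{(p)}$, writing $(\bfcdot_i)$ for $\bfcdot_1$ if $i=1$ and $\star_2$ if $i=2$. Applying $e_{\msf{f},\text{n.o.}}\zeta^* d^r$ to the $V(\varpi_{\frak{p}_i})$-term, I would commute $d^r$ past $V(\varpi_{\frak{p}_i})$ up to a power of $\varpi_{\frak{p}_i}$, invoke Lemma \ref{thor} to convert $e_\text{n.o.}\zeta^*(V(\varpi_{\frak{p}_i})(\cdot))$ into $e_\text{n.o.}\zeta^*(V(\varpi_\wp) U(\varpi_{\frak{p}_j})(\cdot))$, replace $U(\varpi_{\frak{p}_j})$ by its companion eigenvalue $(\star_j)$, apply Proposition \ref{prop: V operator} with $[L:F]=2$ to extract $V(\varpi_\wp)$ from $\zeta^*$ at the cost of $q_\wp^{-1}$, and finally use $V(\varpi_\wp) e_\text{n.o.} = \alpha_\msf{f}^{-1} e_\text{n.o.}$ on the $\msf{f}$-isotypic nearly ordinary part. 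After the auxiliary scalars cancel, this produces
\[
e_{\msf{f},\text{n.o.}}\zeta^*\bigl(d^r(\msf{g}_{\bfcdot_1, \star_2}^{(p)})^{[\frak{p}_i]}\bigr) = \bigl(1 - \bfcdot_1\star_2 \, \alpha_\msf{f}^{-1} q_\wp^{-1}\bigr) \, e_{\msf{f},\text{n.o.}}\zeta^*(d^r \msf{g}_{\bfcdot_1, \star_2}^{(p)}).
\]
Separately, the same toolkit shows $e_{\msf{f},\text{n.o.}}\zeta^*\bigl(d^r(\msf{g}_{\bfcdot_1, \star_2}^{(p)})^{[\frak{p}_i]}\bigr) = e_{\msf{f},\text{n.o.}}\zeta^*(d^r \msf{g}^{[\frak{p}_i]})$ independently of $(\bfcdot, \star)$, since the difference involves $V(\varpi_{\frak{p}_j}) \msf{g}^{[\frak{p}_i]}$, which Lemma \ref{thor} sends to $e_\text{n.o.}\zeta^*(V(\varpi_\wp) U(\varpi_{\frak{p}_i}) d^r \msf{g}^{[\frak{p}_i]}) = 0$ because $d^r\msf{g}^{[\frak{p}_i]}$ remains $\frak{p}_i$-depleted.

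Solving for $e_{\msf{f},\text{n.o.}}\zeta^*(d^r\msf{g}_{\bfcdot_1, \star_2}^{(p)})$ and substituting into the expansion of $\msf{g}$ gives, with $u = \alpha_\msf{f}^{-1} q_\wp^{-1}$,
\[
e_{\msf{f},\text{n.o.}}\zeta^*(d^r \msf{g}) = \frac{\sum_{\bfcdot, \star} \epsilon_\bfcdot \epsilon_\star \, \bfcdot_1 \star_2 \, (1 - \bfcdot_1 \star_2 u)^{-1}}{(\alpha_1-\beta_1)(\alpha_2-\beta_2)} \, e_{\msf{f},\text{n.o.}}\zeta^*(d^r\msf{g}^{[\frak{p}_i]}).
\]
The remaining step is the polynomial identity
\[
\sum_{\bfcdot,\star} \epsilon_\bfcdot \epsilon_\star \, \bfcdot_1 \star_2 \prod_{(\bfcdot',\star') \neq (\bfcdot, \star)} (1 - \bfcdot'_1 \star'_2 u) = (\alpha_1-\beta_1)(\alpha_2-\beta_2)\bigl(1 - \alpha_1\beta_1\alpha_2\beta_2 u^2\bigr),
\]
verifiable by comparing coefficients (the odd-degree terms vanish by symmetry since $\epsilon_\alpha + \epsilon_\beta = 0$), which reduces the scalar to $\scr{E}_{0,\wp}/\scr{E}_\wp$ and yields the claim. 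The main obstacle I anticipate is the careful bookkeeping of normalization constants in the commutation of $d^r$ with the operators $V(\varpi_{\frak{p}_i})$ and $U(\varpi_{\frak{p}_j})$, whose exponents depend on how $\Theta_{L/F}$ distributes between the two primes above $\wp$; one must confirm that these conspire to produce exactly $(1 - \bfcdot_1\star_2 \alpha_\msf{f}^{-1} q_\wp^{-1})$ with no extraneous factors.
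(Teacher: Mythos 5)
Your proposal is correct, and it reaches the formula by a different decomposition than the paper's. The paper's proof only uses the two single $\frak{p}_i$-stabilizations $\msf{g}^{(\frak{p}_i)}_{\alpha_i},\msf{g}^{(\frak{p}_i)}_{\beta_i}$: after Lemma~\ref{thor} turns $V(\varpi_{\frak{p}_i})$ into $(U(\varpi_{\frak{p}_j})-)_{\lvert V(\varpi_\wp)}$, the operator $U(\varpi_{\frak{p}_j})$ (for which a single stabilization is \emph{not} an eigenform) is expanded as $T(\frak{p}_j)-\text{N}_{L/\bb{Q}}(\frak{p}_j)\psi_\msf{g}(\frak{p}_j)V(\varpi_{\frak{p}_j})$ and Lemma~\ref{thor} is applied a second time, so each stabilization contributes the quadratic factor $(1-(\bfcdot)\alpha_j\alpha_\msf{f}^{-1}q_\wp^{-1})(1-(\bfcdot)\beta_j\alpha_\msf{f}^{-1}q_\wp^{-1})$, and the recombination is a two-term partial-fraction identity that exploits $(\msf{g}^{(\frak{p}_i)}_{\bfcdot})^{[\frak{p}_i]}=\msf{g}^{[\frak{p}_i]}$ holding on the nose. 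You instead work with the four simultaneous $(\frak{p}_1,\frak{p}_2)$-stabilizations, which are honest $U(\varpi_{\frak{p}_j})$-eigenforms, so each contributes only the linear factor $(1-\bfcdot_1\star_2\alpha_\msf{f}^{-1}q_\wp^{-1})$ and the $T(\frak{p}_j)$-expansion is avoided; the price is that $(\msf{g}^{(p)}_{\bfcdot_1,\star_2})^{[\frak{p}_i]}$ differs from $\msf{g}^{[\frak{p}_i]}$ by a multiple of $V(\varpi_{\frak{p}_j})\msf{g}^{[\frak{p}_i]}$, so you need the extra (correct, and exactly what Lemma~\ref{thor} provides) observation that this error term is killed by $e_{\msf{f},\text{n.o.}}\zeta^*d^r$, plus a four-term rational identity in place of the paper's two-term one. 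Your identity is true: grouping the four summands in pairs over $\star_2$ factors out $(\alpha_2-\beta_2)$ and reduces it to the paper's two-term identity, whose numerator is $(\alpha_1-\beta_1)(1-\alpha_1\beta_1\alpha_2\beta_2u^2)$. Finally, the normalization issue you flag (powers of $\varpi_{\frak{p},p}$ from commuting $d^r$ with $V$ and $U$) is handled at exactly the same level of bookkeeping in the paper's own proof, where it is absorbed into the convention that $\varpi_{\frak{p}_\iota}^{-v}\alpha_\iota,\varpi_{\frak{p}_\iota}^{-v}\beta_\iota$ are the inverse roots of the $T_0$-Hecke polynomial, so no extraneous factors survive.
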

\begin{proof}
Let $\msf{g}_{\alpha_i}^{(\frak{p}_i)}=(1-{\beta_i}V(\varpi_{\frak{p}_i}))\msf{g}$, $\msf{g}_{\beta_i}^{(\frak{p}_i)}=(1-{\alpha_i}V(\varpi_{\frak{p}_i}))\msf{g}$ be the two $\frak{p}_i$-stabilizations of $\msf{g}$, they satisfy $U(\varpi_{\frak{p}_i})\msf{g}_{\bfcdot}^{(\frak{p}_i)}=(\bfcdot) \msf{g}_{\bfcdot}^{(\frak{p}_i)}$ and $\msf{g}=1/({\alpha_i}-{\beta_i})\big({\alpha_i} \msf{g}_{\alpha_i}^{(\frak{p}_i)}-{\beta_i} \msf{g}_{\beta_i}^{(\frak{p}_i)}\big)$. 
Using Lemma $\ref{thor}$ we compute
\begin{equation}\label{split computation}\begin{split}
e_{\msf{f},\text{n.o.}}&\zeta^*\left[d^r\left(\msf{g}_{\bfcdot}^{(\frak{p}_i)}\right)^{[\frak{p}_i]}\right]=e_{\msf{f},\text{n.o.}}\zeta^*\left[d^r(1-(\bfcdot)V(\varpi_{\frak{p}_i}))\msf{g}_{\bfcdot}^{(\frak{p}_i)}\right]\\
	&= e_{\msf{f},\text{n.o.}}\zeta^*\left[d^r\msf{g}_{\bfcdot}^{(\frak{p}_i)}\right]-(\bfcdot)e_{\msf{f},\text{n.o.}}\zeta^*\left[d^r(U(\varpi_{\frak{p}_j})\msf{g}_{\bfcdot}^{(\frak{p}_i)})_{\lvert V(\varpi_{\wp})}\right]\\
	&= e_{\msf{f},\text{n.o.}}\zeta^*\left[d^r\msf{g}_{\bfcdot}^{(\frak{p}_i)}\right]-(\bfcdot)\alpha_\msf{f}^{-1}q_\wp^{-1}e_{\msf{f},\text{n.o.}}\zeta^*\left[d^r(T(\frak{p}_j)-\text{N}_{L/\bb{Q}}(\frak{p}_j)\psi_\msf{g}(\frak{p}_j)V(\varpi_{\frak{p}_j}))\msf{g}_{\bfcdot}^{(\frak{p}_i)}\right].
\end{split}\end{equation}
Recall that $\msf{g}_{\bfcdot}^{(\frak{p}_i)}$ is an eigenform for the operator $T(\frak{p}_j)$ of eigenvalue $\varpi_j^v\mathsf{a}(\frak{p}_j,\msf{g})=\alpha_j+\beta_j$ and such that $\text{N}_{L/\bb{Q}}(\frak{p}_j)\psi_\msf{g}(\frak{p}_j)=\alpha_j\beta_j$, then the chain of identities in $(\ref{split computation})$ continues as:
\[\begin{split}
e_{\msf{f},\text{n.o.}}&\zeta^*\left[d^r\left(\msf{g}_{\bfcdot}^{(\frak{p}_i)}\right)^{[\frak{p}_i]}\right]=e_{\msf{f},\text{n.o.}}\zeta^*\left[d^r\msf{g}_{\bfcdot}^{(\frak{p}_i)}\right]-	(\bfcdot)\alpha_\msf{f}^{-1}q_\wp^{-1}\Big[(\alpha_j+\beta_j)e_{\msf{f},\text{n.o.}}\zeta^*\left[d^r\msf{g}_{\bfcdot}^{(\frak{p}_i)}\right]+\\
&\qquad\qquad\qquad\qquad\qquad\quad\ \ -\text{N}_{L/\bb{Q}}(\frak{p}_j)\psi_\msf{g}(\frak{p}_j)e_{\msf{f},\text{n.o.}}\zeta^*\left[d^r(U(\varpi_{\frak{p}_i})\msf{g}_{\bfcdot}^{(\frak{p}_i)})_{\lvert V(\varpi_\wp)}\right]\Big]\\
	&=\left(1-(\bfcdot)\alpha_\msf{f}^{-1}q_\wp^{-1}(\alpha_j+\beta_j)+\text{N}_{L/\bb{Q}}(\frak{p}_j)\psi_\msf{g}(\frak{p}_j)\left[(\bfcdot)\alpha_\msf{f}^{-1}q_\wp^{-1}\right]^2\right)e_{\msf{f},\text{n.o.}}\zeta^*\left(d^r\msf{g}_{\bfcdot}^{(\frak{p}_i)}\right)\\
	&=\left(1-(\bfcdot)\alpha_j\alpha_\msf{f}^{-1}q_\wp^{-1}\right)\left(1-(\bfcdot)\beta_j\alpha_\msf{f}^{-1}q_\wp^{-1}\right)e_{\msf{f},\text{n.o.}}\zeta^*\left(d^r\msf{g}_{\bfcdot}^{(\frak{p}_i)}\right).
\end{split}\]
Finally, noting that $\big(\msf{g}_{\alpha_{i}}^{(\frak{p}_i)}\big)^{[\frak{p}_i]}= \big(\msf{g}_{\beta_i}^{(\frak{p}_i)}\big)^{[\frak{p}_i]}=\msf{g}^{[\frak{p}_i]}$, we can put together the previous identities to prove the claim 
\[\begin{split}
e_{\msf{f},\text{n.o.}}\zeta^*(d^r\msf{g})=&\frac{1}{\alpha_i-\beta_i}\left(\alpha_ie_{\msf{f},\text{n.o.}}\zeta^*(d^r\msf{g}_{\alpha_i}^{(\frak{p}_i)})-\beta_ie_{\msf{f},\text{n.o.}}\zeta^*(d^r\msf{g}_{\beta_i}^{(\frak{p}_i)})\right)\\
	&=\frac{1-\alpha_i\beta_i\alpha_j\beta_j(\alpha_\msf{f}^{-1}q_\wp^{-1})^2}{\underset{\bfcdot,\star\in\{\alpha,\beta\}}{\prod}(1-\bfcdot_i\star_j\alpha_\msf{f}^{-1}q_\wp^{-1})}e_{\msf{f},\text{n.o.}}\zeta^*(d^r\msf{g}^{[\frak{p}_i]}).
\end{split}\]

\end{proof}

\paragraph{Interpolation formulas.}
\begin{theorem}\label{thm: interpolation formulas}
For all $(\text{P},\text{Q})\in\Sigma^\msf{f}_\text{cry}$, the twisted triple product $p$-adic $L$-function   $
	\mathscr{L}_p(\breve{\cal{G}},\breve{\cal{F}})\in \mathbf{R}_{\cal{G}}\otimes_{\mathbf{A}_F}\text{Frac}(\mathbf{K}_{\cal{F}})$ satisfies the equality
	\[
	\mathscr{L}_p\big(\breve{\cal{G}},\breve{\cal{F}}\big)(\text{P},\text{Q})=\frac{1}{\msf{E}(\msf{f}_\text{Q}^*)}
	\left(\prod_{\wp\in\cal{Q}_\text{inert}}\cal{E}_\wp(\msf{g}_\text{P},\msf{f}_\text{Q}^*)
	\prod_{\wp\in\cal{Q}_\text{split}}\frac{\scr{E}_\wp(\msf{g}_\text{P},\msf{f}_\text{Q}^*)}{\scr{E}_{0,\wp}(\msf{g}_\text{P},\msf{f}_\text{Q}^*)}\right)
	\frac{\big\langle \zeta^*\left(\delta^r\breve{\msf{g}}_\text{P}\right), \breve{\msf{f}}_\text{Q}^{*}\big\rangle}{\big\langle \msf{f}_\text{Q}^{*} \msf{f}_\text{Q}^{*}\big\rangle},
	\]
	where the Euler factors appearing in the formula are defined in Lemmas $\ref{lemma: inert}$ and $\ref{lemma: split}$, and $r\in\bb{N}[I_L]$ is the unique element, with $r_\mu=0$ if $\mu\notin\Theta_{L/F}$, that makes the weights of $\zeta^*(\delta^r\breve{\msf{g}}_\text{P})$ and $\breve{\msf{f}}_\text{Q}$ match.
\end{theorem}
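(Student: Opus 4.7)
The plan is to depart from formula $(\ref{eq: second expression p-adic L-function})$ for $\mathscr{L}_p(\breve{\cal{G}},\breve{\cal{F}})(\text{P},\text{Q})$, so that the theorem reduces to establishing the identity
\[
\big\langle e_{\text{n.o.}}\zeta^*(d^r\breve{\msf{g}}_\text{P}^{[\cal{P}]}),\breve{\msf{f}}_\text{Q}^*\big\rangle \;=\; \left(\prod_{\wp\in\cal{Q}_\text{inert}}\cal{E}_\wp(\msf{g}_\text{P},\msf{f}_\text{Q}^*)\prod_{\wp\in\cal{Q}_\text{split}}\frac{\scr{E}_\wp(\msf{g}_\text{P},\msf{f}_\text{Q}^*)}{\scr{E}_{0,\wp}(\msf{g}_\text{P},\msf{f}_\text{Q}^*)}\right)\big\langle \zeta^*(\delta^r\breve{\msf{g}}_\text{P}),\breve{\msf{f}}_\text{Q}^*\big\rangle.
\]
Since the Petersson pairing against $\breve{\msf{f}}_\text{Q}^*$ only detects the $\msf{f}_\text{Q}^*$-isotypic component, I can freely replace $e_{\text{n.o.}}$ on the left by the composite projector $e_{\msf{f}_\text{Q}^*,\text{n.o.}} = e_{\msf{f}_\text{Q}^*}\circ e_{\text{n.o.}}$ appearing in Lemmas $\ref{lemma: inert}$ and $\ref{lemma: split}$.

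Next I would strip off the $\cal{P}$-depletion one prime at a time. The datum $\Theta_{L/F}$ selects, via $\iota_p$, a unique prime of $L$ above each $\wp\in\cal{Q}$: namely $\wp\cal{O}_L$ itself when $\wp$ is inert, and exactly one of the two factors $\frak{p}_1,\frak{p}_2$ of $\wp\cal{O}_L$ when $\wp$ splits. The depletion $(-)^{[\cal{P}]}$ is the composition of the individual operators $1-V(\varpi_\frak{p})U(\varpi_\frak{p})$ for $\frak{p}\in\cal{P}$, all of which commute with one another, with $d^r$ (they act diagonally on Fourier coefficients), and with the Hecke operators at primes disjoint from their support. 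Because $\breve{\msf{g}}_\text{P}$ is a cuspform of level $M$ coprime to $p$, and therefore an eigenvector for every Hecke operator at a prime of $L$ above $p$, I can iteratively apply Lemma $\ref{lemma: inert}$ to remove the depletion at each prime above an inert $\wp$ and Lemma $\ref{lemma: split}$ at each prime above a split $\wp$. The eigenvector hypotheses needed at each step persist because a partially depleted form still shares the Hecke eigenvalues of $\breve{\msf{g}}_\text{P}$ at the remaining primes above $p$. The net effect is to produce precisely the Euler product in the statement and to reduce the problem to computing $\langle e_{\msf{f}_\text{Q}^*,\text{n.o.}}\zeta^*(d^r\breve{\msf{g}}_\text{P}),\breve{\msf{f}}_\text{Q}^*\rangle$.

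The final step is to swap the $p$-adic differential operator $d^r$ for the Maass--Shimura operator $\delta^r$. By Lemma $\ref{lemma: auto comparison}$,
\[
e_{\text{n.o.}}\zeta^*(d^r\breve{\msf{g}}_\text{P}) \;=\; e_{\text{n.o.}}\Pi^{\text{hol}}\zeta^*(\delta^r\breve{\msf{g}}_\text{P}),
\]
and since the holomorphic projector is self-adjoint with respect to the Petersson pairing against a holomorphic cuspform while $e_{\text{n.o.}}$ acts as the identity on the $p$-nearly ordinary eigenform $\breve{\msf{f}}_\text{Q}^*$, both projectors can be dropped to yield $\langle e_{\text{n.o.}}\zeta^*(d^r\breve{\msf{g}}_\text{P}),\breve{\msf{f}}_\text{Q}^*\rangle = \langle \zeta^*(\delta^r\breve{\msf{g}}_\text{P}),\breve{\msf{f}}_\text{Q}^*\rangle$. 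The principal technical delicacy is the inductive stripping: one must verify that the simplifications of Lemmas $\ref{lemma: inert}$ and $\ref{lemma: split}$ remain available after earlier primes in $\cal{P}$ have already been removed. This reduces to the commutation of $V(\varpi_\frak{p})$, $U(\varpi_\frak{p})$, $d^r$, and the Hecke operators at disjoint primes, which is routine given that $p$ is coprime to the level $M$.
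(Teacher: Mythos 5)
Your proposal is correct and follows exactly the route of the paper: start from the reformulation $(\ref{eq: second expression p-adic L-function})$, strip the $\cal{P}$-depletion using Lemmas $\ref{lemma: inert}$ and $\ref{lemma: split}$ (which produce the Euler factors), and then replace $d^r$ by $\delta^r$ via Lemma $\ref{lemma: auto comparison}$. You add useful bookkeeping the paper leaves implicit — the prime-by-prime removal of the depletion operators, the persistence of the requisite eigenvector properties at each step, and the justification for dropping the projectors $e_{\msf{f}_\text{Q}^*}$, $\Pi^{\text{hol}}$ and $e_{\text{n.o.}}$ from the pairing — but these are genuinely the details behind the paper's two-line computation rather than a different argument.
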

\begin{proof}
We use $(\ref{eq: second expression p-adic L-function})$ for the value of the $p$-adic $L$-function  at a point $(\text{P},\text{Q})\in\Sigma^\msf{f}_\text{cry}$, and Lemmas $\ref{lemma: inert},\ref{lemma: split}$ give us
\[\begin{split}
\mathscr{L}_p\big(\breve{\cal{G}},\breve{\cal{F}}\big)(\text{P},\text{Q})&=\frac{1}{\msf{E}(\msf{f}_\text{Q}^*)}\frac{\big\langle e_\text{n.o.}\zeta^*\big(d^{r}\breve{\msf{g}}_\text{P}^{[\cal{P}]}\big), \breve{\msf{f}}_\text{Q}^{*}\big\rangle}{\big\langle \msf{f}_\text{Q}^{*}, \msf{f}_\text{Q}^{*}\big\rangle}\\
	&=\frac{1}{\msf{E}(\msf{f}_\text{Q}^*)}\left(\prod_{\wp\in\cal{Q}_\text{inert}}\cal{E}_\wp(\msf{g}_\text{P},\msf{f}_\text{Q}^*)
\prod_{\wp\in\cal{Q}_\text{split}}\frac{\scr{E}_\wp(\msf{g}_\text{P},\msf{f}_\text{Q}^*)}{\scr{E}_{0,\wp}(\msf{g}_\text{P},\msf{f}_\text{Q}^*)}\right)\frac{\big\langle e_{\msf{f}_\text{Q}^{*},\text{n.o}}\zeta^*\big(d^r\breve{\msf{g}}_\text{P}\big),\breve{\msf{f}}_\text{Q}^{*}\big\rangle}{\big\langle \msf{f}_\text{Q}^{*}, \msf{f}_\text{Q}^{*}\big\rangle}.
\end{split}\]
We conclude the proof applying Lemma $\ref{lemma: auto comparison}$ to compare $p$-adic and real analitic differential operators on cuspforms: $e_{\msf{f}_\text{Q}^{*},\text{n.o}}\zeta^*\big(d^r\breve{\msf{g}}_\text{P}\big)=e_{\msf{f}_\text{Q}^{*},\text{n.o}}\Pi^\text{hol}\zeta^*\big(\delta^r\breve{\msf{g}}_\text{P}\big)$.
\end{proof}

\begin{remark} Recall that for every $(\text{P},\text{Q})\in\Sigma^\msf{f}_\text{cry}$
	  there is a unitary
	  automorphic representation $\Pi_{\text{P},\text{Q}}$ of prime-to-$p$ level. The Euler factors in Theorem $\ref{thm: interpolation formulas}$ also
	appear the expression for the local $L$-factor $L_p(\frac{1}{2},\Pi_{\text{P},\text{Q}},r)$. Indeed, if $\wp\in\cal{Q}_\text{inert}$ by using $(\ref{eq:Satake1})$, $(\ref{eq:Satake3})$ 
	   we compute
\[
\begin{split}
\cal{E}_{\wp}(\msf{g},\msf{f}^*)&=\left(1-\alpha_\msf{g}\alpha_{\msf{f}^*}^{-1}q_\wp^{-1}\right)\left(1-\beta_\msf{g}\alpha_{\msf{f}^*}^{-1}q_\wp^{-1}\right)\\
	&=\left(1-\chi_{1,\frak{p}}(\varpi_\wp)\psi_{i,\wp}(\varpi_\wp)q_\wp^{-1/2}\right)\left(1-\chi_{2,\frak{p}}(\varpi_\wp)\psi_{k,\wp}(\varpi_\wp)q_\wp^{-1/2}\right).\\
\end{split}
\]
Similarly if $\wp\in\cal{Q}_\text{split}$ by using $(\ref{eq:Satake1})$, $(\ref{eq:Satake2})$ get
\[
\begin{split}
\scr{E}_{\wp}(\msf{g},\msf{f}^*)&=\underset{\bfcdot,\star\in\{\alpha,\beta\}}{\prod}\left(1-\bfcdot_1\star_2 \alpha_{\msf{f}^*}^{-1}q_\wp^{-1}\right)\\
	&=\underset{i,j\in\{1,2\}}{\prod}\left(1-\chi_{i,\frak{p}_1}(\varpi_\wp)\chi_{j,\frak{p}_2}(\varpi_\wp) \psi_{k,\wp}(\varpi_\wp)q_\wp^{-1/2}\right).
\end{split}
\]
\end{remark}


\section{Geometric theory}
\label{sec: Geom_part}
\subsection{Geometric Hilbert modular forms}
\label{subsec: Geom_HMFS}
Let $F$ be a totally real number field and $G=\text{Res}_{L/\bb{Q}}(\text{GL}_{2,F})$.
For any open compact subgroup $K\subset G(\bb{A}^\infty)$ we consider the Shimura variety
\[
\text{Sh}_K(G)(\bb{C})=G(\bb{Q})\backslash (\frak{H}^{\pm})^{I}\times G(\bb{A}^\infty)/ K
\]
where $\gamma\in G(\bb{Q})=\text{GL}_2(F)$ acts on $z=(z_\tau)_\tau\in (\frak{H}^{\pm})^{I}$ via Moebius transformations $
\gamma\cdot z=(\tau(\gamma)z_\tau)_\tau$. The complex manifold
$\text{Sh}_K(G)(\bb{C})$ has a canonical structure of quasi-projective variety over its reflex field $\bb{Q}$ (\cite{Milne} Chapter II, Theorem 5.5).
Let $\omega$ be the tautological quotient bundle on $\bb{P}^1_\bb{C}$, $
\omega=\{(x,z)\in\bb{P}^1_\bb{C}\times\bb{C}^2\lvert z\in x\}$,
with $p_1:\omega\to \bb{P}^1_\bb{C}$ the projection on the first component. The group $\text{GL}_2(\bb{C})$ acts on $\bb{P}^1_\bb{C}$ via Moebius transformations and there is a natural way to define a $\text{GL}_2(\bb{C})$-action on $\omega$ such that the projection $p_1$ is equivariant. For any weight $(k,w)\in\bb{Z}[I]^2$ such that $k-2w=mt$, one can define a line bundle
\begin{equation}\label{complex line bundle}
\underline{\omega}^{(k,w)}=\underset{\tau\in I}{\bigotimes}\ \text{pr}_\tau^*\bigg(\omega^{\otimes k_\tau}\otimes \text{det}^{\frac{m+k_\tau}{2}}\bigg)
\end{equation}
on $(\bb{P}^1_\bb{C})^{I}$ with $G(\bb{C})$-action given as follows. For each $\tau\in I$, the action of $G(\bb{C})$ on $\text{pr}_\tau^*\big(\omega^{\otimes k_\tau}\otimes \text{det}^{\frac{m+k_\tau}{2}}\big)$ factors through the $\tau$-copy of $\text{GL}_2(\bb{C})$, which in turn acts as the product of $\text{det}^{\frac{m+k_\tau}{2}}$ and the $k_\tau$-th power of the natural action on $\omega$. One has to twist the action by such a power of the determinant because it allows the line bundle to descend to the Galois closure $F^{\text{Gal}}$ of $F$ over $\bb{Q}$. Indeed, consider the subgroup $
Z_s=\text{Ker}\big(\text{N}_{F/\bb{Q}}:\text{Res}_{F/\bb{Q}}(\bb{G}_m)\to\bb{G}_m\big)$
of the center $Z=\text{Res}_{F/\bb{Q}}(\bb{G}_m)$ of $G$ and denote by $G^c$ the quotient of $G$ by $Z_s$. The action of $G(\bb{C})$ on $\underline{\omega}^{(k,w)}$ factors through $G^c(\bb{C})$, thus $\underline{\omega}^{(k,w)}$ descends to an algebraic invertible sheaf on $\text{Sh}_K(G)_\bb{C}$ if $K$ is sufficiently small by (\cite{Milne} Chapter III,  Proposition 2.1), and it has a canonical model over $F^{\text{Gal}}$ by  (\cite{Milne} Chapter III, Theorem 5.1). 

Suppose $F\not=\bb{Q}$, then for every field  $L$, $F^\text{Gal}\subset L\subset\bb{C}$, and sufficiently small compact open subgroup $K\triangleleft\ G(\bb{A}^\infty)$, one can give a geometric interpretation of Hilbert modular forms of weight $(k,w)$, level $K$, defined over $L$ as $
M_{k,w}(K;L)= H^0(\text{Sh}_K(G)_L, \underline{\omega}^{(k,w)})$. To consider cuspforms and treat the case $F=\bb{Q}$, one has to consider compactifications of the Shimura variety $\text{Sh}_K(G)_\bb{Q}$, which we discuss in Section $\ref{subsect: Compactifications and p-adic theory}$.

\subsubsection{Integral models}
\label{subsubsec: Integral models}
Fix $p$ a rational prime unramified in $F$ and consider a level structure of type $K=K^pK_p$, where $K^p$ is an open compact subgroup of $G\big(\widehat{\cal{O}_F}^{p}\big)$ and $K_p=\text{GL}_2(\cal{O}_F\otimes\bb{Z}_p)$. The determinant map $\det: G\to \text{Res}_{F/\bb{Q}}(\bb{G}_m)$ induces a bijection between the set of geometric connected components of $\text{Sh}_K(G)$ and $\text{cl}^+_F(K)$, the strict class group of $K$, $\text{cl}^+_F(K)=F^\times_+\backslash \bb{A}_F^{\infty,\times}/\det(K)$.
Since $\det(K)\subseteq \widehat{\cal{O}_{F}}^\times$, there is a surjection $\text{cl}^+_F(K)\twoheadrightarrow \text{cl}^+_F$ to the strict ideal class group of $F$, which one uses to label the geometric components of the Shimura variety $\text{Sh}_K(G)$. 
Fix fractional ideals $\frak{c}_1,\dots,\frak{c}_{h^+_F}$, coprime to $p$, forming a set of representatives of $\text{cl}^+_F$. Then by strong approximation there is a decomposition
\[
\text{Sh}_K(G)(\bb{C})=G(\bb{Q})^+\backslash \frak{H}^{I}\times G(\bb{A}^\infty)/K=\underset{[\frak{c}]\in \text{cl}^+_F}{\coprod} \text{Sh}^\frak{c}_K(G)(\bb{C}),
\]
where each $\text{Sh}^\frak{c}_K(G)(\bb{C})$ is the disjoint union of quotients of $\frak{H}^I$ by groups of the form $\Gamma(g,K)=gKg^{-1}\cap G(\bb{Q})^+$. A different choice  $\frak{c}'$ of fractional ideal representing $[\frak{c}]\in\text{cl}_F^+$ produces a canonically isomorphic manifold $\text{Sh}^{\frak{c}'}_K(G)(\bb{C})\cong\text{Sh}^\frak{c}_K(G)(\bb{C})$ (\cite{Hilbert} Remark 2.8). Suppose $K^p$ is sufficiently small so there exists a smooth, quasi-projective $\bb{Z}_{(p)}$-scheme $\cal{M}_K^\frak{c}$ representing the moduli problem of isomorphism classes of quadruples $(A,\iota,\lambda,\alpha_{K^p})_{/S}$ where $(A,\iota)$ is a Hilbert-Blumenthal abelian variety over $S$ of dimension $g=[F:\bb{Q}]$, $\lambda$ a $\frak{c}$-polarization and $\alpha_{K^p}$ a level-$K^p$ structure, (\cite{Hilbert} Section 2.3). 

The group of totally positive units $\cal{O}^\times_{F,+}$ acts on $\cal{M}_K^\frak{c}$ by modifying the $\frak{c}$-polarization. The subgroup $(K\cap\cal{O}^\times_F)^2$ of $\cal{O}^\times_{F,+}$ acts trivially, where by $K\cap\cal{O}^\times_F$ we mean the intersection of $K$ and $\cal{O}^\times_F\hookrightarrow Z(\bb{A}^\infty)$ in $G(\bb{A}^\infty)$. Therefore, the finite group $\cal{O}^\times_{F,+}/(K\cap\cal{O}^\times_F)^2$ acts on the moduli scheme $\cal{M}_K^\frak{c}$ and the stabilizer of each geometric connected component is $
\big(\det(K)\cap\cal{O}_{F,+}^\times\big)/(K\cap\cal{O}^\times_F)^2$.

\begin{proposition}\label{neige}
There is an isomorphism between the quotient of $\cal{M}_K^{\frak{c}\frak{d}^{-1}}(\bb{C})$ by the finite group $\cal{O}^\times_{F,+}/(K\cap\cal{O}^\times_F)^2$ and $\text{Sh}_K^\frak{c}(\bb{C})$. Moreover, if $\det(K)\cap\cal{O}_{F,+}^\times=(K\cap\cal{O}^\times_F)^2$, then the quotient map $\cal{M}_K^{\frak{c}\frak{d}^{-1}}(\bb{C})\to\text{Sh}_K^\frak{c}(\bb{C})$ induces an isomorphism between any geometric connected component of $\cal{M}_K^{\frak{c}\frak{d}^{-1}}(\bb{C})$ and its image.
\end{proposition}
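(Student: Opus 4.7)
The plan is to produce an explicit complex-analytic map $\Phi \colon \cal{M}_K^{\frak{c}\frak{d}^{-1}}(\bb{C}) \to \text{Sh}_K^\frak{c}(\bb{C})$, show that its fibers are precisely the orbits of $\cal{O}_{F,+}^\times/(K\cap\cal{O}_F^\times)^2$ acting by rescaling of the polarization, and then deduce the second assertion from the stabilizer computation recalled just above the statement.

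First I would invoke the standard analytic description of $\frak{c}\frak{d}^{-1}$-polarized HBAVs: every complex point $(A,\iota,\lambda,\alpha_{K^p})$ of $\cal{M}_K^{\frak{c}\frak{d}^{-1}}$ is isomorphic to one of the form $(\bb{C}^I/L_\tau, \iota_{\mathrm{can}}, \lambda_{\mathrm{can}}, \alpha_{K^p})$, where $L_\tau \subset \bb{C}^I$ is the lattice attached to the rank-two projective $\cal{O}_F$-module $\cal{O}_F \oplus \frak{c}\frak{d}^{-1}$ and a point $\tau\in\frak{H}^I$, and $\lambda_{\mathrm{can}}$ arises from the natural $\cal{O}_F$-linear pairing $\wedge^2(\cal{O}_F \oplus \frak{c}\frak{d}^{-1}) \cong \frak{c}\frak{d}^{-1}$ combined with the trace pairing $\frak{d}^{-1}\to\bb{Z}$, so that the resulting Riemann form identifies the polarization module with $\frak{c}$. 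I would then define $\Phi$ by sending this quadruple to the class of $(\tau, g_\frak{c})$ in $\text{Sh}_K^\frak{c}(\bb{C})$, where $g_\frak{c}\in G(\bb{A}^\infty)$ is a fixed representative of $[\frak{c}]\in\text{cl}_F^+\subset\text{cl}_F^+(K)$. The point $\tau$ is unique up to the discrete subgroup $\widetilde\Gamma(g_\frak{c},K)\subset \Gamma(g_\frak{c},K)$ of elements preserving the lattice, the $K^p$-level structure and the polarization on the nose, rather than merely up to a totally positive unit.

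Second I would analyze the ambiguity of $\Phi$. The group $\cal{O}_{F,+}^\times$ acts on $\cal{M}_K^{\frak{c}\frak{d}^{-1}}(\bb{C})$ by $u\cdot(A,\iota,\lambda,\alpha_{K^p})=(A,\iota,u\lambda,\alpha_{K^p})$, and the subgroup $(K\cap\cal{O}_F^\times)^2$ acts trivially because each such square is realized as the automorphism of $A$ induced by $\iota$. Analytically, passing from $\lambda$ to $u\lambda$ corresponds to rescaling the symplectic basis of $L_\tau$ by $u$, which does not move $\tau$ inside $\frak{H}^I$ but does move its class inside the larger group $\Gamma(g_\frak{c},K)$, and one checks that the quotient $\Gamma(g_\frak{c},K)/\widetilde\Gamma(g_\frak{c},K)$ is canonically identified with $\cal{O}_{F,+}^\times/(K\cap\cal{O}_F^\times)^2$. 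Hence $\Phi$ factors through this finite quotient and becomes injective; surjectivity then follows from strong approximation for $G$ together with the surjectivity of the uniformization $\frak{H}^I\twoheadrightarrow \Gamma(g_\frak{c},K)\backslash\frak{H}^I$. This establishes the first assertion.

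For the second assertion I would apply the stabilizer formula stated immediately before the proposition: the stabilizer in $\cal{O}_{F,+}^\times/(K\cap\cal{O}_F^\times)^2$ of each geometric connected component of $\cal{M}_K^{\frak{c}\frak{d}^{-1}}(\bb{C})$ equals $(\det(K)\cap\cal{O}_{F,+}^\times)/(K\cap\cal{O}_F^\times)^2$. Under the hypothesis $\det(K)\cap\cal{O}_{F,+}^\times=(K\cap\cal{O}_F^\times)^2$ this stabilizer is trivial, so the finite group acts freely on the set of geometric connected components by mere permutation; combined with the first assertion this forces the restriction of the quotient map to any single component to be injective, hence an isomorphism onto its image. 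The main obstacle I anticipate is the careful normalization of the different in the polarization module, namely justifying the appearance of $\frak{c}\frak{d}^{-1}$ on the moduli side against $\frak{c}$ on the Shimura-variety side; once this bookkeeping is set up, the rest of the argument reduces to a direct comparison of discrete quotients of $\frak{H}^I$.
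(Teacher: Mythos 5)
The paper disposes of this proposition by citing Proposition 2.4 of the reference on arithmetic models of Hilbert modular varieties (together with a remark explaining the shift by $\frak{d}^{-1}$, traced to the complex uniformization conventions of that reference); it does not supply its own argument. Your proposal, by contrast, sketches a genuine analytic proof, and the overall strategy — uniformize both sides by $\frak{H}^{I}$, compare lattice descriptions, and track how $\cal{O}^{\times}_{F,+}$ acts by rescaling the polarization isomorphism — is the right one, and your treatment of the $\frak{d}^{-1}$ normalization via the trace pairing correctly captures the point the paper flags.

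However, your construction of the map $\Phi$ has a gap in the handling of geometric connected components. You define $\Phi$ by sending a uniformized quadruple to the class of $(\tau, g_{\frak{c}})$ for a \emph{fixed} $g_{\frak{c}}\in G(\bb{A}^{\infty})$, and you speak of ``$[\frak{c}]\in\text{cl}^{+}_{F}\subset\text{cl}^{+}_{F}(K)$''. But there is no natural inclusion of $\text{cl}^{+}_{F}$ into $\text{cl}^{+}_{F}(K)$: since $\det(K)\subseteq\widehat{\cal{O}_{F}}^{\times}$ there is only a surjection $\text{cl}^{+}_{F}(K)\twoheadrightarrow\text{cl}^{+}_{F}$, typically with nontrivial kernel, and $\text{Sh}^{\frak{c}}_{K}(\bb{C})$ is the disjoint union of quotients $\Gamma(g,K)\backslash\frak{H}^{I}$ indexed by that entire fiber, not by a single class. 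Correspondingly, $\cal{M}_{K}^{\frak{c}\frak{d}^{-1}}(\bb{C})$ itself has several geometric connected components indexed by the $\det(K)$-equivalence classes of level structures, and the adelic coordinate $g$ in $\Phi(A,\iota,\lambda,\alpha_{K^{p}})=[(\tau,g)]$ must depend on $\alpha_{K^{p}}$, not be fixed once and for all. With a fixed $g_{\frak{c}}$ your $\Phi$ cannot be surjective (it lands in a single component of $\text{Sh}^{\frak{c}}_{K}(\bb{C})$) and the surjectivity claim you derive from strong approximation does not go through as stated. To repair this you should define $\Phi$ componentwise, letting the adelic parameter range over coset representatives for the kernel of $\text{cl}^{+}_{F}(K)\to\text{cl}^{+}_{F}$ determined by the level-structure type, and then the fiber-orbit identification with $\cal{O}^{\times}_{F,+}/(K\cap\cal{O}^{\times}_{F})^{2}$ and the stabilizer argument for the second assertion proceed as you outline. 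The identification ``$\Gamma(g_{\frak{c}},K)/\widetilde{\Gamma}(g_{\frak{c}},K)\cong\cal{O}^{\times}_{F,+}/(K\cap\cal{O}^{\times}_{F})^{2}$'' is also left to the reader, but that step is at least local to one component and is the standard unit-group computation.
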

\begin{proof}
This is (\cite{Hilbert} Proposition 2.4). However, note the shift by the absolute different in the indeces in our statement. It is necessary for the conventions for the complex uniformization used in their work (\cite{Hida} Section 4.1.3).
\end{proof}

\begin{definition}\label{def: Integral model}
Let $p$ be a rational prime unramified in $F$ and $K=K^pK_p$ a compact open subgroup of $G(\bb{A}^\infty)$ such that
$K^p$ is sufficiently small, $K_p=\text{GL}_2(\cal{O}_F\otimes\bb{Z}_p)$ and $\det(K)\cap\cal{O}_{F,+}^\times=(K\cap\cal{O}^\times_F)^2$. The integral model of the Shimura variety $\text{Sh}_K(G)$ over $\bb{Z}_{(p)}$  is  the quotient of $\cal{M}_K=\coprod_{[\frak{c}]\in\text{cl}^+_F}\cal{M}^\frak{c}_K$ by $\cal{O}_{F,+}^\times/(K\cap\cal{O}_F)^2$, which we denote $\mathbf{Sh}_K(G)$. 
\end{definition}

Note that the assumptions on the level $K$ in the definition are always satisfied up to replacing $K^p$ by an open compact subgroup (\cite{Hilbert} Lemma 2.5). Moreover, by Proposition $\ref{neige}$, the scheme $\mathbf{Sh}_K(G)$ is smooth  quasi-projective over $\bb{Z}_{(p)}$ and has an abelian scheme with real multiplication over it.

\begin{remark}
The scheme $\cal{M}_K^{\frak{d}^{-1}}$ is the integral model of the Shimura variety for the algebraic group $G^*$ \cite{Rapoport}. We denote it $\mathbf{Sh}_K(G^*)$ and we let $\xi:\mathbf{Sh}_K(G^*)\to \mathbf{Sh}_K(G)$ be the natural morphism.
\end{remark}


\subsubsection{Diagonal morphism.}
\label{subsec: Diagonal morphism}
Let $L/F$ be an extension of totally real fields with $[F:\bb{Q}]=g$. Consider the map of algebraic groups $\zeta: G_F\longrightarrow G_L$
define by the natural inclusion $\zeta(B):\text{GL}_{2}(B\otimes_\bb{Q}F)\to \text{GL}_{2}(B\otimes_\bb{Q}L)$ of groups for any $\bb{Q}$-algebra $B$. For compact open subgroups $K\subset G_L(\bb{A}^\infty)$ and $K'\subset K\cap G_F(\bb{A}^\infty)$ we have a commutative diagram
\begin{equation}\label{eq: complex proof}
\xymatrix{
\text{Sh}_{K'}(G_F)(\bb{C})\ar[r]^\zeta\ar[d]_\det& \text{Sh}_{K}(G_L)(\bb{C})\ar[d]^\det\\
\text{cl}^+_F(K')\ar[r]^\zeta\ar[d]& \text{cl}^+_L(K)\ar[d]\\
\text{cl}^+_F\ar[r]^\zeta& \text{cl}^+_L,
}\end{equation}
hence for every fractional ideal $\frak{c}$ of $F$ there is an induced map $\zeta:\text{Sh}_{K'}^\frak{c}(G_F)(\bb{C})\to \text{Sh}_{K}^\frak{c}(G_L)(\bb{C})$. Suppose that $K\subset G_L(\bb{A}^\infty)$ and $K'\subset K\cap G_F(\bb{A}^\infty)$ satisfy the assumptions in Definition $\ref{def: Integral model}$. There is a morphism of $\bb{Z}_{(p)}$-schemes $\zeta:\mathbf{Sh}_{K'}(G_F)\longrightarrow\mathbf{Sh}_{K}(G_L)$ induced by morphisms $\tilde{\zeta}:\cal{M}_{K',F}\to\cal{M}_{K,L}$  that maps any quadruple $[A, \iota,\lambda, \alpha_{(K')^p}]_{/S}$ over a $\bb{Z}_{(p)}$-scheme $S$ to the quadruple $\tilde{\zeta}\left([A, \iota,\lambda, \alpha_{(K')^p}]\right)=[A', \iota', \lambda', \alpha_{K^p}']_{/S}$ over $S$ defined as follows. First, the abelian scheme $A'$ is $A\otimes_{\cal{O}_F}\cal{O}_{L}$, then we can describe the $\cal{O}_L$-action on $\cal{O}_L$ via a ring homomorphism $\bar{\iota}:\cal{O}_L\to \text{M}_g(\cal{O}_F)$ by choosing an $\cal{O}_F$-basis of $\cal{O}_L$; the choice of basis induces an identification between $A\otimes_{\cal{O}_F}\cal{O}_L$ and $A^g$. Thus, the ring homomorphism $\iota':\cal{O}_L\to \text{End}_S(A')$ is defined as the arrow that makes the following diagram commute
\[\xymatrix{
\cal{O}_L\ar@{.>}[dr]^{\iota'}\ar[r]^{\bar{\iota}}& \text{M}_g(\cal{O}_F)\ar[d]\\
& \text{M}_g\big(\text{End}_S(A)\big)\cong\text{End}_S(A').
}\]
Following (\cite{burgos} Lemma 5.11), one can compute the dual abelian scheme $\left(A'\right)^\vee\cong A^\vee\otimes_{\cal{O}_F}\frak{d}_{L/F}^{-1}$ and realize that if $\lambda: (\frak{c},\frak{c}^+)\overset{\sim}{\to}(\text{Hom}_{\cal{O}_F}^\text{sym}(A,A^\vee), \text{Hom}_{\cal{O}_F}^\text{sym}(A,A^\vee)^+)$ is a $\frak{c}$-polarization of $A$ then $\lambda'=\lambda\otimes\text{id}$ is a $\frak{c}\otimes_{\cal{O}_F}\frak{d}_{L/F}^{-1}$-polarization of $A'=A\otimes_{\cal{O}_F}\cal{O}_L$ . Finally, it is enough to define $\tilde{\zeta}$ for principal $\frak{N}$-level structures, for $\frak{N}$ an $\cal{O}_F$-ideal. A principal $\frak{N}$-level structure is an $\cal{O}_F$-linear isomorphism of group schemes $(\cal{O}_F/\frak{N})^2\overset{\sim}{\to} A[\frak{N}]$, by tensoring such an isomorphism  with $\cal{O}_L$ over $\cal{O}_F$  we obtain a principal $\frak{N}$-level structure on $A'$.

\begin{remark}
For any fractional ideal $\frak{c}$ of $F$ there is a commutative diagram
\[
\xymatrix{
\cal{M}_{K',F}^{\frak{c}\frak{d}_F^{-1}}\ar[r]^{\tilde{\zeta}}\ar[d]& \cal{M}_{K,L}^{\frak{c}\frak{d}_L^{-1}}\ar[d]\\
\mathbf{Sh}_{K'}^\frak{c}(G_F)\ar[r]^{\zeta}& \mathbf{Sh}_{K}^{\frak{c}}(G_L)}, \quad\text{implying }\quad \xymatrix{
\mathbf{Sh}_{K'}(\text{GL}_{2,\bb{Q}})\ar@{.>}[dr]_{\zeta}\ar[r]^{\tilde{\zeta}}& \cal{M}_{K,L}^{\frak{d}_L^{-1}}\ar[d]^{\xi}\\
& \mathbf{Sh}_{K}(G_L)}\quad\text{when $F=\bb{Q}$}.
\]
\end{remark}

\subsection{Compactifications and $p$-adic theory}\label{subsect: Compactifications and p-adic theory}
Sometimes we drop part of the decorations from the symbols denoting Shimura varieties when we believe it does not cause confusion, both to symplify the notation and to state facts that hold for both groups $G$ and $G^*$. We denote by $\mathbf{Sh}_{K,R}^*$ the minimal compactification of $\mathbf{Sh}_{K,R}$, it is a normal projective scheme and there is a natural open immersion $\oint:\mathbf{Sh}_{K,R}^\text{tor}\longrightarrow\mathbf{Sh}_{K,R}^*$.
By choosing some auxiliary data $\Sigma$, one can construct an arithmetic toroidal compactification $\mathbf{Sh}_{K,\Sigma}^\text{tor}$ smooth and  projective over $\bb{Z}_{(p)}$, such that the natural morphism $\xi$ extends to $\xi:\mathbf{Sh}_{K',\Sigma'}^\text{tor}(G^*)\to\mathbf{Sh}^\text{tor}_{K,\Sigma}(G)$. 
 There is a natural open immersion $\mathbf{Sh}_K\hookrightarrow \mathbf{Sh}^\text{tor}_{K}$ such that the boundary $\text{D}=\mathbf{Sh}^\text{tor}_{K}\setminus\mathbf{Sh}_K$ is a relative simple normal crossing Cartier divisor. The Hilbert-Blumenthal abelian scheme $\cal{A}$ over $\mathbf{Sh}_K$ extends to a semi-abelian scheme $\cal{A}^\text{sa}\to \mathbf{Sh}^\text{tor}_{K}$ with an $\cal{O}_F$-action and a $K$-level structure \cite{Rapoport}, (\cite{Lan13} Chapter  $\text{VI}$). There is a canonical way to extend the rank $2$ vector bundle of relative de Rham cohomology $\bb{H}^1_\text{dR}(\cal{A}/\mathbf{Sh}_{K,R})$ to an $(\cal{O}_{\mathbf{Sh}^\text{tor}_{K}}\otimes_{\bb{Z}}\cal{O}_F)$-module $\bb{H}^1$ locally free of rank $2$ over $\mathbf{Sh}^\text{tor}_{K,R}$ together with Gauss-Manin connection and Kodaira-Spencer isomorphism. If $\underline{\omega}=e^*\big(\Omega^1_{\cal{A}^\text{sa}/ \mathbf{Sh}^\text{tor}_{K}}\big)$ is the cotangent space at the origin of the universal semi-abelian scheme, the vector bundle $\bb{H}^1$ has an $\cal{O}_F$-equivariant Hodge filtration
\[\xymatrix{
0\ar[r]& \underline{\omega}\ar[r]& \bb{H}^1\ar[r]& \text{Lie}((\cal{A}^\text{sa})^\vee)\ar[r]& 0.
}\]
Let $R$ be an $\cal{O}_{F^\text{Gal}}$-algebra in which the discriminant $d_{F/\bb{Q}}$ is invertible. For a coherent $(\cal{O}_{\mathbf{Sh}^\text{tor}_{K,R}}\otimes_{\bb{Z}}\cal{O}_F)$-module $M$, we denote by $M=\bigoplus_{\tau\in I} M_\tau$ its canonical decomposition for the $\cal{O}_F$-action (\cite{KatzCM} Lemma 2.0.8): $M_\tau$ is the direct summand of $M$ on which $\cal{O}_F$ acts via $\tau:\cal{O}_F\to R\to \cal{O}_{\mathbf{Sh}^\text{tor}_{K,R}}$. Then the $\tau$-component of the Hodge filtration is
\[\xymatrix{
0\ar[r]& \underline{\omega}_\tau\ar[r]& \bb{H}^1_\tau\ar[r]& \bigwedge^2(\bb{H}^1_\tau)\otimes\underline{\omega}_\tau^{-1}\ar[r]& 0
}.\]
For a weight $(k,w)\in\bb{Z}[I]^2$ with $k-2w=mt$, we define the integral model of the line bundle $(\ref{complex line bundle})$ by
\[
\underline{\omega}^{(k,w)}:=\underset{\tau\in I}{\bigotimes}\Big((\wedge^2\bb{H}^1_\tau)^{-\frac{m+k_\tau}{2}}\otimes\underline{\omega}_\tau^{k_\tau}\Big)
\]
as a sheaf over $\mathbf{Sh}^\text{tor}_{K,R}(G)$. The geometric interpretation of modular forms for general totally real number field $F$ and of cuspforms is achieved by $M_{k,w}(K;R)=H^0\big(\mathbf{Sh}_K^\text{tor}(G)_{R},\underline{\omega}^{(k,w)}\big)$ and  $S_{k,w}(K;R)=H^0\big(\mathbf{Sh}_K^\text{tor}(G)_{R},\underline{\omega}^{(k,w)}(-\text{D})\big)$.

\begin{remark}
A general compact open subgroup $K\triangleleft G(\bb{A}^\infty)$ of prime-to-$p$ level doesn't satisfy the assumptions in Definition $\ref{def: Integral model}$. Anyway, one can work with modular forms of level $K$ by considering a subgroup $K'$ that does satisfy them and then take $K/K'$-invariants (\cite{Hilbert} Section 6.4).
\end{remark}

\begin{definition}\label{def: geometric cuspforms}
Let $R$ be an $(\cal{O}_{F}^\text{Gal})_{(p)}$-algebra  and let $(k,w)\in\bb{Z}[I]^2$ be any weight. We define a line bundle over $\mathbf{Sh}^\text{tor}_{K,R}(G^*)$ by
\[
\underline{\omega}_{G^*}^{(k,u)}:=\underset{\tau\in I}{\bigotimes}\Big((\wedge^2\bb{H}^1_\tau)^{u_\tau-k_\tau}\otimes\underline{\omega}_\tau^{k_\tau}\Big).
\]
It provides a geometric incarnation of automorphic forms on $G^*$ of weight $(k,u)\in\bb{Z}[I]^2$ by $
M^{*}_{k,u}(K;R)=H^0\big(\mathbf{Sh}_K^\text{tor}(G^*)_{R},\underline{\omega}_{G^*}^{(k,u)}\big)$ and $ S^{*}_{k,u}(K;R)=H^0\big(\mathbf{Sh}_K^\text{tor}(G^*)_{R},\underline{\omega}_{G^*}^{(k,u)}(-\text{D})\big)$.
\end{definition}
A weight $(k,w)\in\bb{Z}[I]^2$, $k-2w=mt$, is cohomological if $2-m\ge k_\tau\ge2$ for all $\tau\in I$. For any cohomological weight we define the vector bundle $\cal{F}^{(k,w)}$ on $\mathbf{Sh}_{K,R}^\text{tor}(G)$ by $\cal{F}^{(k,w)}:=\bigotimes_{\tau\in I}\cal{F}_\tau^{(k,w)}$ for $\cal{F}_\tau^{(k,w)}:=(\wedge^2\bb{H}^1_\tau)^{\frac{2-m-k_\tau}{2}}\otimes\text{Sym}^{k_\tau-2}\bb{H}^1_\tau$. Similarly, a weight $(k,u)\in\bb{Z}[I]^2$ is cohomological if $k\ge2t_F$ and $t+u\ge k$. For any cohomological weight we define the vector bundle $\cal{F}_{G^*}^{(k,u)}$ on $\mathbf{Sh}_{K,R}^\text{tor}(G^*)$ by $
\cal{F}_{G^*}^{(k,u)}:=\bigotimes_{\tau\in I}\cal{F}_{G^*,\tau}^{(k,u)}$ for $\cal{F}_{G^*,\tau}^{(k,u)}:=(\wedge^2\bb{H}^1_\tau)^{1+u_\tau-k_\tau}\otimes\text{Sym}^{k_\tau-2}\bb{H}^1_\tau$. The extended Gauss-Manin connection on $\bb{H}^1$ induces by functoriality integrable connections $\nabla:\cal{F}^{(k,w)}\to \cal{F}^{(k,w)}\otimes\Omega^1_{\mathbf{Sh}^\text{tor}_{K,R}(G)}(\log \text{D})$ and  $\nabla:\cal{F}_{G^*}^{(k,u)}\to \cal{F}_{G^*}^{(k,u)}\otimes\Omega^1_{\mathbf{Sh}^\text{tor}_{K,R}(G^*)}(\log \text{D})$
out of which one can form the complexes 
\[
\text{DR}_c^{\bullet}\big(\cal{F}^{(k,w)}\big) \ = \ \left[0\to\cal{F}^{(k,w)}(-\text{D})\overset{\nabla}{\longrightarrow} \cdots\overset{\nabla}{\longrightarrow}\cal{F}^{(k,w)}(-\text{D})\otimes\Omega^g_{\mathbf{Sh}^\text{tor}_{K,R}(G)}(\log \text{D})\to 0\right],
\]
\[
\text{DR}_c^{\bullet}\big(\cal{F}_{G^*}^{(k,u)}\big) \ = \ \left[0\to\cal{F}_{G^*}^{(k,u)}(-\text{D})\overset{\nabla}{\longrightarrow} \cdots\overset{\nabla}{\longrightarrow}\cal{F}_{G^*}^{(k,u)}(-\text{D})\otimes\Omega^g_{\mathbf{Sh}^\text{tor}_{K,R}(G^*)}(\log \text{D})\to 0\right]
\]
equipped with their natural Hodge filtration. One can associate to $\text{DR}_c^{\bullet}\big(\cal{F}^{(k,w)}\big)$ and $\text{DR}_c^{\bullet}\big(\cal{F}_{G^*}^{(k,u)}\big)$ a dual BGG complex. We recall the definition of $\text{BGG}_c(\cal{F}_{G^*}^{(k,u)})$ and we refer to (\cite{Hilbert}  Section 2.15 ) for the definition of $\text{BGG}_c(\cal{F}^{(k,w)})$. For any subset $J\subset I$, let $s_J\in\{\pm1\}^I$ be the element whose $\tau$-component is $-1$ if $\tau\not\in J$ and $1$ if $\tau\in J$. For $0\le j\le g$ we put 
\[
\text{BGG}_c^j(\cal{F}_{G^*}^{(k,u)})=\bigoplus_{J\subset I,\ \#J=j}\underline{\omega}_{G^*}^{s_J\cdot(k,u)}(-D)e_J
\]
for $e_J$ the Cech symbol and $\underline{\omega}_{G^*}^{s_J\cdot(k,u)}=\big(\bigotimes_{\tau\not\in J}(\wedge^2\bb{H}^1_\tau)^{u_\tau-1}\otimes\underline{\omega}_\tau^{2-k_\tau}\big)\otimes\big(\bigotimes_{\tau\in J}(\wedge^2\bb{H}^1_\tau)^{u_\tau-k_\tau}\otimes\underline{\omega}_\tau^{k_\tau}\big)$.
There are differential operators $d^j:\text{BGG}_c^j(\cal{F}_{G^*}^{(k,u)})\to \text{BGG}_c^{j+1}(\cal{F}_{G^*}^{(k,u)})$ given on local sections by $
d^j: \msf{f}e_J\mapsto\sum_{\tau_0\not\in J}\Theta_{\tau_0,k_{\tau_0-1}}(\msf{f})e_{\tau_0}\wedge e_J$ where  $\Theta_{\tau_0,k_{\tau_0}-1}(\msf{f})=\frac{(-1)^{k_{\tau_0}-2}}{(k_{\tau_0}-2)!}\sum_\xi \tau_0(\xi)^{k_{\tau_0}-1}(\xi)a_\xi q^\xi$ if the local section is written as $\msf{f}=\sum_\xi a_\xi q^\xi$.

\begin{theorem}{(\cite{Hilbert} Theorem 2.16, \cite{BGG} remark 5.24)}\label{th: dual BGG quasi-iso}
Let $R$ be an $F^\text{Gal}$-algebra, then for $\cal{S}=\cal{F}^{(k,w)}$ (resp. $\cal{S}=\cal{F}_{G^*}^{(k,u)}$) there is a canonical quasi-isomorphic embedding $
\text{BGG}_c^{\bullet}\big(\cal{S}\big)\hookrightarrow \text{DR}_c^{\bullet}\big(\cal{S}\big)$ of complexes of abelian sheaves on $\mathbf{Sh}^\text{tor}_{K,R}(G)$ (resp. $\mathbf{Sh}^\text{tor}_{K,R}(G^*)$). Moreover, the Hodge spectral sequences for both complexes degenerate at the first page.
\end{theorem}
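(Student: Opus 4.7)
The plan is to follow Faltings' construction of the dual BGG complex for Shimura varieties: first build the embedding on each graded piece, identifying its differentials with the Maass–Shimura–type operators $\Theta_{\tau_0,k_{\tau_0}-1}$; then check the quasi-isomorphism locally by reducing to a tensor product over $\tau\in I$ of rank-two Koszul-style statements; finally deduce degeneration of the Hodge spectral sequence from the known log Hodge-to-de Rham degeneration for automorphic vector bundles on toroidal compactifications of Hilbert modular varieties.

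Because the $(\cal{O}_{\mathbf{Sh}^\text{tor}_K}\otimes \cal{O}_F)$-module $\bb{H}^1$ decomposes canonically as $\bigoplus_{\tau\in I}\bb{H}^1_\tau$ and the Kodaira–Spencer isomorphism identifies $\Omega^1_{\mathbf{Sh}^\text{tor}_{K,R}}(\log \mathrm{D})\cong \bigoplus_{\tau\in I}\underline{\omega}_\tau^{2}\otimes(\wedge^2\bb{H}^1_\tau)^{-1}$, the sheaf $\cal{F}^{(k,u)}_{G^*}(-\mathrm{D})\otimes\Omega^j(\log\mathrm{D})$ breaks up as a direct sum indexed by subsets of $I$ of size $j$. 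For each such subset $J$ the lowest-weight sub-line bundle inside the $J$-component is exactly $\underline{\omega}_{G^*}^{s_J\cdot(k,u)}(-\mathrm{D})$, which defines the embedding $\text{BGG}^j_c(\cal{S})\hookrightarrow \text{DR}^j_c(\cal{S})$. A direct computation, matching the action of $\nabla$ with the Gauss–Manin connection on symmetric powers of $\bb{H}^1_\tau$, shows that the composition $\underline{\omega}^{s_J\cdot(k,u)}_{G^*}(-\mathrm{D})\hookrightarrow \cal{F}^{(k,u)}_{G^*}(-\mathrm{D})\otimes\Omega^j \xrightarrow{\nabla}\cal{F}^{(k,u)}_{G^*}(-\mathrm{D})\otimes\Omega^{j+1}$ followed by projection to the $J\cup\{\tau_0\}$ component is precisely the operator $\Theta_{\tau_0,k_{\tau_0}-1}$ appearing in the definition of $d^j$.

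To verify that this embedding is a quasi-isomorphism, I would work on formal neighbourhoods of geometric points of $\mathbf{Sh}^\text{tor}_{K,R}$. There $\bb{H}^1\cong \bigoplus_\tau \bb{H}^1_\tau$ and the whole situation factors as a tensor product over $\tau\in I$ of elementary one-variable complexes. One is then reduced to proving the classical Koszul/BGG statement that the two-term complex
\[
\underline{\omega}_\tau^{2-k_\tau}\otimes(\wedge^2\bb{H}^1_\tau)^{u_\tau-1}\;\longrightarrow\; \underline{\omega}_\tau^{k_\tau}\otimes(\wedge^2\bb{H}^1_\tau)^{u_\tau-k_\tau}\otimes\Omega^1_\tau(\log\mathrm{D})
\]
embeds quasi-isomorphically into the de Rham complex of $\text{Sym}^{k_\tau-2}\bb{H}^1_\tau$. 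This is an $\mathfrak{sl}_2$-BGG resolution: it amounts to showing that away from the sub/quotient lines of $\text{Sym}^{k_\tau-2}\bb{H}^1_\tau$ cut out by the Hodge filtration, the Gauss–Manin connection acts bijectively, which is checked directly on the $q$-expansion basis $\{\xi^i q^\xi\}$ via the formula for $\Theta_{\tau,k_\tau-1}$.

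For degeneration at $E_1$, the Hodge spectral sequence of $\text{DR}_c^\bullet(\cal{S})$ degenerates by the standard theory of log Hodge structures on smooth projective varieties over characteristic-zero bases, applied to the canonical extension of the variation of Hodge structure underlying $\cal{S}$; equivalently one can invoke the Deligne–Illusie mod-$p$ lifting argument extended to log toroidal compactifications of PEL Shimura varieties. The BGG quasi-isomorphism then transports this degeneration to $\text{BGG}_c^\bullet(\cal{S})$. The main obstacle, and the reason the cited references do the heavy lifting, is the careful bookkeeping of log structures and canonical extensions at the boundary divisor $\mathrm{D}$: one must ensure that the $(-\mathrm{D})$ twists, the canonical extensions of $\bb{H}^1$ used to define $\cal{F}^{(k,w)}$, and the differentials $\Theta_{\tau,k_\tau-1}$ are mutually compatible so that the local quasi-isomorphism extends across the cusps.
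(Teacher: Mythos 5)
The paper does not prove this statement at all: it is imported wholesale from the cited references (Theorem 2.16 of the Hilbert modular forms reference and the BGG reference), so there is no internal proof to compare with; your sketch must therefore stand on its own, and as written it contains a genuine structural gap. Your construction of the embedding is wrong: you take $\underline{\omega}_{G^*}^{s_J\cdot(k,u)}(-\text{D})$ to be an $\mathcal{O}$-linear sub-line bundle of $\cal{F}_{G^*}^{(k,u)}(-\text{D})\otimes\Omega^j(\log\text{D})$ and claim that composing this inclusion with one application of $\nabla$ and a projection yields $\Theta_{\tau_0,k_{\tau_0}-1}$. Neither half of this can work. In degree $0$ the BGG term involves $\underline{\omega}_\tau^{2-k_\tau}$, which is the top \emph{quotient} graded piece of $\text{Sym}^{k_\tau-2}\bb{H}^1_\tau$ (up to a $\wedge^2\bb{H}^1_\tau$ twist), not a subbundle, and in intermediate degrees the $J$-component is a sub in the $\tau\in J$ directions but a quotient in the $\tau\notin J$ directions; so there is no canonical $\mathcal{O}$-linear inclusion of the kind you use. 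Moreover $\Theta_{\tau,k_\tau-1}$ is a differential operator of order $k_\tau-1$ (on $q$-expansions it is essentially $\theta^{k_\tau-1}$), while your recipe can only produce a first-order operator; the orders do not match as soon as $k_\tau>3$. This mismatch is precisely why the theorem asserts an embedding of complexes of \emph{abelian sheaves} rather than of $\mathcal{O}$-module complexes: the embedding itself must be given by higher-order differential operators (locally by telescoping expressions of the same shape as the sections $G_1,G_2$ constructed later in the paper), or one argues in the filtered derived category via the associated graded, whose graded pieces are Koszul complexes exact away from the extreme spots occupied by the BGG line bundles. Your reduction "to formal neighbourhoods of geometric points checked on the $q$-expansion basis" conflates two different localizations ($q$-expansions live at the cusps), and the phrase "away from the sub/quotient lines the connection acts bijectively" does not amount to the needed Koszul-exactness statement.

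On degeneration, your appeal to log Hodge theory or Deligne--Illusie is in the right spirit but is applied to the wrong complex: $\text{DR}_c^{\bullet}(\cal{S})$ is the $(-\text{D})$-twisted (cuspidal) subcomplex of the log de Rham complex of the canonical extension, not that log de Rham complex itself, so degeneration for it is a statement about interior/cuspidal cohomology and needs a separate argument (this is exactly the content carried by the cited references). As it stands, the key identifications in your second and third paragraphs would fail, so the proposal does not establish the theorem.
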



\subsubsection{$p$-adic theory}
\label{subsubsec: p-adic theory}
Katz's idea for a geometric theory of $p$-adic modular forms \cite{p-adicpropertiesmodularschemes} consists in removing from the relevant Shimura variety the preimages, under the specialization map, of those points in the special fiber that correspond to non-ordinary abelian varieties. 

Let $E\subset\bb{C}$ be a number field containing $F^\text{Gal}$. The fixed embedding $\iota_p:\overline{\bb{Q}}\hookrightarrow\overline{\bb{Q}}_p$ determines a prime ideal $\wp\mid p$ of $E$. We denote by $E_\wp$ the completion, $\cal{O}_\wp$ the ring of integers and $k$ the residue field. Let $\cal{A}^\text{sa}_k$ be the semi-abelian scheme over the special fiber $\mathbf{Sh}^\text{tor}_{K,k}$ of the Shimura variety. The determinant of the map, induced by Vershiebung $\text{V}: (\cal{A}^\text{sa}_k)^{(p)}\to\cal{A}^\text{sa}_k$ between cotangent spaces at the origin, corresponds to a characteristic $p$ Hilbert modular form  $h\in H^0(\mathbf{Sh}^\text{tor}_{K,k}, \det(\underline{\omega})^{\otimes (p-1)})$,
called the Hasse invariant. The ordinary locus $\mathbf{Sh}^{\text{tor},\text{ord}}_{K,k}$ is the complement of the zero locus of the Hasse invariant. Let $\scr{S}^\text{tor}_{K}$ denote the formal completion of $\mathbf{Sh}^\text{tor}_{K,\cal{O}_\wp}$ along its special fiber and 
$j:\big]\mathbf{Sh}^{\text{tor},\text{ord}}_{K,k}\big[\hookrightarrow \scr{S}^\text{tor}_{K,\text{rig}}$ the inverse image  of the ordinary locus under the specialization map $\text{sp}:\scr{S}^\text{tor}_{K,\text{rig}}\to \mathbf{Sh}^{\text{tor}}_{K,k}$. For the minimal compactification $\mathbf{Sh}^{*}_{K,k}$ one can similarly define the ordinary locus $\mathbf{Sh}^{*,\text{ord}}_{K,k}$ of the special fiber, which is an affine scheme, since $\det(\underline{\omega})$ is an ample line bundle on $\mathbf{Sh}^{*}_{K,k}$. This is a very convenient feature because it implies the existence of a fundamental system of strict affinoid neighborhoods of $]\mathbf{Sh}^{*,\text{ord}}_{K,k}[$. 

\begin{theorem}\label{th: p-adic cohomology}
We recall that overconvergent cuspforms of weight $(k,w)\in\bb{Z}[I]^2$ are defined as $S^\dagger_{k,w}(K;E_\wp)=H^0\big(\scr{S}^\text{tor}_{K,\text{rig}}, j^\dagger\big(\underline{\omega}^{(k,w)}(-\text{D})\big)\big)$. For any cohomological weight $(k,w)\in\bb{Z}[I]^2$, $k-2w=mt$, the hypercohomology group $\bb{H}^g\big(\scr{S}^\text{tor}_{K,\text{rig}}(G), j^\dagger\text{DR}^\bullet_c\big(\cal{F}^{(k,w)}\big)\big)$ can be computed either as
\[
 \frac{S^\dagger_{k,w}(K;E_\wp)}{\sum_{\tau\in I}\Theta_{\tau,k_\tau-1}\big(S^\dagger_{s_{\tau}\cdot (k,w)}(K;E_\wp)\big)}\qquad\text{or}\qquad\frac{H^0_\text{rig}\big(\scr{S}^\text{tor}_{K,\text{rig}}(G), j^\dagger\big(\cal{F}^{(k,w)}\otimes\Omega_{\scr{S}^\text{tor}_{K,\text{rig}}(G)}^g\big) \big)}{\nabla H^0_\text{rig}\big(\scr{S}^\text{tor}_{K,\text{rig}}(G), j^\dagger\big(\cal{F}^{(k,w)}\otimes\Omega_{\scr{S}^\text{tor}_{K,\text{rig}}(G)}^{g-1}\big) \big)}.
\]
\end{theorem}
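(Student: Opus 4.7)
The plan is to reduce the hypercohomology to the cohomology of a complex of global sections, via a vanishing result for higher coherent cohomology on the rigid-analytic tube of the ordinary locus, and then to identify the top cohomology in two different ways using the dual BGG and de Rham complexes respectively.

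First I would invoke Theorem \ref{th: dual BGG quasi-iso} to replace $\text{DR}^\bullet_c(\cal{F}^{(k,w)})$ by the quasi-isomorphic dual BGG complex, so that $\bb{H}^g(\scr{S}^\text{tor}_{K,\text{rig}}, j^\dagger\text{DR}^\bullet_c)\cong\bb{H}^g(\scr{S}^\text{tor}_{K,\text{rig}}, j^\dagger\text{BGG}^\bullet_c)$. Each term of the BGG complex is a direct sum of line bundles of the form $\underline{\omega}^{s_J\cdot(k,w)}(-\text{D})$. Next I would establish the crucial vanishing: since $\det(\underline{\omega})$ is ample on the minimal compactification $\mbf{Sh}^*_K$ and the Hasse invariant is a nonzero section of a power of this line bundle, the ordinary locus $\mbf{Sh}^{*,\text{ord}}_{K,k}$ is affine, and therefore admits a fundamental system of strict affinoid neighborhoods of its tube inside $\scr{S}^*_{K,\text{rig}}$. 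Pulling these back along the proper morphism $\oint:\mbf{Sh}^\text{tor}_K\to\mbf{Sh}^*_K$ and combining Tate's acyclicity on affinoids with the vanishing of higher direct images of coherent sheaves under $\oint$, one deduces $H^q(\scr{S}^\text{tor}_{K,\text{rig}}, j^\dagger\cal{F})=0$ for every coherent sheaf $\cal{F}$ on $\scr{S}^\text{tor}_{K,\text{rig}}$ and every $q>0$. This applies both to the terms of $\text{BGG}^\bullet_c$ and to the terms $\cal{F}^{(k,w)}(-\text{D})\otimes\Omega^p(\log\text{D})$ of $\text{DR}^\bullet_c$.

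Granted this vanishing, the hypercohomology spectral sequence collapses, and in both cases the hypercohomology agrees with the cohomology of the complex of global sections; in top degree $g$ this is simply the cokernel of the final differential. For the BGG complex, the degree-$g$ piece corresponds to $J=I$ and has global sections equal to $S^\dagger_{k,w}(K;E_\wp)$, while the degree-$(g-1)$ piece decomposes, as $J$ ranges over subsets of cardinality $g-1$, as $\bigoplus_{\tau\in I}S^\dagger_{s_\tau\cdot(k,w)}(K;E_\wp)$; the differential is by construction the direct sum of the theta operators $\Theta_{\tau,k_\tau-1}$, yielding the first expression. For $\text{DR}^\bullet_c$ the same collapse gives the cokernel of $\nabla:H^0_\text{rig}(\scr{S}^\text{tor}_{K,\text{rig}}, j^\dagger(\cal{F}^{(k,w)}\otimes\Omega^{g-1}))\to H^0_\text{rig}(\scr{S}^\text{tor}_{K,\text{rig}}, j^\dagger(\cal{F}^{(k,w)}\otimes\Omega^g))$, which is the second expression.

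The main technical obstacle is establishing the cohomological vanishing rigorously: one has to check that $j^\dagger$ commutes with the relevant inverse-limit/direct-limit operations defining rigid cohomology on strict neighborhoods, that the fundamental system of strict affinoids pulls back compatibly under the proper map $\oint$, and that the logarithmic boundary divisor $\text{D}$ does not obstruct the vanishing after the $(-\text{D})$-twist. Rather than reprove this from scratch, I would lean on the parallel rigid-analytic results on the ordinary locus of Hilbert modular varieties (in the spirit of Andreatta--Iovita--Pilloni and related works), which supply exactly this type of coherent-cohomology vanishing on strict neighborhoods of the ordinary locus.
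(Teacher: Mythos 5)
Your overall strategy is the same as the paper's — pass to the dual BGG complex, degenerate the hypercohomology spectral sequence, and read off the top cohomology as a cokernel of global sections — but the justification you give for the crucial vanishing step is wrong as stated, and the error hides the real content of the argument.

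You assert that one can pull back a fundamental system of strict affinoid neighborhoods along $\oint:\mathbf{Sh}^\text{tor}_K\to\mathbf{Sh}^*_K$ and combine Tate acyclicity with ``the vanishing of higher direct images of coherent sheaves under $\oint$'' to conclude $H^q(\scr{S}^\text{tor}_{K,\text{rig}},j^\dagger\cal{F})=0$ for \emph{every} coherent sheaf $\cal{F}$ and $q>0$. This is false. The morphism $\oint$ is proper with positive-dimensional fibers over the cusps (they are compactified torus embeddings), so an arbitrary coherent sheaf on $\mathbf{Sh}^\text{tor}_K$ — for instance one supported on a boundary component — will generically have nontrivial higher direct images. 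Moreover the preimage of an affinoid under $\oint$ is not affinoid, so Tate acyclicity does not apply directly on the toroidal side. What saves the argument is a theorem of Lan (cited in the paper as \cite{HigherLan} Theorem 8.2.1.2) that the higher direct images of \emph{subcanonical automorphic bundles} — precisely the $\underline{\omega}^{s_J\cdot(k,w)}(-\text{D})$ appearing in $\text{BGG}^\bullet_c$ and the twisted log-de Rham terms — do vanish; this is a deep and specific result, not a formal consequence of coherence. The paper's proof therefore pushes the complexes down to the minimal compactification via the Leray spectral sequence, invoking Lan's vanishing to degenerate it, and only then uses the affinoid neighborhoods on $\scr{S}^*_{K,\text{rig}}$ to kill higher cohomology. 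To repair your argument you would need to replace the blanket coherent-sheaf claim by exactly this restricted vanishing for the subcanonical automorphic bundles occurring in $\text{BGG}^\bullet_c$ and $\text{DR}^\bullet_c$, with a precise citation; the rest of your reasoning (identifying the degree-$g$ term with $S^\dagger_{k,w}$, the degree-$(g-1)$ terms with $\bigoplus_\tau S^\dagger_{s_\tau\cdot(k,w)}$, and the differential with $\sum_\tau\Theta_{\tau,k_\tau-1}$, resp.\ the cokernel of $\nabla$) is correct and matches the paper.
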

\begin{proof}
The first claim is (\cite{Hilbert} Theorem 3.5). For the second, Theorem $\ref{th: dual BGG quasi-iso}$ states that we have a quasi-isomorphism of complexes $\text{DR}^{\bullet}_c(\cal{F}^{(k,w)})\cong\text{BGG}^{\bullet}_c(\cal{F}^{(k,w)})$ , thus the isomorphisms  
$\bb{H}^g\big(\scr{S}^\text{tor}_{K,\text{rig}}, j^\dagger\text{DR}^\bullet_c\big(\cal{F}^{(k,w)}\big)\big)\cong\bb{H}^g\big(\scr{S}^*_{K,\text{rig}}, j^\dagger\oint_*\text{BGG}_c(\cal{F}^{(k,w)}\big)\big)\cong \bb{H}^g\big(\scr{S}^*_{K,\text{rig}}, j^\dagger\oint_*\text{DR}^\bullet_c\big(\cal{F}^{(k,w)}\big)\big)$ follow by applying the Leray spectral sequence for the composition $\scr{S}^\text{tor}_{K,\text{rig}}\to\scr{S}^*_{K,\text{rig}}\to\text{Spa}\bb{Q}_p$ and the vanishing of the higher derived images of subcanonical automorphic bundles (\cite{HigherLan} Theorem 8.2.1.2). We conclude that
\[
\bb{H}^g\big(\scr{S}^\text{tor}_{K,\text{rig}}, j^\dagger\text{DR}^\bullet_c\big(\cal{F}^{(k,w)}\big)\big)\cong\frac{H^0_\text{rig}\big(\scr{S}^\text{tor}_{K,\text{rig}}(G), j^\dagger\big(\cal{F}^{(k,w)}\otimes\Omega_{\scr{S}^\text{tor}_{K,\text{rig}}(G)}^g\big) \big)}{\nabla H^0_\text{rig}\big(\scr{S}^\text{tor}_{K,\text{rig}}(G), j^\dagger\big(\cal{F}^{(k,w)}\otimes\Omega_{\scr{S}^\text{tor}_{K,\text{rig}}(G)}^{g-1}\big) \big)}
\]
since on the minimal compactification there is a fundamental system of affinoid neighborhoods of the ordinary locus.
\end{proof}
\begin{remark}
	The theorem holds if we replace the group $G$ by $G^*$. Indeed, if we define overconvergent cuspforms for $G^*$ of weight $(k,u)\in\bb{Z}[I]^2$ by $S^{*,\dagger}_{k,u}(K;E_\wp)=H^0\big(\scr{S}^\text{tor}_{K,\text{rig}}(G^*), j^\dagger\big(\underline{\omega}_{G^*}^{(k,u)}(-\text{D})\big)\big)$, then for any cohomological weight $(k,u)\in\bb{Z}[I]^2$, the group $\bb{H}^g\big(\scr{S}^\text{tor}_{K,\text{rig}}(G^*), j^\dagger\text{DR}^\bullet_c\big(\cal{F}^{(k,u)}\big)\big)$ can be computed either as
$
 S^{*,\dagger}_{k,u}(K;E_\wp)/\sum_{\tau\in I}\Theta_{\tau,k_\tau-1}\big(S^{*,\dagger}_{s_{\tau}\cdot (k,u)}(K;E_\wp)\big)$ or $H^0_\text{rig}\big(\scr{S}^\text{tor}_{K,\text{rig}}(G^*), j^\dagger\big(\cal{F}_{G^*}^{(k,u)}\otimes\Omega_{\scr{S}^\text{tor}_{K,\text{rig}}(G^*)}^g\big) \big)/\nabla H^0_\text{rig}\big(\scr{S}^\text{tor}_{K,\text{rig}}(G^*), j^\dagger\big(\cal{F}_{G^*}^{(k,u)}\otimes\Omega_{\scr{S}^\text{tor}_{K,\text{rig}}(G^*)}^{g-1}\big) \big)$.
\end{remark}

\begin{lemma}\label{frobenius in cohomology}
Let $\frak{p}\mid p$ be a prime $\cal{O}_F$-ideal. The partial Frobenius $\text{Fr}_\frak{p}$ (\cite{Hilbert} Section 3.12) acts on the image of $S^{\dagger}_{k,w}(K,E_\wp)$ in  the hypercohomology group $\bb{H}^g\big(\scr{S}^\text{tor}_{K,\text{rig}}(G), j^\dagger\text{DR}^\bullet_c\big(\cal{F}^{(k,w)}\big)\big)$  as $\text{Fr}_\frak{p}=\text{N}_{F/\bb{Q}}(\frak{p})V(\frak{p})$.
\end{lemma}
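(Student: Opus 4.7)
The strategy is to verify the identity by comparing the effect of the two operators on $q$-expansions at the cusps, then invoking the $q$-expansion principle to transport the equality into hypercohomology. Both $\text{Fr}_\frak{p}$ and $V(\frak{p})$ admit explicit descriptions on power-series coefficients, and the discrepancy between them turns out to be a single scalar that one can read off from how $\text{Fr}_\frak{p}$ differentiates from the naive substitution $q^\xi\mapsto q^{\xi\varpi_\frak{p}}$.

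\emph{Step 1.} Recall that the partial Frobenius $\text{Fr}_\frak{p}$ on $\scr{S}^\text{tor}_{K,\text{rig}}(G)$ is defined using the canonical $\frak{p}$-subgroup $H_\frak{p}\subset \cal{A}^{\text{sa}}[\frak{p}]$ on the ordinary locus, sending the universal semi-abelian scheme to its quotient by $H_\frak{p}$. Applied to the universal Tate/Mumford object at a cusp, this quotient geometrically corresponds to the substitution $q^\xi\longmapsto q^{\xi\varpi_\frak{p}}$ on the formal parameters, since $H_\frak{p}$ is the multiplicative subgroup of the toroidal fiber. Consequently, for an overconvergent cuspform $\msf{f}$ with $q$-expansion $\sum_\xi a_p(\xi,\msf{f})q^\xi$, the class $\text{Fr}_\frak{p}[\msf{f}]$ in $\bb{H}^g$ is represented by the cuspform whose $q$-expansion is $\sum_\xi a_p(\xi,\msf{f})q^{\xi\varpi_\frak{p}}$, twisted by the scalar arising from the natural maps induced on $\underline{\omega}^{(k,w)}$ and on $\Omega^g_{\scr{S}^\text{tor}_{K,\text{rig}}(G)}$.

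\emph{Step 2.} On the other side, by the formula in Section \ref{subsec:ah_mod_form}, the operator $V(\frak{p})$ acts on $p$-adic $q$-expansions by
\[
\msf{a}_p(y,\msf{f}_{\lvert V(\frak{p})})=\varpi_{\frak{p},p}^{-v}\,\msf{a}_p(y\varpi_\frak{p}^{-1},\msf{f}),
\]
which is exactly the substitution $q^\xi\mapsto q^{\xi\varpi_\frak{p}}$ up to the weight normalization absorbed into the definition of $\msf{a}_p$. Comparing the two $q$-expansions shows that $\text{Fr}_\frak{p}$ and $V(\frak{p})$ differ only by a scalar independent of $\msf{f}$.

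\emph{Step 3.} To pin down the scalar, note that $\text{Fr}_\frak{p}$ acts on the $\frak{p}$-component of $\Omega^1_{\scr{S}^\text{tor}_{K,\text{rig}}(G)}$ by multiplication by a uniformizer: the dual map on the Lie algebra of the canonical subgroup is divisible, and its contribution to the top exterior power along the embeddings in $I_\frak{p}$ yields a factor of $\text{N}_{F/\bb{Q}}(\frak{p})$ after Kodaira--Spencer identification. All other weight factors induced on $\underline{\omega}^{(k,w)}$ cancel against those already built into the definition of $V(\frak{p})$ on $p$-adic Fourier coefficients. The combined normalization is therefore precisely $\text{N}_{F/\bb{Q}}(\frak{p})$.

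\emph{Step 4.} Finally, by Theorem \ref{th: p-adic cohomology}, a class in $\bb{H}^g(\scr{S}^\text{tor}_{K,\text{rig}}(G),j^\dagger\text{DR}^\bullet_c(\cal{F}^{(k,w)}))$ arising from the image of $S^\dagger_{k,w}(K;E_\wp)$ is determined by its $q$-expansion modulo the images of the operators $\Theta_{\tau,k_\tau-1}$; since the identity $\text{Fr}_\frak{p}=\text{N}_{F/\bb{Q}}(\frak{p})V(\frak{p})$ holds at the level of $q$-expansions, it descends to the asserted equality on the image in hypercohomology. The main obstacle is bookkeeping of normalizations, namely confirming that the scalar factors produced on $\underline{\omega}^{(k,w)}$, on $\Omega^g$ through Kodaira--Spencer, and in the Hida convention for $V(\frak{p})$ combine cleanly to $\text{N}_{F/\bb{Q}}(\frak{p})$ rather than to some unexpected power; once this is verified, the remaining verification on $q$-expansions is routine.
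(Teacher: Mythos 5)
Your proposal is correct and follows essentially the same route as the paper: one computes the action of the partial Frobenius on $q$-expansions of the de Rham realization over the ordinary locus (Tate objects), keeping track of the twists coming from the coefficient sheaf and from $\Omega^g$ via Kodaira--Spencer, and compares with Hida's normalization of $V(\frak{p})$. The paper organizes the same bookkeeping by quoting Coleman's computation to get $\text{Fr}_\frak{p}=\varpi_\frak{p}^{\,k-t+\frac{(2-m)t-k}{2}}[\frak{p}]$ and then using $[\frak{p}]=\varpi_\frak{p}^{\,t-w}V(\frak{p})$, which is exactly the cancellation you assert in Step 3.
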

\begin{proof}
	Taking into account the action of the partial Frobenius on $j^\dagger\Omega^g_{\scr{S}^\text{tor}_{K,\text{rig}}(G)}$, the same computation as in (\cite{ClassicColeman} Remark p.339) shows that $\text{Fr}_\frak{p}$ acts on the image of $S^{\dagger}_{k,w}(K, E_\wp)$ in $\bb{H}^g\big(\scr{S}^\text{tor}_{K,\text{rig}}(G), j^\dagger\text{DR}^\bullet_c\big(\cal{F}^{(k,w)}\big)\big)$ as $\varpi_\frak{p}^{k-t+\frac{(2-m)t-k}{2}}[\frak{p}]$, since $[\frak{p}]$ is the operator that acts on q-expansion by $\msf{a}(y,\msf{f}_{\lvert [\frak{p}]})=\msf{a}(y\varpi_\frak{p}^{-1},\msf{f})$. We conclude noting that $[\frak{p}]=\varpi_\frak{p}^{t-w}V(\frak{p})$ as operators on $S^{\dagger}_{k,w}(K, E_\wp)$.
\end{proof}
If we denote by $U_\frak{p}^\text{geom}$ the operator defined in (\cite{Hilbert} Section 3.18), the equality $U_\frak{p}\text{Fr}_\frak{p}=\langle \frak{p}^{-1}\rangle \varpi_\frak{p}^t$ of (\cite{Hilbert} Lemma 3.20) implies that $U(\frak{p})=U_\frak{p}^\text{geom}\langle \frak{p}\rangle$ as operators on $S^\dagger_{k,w}(K,E_\wp)$. In particular, we can restate (\cite{Hilbert} Corollary 3.24) by saying that if $\msf{f}\in S^\dagger_{s_J\cdot(k,w)}(K,E_\wp)$ is a generalized eigenform for $U_0(\frak{p})$ with non-zero eigenvalue $\lambda_\frak{p}$, then
\begin{equation}\label{bound on slopes}
\text{val}_p(\lambda_\frak{p})\ge\sum_{\tau\in I_{\frak{p}}\setminus J}(k_\tau-1)
\end{equation}
where $I_{\frak{p}}$ is the subset of those embeddings $F\hookrightarrow\overline{\bb{Q}}$ that induce the prime $\frak{p}$ when composed with the fixed $p$-adic embedding $\iota_p:\overline{\bb{Q}} \hookrightarrow\overline{\bb{Q}}_p$.

\begin{corollary}\label{overconvergence in the split case}
	Let $F/\bb{Q}$ be a real quadratic field in which $p\cal{O}_F=\frak{p}_1\frak{p}_2$ splits. Let $\frak{N}$ be an $\cal{O}_F$-ideal prime to $p$ and $\msf{f}\in S_{k,w}(\frak{N},\overline{\bb{Q}})$ a $p$-nearly ordinary eigenform. Then the $p$-adic cuspforms $d_1^{1-k_1}(\msf{f}^{[\frak{p}_1, \frak{p}_2]})$ and $d_2^{1-k_2}(\msf{f}^{[\frak{p}_1, \frak{p}_2]})$ are overconvergent.
\end{corollary}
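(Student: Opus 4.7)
The plan is to realize $d_1^{1-k_1}(\msf{f}^{[\frak{p}_1,\frak{p}_2]})$ as the overconvergent preimage of the doubly depleted cuspform under the BGG differential $\Theta_{\tau_1, k_1-1}\colon S^\dagger_{s_{I_F\setminus\{\tau_1\}}\cdot(k,w)}(K,E_\wp)\to S^\dagger_{(k,w)}(K,E_\wp)$, where $\tau_i\in I_F$ corresponds to $\frak{p}_i$. Once this is set up, the argument for $d_2^{1-k_2}$ is obtained by swapping $\frak{p}_1$ and $\frak{p}_2$.

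First I would check that $\msf{f}^{[\frak{p}_1,\frak{p}_2]}$ is itself overconvergent. Since $\msf{f}$ is classical of tame level prime to $p$, it lies in $S^\dagger_{k,w}(K,E_\wp)$; the composition $V(\varpi_{\frak{p}_i})U(\varpi_{\frak{p}_i})$ preserves the overconvergence radius because $U(\varpi_{\frak{p}_i})$ improves overconvergence while $V(\varpi_{\frak{p}_i})$ restricted to the image of $U(\varpi_{\frak{p}_i})$ recovers the subspace supported on ideles divisible by $\frak{p}_i$. Hence $\msf{f}^{[\frak{p}_1,\frak{p}_2]}=\prod_{i=1,2}(1-V(\varpi_{\frak{p}_i})U(\varpi_{\frak{p}_i}))\msf{f}$ is overconvergent and is annihilated by both $U(\varpi_{\frak{p}_1})$ and $U(\varpi_{\frak{p}_2})$.

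Next I would show the class of $\msf{f}^{[\frak{p}_1,\frak{p}_2]}$ lifts through $\Theta_{\tau_1,k_1-1}$ by combining Theorem $\ref{th: p-adic cohomology}$ with the slope inequality $(\ref{bound on slopes})$. Since $\Theta_{\tau_1,k_1-1}$ acts on $q$-expansions as multiplication by $y_{\tau_1}^{k_1-1}$ and $\varpi_{\frak{p}_1,p}=p$ at a split prime, one has the intertwining $U(\varpi_{\frak{p}_1})\circ\Theta_{\tau_1, k_1-1}=p^{k_1-1}\,\Theta_{\tau_1, k_1-1}\circ U(\varpi_{\frak{p}_1})$. Using the slope decomposition on the BGG spectral sequence from Theorem $\ref{th: dual BGG quasi-iso}$ together with $(\ref{bound on slopes})$ applied at weight $s_{I_F\setminus\{\tau_1\}}\cdot(k,w)$, where every nonzero $U(\varpi_{\frak{p}_1})$-eigenvalue has slope at least $k_1-1$, one isolates the $U(\varpi_{\frak{p}_1})$-null part of $S^\dagger_{k,w}(K,E_\wp)$ as lying precisely inside $\Theta_{\tau_1,k_1-1}\bigl(S^\dagger_{s_{I_F\setminus\{\tau_1\}}\cdot(k,w)}(K,E_\wp)\bigr)$. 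Applied to $\msf{f}^{[\frak{p}_1,\frak{p}_2]}$ this produces an overconvergent preimage $\msf{h}$. Finally, since $y_{\tau_1}^{k_1-1}$ is a unit on the support of $\msf{f}^{[\frak{p}_1,\frak{p}_2]}$, uniqueness of $q$-expansions identifies $\msf{h}$ with $d_1^{1-k_1}(\msf{f}^{[\frak{p}_1,\frak{p}_2]})$, yielding the desired overconvergence.

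The main obstacle is the second step: establishing the precise inclusion $\ker U(\varpi_{\frak{p}_1})\cap S^\dagger_{k,w}\subseteq\Theta_{\tau_1,k_1-1}(S^\dagger_{s_{I_F\setminus\{\tau_1\}}\cdot(k,w)})$. This rests on a delicate separation, made possible only because $p$ splits in $L/\bb{Q}$, of the contributions at $\frak{p}_1$ and $\frak{p}_2$ inside the BGG spectral sequence via $(\ref{bound on slopes})$, which is precisely why the hypothesis on the splitting behaviour of $p$ enters and why the analogous inert statement would require the Andreatta-Iovita type machinery alluded to in the remark preceding the corollary.
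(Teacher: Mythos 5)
Your proposal hinges on the asserted inclusion
\[
\ker U(\varpi_{\frak{p}_1})\cap S^\dagger_{k,w}(K;E_\wp)\subseteq\Theta_{\tau_1,k_1-1}\bigl(S^\dagger_{s_{I_F\setminus\{\tau_1\}}\cdot(k,w)}(K;E_\wp)\bigr),
\]
but this inclusion is not established by the slope bound $(\ref{bound on slopes})$ and is in fact false. What Theorem $\ref{th: p-adic cohomology}$ genuinely gives you, via invertibility of $U_0(\frak{p}_1)$ on the finite-dimensional hypercohomology quotient, is the weaker containment $\ker U_0(\frak{p}_1)\subseteq\text{Im}(\Theta_1)+\text{Im}(\Theta_2)$. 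The slope bound, applied on the source of $\Theta_1$, only tells you that $\text{Im}(\Theta_1)$ consists of high-slope forms; it does not say that a $U_0(\frak{p}_1)$-null class has zero component in $\text{Im}(\Theta_2)/(\text{Im}(\Theta_1)\cap\text{Im}(\Theta_2))$. Indeed $\Theta_2$ commutes with $U_0(\frak{p}_1)$, and the slope bound imposes no constraint on the $U_0(\frak{p}_1)$-action on $S^\dagger_{s_{I_F\setminus\{\tau_2\}}\cdot(k,w)}$, so $\Theta_2$ carries $U_0(\frak{p}_1)$-null forms to $U_0(\frak{p}_1)$-null forms whose image in $\text{Im}(\Theta_2)/(\text{Im}(\Theta_1)\cap\text{Im}(\Theta_2))$ need not vanish. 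If your inclusion were correct, the corollary would already hold for the \emph{singly} depleted $\msf{f}^{[\frak{p}_1]}$, which would make the double depletion in the statement superfluous.

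Precisely here is where the Loeffler--Skinner--Zerbes mechanism enters in the paper's argument, and it is entirely absent from yours. One first records that $\msf{f}^{[\frak{p}_1]}$ lies in $\text{Im}(\Theta_1)+\text{Im}(\Theta_2)$, then studies its image under the projection $\text{pr}_2$ to $\text{Im}(\Theta_2)/(\text{Im}(\Theta_1)\cap\text{Im}(\Theta_2))$. Decomposing $\msf{f}$ into its two $\frak{p}_2$-stabilizations, the $\alpha_2$-part is killed by $\text{pr}_2$ for slope reasons (this is where $(\ref{bound on slopes})$ does real work), so $\text{pr}_2(\msf{f}^{[\frak{p}_1]})$ is a $U_0(\frak{p}_2)$-eigenvector of eigenvalue $\beta_{0,2}$. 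A short computation with $T_0(\frak{p}_2)=U_0(\frak{p}_2)+\alpha_{0,2}\beta_{0,2}[\frak{p}_2]$ then shows $[\frak{p}_2]\text{pr}_2(\msf{f}^{[\frak{p}_1]})=\beta_{0,2}^{-1}\text{pr}_2(\msf{f}^{[\frak{p}_1]})$, which forces $\text{pr}_2(\msf{f}^{[\frak{p}_1,\frak{p}_2]})=0$. Symmetrically $\text{pr}_1(\msf{f}^{[\frak{p}_1,\frak{p}_2]})=0$, and only then does one conclude $\msf{f}^{[\frak{p}_1,\frak{p}_2]}\in\text{Im}(\Theta_1)\cap\text{Im}(\Theta_2)$. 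Your sketch does not use the $\frak{p}_2$-depletion in any load-bearing way and therefore cannot bridge the gap between the sum and the intersection of the two images.
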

\begin{proof}
We prove the corollary building on an idea of Loeffler, Skinner and Zerbes (\cite{LoefflerSkinnerZerbes}, Proposition 4.5.3). Let $1-\msf{a}(\varpi_{\frak{p}_2},\msf{f})X+\epsilon_{\msf{f}}(\frak{p}_2)\varpi_{\frak{p}_2}^{(m+1)t-2v}X^2=(1-\alpha_{0,2}X)(1-\beta_{0,2}X)$  be the Hecke polynomial of $\msf{f}$ for $T_0(\frak{p}_2)$. By assumption $\msf{f}$ is nearly ordinary at $\frak{p}_2$, hence we can assume without loss of generality that $\text{val}_p(\alpha_{0,2})=0$ and $\text{val}_p(\beta_{0,2})=\text{val}_p(\varpi_{\frak{p}_2}^{k-t})$. We denote by $\msf{f}_{\alpha_2},\msf{f}_{\beta_2}$ the two $\frak{p}_2$-stabilizations of $\msf{f}$. If we write $\Theta_i=\Theta_{\tau_i,k_{\tau_i}-1}$ for $i=1,2$, then the classes of $\msf{f}_{\alpha_2}^{[\frak{p}_1]},\msf{f}_{\beta_2}^{[\frak{p}_1]}$ are trivial in the quotient $\frac{S^\dagger_{k,w}(K;E_\wp)}{\text{Im}(\Theta_{1})+\text{Im}(\Theta_{2})}$ because they are annihilated by the invertible operator $U_0(\frak{p}_1)$. Consider the Hecke-equivariant projections $\text{pr}_i: \text{Im}(\Theta_{1})+\text{Im}(\Theta_{2})\to\frac{\text{Im}(\Theta_{i})}{\text{Im}(\Theta_{1})\cap\text{Im}(\Theta_{2})}$ for $i=1,2$. We immediately see that $\text{pr}_2(\msf{f}_{\alpha_2}^{[\frak{p}_1]})=0$ because of the lower bound $(\ref{bound on slopes})$ on the slopes of $U_0(\frak{p}_2)$, therefore $\text{pr}_2(\msf{f}^{[\frak{p}_1]})=\frac{\beta_2}{\beta_2-\alpha_2}\text{pr}_2(\msf{f}^{[\frak{p}_1]}_{\beta_2})$ which implies $U_0(\frak{p}_2)\text{pr}_2(\msf{f}^{[\frak{p}_1]})=\beta_{0,2}\cdot\text{pr}_2(\msf{f}^{[\frak{p}_1]})$. We claim that $[\frak{p}_2]\text{pr}_2(\msf{f}^{[\frak{p}_1]})=\frac{1}{\beta_{0,2}}\text{pr}_2(\msf{f}^{[\frak{p}_1]})$. Indeed, the equality of Hecke operators $T_0(\frak{p}_2)=U_0(\frak{p}_2)+\alpha_{0,2}\beta_{0,2}[\frak{p}_2]$ allows us to compute that
	\[\begin{split}
		[\frak{p}_2]\text{pr}_2(\msf{f}^{[\frak{p}_1]})&=\frac{1}{\alpha_{0,2}\beta_{0,2}}\big[T_0(\frak{p}_2)\text{pr}_2(\msf{f}^{[\frak{p}_1]})-U_0(\frak{p}_2)\frac{\beta_2}{\beta_2-\alpha_2}\text{pr}_2(\msf{f}^{[\frak{p}_1]}_{\beta_2})\big]\\
		&=\frac{1}{\alpha_{0,2}\beta_{0,2}}\big[\msf{a}(\frak{p}_2,\msf{f})\text{pr}_2(\msf{f}^{[\frak{p}_1]})-\beta_{0,2}\text{pr}_2(\msf{f}^{[\frak{p}_1]})\big]=\frac{1}{\beta_{0,2}}\text{pr}_2(\msf{f}^{[\frak{p}_1]}).
	\end{split}
	\]
	Thus it is clear that $\text{pr}_2(\msf{f}^{[\frak{p}_1, \frak{p}_2]})=0$. By exchanging the roles of the two primes $\frak{p}_1,\frak{p}_2$ we also have that $\text{pr}_1(\msf{f}^{[\frak{p}_1, \frak{p}_2]})=0$, which proves $\msf{f}^{[\frak{p}_1, \frak{p}_2]}\in \text{Im}(\Theta_{1})\cap\text{Im}(\Theta_{2})$.
\end{proof}

\section{A $p$-adic Gross-Zagier Formula}
\subsection{De Rham realization of modular forms}\label{realization of modular forms}
Let $E$ be a number field, following Voevodsky \cite{Voe} we consider two categories of motives over $E$: the category of effective Chow motives denoted $\text{CHM}^\text{eff}$ with a natural functor $\text{h}:\text{SmProj}_{/E}\to\text{CHM}^\text{eff}$
 from the category $\text{SmProj}_{/E}$ of smooth and projective schemes over $E$, and the triangulated category $\text{DM}^\text{eff}$ of effective geometric motives with the natural functor $\text{M}_\text{gm}:\text{Sm}_{/E}\to\text{DM}^\text{eff}$ from the category $\text{Sm}_{/E}$ of smooth schemes over $E$. Since number fields have characteristic zero, these two categories are related by a full embedding $\text{CHM}^\text{eff}\to\text{DM}^\text{eff}$ such that makes the diagram
\[\xymatrix{
\text{SmProj}_{/\bb{Q}}\ar[r]\ar[d]_{\text{h}} & \text{Sm}_{/\bb{Q}}\ar[d]^{\text{M}_\text{gm}}\\
\text{CHM}^\text{eff}\ar[r] & \text{DM}^\text{eff}
}\]
commutes (\cite{Voe} Proposition 2.1.4 and Remark). 

Let $F$ be a totally real number field of degree $g$ over $\bb{Q}$ and let $E$ be any field containing $F^\text{Gal}$. The Shimura variety $\text{Sh}_K(G^*)_\bb{Q}$ has a universal  Hilbert-Blumenthal abelian scheme $\cal{A}\to \text{Sh}_K(G^*)$, the $\cal{O}_F$-action induces a ring homomorphism $F\hookrightarrow \text{End}_{\text{Sh}_K(G^*)}(\cal{A})\otimes_\bb{Z}\bb{Q}$. We denote by $\text{CMH}(\text{Sh}_K(G^*))$ the category of Chow motives over $\text{Sh}_K(G^*)$ \cite{DM}. Since the decomposition of the Chow motive $\text{h}(\cal{A}/\text{Sh}_K(G^*))=\bigoplus_i\text{h}_i(\cal{A}/\text{Sh}_K(G^*))$ of $\cal{A}$ over $\text{Sh}_K(G^*)$ is functorial (\cite{DM} Theorem 3.1), there is an isomorphism of $\bb{Q}$-vector spaces (\cite{K} Proposition 2.2.1)
\[
\text{End}_{\text{Sh}_K(G^*)}(\cal{A})\otimes_\bb{Z}\bb{Q}\overset{\sim}{\longrightarrow}\text{End}_{\text{CHM}(\text{Sh}_K(G^*))}\left(\text{h}_1(\cal{A}/\text{Sh}_K(G^*))\right)\otimes_\bb{Z}\bb{Q}.
\]
One denotes by $e_\tau\in \text{End}_{\text{CHM}(\text{Sh}_K(G^*))}\left(\text{h}_1(\cal{A}/\text{Sh}_K(G^*))\right)\otimes_\bb{Z}E$, $\tau\in I_F$,  the idempotents coming from  $\prod_\tau F=F\otimes E\hookrightarrow\text{End}_{\text{CHM}(\text{Sh}_K(G^*))}\left(\text{h}_1(\cal{A}/\text{Sh}_K(G^*))\right)\otimes_\bb{Z}E$.

\begin{definition}
Let $k\in \bb{N}[I_F]$, $k\ge 2t_F$. The relative motive $\cal{V}^k\in\text{CHM}(\text{Sh}_K(G^*))_E$ is defined as
\[
\cal{V}^k=\underset{\tau\in I_F}{\bigotimes}\text{Sym}^{k_\tau-2}\text{h}_1(\cal{A}/\text{Sh}_K(G^*))^{e_\tau}
\]
following the conventions of (\cite{K} p.72) for the symmetric products. The motive $\cal{V}^k$ is a direct factor of $\text{h}(\cal{A}^{k-2g}/\text{Sh}_K(G^*))$, where $\cal{A}^{k-2g}$ denotes the $(\lvert k\rvert-2g)$-fold fiber product of $\cal{A}$ over $\text{Sh}_K(G^*)$, thus it corresponds to an idempotent $e_k\in\text{CH}^{g(\lvert k\rvert-2g)}\left(\cal{A}^{k-2g}\times_{\text{Sh}_K(G^*)}\cal{A}^{k-2g}\right)\otimes_\bb{Z}E$ such that $\text{M}_\text{gm}(\cal{A}^{k-2g})^{e_k}=\cal{V}^k$.
\end{definition}


\begin{proposition}{(\cite{Wieldeshaus} Corollary 3.9)}\label{comp}
Suppose $k>2t_F$ and let $U_{k-2g}$ be any smooth compactification of $\cal{A}^{k-2g}$, then the graded part of weight zero with respect to the motivic weight structure on $\text{CHM}^\text{eff}_{E}$, $\text{Gr}_0\text{M}_{\text{gm}}\left(\cal{A}^{k-2g}\right)^{e_{k}}$,  is canonically a direct factor of the Chow motive $\text{M}_\text{gm}(U_{k-2g})$. Hence, it corresponds to an idempotent $\theta_k\in\text{CH}^{g(\lvert k\rvert-2g+1)}(U_{k-2g}\times_\bb{Q} U_{k-2g})\otimes_\bb{Z}E$.
\end{proposition}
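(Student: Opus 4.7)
I would invoke Wildeshaus's theory of weight structures on $\text{DM}^{\text{eff}}_{\text{gm}}$, whose heart is canonically equivalent to $\text{CHM}^{\text{eff}}_{E}$. Since $U_{k-2g}$ is smooth projective, $\text{M}_{\text{gm}}(U_{k-2g})$ already lies in the heart; the task is then to identify $\text{Gr}_0\,\text{M}_{\text{gm}}(\cal{A}^{k-2g})^{e_{k}}$ with a direct summand of it.

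First I would check that the motive $M := \text{M}_{\text{gm}}(\cal{A}^{k-2g})^{e_k}$ satisfies the weight-boundedness hypotheses of Wildeshaus's Corollary 3.9. Smoothness of $\cal{A}^{k-2g}$ forces the weights of $\text{M}_{\text{gm}}(\cal{A}^{k-2g})$ to be non-negative, and the projector $e_k$ cuts out the summand modeled on the relative middle cohomology of the universal Hilbert--Blumenthal abelian scheme. The strict inequality $k > 2t_F$ (i.e.\ each $k_\tau > 2$) ensures that every symmetric power factor $\text{Sym}^{k_\tau-2}h_1$ is of strictly positive degree, so no unwanted boundary contributions occur and the numerical hypotheses of Wildeshaus's criterion are satisfied. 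Compatibility of $e_k$ with the weight structure is automatic, since $e_k$ is induced by an algebraic correspondence coming from the CM-type decomposition of $h_1(\cal{A}/\text{Sh}_K(G^*))$, and weight structures are functorial for morphisms of geometric motives.

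Once the hypotheses are verified, Corollary 3.9 produces a canonical direct factor decomposition $\text{M}_{\text{gm}}(U_{k-2g}) \simeq \text{Gr}_0 M \oplus N$ in $\text{CHM}^{\text{eff}}_E$, depending (up to canonical isomorphism) on the choice of compactification. This decomposition is witnessed by an idempotent endomorphism of $\text{M}_{\text{gm}}(U_{k-2g})$, which under the standard identification
\[
\text{End}_{\text{CHM}^{\text{eff}}_E}\bigl(\text{M}_{\text{gm}}(U_{k-2g})\bigr) \; = \; \text{CH}^{\dim U_{k-2g}}\!\bigl(U_{k-2g} \times_{\bb{Q}} U_{k-2g}\bigr) \otimes_\bb{Z} E
\]
yields the cycle $\theta_k$. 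A dimension count, $\dim U_{k-2g} = \dim\text{Sh}_K(G^*) + (|k|-2g)\cdot g = g(|k|-2g+1)$, gives the stated codimension.

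The main obstacle is the construction of the weight filtration itself: extracting a Chow-effective weight-zero piece from a general geometric motive lies deep, depending on Bondarko's formalism of weight structures and de Jong alterations. I would treat Wildeshaus's Corollary 3.9 as a black box, since re-proving it would take us very far from the automorphic and $p$-adic $L$-function themes of the paper; the only local input needed from our side is the compatibility of $e_k$ with the weight structure, which is immediate from functoriality.
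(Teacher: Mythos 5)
Your proposal coincides with what the paper actually does: Proposition~\ref{comp} is not proved in the text but simply imported from Wildeshaus's Corollary 3.9, and your surrounding bookkeeping — $\text{M}_\text{gm}(U_{k-2g})$ lies in the heart $\text{CHM}^\text{eff}_E$ of the motivic weight structure, the direct-factor decomposition is realized by an idempotent endomorphism, $\text{End}\big(\text{M}_\text{gm}(U_{k-2g})\big)\otimes_\bb{Z}E\cong\text{CH}^{\dim U_{k-2g}}(U_{k-2g}\times_\bb{Q}U_{k-2g})\otimes_\bb{Z}E$, and $\dim U_{k-2g}=g(\lvert k\rvert-2g+1)$ — is correct. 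Only note that your phrase ``no unwanted boundary contributions occur'' is precisely the hard content of Wildeshaus's result (avoidance of weights $0$ and $1$ by the boundary motive, proved via the degeneration along the Baily--Borel boundary under the regularity condition $k>2t_F$), so it belongs inside the black box rather than following from positivity of the symmetric-power degrees.
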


\begin{proposition}\label{prop: elliptic comp}
Suppose $F=\bb{Q}$ and let $k>2$ be an integer. For any smooth compactification $W_{k-2}$ of the $(k-2)$th-fold product of the universal elliptic curve $\cal{E}$ over the modular curve $\text{Sh}_K(\text{GL}_{2,\bb{Q}})$, there exists an idempotent $\theta_k\in\text{CH}^{k-1}(W_{k-2}\times_\bb{Q}W_{k-2})\otimes_\bb{Z}\bb{Q}$ such that $\theta_k^* H_\text{dR}^*(W_{k-2}/\bb{Q})=\theta_k^*H_\text{dR}^{k-1}(W_{k-2}/\bb{Q})$ is functorially isomorphic to $H^1_\text{par}(\text{Sh}^\text{tor}_K(\text{GL}_{2,\bb{Q}}),\cal{F}^{(k,k-1)},\nabla)$ with its Hodge filtration.
\end{proposition}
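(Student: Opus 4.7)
The plan is to reduce the statement to Proposition \ref{comp} specialised to $F=\bb{Q}$, and then identify the resulting de Rham realisation, together with its Hodge filtration, with parabolic cohomology. Since $F=\bb{Q}$ we have $G^{*}=\text{GL}_{2,\bb{Q}}$, the universal Hilbert--Blumenthal abelian scheme $\cal{A}\to\mathbf{Sh}_{K}(G^{*})$ is the universal elliptic curve $\cal{E}\to\mathbf{Sh}_{K}(\text{GL}_{2,\bb{Q}})$, the idempotents $e_{\tau}$ collapse to the identity, and for $k>2t_{F}=2$ the relative motive $\cal{V}^{k}$ becomes $\text{Sym}^{k-2}\text{h}_{1}(\cal{E}/\mathbf{Sh}_{K})$. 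Applying Proposition \ref{comp} with $F=\bb{Q}$ produces exactly an idempotent $\theta_{k}\in\text{CH}^{k-1}(W_{k-2}\times_{\bb{Q}}W_{k-2})\otimes_{\bb{Z}}\bb{Q}$ cutting out $\text{Gr}_{0}\text{M}_{\text{gm}}(\cal{E}^{k-2})^{e_{k}}$ as a direct factor of $\text{M}_{\text{gm}}(W_{k-2})$; this is the desired idempotent. (The existence of such a $\theta_{k}$ is essentially Scholl's construction of the motive attached to a modular form, so one may alternatively invoke that construction directly.)

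Next I would compute the de Rham realisation. The Leray spectral sequence for $\pi:\cal{E}^{k-2}\to\mathbf{Sh}_{K}(\text{GL}_{2,\bb{Q}})$, together with Deligne's identification $R^{j}\pi_{*}\Omega^{\bullet}_{\cal{E}^{k-2}/\mathbf{Sh}_{K}}$, shows that the projector $e_{k}$ cuts out the single row $R^{k-2}\pi_{*}(\cdots)^{e_{k}}\cong\text{Sym}^{k-2}\bb{H}^{1}=\cal{F}^{(k,k-1)}$ (the twist $w=k-1$ corresponds to the shift by half the weight, i.e.\ by $\wedge^{2}\bb{H}^{1}$'s absent in the elliptic case). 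Consequently, after applying $e_{k}$ one obtains
\[
e_{k}^{*}H^{k-1}_{\text{dR}}(\cal{E}^{k-2}/\bb{Q})\cong H^{1}_{\text{dR}}\big(\mathbf{Sh}_{K}(\text{GL}_{2,\bb{Q}}),\cal{F}^{(k,k-1)},\nabla\big),
\]
and all other $e_{k}$-components of $H^{*}_{\text{dR}}(\cal{E}^{k-2}/\bb{Q})$ vanish for weight reasons. The content of Proposition \ref{comp}---i.e.\ the existence of $\theta_{k}$ on a compactification---is exactly the assertion that further cutting by $\theta_{k}$ eliminates the boundary contribution on $W_{k-2}$, so that $\theta_{k}^{*}H^{*}_{\text{dR}}(W_{k-2}/\bb{Q})$ is concentrated in degree $k-1$ and equals $H^{1}_{\text{par}}\bigl(\mathbf{Sh}^{\text{tor}}_{K}(\text{GL}_{2,\bb{Q}}),\cal{F}^{(k,k-1)},\nabla\bigr)$; the functoriality in $K$ is automatic from the functoriality of the construction of $\theta_{k}$.

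Finally, to obtain the statement about Hodge filtrations I would compare Hodge filtrations on both sides. On the ambient $H^{k-1}_{\text{dR}}(W_{k-2}/\bb{Q})$ it is the usual Hodge filtration of a smooth projective variety and is respected by algebraic correspondences, hence by $\theta_{k}$. On the target, the degeneration of the Hodge spectral sequence for $\text{DR}^{\bullet}_{c}(\cal{F}^{(k,k-1)})$ provided by Theorem \ref{th: dual BGG quasi-iso} together with the dual BGG complex identifies the associated graded pieces with the expected spaces of cuspforms of weights $s_{\emptyset}\cdot(k,k-1)$ and $s_{\{\tau\}}\cdot(k,k-1)$. Tracing the isomorphism above through the Leray filtration shows that these graded pieces agree with the Hodge graded pieces of $\theta_{k}^{*}H^{k-1}_{\text{dR}}(W_{k-2}/\bb{Q})$, which upgrades the previous identification to a filtered one.

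The main obstacle I anticipate is the careful bookkeeping of filtrations through the Leray spectral sequence on $\cal{E}^{k-2}$ and the degeneracy needed at the boundary of the toroidal compactification; this is where one really uses Proposition \ref{comp} (equivalently Scholl's construction), since it is precisely the cycle $\theta_{k}$ that kills the contribution of the boundary divisors of $W_{k-2}\setminus\cal{E}^{k-2}$ so that the filtered de Rham realisation coincides with the parabolic one rather than with the full log de Rham cohomology.
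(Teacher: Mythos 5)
Your first step coincides with the paper's: Proposition \ref{comp} (for $F=\bb{Q}$, where $\cal{V}^k=\text{Sym}^{k-2}\text{h}_1(\cal{E}/\text{Sh}_K)$) supplies the idempotent $\theta_k$ with $\theta_k^*\text{M}_\text{gm}(W_{k-2})=\text{Gr}_0\text{M}_\text{gm}(\cal{E}^{k-2})^{e_k}$. The gap is in the second half. Proposition \ref{comp} is a purely motivic statement (a weight-structure assertion that $\text{Gr}_0$ of the open Kuga--Sato motive is a direct factor of the Chow motive of a compactification); it does \emph{not} by itself say that the de Rham realization of this factor is interior/parabolic cohomology, nor anything about Hodge filtrations. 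Your Leray-plus-weights argument establishes concentration of the $e_k$-part in degree $k-1$ on the \emph{open} variety $\cal{E}^{k-2}$, but on the compactification $W_{k-2}$ the boundary strata contribute classes in degree $k-1$ as well (the Eisenstein part), and the assertions that ``$\theta_k$ eliminates the boundary contribution'' and that ``tracing the isomorphism through the Leray filtration'' matches the Hodge filtration with the one on $H^1_\text{par}(\text{Sh}^\text{tor}_K(\text{GL}_{2,\bb{Q}}),\cal{F}^{(k,k-1)},\nabla)$ are exactly the nontrivial content that needs proof; as written they are asserted, not derived.

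This missing content is precisely Scholl's theorem (\cite{Scholl}, Theorem 3.1.0), and the point of the paper's proof is how to apply it when $W_{k-2}$ is an \emph{arbitrary} smooth compactification rather than Scholl's canonical one: one quotes the argument of \cite{BDP} Lemma 2.2, and justifies its applicability by the fact that the motive Scholl constructs is isomorphic to $\text{Gr}_0\text{M}_\text{gm}(\cal{E}^{k-2})^{e_k}$, which is independent of the chosen compactification (\cite{Chownonprojective}, Corollary 3.4(b)), together with the check that Scholl's idempotent agrees with $e_k$ on $\text{M}_\text{gm}(\cal{E}^{k-2})$ (the torsion part acts trivially since $\cal{E}^{k-2}\to\text{Sh}_K(\text{GL}_{2,\bb{Q}})$ is an abelian scheme). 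Your parenthetical remark that one ``may alternatively invoke Scholl's construction directly'' points in the right direction, but without the compactification-independence comparison and the idempotent matching it does not yield the statement for all $W_{k-2}$; with them, your sketch essentially collapses to the paper's proof. To make your route self-contained you would instead have to redo the comparison between $H^{k-1}_\text{dR}(W_{k-2})$, compactly supported cohomology of $\cal{E}^{k-2}$, and interior cohomology, filtration included --- i.e.\ reprove the relevant part of Scholl's theorem.
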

\begin{proof}
Proposition $\ref{comp}$ provides an idempotent $\theta_k$ such that $\theta_k^*\text{M}_\text{gm}(W_{k-2})=\text{Gr}_0\text{M}_\text{gm}(\cal{E}^{k-2})^{e_k}$. We claim that the proof of (\cite{BDP} Lemma 2.2) applies to our situation. Indeed, the main ingredient of that proof is a result of Scholl (\cite{Scholl} Theorem 3.1.0), which can be applied to any smooth compactification $W_{k-2}$ since the motive considered by Scholl is isomorphic to $\text{Gr}_0\text{M}_\text{gm}(\cal{E}^{k-2})^{e_k}$ by (\cite{Chownonprojective} Corollary 3.4(b)). Note that the idempotent $e$ in (\cite{Chownonprojective} Definition 3.1) acts as the idempotent $e_k$ on $\text{M}_\text{gm}(\cal{E}^{k-2})$ because the action of the torsion appearing in $e$ is trivial since $\cal{E}^{k-2}\to\text{Sh}_K(\text{GL}_{2,\bb{Q}})$ is an abelian scheme.
\end{proof}

\begin{proposition}\label{prop: Hilbert comp}
Let $L/\bb{Q}$ be a real quadratic extension and $\ell\in\bb{N}[I_F],\ \ell>2t_F$ a non-parallel weight. For any smooth compactification $U_{\ell-4}$ of the $(\lvert\ell\rvert-4)$th-fold product of the universal abelian surface over $\text{Sh}_K(G_L^*)$, there exists an idempotent $\theta_\ell\in\text{CH}^{2(\lvert\ell\rvert-3)}(U_{\ell-4}\times_\bb{Q}U_{\ell-4})\otimes_\bb{Z}L$ such that $\theta_\ell^*H_\text{dR}^{i+\lvert\ell\rvert-4}(U_{\ell-4}/\bb{Q})$ is functorially isomorphic to $\bb{H}^i(\text{Sh}_K(G_L^*),\text{DR}^{\bullet}(\cal{F}_{G^*_L}^{(\ell,\ell-t)}))$ with its Hodge filtration.
\end{proposition}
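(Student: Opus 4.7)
The plan is to mirror the strategy of Proposition~\ref{prop: elliptic comp}, replacing the universal elliptic curve by the universal Hilbert--Blumenthal abelian surface $\cal{A}\to \mathbf{Sh}_K(G^*_L)$ and invoking the analog of Scholl's construction for Hilbert modular varieties. The non-parallel hypothesis on $\ell$ will play the role that the condition $k>2$ plays in the elliptic case, ensuring that the relevant interior cohomology agrees with its parabolic version and that the Chow motive defined by the cuspidal projector captures the right de Rham realization.

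First I would apply Proposition~\ref{comp} with $F=L$ and $k=\ell$ to produce the idempotent $\theta_\ell\in\text{CH}^{2(\lvert\ell\rvert-3)}(U_{\ell-4}\times_{\bb{Q}} U_{\ell-4})\otimes_{\bb{Z}}L$ realizing $\text{Gr}_0\text{M}_\text{gm}\big(\cal{A}^{\lvert\ell\rvert-4}\big)^{e_\ell}$ as a direct factor of $\text{M}_\text{gm}(U_{\ell-4})$; here I use that $\ell>2t_L$ so the hypothesis of Proposition~\ref{comp} is satisfied. Taking de Rham realizations, this gives a natural identification
\[
\theta_\ell^*H_\text{dR}^{\bullet}(U_{\ell-4}/\bb{Q})\;\cong\;\text{Gr}_0 H_\text{dR}^{\bullet}\big(\cal{A}^{\lvert\ell\rvert-4}/\bb{Q}\big)^{e_\ell},
\]
i.e.\ the weight-$(\lvert\ell\rvert-4+\bullet)$ part of the image of interior cohomology inside the cohomology of any smooth compactification.

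The second step is to identify the right-hand side with the parabolic/interior de Rham cohomology $\bb{H}^{\bullet}\big(\mathbf{Sh}_K(G^*_L),\text{DR}^\bullet(\cal{F}_{G^*_L}^{(\ell,\ell-t)})\big)$. Via the Leray spectral sequence for $\cal{A}^{\lvert\ell\rvert-4}\to\mathbf{Sh}_K(G^*_L)$ together with the K\"unneth formula and the $e_\ell$-decomposition of each $R^1\pi_*$, one obtains a canonical isomorphism
\[
H_\text{dR}^{i+\lvert\ell\rvert-4}\big(\cal{A}^{\lvert\ell\rvert-4}/\bb{Q}\big)^{e_\ell}\;\cong\;\bb{H}^i\big(\mathbf{Sh}_K(G^*_L),\text{DR}^\bullet(\cal{F}_{G^*_L}^{(\ell,\ell-t)})\big)
\]
compatibly with Hodge filtrations, and Proposition~\ref{comp} guarantees that passing to $\text{Gr}_0$ realises the \emph{interior} version of this cohomology inside $\theta_\ell^* H_\text{dR}^\bullet(U_{\ell-4}/\bb{Q})$. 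The key point is then that for a non-parallel cohomological weight the boundary contribution to $\bb{H}^\bullet\big(\mathbf{Sh}_K(G^*_L),\text{DR}^\bullet(\cal{F}_{G^*_L}^{(\ell,\ell-t)})\big)$ vanishes, so that interior and full hypercohomology coincide; this is the exact Hilbert analog of Scholl's result and follows from the degeneration of the Hodge spectral sequence in Theorem~\ref{th: dual BGG quasi-iso} together with the vanishing of $H^0\big(\mathbf{Sh}^\text{tor}_K(G^*_L),\underline{\omega}_{G^*}^{s_J\cdot(\ell,\ell-t)}(-\text{D})\big)$ for non-extremal $J$ when $\ell$ is non-parallel (no weight $s_J\cdot\ell$ is parallel of small level).

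The main obstacle, and the reason the case $\ell=2t_L$ is excluded (and handled separately by demanding $k=2$ and treating a weight-$(2t_L,2)$ Hirzebruch--Zagier cycle directly), is precisely this vanishing of boundary cohomology: for parallel weights one picks up Eisenstein contributions at the cusps and the passage from full to interior cohomology is not achieved by a motivic idempotent supported on a smooth compactification. Everything else reduces to weight bookkeeping on the $\tau$-components of $\bb{H}^1$ and to the functoriality of the motivic decomposition of $\text{h}(\cal{A}/\mathbf{Sh}_K(G^*_L))$ already used in the elliptic case.
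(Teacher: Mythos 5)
Your first step coincides with the paper's: Proposition~\ref{comp} supplies the idempotent $\theta_\ell$ with $\theta_\ell^*\text{M}_\text{gm}(U_{\ell-4})=\text{Gr}_0\text{M}_\text{gm}(\cal{A}^{\ell-4})^{e_\ell}$. The gap is in the second half. In the paper, the non-parallel hypothesis enters through Wildeshaus's Corollary 3.7, which identifies $\text{Gr}_0\text{M}_\text{gm}(\cal{A}^{\ell-4})^{e_\ell}$ with the relative motive $\cal{V}^\ell$ (a weight-avoidance statement for the boundary motive), and the de Rham realization of $\cal{V}^\ell$, together with its Hodge filtration, is then given by Kings (Corollary 2.3.4). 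You instead assert that ``Proposition~\ref{comp} guarantees that passing to $\text{Gr}_0$ realises the interior version'': it does not. Proposition~\ref{comp} is a purely motivic statement about a direct factor of $\text{M}_\text{gm}(U_{\ell-4})$; it says nothing about what the realization of that factor is, nor that it coincides with $\cal{V}^\ell$. That identification is exactly the non-formal content supplied by Wildeshaus, and it is the point where non-parallelness is used in the paper.

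Your substitute for this input --- that boundary contributions vanish because the Hodge spectral sequence of Theorem~\ref{th: dual BGG quasi-iso} degenerates and $H^0\big(\mathbf{Sh}^\text{tor}_K(G^*_L),\underline{\omega}_{G^*}^{s_J\cdot(\ell,\ell-t)}(-\text{D})\big)=0$ for non-extremal $J$ --- does not do the job. Theorem~\ref{th: dual BGG quasi-iso} concerns the cuspidal complex $\text{DR}_c^{\bullet}$ on the toroidal compactification and its filtration; it gives no control over the cone comparing $\text{DR}_c^{\bullet}$ with the log-de Rham complex without the cusp condition, i.e.\ over the boundary cohomology, which involves higher cohomology supported on $\text{D}$ and is not measured by $H^0$ of BGG line bundles. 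Moreover, what is true (and what is needed) for non-parallel $\ell$ is a statement about the weights occurring in the boundary motive/cohomology, not its outright vanishing in all degrees; conflating the two is where the argument breaks. Your Leray--K\"unneth identification of $H_\text{dR}^{i+\lvert\ell\rvert-4}(\cal{A}^{\ell-4})^{e_\ell}$ with $\bb{H}^i\big(\text{Sh}_K(G_L^*),\text{DR}^{\bullet}(\cal{F}_{G^*_L}^{(\ell,\ell-t)})\big)$ is fine in principle (it is essentially Kings's computation), but without the Wildeshaus-type purity input the asserted isomorphism $\theta_\ell^*H_\text{dR}^{i+\lvert\ell\rvert-4}(U_{\ell-4}/\bb{Q})\cong\bb{H}^i\big(\text{Sh}_K(G_L^*),\text{DR}^{\bullet}(\cal{F}_{G^*_L}^{(\ell,\ell-t)})\big)$, functorial and compatible with Hodge filtrations, is not established.
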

\begin{proof}
Since the weight $\ell$ is not parallel, Proposition $\ref{comp}$ and (\cite{Wieldeshaus} Corollary 3.7) provide an idempotent $\theta_\ell$ such that $\theta_k^*\text{M}_\text{gm}(U_{k-4})=\cal{V}^\ell$ . Then Kings proved in (\cite{K} Corollary 2.3.4) that the $(i+\lvert\ell\rvert-4)$-th cohomology of the de Rham realization of $\cal{V}^\ell$ is isomorphic to $\bb{H}^i(\text{Sh}_K(G_L^*),\text{DR}^{\bullet}(\cal{F}_{G^*_L}^{(\ell,\ell-t)}))$.
\end{proof}

\subsection{Generalized Hirzebruch-Zagier cycles}\label{Generalized Hirzebruch-Zagier cycles}
Let $L/\bb{Q}$ be a real quadratic extension and denote by $\xi:\text{Sh}_K(G_L^*)\to\text{Sh}_K(G_L)$ the map of Shimura varieties derived from the inclusion $G_L^*\hookrightarrow G_L$.
Let $\breve{\msf{g}}\in S_{\ell,x}(M,L;\overline{\bb{Q}})$ be a eigenform of either parallel weight $\ell=2t_L$ or non-parallel weight $\ell>2t_L$ such that $\ell-2x=nt_L$. Let $\breve{\msf{f}}\in S_{k,w}(M;\overline{\bb{Q}})$ be an elliptic eigenform for the good Hecke operators, such that $k-2w=m$, and we denote by $\msf{f}$ the newform corrsponding to the system of eigenvalues. We suppose that the weights of $\msf{g}$ and $\msf{f}$ are balanced, which implies the equality $2n=m$. We consider $E/\bb{Q}$ a finite Galois extension containing the Fourier coefficients of $\msf{g}$ and $\msf{f}$. 

We want to realize these modular forms in the de Rham cohomology of some proper and smooth variety. The pullback $\xi^*\breve{\msf{g}}$ lives in $S_{\ell, x}^*(M,L;E)$, which by Proposition $\ref{prop: change central character}$ is isomorphic to $S_{\ell,\ell-t_L}^*(M,L;E)$.  Thanks to Theorem $\ref{th: dual BGG quasi-iso}$  we can realize the latter space as a subgroup of the hypercohomology group $\bb{H}^2(\text{Sh}_K^\text{tor}(G_L^*)_E,\text{DR}_c^{\bfcdot}(\cal{F}^{\ell,\ell-t}))$, which is simply the de Rham cohomology group $H^2_\text{dR}(\text{Sh}_K^\text{tor}(G_L^*)/E)$ when $\ell=2t_L$. Instead, when $\ell>2t_L$ is not parallel, let $U_{\ell-4}$ be any smooth compactification of $\cal{A}^{\ell-4}$; then, we can invoke Proposition $\ref{prop: Hilbert comp}$ to establish that the differential attached to $\Psi_{x,\ell-t_L}(\xi^*\breve{\msf{g}})$ lives in $\text{F}^{\lvert\ell\rvert-2}H^{\lvert\ell\rvert-2}_\text{dR}(U_{\ell-4}/E)$. Similarly, if $k=2$, $\Psi_{w,1}(\breve{\msf{f}})\in S_{2,1}(M;E)\cong\text{F}^1 H^1_\text{dR}(\mathbf{Sh}_K^\text{tor}(\text{GL}_2)_E)$, while when $k>2$ we can consider any smooth compactification $W_{k-2}$ of $\cal{E}^{k-2}$ to see that the class of the differential $\omega_{\Psi_{w,k-1}(\breve{\msf{f}})}$ lives in $H_\text{dR}^{k-1}(W_{k-2}/E)$, by Proposition $\ref{prop: elliptic comp}$.
\begin{definition}\label{definition of classes}
Choose a prime $p$ coprime to $M$, let $E_\wp$ be the closure of $\iota_p(E)$ in $\overline{\bb{Q}}_p$ and suppose that $\breve{\msf{g}}, \breve{\msf{f}}$ are $p$-nearly ordinary. We write $\omega$ for the differential $\omega_{\Psi_{x,\ell-t_L}(\xi^*\breve{\msf{g}})}$ and we take $\eta$ to be the class in the $\Psi_{w,k-1}(\msf{f})$-isotypic part of $H^1_\text{par}(\text{Sh}_K^\text{tor}(\text{GL}_{2,\bb{Q}})_{E_\wp},\cal{F}^{k,k-1},\nabla)^\text{u.r.}$ whose image in the $0$-th graded piece, $H^1(\text{Sh}_K^\text{tor}(\text{GL}_{2,\bb{Q}})_{E_\wp},\underline{\omega}^{2-k})$, is equal to the image of $\frac{1}{\langle\msf{f}^*,\msf{f}^*\rangle}\overline{\omega_{\Psi_{w,k-1}(\breve{\msf{f}}^*)}}$. \end{definition}
The class $\eta\in H^1_\text{par}(\text{Sh}_K^\text{tor}(\text{GL}_{2,\bb{Q}})_{E_\wp},\cal{F}^{k,k-1},\nabla)^\text{u.r.}$ satisfies
\begin{equation}\label{computation of the eigenvalue}
		\text{Fr}_p(\eta)=\alpha_{\msf{f}^*}p^{w-1}\eta,
\end{equation} where the eigenvalue is a p-adic unit since $\msf{f}^*$ is $p$-nearly ordinary. Indeed, by definition $\eta=[c\cdot\Psi_{w,k-1}(\breve{\msf{f}}_\beta)]$ for some non-zero constant $c$, and applying Lemmas $\ref{lemma: central character and V(p)}$ and $\ref{frobenius in cohomology}$ we can compute
	\[\begin{split}
	\text{Fr}_p(\eta)&=pV(p)[c\cdot\Psi_{w,k-1}(\breve{\msf{f}}_{\beta})]=p\cdot p^{k-1-w}[c\cdot\Psi_{w,k-1}(V(p)\breve{\msf{f}}_{\beta})]\\
	&=p^{k-w}[c\cdot\Psi_{w,k-1}(U(p)^{-1}\breve{\msf{f}}_{\beta})]=p^{k-w}\beta_{\msf{f}}^{-1}\eta=\alpha_{\msf{f}^*}p^{w-1}\eta,
	\end{split}\]
	since $\beta_{\msf{f}}^{-1}=\alpha_{\msf{f}}\psi_{\msf{f}}(p)^{-1}p^{-1}=\alpha_{\msf{f}^*}p^{-m-1}$.

For all $s\ge0$ we want to consider the cohomology class
\[
\pi_1^*\omega\cup\pi_2^*\eta\in \text{F}^{\lvert\ell\rvert-2-s} H_\text{dR}^{\lvert\ell\rvert+k-3}\big(U_{\ell-4}\times_{E_\wp}W_{k-2}\big).
\]
Our goal is to define a null-homologous cycle on $U_{\ell-4}\times_EW_{k-2}$ whose syntomic Abel-Jacobi map can be evaluated at $\pi_1^*\omega\cup\pi_2^*\eta$. 
Let $\scr{Z}_{\ell,k}$ be a proper smooth model of $U_{\ell-4}\times_{E_\wp}W_{k-2}$ over $\cal{O}_{E_\wp}$ of relative dimension $d$, and denote by $Z_{\ell,k}$ its generic fiber. For all integers $i\ge0$, the syntomic cohomology groups of $\scr{Z}_{\ell,k}$ sit in a short exact sequence of the form
\[\xymatrix{
0\ar[r]& H^{2i-1}_\text{dR}(Z_{\ell,k})/F^i\ar[r]^i & H^{2i}_\text{syn}(\scr{Z}_{\ell,k},i)\ar[r]^p&\text{F}^iH^{2i}_\text{dR}(Z_{\ell,k})\ar[r]&0.
}\]
The syntomic cycle class map (\cite{Bes00} Proposition 5.4) is compatible with the de Rham cycle class map producing a commuting diagram 
\[\xymatrix{
\text{CH}^i(\scr{Z}_{\ell,k})\ar[r]^{\text{cl}_\text{syn}}\ar[d]^{\text{Res}}& H^{2i}_\text{syn}(\scr{Z}_{\ell,k},i)\ar[d]^p\\
\text{CH}^i(Z_{\ell,k})\ar[r]^{\text{cl}_\text{dR}}& F^iH^{2i}_\text{dR}(Z_{\ell,k}),
}\]
where on the left hand side are the Chow group of algebraic cycles modulo rational equivalence. The restriction of the syntomic cycle class map $\text{cl}_\text{syn}$ to the subgroup of de Rham null-homologous cycles $\text{CH}^i(\scr{Z}_{\ell,k})_0$, i.e., the kernel of the composition $\text{cl}_\text{dR}\circ\text{Res}$, has image landing in $H^{2i-1}_\text{dR}(Z_{\ell,k})/F^i$. The syntomic Abel-Jacobi map 
\begin{equation}\label{eq: Abel-Jacobi}
\text{AJ}_p: \text{CH}^i(\scr{Z}_{\ell,k})_0\longrightarrow \left( F^{d-i+1}H_{dR}^{2(d-i)+1}(Z_{\ell,k})\right)^\vee
\end{equation}
is obtained by identifying the target using Poincar\'e duality.

We determine the positive integer $s$ and make sure the numerology works. The dimension of the variety $U_{\ell-4}\times_EW_{k-2}$ is $d=2\lvert\ell\rvert+k-7$, therefore the cycle we want has to be of dimension $d-i$ such that $2(d-i)+1=\lvert\ell\rvert+k-3$ and $s\ge0$ has to satisfy $\lvert\ell\rvert-2-s=(d-i)+1$. Hence 
\begin{equation}\label{eq: numerology}
(d-i)=\frac{\lvert\ell\rvert+k-4}{2},\qquad s=\frac{\lvert\ell\rvert-k-2}{2}
\end{equation}
with $s\ge0$ since the weights are balanced. 

\subsubsection{Definition of the cycles.}
We treat separately the case $(\ell,k)=(2t_L,2)$ and the general case $(\ell,k)>(2t_L,2)$ with $\ell$ not parallel. Set $r+1=\frac{\lvert\ell\rvert+k-4}{2}$ and consider the closed embedding
\[
\varphi:\cal{E}^r\longrightarrow\cal{A}^{\ell-4}\times_E\cal{E}^{k-2},\qquad (x,P_1,\dots,P_r)\mapsto(\zeta(x),P_{1}'\otimes1,\dots,P_{\lvert\ell\rvert-4}'\otimes1; x,P_{\lvert\ell\rvert-3}',\dots,P_{2r}')
\] where $(P_1',\dots,P_{2r}')=(P_1,\dots,P_r,P_1,\dots,P_r)$ and $P_i'\otimes 1$ is the point  $P_i'\otimes 1\to\cal{E}\otimes_\bb{Z}\cal{O}_F\to\cal{A}$. The definition makes sense because $2r=\lvert\ell\rvert-4+k-2$. The variety $\cal{E}^r$ has dimension equal  to $r+1$ and we will define the null-homologous cycle by first compactifying and then by applying an appropriate correspondence.
Take smooth and projective compactifications $W_r, U_{\ell-4}, W_{k-2}$ of $\cal{E}^r,\cal{A}^{\ell-4},\cal{E}^{k-2}$ respectively, the map $\varphi$ defines a rational morphism $\xymatrix{\varphi: W_r \ar@{.>}[r]& U_{\ell-4}\times_E W_{k-2}}$.
Using Hironaka's work on resolution of singularities (\cite{Hironaka} Chapter 0.5, Question (E)), we can assume the rational map has a representative $\varphi: W_r\longrightarrow U_{\ell-4}\times_E W_{k-2}$ up to replacing the smooth and projective compatification of $\cal{E}^r$. By spreading out, there is an open of $\text{Spec}(\cal{O}_E)$ over which all our geometric objects can be defined simultaneously and retain their relevant features: we have smooth and projective models $\scr{W}_r, \scr{U}_{\ell-4}, \scr{W}_{k-2}$ of $W_r, U_{\ell-4}, W_{k-2}$ respectively and the map $\varphi$ extends to a map $\tilde{\varphi}:\scr{W}_r\longrightarrow \scr{U}_{\ell-4}\times\scr{W}_{k-2}$. 

When $\ell=2t_L$ and $k=2$ we define correspondences on $\scr{U}_0\times\scr{W}_0$ as follows. We assume the number field $E$ is large enough such that $U_{0/E}$ (resp. $W_{0/E}$) is the disjoint union of its geometrically connected components $U_{0/E}=\coprod_iU_{0,i}$ (resp. $W_{0/E}=\coprod_jW_{0,j}$) and we pick an $E$-rational point $a_i\in U_{0,i}$ (resp. $b_j\in W_{0,j}$) for every such component. We define correspondences on $Z=U_0\times_E W_0$ by $P_{i,j}=\text{graph}(q_{i,j}:Z\to U_{0,i}\times_E W_{0,j}\to Z)$, $P_{a_i,j}=\text{graph}(q_{a_i,j}:Z\to W_{0,j}\to Z)$, $P_{i,b_j}=\text{graph}(q_{i,b_j}:Z\to U_{0,i}\to Z)$, $P_{a_i,b_j}=\text{graph}(q_{a_i,b_j}:Z\to \{a_i\}\times\{b_j\}\to Z)$, all elements of $\text{CH}^6(Z\times_E Z)$. We set 
\[
P=\sum_{i,j}\left(P_{i,j}-P_{a_i,j}-P_{i,b_j}+P_{a_i,b_j}\right),
\]
that acts on $\text{CH}^{\bfcdot}(Z)$ by $P_*=\text{pr}_{2,*}(P\cdot\text{pr}_{1}^*)$; in particular, for any cycle $S\in\text{CH}^{\bfcdot}(Z)$, we have \[
P_*(S)=\sum_{i,j}\left[(q_{i,j})_*-(q_{a_i,j})_*-(q_{i,b_j})_*+(q_{a_i,b_j})_*\right](S).
\]
For $i,j$ running in the set of indices of the geometrically connected components of $U_0$ and $W_0$ the correspondences $(P_{i,j}-P_{a_i,j}-P_{i,b_j}+P_{a_i,b_j})$ are idempotents and orthogonal to each other, hence $P\circ P=P$ in $\text{CH}^6(Z\times_E Z)$, i.e., $P$ is a projector. We denote by $\tilde{P}$ the correspondence on $\scr{U}_0\times\scr{W}_0$ defined over some open of $\text{Spec}(\cal{O}_E)$ obtained by spreading out $P$.

When $(\ell,k)>(2t_L,2)$ with $\ell$ non-parallel, we obtain a correspondence on $\scr{U}_{\ell-4}\times\scr{W}_{k-2}$ by spreading out those correspondences considered in Section $\ref{realization of modular forms}$. Indeed, the idempotents $\theta_\ell\in\text{CH}^{2(\lvert \ell\rvert-3)}(U_{\ell-4}\times_EU_{\ell-4})\otimes_\bb{Z}L$ and $\theta_k\in\text{CH}^{\lvert k\rvert-1}(W_{k-2}\times_EW_{k-2})\otimes_\bb{Z}\bb{Q}$ extend to elements $\tilde{\theta}_\ell\in\text{CH}^{2(\lvert \ell\rvert-3)}(\scr{U}_{\ell-4}\times\scr{U}_{\ell-4})\otimes_\bb{Z}L$ and $\tilde{\theta}_k\in\text{CH}^{\lvert k\rvert-1}(\scr{W}_{k-2}\times\scr{W}_{k-2})\otimes_\bb{Z}\bb{Q}$ respectively.

\begin{definition}
	For all but finitely many primes $p$, we define the Hirzebruch-Zagier cycle of weight $(2t_L,2)$ to be 
	\[
	\Delta_{2t_L,2}=\tilde{P}_*\tilde{\varphi}_*[\scr{W}_0]\in\text{CH}^2(\scr{U}_{0}\times_{\cal{O}_{E,\wp}}\scr{W}_{0}).
	\]
\end{definition}

\begin{proposition}
	The Hirzebruch-Zagier cycle $\Delta_{2t_L,2}\in \text{CH}^2(\scr{U}_{0}\times_{\cal{O}_{E,\wp}}\scr{W}_{0})
	$ is de Rham null-homologous.
\end{proposition}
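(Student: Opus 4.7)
The plan is to reduce the question to a cohomological calculation by passing to the generic fibre $Z=U_0\times_{E_\wp}W_0$ and invoking the compatibility of the de Rham cycle class map with the correspondence action: it then suffices to show that $P_*[\varphi_*[W_0]]$ vanishes in $H^4_{\text{dR}}(Z)$. The argument rests on two ingredients, a severe restriction on the K\"unneth support of $[\varphi_*[W_0]]$ and the fact that the projector $P$ annihilates the surviving pieces.

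First I would restrict the support via the K\"unneth formula
\[
H^4_{\text{dR}}(Z)\;=\;\bigoplus_{p+q=4}H^p_{\text{dR}}(U_0)\otimes H^q_{\text{dR}}(W_0);
\]
only $q\in\{0,1,2\}$ contributes since $\dim W_0=1$. Because $U_0$ is a smooth toroidal compactification of the Hilbert modular surface $\mathbf{Sh}(G_L^*)$ attached to the real quadratic field $L$, it satisfies $H^1_{\text{dR}}(U_0)=0$: the open Shimura surface carries no holomorphic $1$-forms and a suitable toroidal boundary divisor contributes only in even degrees. Poincar\'e duality then gives $H^3_{\text{dR}}(U_0)=0$, so the $(3,1)$-K\"unneth summand vanishes and $[\varphi_*[W_0]]$ is concentrated in $H^4\otimes H^0\oplus H^2\otimes H^2$.

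Next I would compute the correspondence action on these two summands. Each $P_{\bullet,\bullet}$ is the graph of a partial morphism supported on the component $U_{0,i}\times W_{0,j}$ and factors as a proper projection followed by a closed embedding: $P_{i,j}$ as the diagonal of $U_{0,i}\times W_{0,j}$; $P_{a_i,j}$ through $U_{0,i}\times W_{0,j}\xrightarrow{\pi_W}W_{0,j}\xrightarrow{\iota_{a_i}}Z$; $P_{i,b_j}$ through $U_{0,i}\times W_{0,j}\xrightarrow{\pi_U}U_{0,i}\xrightarrow{\iota_{b_j}}Z$; and $P_{a_i,b_j}$ through the point $(a_i,b_j)$. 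Integration along the fibre forces each such pushforward to extract only the top-degree K\"unneth component in the collapsed direction. On the $(i,j)$-piece of $H^4$ one then finds that $(P_{i,j})_*$ is the identity, $(P_{a_i,j})_*$ acts as the identity on $H^4(U_{0,i})\otimes H^0(W_{0,j})$ and kills the remaining $H^4$-summands, $(P_{i,b_j})_*$ acts as the identity on $H^2(U_{0,i})\otimes H^2(W_{0,j})$ and kills the rest, while $(P_{a_i,b_j})_*$ vanishes on $H^4$ altogether. Assembling signs, $(P_{i,j}-P_{a_i,j}-P_{i,b_j}+P_{a_i,b_j})_*$ acts as $1-1-0+0=0$ on $H^4\otimes H^0$ and as $1-0-1+0=0$ on $H^2\otimes H^2$; summing over $(i,j)$ then yields $P_*[\varphi_*[W_0]]=0$, as required.

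The main obstacle is the correspondence calculation, which requires fixing the interpretation of each $q_{\bullet,\bullet}$ as a partial morphism whose natural source is the component $U_{0,i}\times W_{0,j}$, and a careful application of the projection formula and integration along the fibre in de Rham cohomology. The other essential input is the vanishing $H^1_{\text{dR}}(U_0)=0$, which rules out the only K\"unneth piece (namely $H^3\otimes H^1$) that the projector $P$ cannot annihilate; one must also verify that this classical property of Hilbert modular surfaces for real quadratic fields is preserved by the smooth compactification used in the construction.
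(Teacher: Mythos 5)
Your proof is correct and is essentially the paper's own argument: the paper base-changes to $\bb{C}$ and uses Poincar\'e duality to show $P_*H_2(Z(\bb{C}))=0$, invoking simple connectedness of the components of $U_0(\bb{C})$ to kill the $H_1\otimes H_1$ K\"unneth term and then checking the projector on the two remaining pieces, which is exactly your computation read dually in $H^4_{\text{dR}}(Z)$ with $H^1_{\text{dR}}(U_0)=0$ (hence $H^3_{\text{dR}}(U_0)=0$) removing the cross term. The only difference is presentational (singular homology versus de Rham cohomology), and the classical vanishing for the compactified Hilbert modular surface is asserted in the paper just as briefly as you sketch it.
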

\begin{proof}
	To verify that $\text{cl}_{\text{dR}}(\Delta_{2t_L,2})$ is zero in $H_\text{dR}^4(Z/E_\wp)$, it suffices to show that $P_*H_\text{dR}^4(Z/E)=0$ since our cycle starts his life over $E$. After base-change to $\bb{C}$, via the fixed complex embedding $\iota_\infty:\overline{\bb{Q}}\hookrightarrow\bb{C}$, Poincar\'e duality tells us that it is enough to prove the projector annihilates the second singular homology, i.e., $P_*H_2(Z(\bb{C}))=0$. By Kunneth formula and the fact that each connected component of $U_0(\bb{C})$ is simply connected, we compute that $P_*H_2(Z(\bb{C}))=P_*(H_0(U_0(\bb{C}))\otimes H_2(W_0(\bb{C}))\oplus H_2(U_0(\bb{C}))\otimes H_0(W_0(\bb{C})))$, which we can show it is zero by the explicit definition of the projector $P$. Indeed, let $[x]\otimes [C]\in H_0(U_0(\bb{C}))\otimes H_2(W_0(\bb{C}))$ be a simple tensor for $x\in U_0(\bb{C})$ a point, then for all $i,j$ we find
	\[\begin{split}
	\left(P_{i,j}-P_{a_i,j}-P_{i,b_j}+P_{a_i,b_j}\right)([x]\otimes [C])&=\left((q_{i,j})_*-(q_{a_i,j})_*-(q_{i,b_j})_*+(q_{a_i,b_j})_*\right)([x]\otimes [C])\\
		&=[a_i]\otimes[C_j]-[a_i]\otimes[C_j]=0,
	\end{split}
	\]
	where $(q_{i,b_j})_*([x]\otimes [C])=0=(q_{a_i,b_j})_*([x]\otimes [C])$ because the dimension of the pushforward drops. Similarly, if $[D]\otimes [y]\in H_2(U_0(\bb{C}))\otimes H_0(W_0(\bb{C}))$ is a simple tensor for $y\in W_0(\bb{C})$ a point, then $\left(P_{i,j}-P_{a_i,j}-P_{i,b_j}+P_{a_i,b_j}\right)([D]\otimes [y])=0$ for all $i,j$.
\end{proof}

\begin{definition}
Let $\ell\in\bb{Z}[I_L]$, $\ell>2t_L$, be a non-parallel weight and $k>2$ an integer such that $(\ell,k)$ is a balanced triple. For all but finitely many primes $p$, the generalized Hirzebruch-Zagier cycle of weight $(\ell,k)$ is 
\[
\Delta_{\ell,k}=(\tilde{\theta}_\ell,\tilde{\theta}_k)_*\tilde{\varphi}_*[\scr{W}_r]\in \text{CH}^i(\scr{U}_{\ell-4}\times_{\cal{O}_{E,\wp}}\scr{W}_{k-2})\otimes_\bb{Z}L.
\]
\end{definition}


\begin{proposition}
Let $\ell\in\bb{Z}[I_L]$, $\ell>2t_L$, be a non-parallel weight and $k>2$ an integer such that $(\ell,k)$ is a balanced triple. The generalized Hirzebruch-Zagier cycle $\Delta_{\ell,k}\in \text{CH}^i(\scr{U}_{\ell-4}\times_{\cal{O}_{E,\wp}}\scr{W}_{k-2})\otimes_\bb{Z}L
$ is de Rham null-homologous. 
\end{proposition}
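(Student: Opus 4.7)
The goal is to show that $\text{cl}_{\text{dR}}(\Delta_{\ell,k})$ vanishes in $F^i H^{2i}_{\text{dR}}(Z_{\ell,k}/E_\wp)$, where $Z_{\ell,k}=U_{\ell-4}\times_{E}W_{k-2}$ and $i$ is the codimension determined by $(\ref{eq: numerology})$, namely $d-i=r+1=(\lvert\ell\rvert+k-4)/2$ so that $2i=3\lvert\ell\rvert+k-10$. Because $\Delta_{\ell,k}$ is already defined over $E$, it is enough to verify the vanishing in $H^{2i}_{\text{dR}}(Z_{\ell,k}/E)$, which we may check after base change to $\bb{C}$.

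My approach is a K\"unneth/dimension count. Since the cycle class is obtained by applying the idempotent correspondence $\tilde{\theta}_\ell\times\tilde{\theta}_k$, we have
\[
\text{cl}_{\text{dR}}(\Delta_{\ell,k})=\big(\tilde{\theta}_{\ell,*}\otimes\tilde{\theta}_{k,*}\big)\,\text{cl}_{\text{dR}}\!\big(\tilde{\varphi}_*[\scr{W}_r]\big),
\]
so it lies in the image of the projector $\theta_{\ell,*}\otimes\theta_{k,*}$ acting via K\"unneth on
\[
H^{2i}_{\text{dR}}(Z_{\ell,k})\otimes_\bb{Z}L\;=\;\bigoplus_{a+b=2i}H^a_{\text{dR}}(U_{\ell-4})\otimes H^b_{\text{dR}}(W_{k-2})\otimes_\bb{Z}L.
\]
By Proposition $\ref{prop: elliptic comp}$, $\theta_{k,*}$ annihilates $H^b_{\text{dR}}(W_{k-2})$ unless $b=k-1$. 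By Proposition $\ref{prop: Hilbert comp}$, $\theta_{\ell,*}$ identifies $H^a_{\text{dR}}(U_{\ell-4})$ with the hypercohomology $\bb{H}^{a-(\lvert\ell\rvert-4)}(\text{Sh}_K(G_L^*),\text{DR}^\bullet(\cal{F}_{G^*_L}^{(\ell,\ell-t)}))$, which vanishes outside the range $0\le a-(\lvert\ell\rvert-4)\le 2g=4$ since $\text{Sh}_K(G_L^*)$ has dimension $g=2$. In other words, $\theta_{\ell,*}$ annihilates $H^a_{\text{dR}}(U_{\ell-4})$ unless $\lvert\ell\rvert-4\le a\le\lvert\ell\rvert$.

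The only K\"unneth components that can survive the projector therefore satisfy $b=k-1$ and $\lvert\ell\rvert-4\le a\le\lvert\ell\rvert$, forcing $a+b\le\lvert\ell\rvert+k-1$. But the numerology gives $2i=3\lvert\ell\rvert+k-10$, and the hypothesis that $\ell$ is non-parallel with $\ell>2t_L$ (so each $\ell_\tau\ge 2$ and at least one $\ell_\tau\ge 3$) implies $\lvert\ell\rvert\ge 5$. Hence
\[
2i-(\lvert\ell\rvert+k-1)\;=\;2\lvert\ell\rvert-9\;\ge\;1>0,
\]
which means no K\"unneth component survives, and $\text{cl}_{\text{dR}}(\Delta_{\ell,k})=0$.

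The only subtle point is the justification that $\theta_{\ell,*}$ really kills $H^a_{\text{dR}}(U_{\ell-4})$ for $a$ outside the indicated range: this is a consequence of the fact that $\tilde{\theta}_\ell$ is a Chow--K\"unneth projector cutting out the relative motive $\cal{V}^\ell$ from $\text{h}(U_{\ell-4})$, whose de Rham realization is the Gauss--Manin hypercohomology on $\text{Sh}_K(G_L^*)$, and therefore concentrated in the range $[\lvert\ell\rvert-4,\lvert\ell\rvert]$ purely by the dimension of the Hilbert modular surface. The rest is a routine dimension count, which works precisely because the balancedness of $(\ell,k)$ combined with non-parallelness pushes $2i$ strictly above the upper end of the K\"unneth range.
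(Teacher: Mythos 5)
There is a genuine gap, and it sits exactly at the point you flag as ``the only subtle point''. Propositions $\ref{prop: Hilbert comp}$ and $\ref{prop: elliptic comp}$ describe the \emph{pullback} action $\theta_\ell^*$, $\theta_k^*$ on de Rham cohomology, whereas $\text{cl}_\text{dR}(\Delta_{\ell,k})$ lies in the image of the \emph{pushforward} action $(\theta_\ell,\theta_k)_*$. By adjunction under Poincar\'e duality, $\theta_{\ell,*}$ annihilates $H^a_\text{dR}(U_{\ell-4})$ precisely when $\theta_\ell^*$ annihilates $H^{2\dim U_{\ell-4}-a}_\text{dR}(U_{\ell-4})$, so the degrees where $\theta_{\ell,*}$ can act nontrivially are $3\lvert\ell\rvert-12\le a\le 3\lvert\ell\rvert-8$, not $\lvert\ell\rvert-4\le a\le\lvert\ell\rvert$ as you assert (the two ranges coincide only when $\lvert\ell\rvert=4$, which is excluded since $\ell>2t_L$ is non-parallel). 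With the correct range, the K\"unneth component with $b=k-1$ and $a=2i-(k-1)=3\lvert\ell\rvert-9$ does fall inside it, so your dimension count does not close: exactly one component survives the numerics, and no inequality of the form $2\lvert\ell\rvert-9\ge 1$ can rule it out.

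That surviving component is where the actual content of the proposition lies. Dualizing, as the paper does --- i.e.\ using Poincar\'e duality to reduce to the vanishing of $(\theta_\ell,\theta_k)^*H^{2(d-i)}_\text{dR}(U_{\ell-4}\times W_{k-2})$ with $2(d-i)=\lvert\ell\rvert+k-4$ --- the constraint $q=k-1$ forces $p=\lvert\ell\rvert-3$, and the remaining factor is $\theta_\ell^*H^{\lvert\ell\rvert-3}_\text{dR}(U_{\ell-4})\cong\bb{H}^{1}\big(\text{Sh}_K(G_L^*),\text{DR}^{\bullet}(\cal{F}_{G^*_L}^{(\ell,\ell-t)})\big)$, exactly the group in $(\ref{eq: H1 vanishing})$. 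Its vanishing is not a consequence of the dimension of the Hilbert modular surface: the paper proves it by identifying this $\bb{H}^1$ with the degree-one intersection cohomology of the Baily--Borel compactification of $\text{Sh}_K(G_L^*)$ and invoking Lie algebra cohomology computations (the Nekov\'a\v{r} input cited in the paper's proof). Without this vanishing theorem your argument cannot conclude, so the proposal as written is incomplete; to repair it you must either correct the pushforward/pullback ranges and then import the $\bb{H}^1$-vanishing, or simply dualize first and argue as the paper does.
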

\begin{proof}
The class $\text{cl}_\text{dR}(\Delta_{\ell,k})$ belongs to $(\theta_\ell,\theta_k)_*H_{\text{dR}}^{2i}(U_{\ell-4}\times_{E_\wp}W_{k-2})$ and by Poincar\'e duality, it is trivial if and only if
\begin{equation}\label{eq: vanishing}
(\theta_\ell,\theta_k)^*H_{\text{dR}}^{2(d-i)}(U_{\ell-4}\times_{E_\wp}W_{k-2})=\bigoplus_{p+q=2(d-i)}(\theta_\ell)^*H_{\text{dR}}^{p}(U_{\ell-4})\otimes(\theta_k)^*H_{\text{dR}}^{q}(W_{k-2})
\end{equation}
is trivial. By Propositions $\ref{prop: Hilbert comp}$ and $\ref{prop: elliptic comp}$, we have $\theta_\ell^*H_{\text{dR}}^{p}(U_{\ell-4})=\bb{H}^{p-\lvert\ell\rvert+4}(\text{Sh}_K(G_L^*),\text{DR}^{\bullet}(\cal{F}_{G^*_L}^{(\ell,\ell-t)}))$ and $\theta_k^* H_\text{dR}^*(W_{k-2})=\theta_k^*H_\text{dR}^{k-1}(W_{k-2})=H^1_\text{par}(\text{Sh}^\text{tor}_K(\text{GL}_{2,\bb{Q}}),\cal{F}^{(k,k-1)},\nabla)$. Hence, $q=k-1$ forces $p$ to be $p=\lvert\ell\rvert-3$ and the group \begin{equation}\label{eq: H1 vanishing}
\theta_\ell^*H_{\text{dR}}^{\lvert\ell\rvert-3}(U_{\ell-4})=\bb{H}^{1}(\text{Sh}_K(G_L^*),\text{DR}^{\bullet}(\cal{F}_{G^*_L}^{(\ell,\ell-t)}))
\end{equation}
is trivial. Indeed, by (\cite{ES-Nekovar} A6.20), the cohomology group $\bb{H}^{1}(\text{Sh}_K(G_L^*),\text{DR}^{\bullet}(\cal{F}_{G^*_L}^{(\ell,\ell-t)}))$ is identified with the intersection cohomology of the Baily-Borel compactification of $\text{Sh}_K(G_L^*)$, that in turn is trivial in degree 1 by computations using Lie algebra cohomology (\cite{ES-Nekovar} Sections 5.11, 6.5, 6.6).
\end{proof}

\subsubsection{Evaluation of syntomic Abel-Jacobi.}
Let $p$ be a prime splitting in $L/\bb{Q}$, $p\cal{O}_L=\frak{p}_1\frak{p}_2$. We are interested in computing $\text{AJ}_p(\Delta_{\ell,k})(\pi_1^*\omega\cup\pi_2^*\eta)$ and to relate it to some value of the twisted triple product $p$-adic $L$-function outside the range of interpolation. Let $\tilde{\omega}$ (resp. $\tilde{\eta}$) be a lift of $\omega$ (resp. $\eta$) to $\text{fp}$-cohomology,
since the Hirzebruch-Zagier cycle is null-homologous the computation is independent of the choice of lifts. We start by treating the case $(\ell,k)=(2t_L,2)$: 
\[\begin{split}
\text{AJ}_p(\Delta_{2t_L,2})(\pi_1^*\omega\cup\pi_2^*\eta)&=\langle \text{cl}_\text{syn}(\Delta_{2t_L,2}), \pi_1^*\tilde{\omega}\cup\pi_2^*\tilde{\eta}\rangle_\text{fp}=\langle \tilde{P}_*\text{cl}_\text{syn}(\tilde{\varphi}_*[\scr{W}_0]), \pi_1^*\tilde{\omega}\cup\pi_2^*\tilde{\eta}\rangle_\text{fp}\\
	&=\langle \text{cl}_\text{syn}(\tilde{\varphi}_*[\scr{W}_0]),\sum_{i,j}(\tilde{P}_{i,j}-\tilde{P}_{a_i,j}-\tilde{P}_{i,b_j}+\tilde{P}_{a_i,b_j})^*(\pi_1^* \tilde{\omega}\cup\pi_2^*\tilde{\eta})\rangle_\text{fp}\\
	&=\langle \text{cl}_\text{syn}(\tilde{\varphi}_*[\scr{W}_0]),\pi_1^*\tilde{\omega}\cup\pi_2^*\tilde{\eta}\rangle_\text{fp}\\
(\cite{Bes00}\ \text{Equation}\ (20))\qquad	&=\text{tr}_{\scr{W}_0}(\tilde{\varphi}^*(\pi_1^*\tilde{\omega}\cup\pi_2^*\tilde{\eta}))=\text{tr}_{\scr{W}_0}(\tilde{\zeta}^*\tilde{\omega}\cup\tilde{\eta}).
\end{split}
\]
The fourth equality is justified by the vanishing  $H_\text{fp}^1(\text{Spec}(\cal{O}_{E,\wp}),0)=0=H_\text{fp}^2(\text{Spec}(\cal{O}_{E,\wp}),2)$, which imply that $\sum_{i,j}\tilde{P}_{i,j}^*=(\text{id}_{\scr{U}_0\times\scr{W}_0})^*$ and that all the other pullbacks are zero. 

To deal with the general case, we first need to analyze the action of the correspondences $\tilde{\theta}_k,\tilde{\theta}_\ell$ on $\text{fp}$-cohomology. The exact sequence in (\cite{Bes00} (8)) induces a functorial isomorphism $H_\text{fp}^{k-1}(\scr{W}_{k-2},0)\cong H^{k-1}_\text{dR}(W_{k-2})$, we denote by $\tilde{\eta}$ the preimage of $\eta\in \theta_k^*H^{k-1}_\text{dR}(W_{k-2})$ that satisfies $\tilde{\theta}_k^*\tilde{\eta}=\tilde{\eta}$ since $\theta_k^*\eta=\eta$. By functoriality of the short exact sequence (\cite{Bes00} (8)), there is a commuting diagram
\[\xymatrix{
0\ar[r]& H^{\lvert\ell\rvert-3}_\text{dR}(U_{\ell-4})/F^{\lvert\ell\rvert-2-s}\ar[d]^{\theta_\ell^*=0}\ar[r]^\iota & H^{\lvert\ell\rvert-2}_\text{fp}(\scr{U}_{\ell-4},\lvert\ell\rvert-2-s)\ar[d]^{\tilde{\theta}_\ell^*}\ar[r]^p&\text{F}^{\lvert\ell\rvert-2-s}H^{\lvert\ell\rvert-2}_\text{dR}(U_{\ell-4})\ar[d]^{\theta_\ell^*}\ar@{.>}[dl]\ar[r]&0\\
0\ar[r]& H^{\lvert\ell\rvert-3}_\text{dR}(U_{\ell-4})/F^{\lvert\ell\rvert-2-s}\ar[r]^\iota & H^{\lvert\ell\rvert-2}_\text{fp}(\scr{U}_{\ell-4},\lvert\ell\rvert-2-s)\ar[r]^p&\text{F}^{\lvert\ell\rvert-2-s}H^{\lvert\ell\rvert-2}_\text{dR}(U_{\ell-4})\ar[r]&0,
}\]
where the leftmost vertical arrow is zero because of the vanishing $(\ref{eq: H1 vanishing})$. Therefore, there is a canonical lift $\tilde{\omega}^\text{can}=\tilde{\theta}_\ell^*\omega$ to $H^{\lvert\ell\rvert-2}_\text{fp}(\scr{U}_{\ell-4},\lvert\ell\rvert-2-s)$ of any class $\omega\in \theta_\ell^*\text{F}^{\lvert\ell\rvert-2-s}H^{\lvert\ell\rvert-2}_\text{dR}(U_{\ell-4})$,  with the property $\tilde{\theta}_\ell^*\tilde{\omega}^\text{can}=\tilde{\omega}^\text{can}$. 
At this point we can to compute 
\[\begin{split}
\text{AJ}_p(\Delta_{\ell,k})(\pi_1^*\omega\cup\pi_2^*\eta)&=\langle \text{cl}_\text{syn}(\Delta_{\ell,k}), \pi_1^*\tilde{\omega}\cup\pi_2^*\tilde{\eta}\rangle_\text{fp}=\langle (\tilde{\theta}_\ell,\tilde{\theta}_k)_*\text{cl}_\text{syn}(\tilde{\varphi}_*[\scr{W}_r]), \pi_1^*\tilde{\omega}\cup\pi_2^*\tilde{\eta}\rangle_\text{fp}\\
	&=\langle \text{cl}_\text{syn}(\tilde{\varphi}_*[\scr{W}_r]),\pi_1^*\tilde{\theta}_\ell^* \tilde{\omega}\cup\pi_2^*\tilde{\theta}_k^*\tilde{\eta}\rangle_\text{fp}\\
	&=\langle \text{cl}_\text{syn}(\tilde{\varphi}_*[\scr{W}_r]),\pi_1^*\tilde{\omega}^\text{can}\cup\pi_2^*\tilde{\eta}\rangle_\text{fp}\\
&=\text{tr}_{\scr{W}_r}(\tilde{\varphi}^*(\pi_1^*\tilde{\omega}^\text{can}\cup\pi_2^*\tilde{\eta}))=\text{tr}_{\scr{W}_r}(\tilde{\varphi}_1^*\tilde{\omega}^\text{can}\cup\tilde{\varphi}_2^*\tilde{\eta}),\\
\end{split}
\]
where $\tilde{\varphi}_i=(\pi_i\circ\tilde{\varphi})$. The fundamental exact sequence of $\text{fp}$-cohomology induces an isomorphism $\iota: H_\text{dR}^{\lvert\ell\rvert-3}(W_r)\overset{\sim}{\to} H_\text{fp}^{\lvert\ell\rvert-2}(\scr{W}_r,\lvert\ell\rvert-2-s)$, since the filtered piece $\text{F}^nH_\text{dR}^j(W_r)$ is trivial for $n>\dim_{E_\wp} W_r$ and indeed $\lvert\ell\rvert-2-s$ is greater than $\dim_{E_\wp} W_r=r+1$. Therefore, if we write $\tilde{\varphi}_1^*\tilde{\omega}^\text{can}=\iota\Upsilon({\omega})$, we can rewrite the quantity we want to evaluate as 
\begin{equation}\label{eq: syntomic AJ}
\text{AJ}_p(\Delta_{\ell,k})(\pi_1^*\omega\cup\pi_2^*\eta)=\text{tr}_{W_r}(\Upsilon({\omega})\cup_\text{dR}\tilde{\varphi}_2^*\eta)=\langle\Upsilon({\omega}),\tilde{\varphi}_2^*\eta\rangle_\text{dR},
\end{equation}
for the Poincar\'e pairing $\langle\ ,\rangle_{\text{dR}}: H_\text{dR}^{\lvert\ell\rvert-3}(W_r)\times H_\text{dR}^{k-1}(W_r)\overset{\cup}{\longrightarrow }H_\text{dR}^{\lvert\ell\rvert+k-4}(W_r)\overset{\text{tr}_\text{dR}}{\longrightarrow}E_\wp$.

\subsection{Description of $\text{AJ}_p(\Delta_{\ell,k})$ in terms of $p$-adic modular forms}
The final step is to interpret the expression $(\ref{eq: syntomic AJ})$ in terms of $p$-adic modular forms. By taking a large enough power of the Hasse invariant, one can lift it to $\bb{Z}_p$ (\cite{AG} Lemma 11.10); let $\scr{X}_K\hookrightarrow\scr{S}_K(G_L^*)$ be the complement of the zero locus of the lift. By pulling back the lift along $\zeta:\scr{S}_{K'}(\text{GL}_{2,\bb{Q}})\to\scr{S}_K(G_L^*)$, we can define another formal scheme $\scr{Y}_{K'}\hookrightarrow\scr{S}_{K'}(\text{GL}_{2,\bb{Q}})$ equipped with a map $\zeta:\scr{Y}_{K'}\longrightarrow\scr{X}_K$. Let $\scr{A}\to\scr{X}_K$ be the universal abelian surface (resp. $\scr{E}\to\scr{Y}_{K'}$ the universal elliptic curve), we have a commuting diagram
\[\xymatrix{
\scr{E}^r\ar[d]^\nu\ar[r]^{\tilde{\varphi}_1}&\scr{A}^{\ell-4}\ar[d]^\upsilon\\
\scr{W}_r\ar[r]^{\tilde{\varphi}_1}& \scr{U}_{\ell-4}
}\qquad
\text{that induces}\qquad 
\xymatrix{
\tilde{\theta}_\ell^*\tilde{H}_\text{fp}^{\ell-2}(\scr{U}_{\ell-4},\lvert\ell\rvert-2-s)\ar[r]^{\upsilon^*}\ar[d]^{\tilde{\varphi}_1^*}& \tilde{\theta}_\ell^*\tilde{H}_\text{fp}^{\ell-2}(\scr{A}^{\ell-4},\lvert\ell\rvert-2-s)\ar[d]^{\tilde{\varphi}_1^*}\\
\tilde{H}_\text{fp}^{\ell-2}(\scr{W}_{r},\lvert\ell\rvert-2-s)\ar[r]^{\nu^*}& \tilde{H}_\text{fp}^{\ell-2}(\scr{E}^{r},\lvert\ell\rvert-2-s),
}\]
where we consider the Gros-style version of $\text{fp}$-cohomology. We choose to work with $\scr{A}^{\ell-4}$ because the pull back $\upsilon^*(\tilde{\omega}^\text{can})\in\tilde{\theta}_\ell^*\tilde{H}_\text{fp}^{\ell-2}(\scr{A}^{\ell-4},\lvert\ell\rvert-2-s)\overset{\sim}{\to}\theta_\ell^*\text{F}^{\lvert\ell\rvert-2-s}H_\text{dR}^{\ell-2}(A^{\ell-4})$ can be directly described in terms of $p$-adic modular forms. Indeed, the group $\theta_\ell^*\text{F}^{\lvert\ell\rvert-2-s}H_\text{dR}^{\ell-2}(A^{\ell-4})$ is the same as the de Rham realization of the motive $\cal{V}^\ell$ over the rigid fiber $X_K$, which is isomorphic to the rigid cohomology $\bb{H}^2(\scr{S}^\text{tor}_{K,\text{rig}}(G_L^*); j^\dagger\text{DR}^{\bfcdot}(\cal{F}_{G_L^*}^{(\ell,\ell-t_L)}))$ by the comparison theorem of Baldassarri and Chiarellotto (\cite{BC} Corollary 2.6). 

To express the class $\upsilon^*(\tilde{\omega}^\text{can})$ explicitly we need to make a judicious choice of a polynomial. From the form of the Euler factors appearing in Theorem  $\ref{thm: interpolation formulas}$ and the information given by Corollary $\ref{overconvergence in the split case}$ we are led to consider the polynomial $P(T)^2$ for $P(T)=\prod_{\bfcdot,\star\in\{\alpha,\beta\}}(1-\bfcdot_1\star_2T)$. Following (\cite{LoefflerSkinnerZerbes} Proposition 4.5.5), if we set $T=T_1T_2$, we can write $P(T_1,T_2)=a_2(T_1,T_2)P_1(T_1)+b_1(T_1,T_2)P_2(T_2)$ for $P_i(T_i)=(1-\alpha_iT_i)(1-\beta_iT_i)$ and
\[
a_2(T_1,T_2)=\alpha_1\beta_1\alpha_2\beta_2(\alpha_2+\beta_2)T_1^2T_2^3-\alpha_1\beta_1\alpha_2\beta_2T_1^2T_2^2-\alpha_2\beta_2(\alpha_1+\beta_1)T_1T_2^2+1,
  \]
  \[
b_1(T_1,T_2)=\alpha_1^2\beta_1^2\alpha_2\beta_2T_1^4T_2^2-\alpha_1\beta_1(\alpha_2+\beta_2)T_1^2T_2-\alpha_1\beta_1T_1^2+(\alpha_1+\beta_1)T_1.
  \]
  The index $2$ in $a_2$ (resp. the index $1$ in $b_1$) is there to reminds us that the monomial composing the polynomial are of the form $T_1^{a_1}T_2^{a_2}$ with $a_1\le a_2$ (resp. $T_1^{b_1}T_2^{b_2}$ with $b_1> b_2$). The polyonomial $P(T_1,T_2)$ is symmetric in the indices $1,2$, hence we can also write $P(T_1,T_2)=a_1(T_1,T_2)P_2(T_1)+b_2(T_1,T_2)P_1(T_2)$ where $a_1(T_1,T_2)$ (resp. $b_2(T_1,T_2)$) is obtained from $a_2(T_1,T_2)$ (resp. $b_1(T_1,T_2)$) by swapping all the indices. Therefore,
  \[\begin{split}
  	P(T_1,T_2)^2&=a_1a_2P_1P_2+a_2P_1b_2P_1+a_1P_2b_1P_2+b_1b_2P_1P_2\\
		&=a_1a_2P_1P_2+(P-b_1P_2)b_2P_1+(P-b_2P_1)b_1P_2+b_1b_2P_1P_2\\
		&=(a_1a_2-b_1b_2)P_1P_2+P(b_2P_1+b_1P_2),
  \end{split}
  \]
  where the symmetric polynomial $(a_1a_2-b_1b_2)(T_1,T_2)$ satisfies
  \[
  (a_1a_2-b_1b_2)(T)=(1-\alpha_1\beta_2\alpha_2\beta_2T^2)\prod_{\bfcdot,\star\in\{\alpha,\beta\}}(1-\bfcdot_1\star_2T).
  \]
The class of $\omega_{\breve{\msf{g}}^{[\frak{p}_i]}}$ is zero in $\bb{H}^2(\scr{S}^\text{tor}_{K,\text{rig}}, j^\dagger\text{DR}_c^{\bfcdot}(\cal{F}^{(\ell,x)}))$, hence there are overconvergent cuspforms $\msf{g}_j^{(i)}\in S^\dagger_{s_{\tau_i}\cdot(\ell,x)}(K;E_\wp)$ such that $\breve{\msf{g}}^{[\frak{p}_i]}=d_1^{\ell_1-1}(\msf{g}^{(i)}_1)+d_2^{\ell_2-1}(\msf{g}^{(i)}_2)$. Furthermore, $d_1^{1-\ell_1}\breve{\msf{g}}^{[\frak{p}_1, \frak{p}_2]}$ is overconvergent by Corollary $\ref{overconvergence in the split case}$. It follows we can write $P(V(p))^2\breve{\msf{g}}$ as
\[\begin{split}
P(V(p))^2\breve{\msf{g}}&=\big[a_1a_2-b_1b_2](V(p))\breve{\msf{g}}^{[\frak{p}_1, \frak{p}_2]}+P(V(p))[b_2(V(\frak{p}_1),V(\frak{p}_2))\breve{\msf{g}}^{[\frak{p}_1]}+b_1(V(\frak{p}_1),V(\frak{p}_2))\breve{\msf{g}}^{[\frak{p}_2]}\big]\\
	&=d_1^{\ell_1-1}(\msf{h})+d_1^{\ell_1-1}(\msf{h}_1)+d_2^{\ell_2-1}(\msf{h}_2),
\end{split}
\]
where $\msf{h}=[a_1a_2-b_1b_2](V(p))d_1^{1-\ell_1}\breve{\msf{g}}^{[\frak{p}_1, \frak{p}_2]}$, $\msf{h}_1=P(V(p))[b_2\msf{g}_1^{(1)}+b_1\msf{g}_1^{(2)}]$ and $\msf{h}_2=P(V(p))[b_2\msf{g}_2^{(1)}+b_1\msf{g}_2^{(2)}]$.

\begin{proposition}
	Let $L/\bb{Q}$ be a real quadratic extension and $\msf{g}\in S^\dagger_{\ell,x}(K,L;E_\wp)$ an overconvergent cuspform whose class $\omega_\msf{g}$ in $\bb{H}^2\big(\scr{S}^\text{tor}_{K,\text{rig}}(G_L), j^\dagger\text{DR}^\bullet_c\big(\cal{F}^{(\ell,x)}\big)\big)$ is trivial. By Theorem $\ref{th: p-adic cohomology}$ there are $p$-adic modular forms $\msf{g}_j\in S^\dagger_{s_{j}\cdot(\ell,x)}(K;E_\wp)$ for $j=1,2$, such that $\msf{g}=d_1^{\ell_1-1}(\msf{g}_1)+d_2^{\ell_2-1}(\msf{g}_2)$. Then we can construct sections $G_j\in H^0_\text{rig}\big(\scr{S}^\text{tor}_{K,\text{rig}}(G_L), j^\dagger\big(\cal{F}^{(\ell,x)}\otimes\Omega_{\tau_j}^{1}\big) \big)$, $j=1,2$, that satisfy the equation $\omega_\msf{g}=\nabla(G_1+G_2)$ in $H^0_\text{rig}\big(\scr{S}^\text{tor}_{K,\text{rig}}(G_L), j^\dagger\big(\cal{F}^{(\ell,x)}\otimes\Omega^{2}\big) \big)$.
\end{proposition}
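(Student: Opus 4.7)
The plan is to use the BGG-to-de Rham quasi-isomorphism of Theorem~\ref{th: dual BGG quasi-iso}, which identifies the hypercohomology of $\text{DR}^\bullet_c(\cal{F}^{(\ell,x)})$ with the cohomology of $\text{BGG}^\bullet_c(\cal{F}^{(\ell,x)})$, to lift the given coboundary relation from the BGG complex into the de Rham complex while respecting the $\cal{O}_L$-grading that separates the contributions from the two embeddings $\tau_1,\tau_2\in I_L$.

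First I would reinterpret the hypothesis $\msf{g}=d_1^{\ell_1-1}(\msf{g}_1)+d_2^{\ell_2-1}(\msf{g}_2)$ as a coboundary relation in the complex of overconvergent $j^\dagger$-sections of $\text{BGG}^\bullet_c(\cal{F}^{(\ell,x)})$. The BGG differential in degree one, which sends the $e_{\tau_j}$-summand $\underline{\omega}^{s_{\tau_j}\cdot(\ell,x)}$ into $\underline{\omega}^{(\ell,x)}$, coincides on $q$-expansions with the operator $d_{\tau_j}^{\ell_j-1}$ up to an explicit nonzero universal constant coming from the factorial/sign factor in the definition of $\Theta_{\tau_j,\ell_j-1}$ in Section~\ref{subsect: Compactifications and p-adic theory}. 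After absorbing this constant into rescaled sections $\tilde{\msf{g}}_j$, the element $\tilde{\msf{g}}_1e_{\tau_1}+\tilde{\msf{g}}_2e_{\tau_2}$ becomes a BGG preimage of $\msf{g}$.

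Next I would apply the BGG embedding, which is a morphism of complexes compatible with the canonical $\cal{O}_L$-isotypic decomposition $\Omega^1=\Omega^1_{\tau_1}\oplus\Omega^1_{\tau_2}$ on the de Rham side. Under this embedding the summand of $\text{BGG}^1_c(\cal{F}^{(\ell,x)})$ indexed by the \v{C}ech symbol $e_{\tau_j}$ lands inside $\cal{F}^{(\ell,x)}\otimes\Omega^1_{\tau_j}$, and this splitting is preserved after taking $j^\dagger$-sections. I would then set $G_j$ equal to the image of $\tilde{\msf{g}}_je_{\tau_j}$. Because the BGG embedding is a genuine chain map, $\nabla(G_1+G_2)$ equals the image of $\msf{g}$ under the BGG inclusion in top degree, which by the very definition of $\omega_\msf{g}$ is precisely $\omega_\msf{g}$.

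The main point requiring care will be verifying in explicit local coordinates that the BGG embedding sends the $e_{\tau_j}$-component into the one-form subsheaf $\Omega^1_{\tau_j}$ and not into the complementary summand, together with pinning down the universal scaling constants relating the BGG differentials $\Theta_{\tau,\ell_\tau-1}$ to the operators $d_\tau^{\ell_\tau-1}$ on $q$-expansions. Both are mechanical once the conventions of Section~\ref{subsect: Compactifications and p-adic theory} and \cite{Hilbert}~Section~2.15 are fixed; the whole argument is really a strictness statement for the quasi-isomorphism BGG$\hookrightarrow$DR at the level of primitives, rather than merely at the level of cohomology.
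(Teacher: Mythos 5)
Your proposal is a genuinely different route from the paper's proof. The paper's argument is fully explicit: it writes down $G_1$ and $G_2$ as telescopic sums, with coefficients $(-1)^i\frac{(\ell_j-2)!}{(\ell_j-2-i)!}$ multiplying $d_j^{\ell_j-2-i}(\msf{g}_j)$ tensored with explicit monomials $\omega_\tau^a\eta_\tau^b$ in a local basis of $\bb{H}^1_\tau$ and a single $\frac{dq_{j'}}{q_{j'}}$, and then verifies $\nabla(G_1)+\nabla(G_2)=\omega_\msf{g}$ by collapsing the telescoping sums. This is a direct multi-variable adaptation of the Coleman/BDP primitive construction, and it produces formulas for the $G_j$ that are reused downstream to compute the unit-root splitting. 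Your approach, by contrast, invokes Theorem \ref{th: dual BGG quasi-iso} at the level of primitives: the hypothesis $\msf{g}=d_1^{\ell_1-1}(\msf{g}_1)+d_2^{\ell_2-1}(\msf{g}_2)$ exhibits a preimage of $\msf{g}$ under the top BGG differential, and since the BGG complex is a genuine subcomplex of the de Rham complex, pushing that preimage through the embedding immediately yields a de Rham primitive. As you note, the embedding is a chain map, so $\nabla$ of the image equals $\omega_\msf{g}$; this buys conceptual economy at the cost of not producing an explicit formula.

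There is one point you flag as "mechanical" that deserves more caution than you give it: the claim that the BGG embedding in degree one sends the $e_{\tau_j}$-summand into a single $\cal{O}_L$-isotypic piece of $\cal{F}^{(\ell,x)}\otimes\Omega^1$ (and into which of the two). This is not an abstract formal consequence of Theorem \ref{th: dual BGG quasi-iso}: the theorem gives the existence of the canonical embedding but does not, as stated in the paper, spell out the compatibility with the Kodaira–Spencer decomposition $\Omega^1\cong\Omega^1_{\tau_1}\oplus\Omega^1_{\tau_2}$. To pin this down you would need to argue that the BGG resolution of the coefficient representation of $\mathrm{GL}_{2,L}$ factors as a tensor product over the two embeddings, and that the resulting degree-one inclusion is purely of one Kodaira–Spencer type — which is precisely the information displayed by the paper's explicit formulas, where $G_1$ carries only $\frac{dq_2}{q_2}$ and $G_2$ only $\frac{dq_1}{q_1}$. (Note in passing that this reveals a small index mismatch between the proposition's phrase ``$\Omega^1_{\tau_j}$'' and what the proof's formulas actually exhibit.) So your argument is not wrong, and the normalization constants you defer are indeed elementary (they are the $\frac{(-1)^{\ell_\tau-2}}{(\ell_\tau-2)!}$ factors in $\Theta_{\tau,\ell_\tau-1}$), but making the grading statement airtight amounts to carrying out essentially the same local computation the paper does. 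The net trade-off: your approach is cleaner to state and generalizes more readily to higher-degree Hilbert modular varieties where an $n$-fold telescoping formula would be painful, while the paper's approach is self-contained and directly yields the explicit sections that are needed later in the evaluation of the syntomic Abel–Jacobi map.
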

\begin{proof}
	Set $v_j^{(a,b)}=\omega_j^a\eta_j^b$, $w_j=\omega_j\wedge\eta_j$ and consider the sections 
	\[
	G_1=\sum_{i=0}^{\ell_{1}-2}(-1)^{i}\frac{(\ell_{1}-2)!}{(\ell_{1}-2-i)!}d_{1}^{\ell_{1}-2-i}(\msf{g}_{1})\big( w_2^{\frac{2-n-\ell_2}{2}}\otimes v_2^{(\ell_2-2,0)}\otimes w_1^{\frac{2-n-\ell_1}{2}}\otimes v_1^{(\ell_1-2-i,i)}\big)\otimes\frac{dq_2}{q_2}.
	\]
	\[
	G_2=-\sum_{i=0}^{\ell_{2}-2}(-1)^{i}\frac{(\ell_{2}-2)!}{(\ell_{2}-2-i)!}d_{1}^{\ell_{2}-2-i}(\msf{g}_{2})\big(w_1^{\frac{2-n-\ell_1}{2}}\otimes v_1^{(\ell_1-2,0)}\otimes w_2^{\frac{2-n-\ell_2}{2}}\otimes v_2^{(\ell_2-2-i,i)}\big)\otimes\frac{dq_1}{q_1}.
	\]
	of $H^0_\text{rig}\big(\scr{S}^\text{tor}_{K,\text{rig}}(G_L), j^\dagger\big(\cal{F}^{(\underline{\ell},2-n)}\otimes\Omega^{1}\big) \big)$. Differentiating them we obtain telescopic sums which collapse to
	\[
	\nabla(G_j)=d_j^{\ell_j-1}(\msf{g}_j)\bigotimes_{c=1}^2\big(w_c^{\frac{2-n-\ell_c}{2}}\otimes v_c^{(\ell_c-2,0)}\big)\otimes(\frac{dq_1}{q_1}\wedge\frac{dq_2}{q_2}).
	\]
	Therefore, $\omega_{\msf{g}}=\nabla(G_1)+\nabla(G_2)$ as claimed.
	\end{proof}
	
It follows that there are sections $G_\msf{h}, G_{\msf{h}_1},G_{\msf{h}_2}$ associated with $\msf{h}$, $\msf{h}_1$, $\msf{h}_2$ respectively, that satisfy $P(p^{-t_L}\text{Fr}_p)^2\omega_{\msf{g}}=\nabla(G_\msf{h}+ G_{\msf{h}_1}+G_{\msf{h}_2})$ 
since $\text{Fr}_p=p^{t_L}V(p)$ in cohomology (Lemma $\ref{frobenius in cohomology}$). The pullback by the morphism $\xi$ gives $P(p^{-t_L}\text{Fr}_p)^2\omega_{\xi^*\msf{g}}=\nabla(G_{\xi^*\msf{h}}+ G_{\xi^*\msf{h}_1}+G_{\xi^*\msf{h}_2})$ and to land in the right cohomology group we need to change the central character using the isomophism $\Psi=\Psi_{x,\ell-t_L}$. Lemma $\ref{lemma: central character and V(p)}$ implies
\[
P(p^{x-\ell}\text{Fr}_p)^2\omega_{\Psi\xi^*\msf{g}}=\nabla(G_{\Psi\xi^*\msf{h}}+ G_{\Psi\xi^*\msf{h}_1}+G_{\Psi\xi^*\msf{h}_2}).
\] 
We set $G=G_{\Psi\xi^*\msf{h}}+ G_{\Psi\xi^*\msf{h}_1}+G_{\Psi\xi^*\msf{h}_2}$ and we let $\epsilon_{\ell}=\frac{1}{\prod_{\tau}(\ell_\tau-2)!}\sum_{\sigma\in\prod_\tau S_{\ell_\tau-2}}\sigma$ be the symmetrization projector 
	\[
	\epsilon_\ell: \bigotimes_\tau(\bb{H}_\tau^1)^{\ell_\tau-2}\to \bigotimes_\tau\text{Sym}^{\ell_\tau-2}\bb{H}_\tau^1
	\]
	which identifies the target sheaf with a subsheaf of the first. Finally, if we set $Q(T)=P(p^{x-\ell}T)$, then the cohomology class $\upsilon^*(\tilde{\omega}^\text{can})$ is represented by $[\omega,\epsilon_\ell G]$ in $\tilde{H}_{\text{f},Q^2}^{\lvert\ell\rvert-2}\big(\scr{A}^{\ell-4},\lvert\ell\rvert-2-s\big)$.

\begin{proposition}
The cohomology class $\nu^*(\tilde{\varphi}_1^*\tilde{\omega}^\text{can})$ is represented by $[0,\tilde{\varphi}_1^*\epsilon_\ell G]$ in $ \tilde{H}_{\text{f},Q^2}^{\lvert\ell\rvert-2}\big(\scr{E}^{r},\lvert\ell\rvert-2-s\big)$ and the section $\tilde{\varphi}_1^*\epsilon_\ell G$ has image under the unit-root splitting equal to
\[
\text{Spl}_\text{ur}(\tilde{\varphi}_1^*\epsilon_\ell G)=(-1)^ss!\Psi_{w,k-1+s}\zeta^*\big(d_1^{\ell_1-2-s}(\msf{h}) +d_1^{\ell_1-2-s}(\msf{h}_1)+ d_2^{\ell_2-2-s}(\msf{h}_2) \big)
\]
in $S^\dagger_{k,k-1+s}(K,E_\wp)$.
\end{proposition}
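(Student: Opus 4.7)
The plan is to establish the two parts of the proposition separately. For the first assertion, I would observe that the Hodge-filtered representative $\omega=\omega_{\Psi\xi^*\msf{g}}$ of the weight-part of $\tilde{\omega}^{\text{can}}$ is a section involving the top-degree algebraic $2$-form $\frac{dq_1}{q_1}\wedge\frac{dq_2}{q_2}$ on the two-dimensional Hilbert modular variety $\scr{X}_K$. Since $\tilde{\varphi}_1$ factors through $\zeta:\scr{Y}_{K'}\to\scr{X}_K$ and $\scr{Y}_{K'}$ is one-dimensional, $\zeta^*\Omega^2_{\scr{X}_K}=0$; hence $\tilde{\varphi}_1^*\omega=0$, and the class $\nu^*(\tilde{\varphi}_1^*\tilde{\omega}^{\text{can}})$ is represented by $[0,\tilde{\varphi}_1^*\epsilon_\ell G]$ in the Gros-style finite polynomial cohomology, as claimed.

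For the unit-root splitting computation, my strategy is to pull the explicit expressions for the three summands $G_{\Psi\xi^*\msf{h}}$, $G_{\Psi\xi^*\msf{h}_1}$ and $G_{\Psi\xi^*\msf{h}_2}$ back to $\scr{E}^r$ term-by-term, and then identify the output via Kodaira--Spencer. Concretely: the differential $\frac{dq_j}{q_j}$ on $\scr{X}_K$ pulls back to the unique holomorphic differential $\frac{dq}{q}$ on the elliptic modular variety; the basis vectors $\omega_j,\eta_j$ of the $\tau_j$-component of $\bb{H}^1$ on the Hilbert side pull back to the elliptic $\omega_\scr{E},\eta_\scr{E}$; and the $d_j$ operators on $L$-modular forms interact with diagonal restriction via the compatibility $\zeta^*\circ d_j=d\circ\zeta^*$ at the level of $q$-expansion coefficients (which is essentially contained in Lemma~\ref{lemma: p-adic diag}). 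This reduces the computation on $\scr{E}^r$ to an algebraic manipulation of symmetric tensors in $H^1_{\text{dR}}(\scr{E})$ with coefficients in diagonal restrictions of $L$-adic $p$-adic cuspforms.

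The heart of the argument is then applying $\text{Spl}_{\text{ur}}$ to the resulting symmetric tensor. Over the ordinary locus, the unit-root splitting of $\text{Sym}^{\bullet}H^1_{\text{dR}}$ of the universal elliptic curve projects onto the Hodge quotient via the Frobenius eigenspace decomposition; combinatorially, this selects a single coefficient in the sum defining each $G_j$. Matching cohomological degrees using the numerology $2r=\lvert\ell\rvert-4+k-2$ and $s=\frac{\lvert\ell\rvert-k-2}{2}$, the only surviving summand is the one at $i=\ell_j-2-s$ in each $G_j$; the remaining Hodge-direction factors assemble to give an overconvergent elliptic cuspform of weight $k$. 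The combinatorial prefactors from the symmetrization projector $\epsilon_\ell$, the binomial coefficients produced by the pull-back of symmetric tensors through the diagonal, and the falling factorial $(-1)^i(\ell_j-2)!/(\ell_j-2-i)!$ in the definition of $G_j$, combine at $i=s$ to the overall constant $(-1)^s s!$.

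The last step is to package the three surviving contributions and apply the central-character shift $\Psi_{w,k-1+s}$, which tracks the discrepancy between the weight $(\ell,\ell-t_L)$ used to realize $\omega$ on the Hilbert side and the weight $(k,k-1+s)$ of the resulting elliptic cuspform; the compatibility between $V(p)$ and $\Psi$ recorded in Lemma~\ref{lemma: central character and V(p)} is used implicitly here. The main obstacle is the combinatorial bookkeeping: one has to track the symmetrization projector $\epsilon_\ell$, the pull-back of $\bb{H}^1$ along the diagonal, and the unit-root projection simultaneously, and verify that the indexing of the surviving terms matches the three pieces $d_1^{\ell_1-2-s}(\msf{h})$, $d_1^{\ell_1-2-s}(\msf{h}_1)$ and $d_2^{\ell_2-2-s}(\msf{h}_2)$, distinguishing the $\msf{h},\msf{h}_1$ contributions (both from the $\tau_1$-primitive $G_1$) from the $\msf{h}_2$ contribution (from the $\tau_2$-primitive $G_2$).
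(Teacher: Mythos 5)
Your overall route is the one the paper takes: kill the de Rham component of the pulled-back fp-class, then pull back the explicit sections $G_{\Psi\xi^*\msf{h}}, G_{\Psi\xi^*\msf{h}_1}, G_{\Psi\xi^*\msf{h}_2}$, apply the unit-root splitting, and track the combinatorics to produce $(-1)^ss!$ together with the central-character shift $\Psi_{w,k-1+s}$. For the first assertion your argument (the representative of $\omega$ carries the factor $\frac{dq_1}{q_1}\wedge\frac{dq_2}{q_2}$, which dies under pullback along a map lying over the curve $\scr{Y}_{K'}\to\scr{X}_K$) is a representative-level version of the paper's cleaner observation that the whole group $\text{F}^{\lvert\ell\rvert-2-s}H^{\lvert\ell\rvert-2}_\text{dR}$ of the target vanishes, since $\lvert\ell\rvert-2-s=r+2$ exceeds the dimension $r+1$; both are fine.

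There is, however, one ingredient you invoke that is false and, if actually used, would derail the computation: the commutation $\zeta^*\circ d_j=d\circ\zeta^*$. By Lemma $\ref{lemma: p-adic diag}$, the coefficient of $q^\eta$ in $\zeta^*(d_1\msf{g})$ weights each term of the trace sum by $\xi_{\frak{p}_1}$, whereas in $d\,\zeta^*\msf{g}$ the whole sum is weighted by $\eta=\text{tr}_{L/\bb{Q}}(\xi)$; the correct relation in the split situation is $d\circ\zeta^*=\zeta^*\circ(d_1+d_2)$, not a commutation with a single $d_j$. Fortunately no such exchange is needed: in the statement (and in the paper's proof) the operators $d_j$ have already acted on the $L$-side inside the coefficients of the $G$'s, and the pullback $\tilde{\varphi}_1^*$, lying over $\zeta$, merely restricts those coefficients diagonally, so the answer is $\zeta^*\big(d_1^{\ell_1-2-s}(\msf{h})+d_1^{\ell_1-2-s}(\msf{h}_1)+d_2^{\ell_2-2-s}(\msf{h}_2)\big)$ with $\zeta^*$ outermost — which is not what one would obtain by moving $d$ across $\zeta^*$. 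Two smaller points: the surviving summand of $G_j$ is the one with index $i=s$ (whose $d_j$-exponent is $\ell_j-2-s$), not $i=\ell_j-2-s$ as you first write, though you use $i=s$ later; and the selection of a single term is not just the unit-root projection killing $\eta$ in the symmetric slots — it also uses that in the $s$ overlapping slots of the sheaf map $(\ref{eq: map of sheaves})$ one has $\omega\wedge\omega=0$, so all $s$ copies of $\eta_j$ must be spent there (the other tensor factor of $G_j$ consists of $\omega$'s only); this is exactly where the paper's choice of the subsets $B_1,B_2$ with overlap of cardinality $s$ enters, and it is what converts the prefactor $(-1)^s(\ell_j-2)!/(\ell_j-2-s)!$ into $(-1)^ss!$.
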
 
\begin{proof}
	 The class $\nu^*(\tilde{\varphi}_1^*\tilde{\omega}^\text{can})=\tilde{\varphi}_1^*\upsilon^*(\tilde{\omega}^\text{can})=[\tilde{\varphi}_1^*\omega,\tilde{\varphi}_1^*\epsilon_\ell G]$ is represented by $[0,\tilde{\varphi}_1^*\epsilon_\ell G]$ because $\text{F}^{\lvert\ell\rvert-2-s}H_\text{dR}^{\lvert\ell\rvert-2}(E)=0$. Since $\varphi_1: E^r\to A^{\ell-4}$ is a map over $X_K$, the pull-back $\varphi_1^*: H_\text{dR}^{\lvert\ell\rvert-3}(A^{\ell-4})\to H_\text{dR}^{\lvert\ell\rvert-3}(E^r)$ is compatible with the pull-back maps between the terms of the Leray spectral sequences for $A^{\ell-4}\to X_K\to\text{Spec}\ \bb{Q}_p$ and $E^r\to X_K\to\text{Spec}\ \bb{Q}_p$. We deduce the existence of a map 
	\[\begin{split}
	\varphi_1^*: H^1_\text{dR}(X_K, \text{DR}^{\bfcdot}(\cal{F}_{G_L^*}^{(\ell,\ell-t_L)}))\longrightarrow &H^1_\text{dR}(X_K,(H^1(E/X_K)^{\otimes(k-2)}\otimes H^{2}(E/X_K)^{\otimes s},\nabla))\\
	&=H^1_\text{dR}(X_K,\zeta_*(H^1(E/Y_{K'})^{\otimes(k-2)}\otimes H^{2}(E/Y_{K'})^{\otimes s}),\nabla))\\
	&=H^1_\text{dR}(Y_{K'},\text{DR}^{\bfcdot}(\cal{F}^{(k,k-1+s)})),
	\end{split}\] since $\zeta$ is a finite morphism. We can describe the pullback $\tilde{\varphi}_1^*\epsilon_\ell G$ explicitly as follows.  Recall that $r=-s+\lvert\ell\rvert-4$ and we consider the subsets $B_1=\{1,\dots,\ell_1-2\}$, $B_2=\{r-(\ell_2-2)+1,\dots, r\}$ of $\{1,\dots r\}$ of cardinalities $\ell_{1}-2,\ell_{2}-2$ respectively, such that $B_1\cup B_2=\{1,\dots r\}$. This is possible because $(\ell,k)$ is a balanced triple. Set $A=\{1,\dots r\}\setminus(B_1\cap B_2)$ whose cardinality is $r-s$. Then we can define a map of sheaves over the modular curve
	\begin{equation}\label{eq: map of sheaves}
		(\bb{H}^1)^{\ell_{1}-2}\otimes(\bb{H}^1)^{\ell_{2}-2}\longrightarrow (\bb{H}^1)^{r-s}\otimes(\wedge^2\bb{H}^1)^{s}.
		\end{equation}
	The pullback $\zeta^*(\epsilon_{\ell}G)$ is an overconvergent section of $\left(\bigotimes_{j=1}^2(\bb{H}^1)^{\ell_{j}-2}\right)\otimes\Omega^1$ and the image in $\text{Sym}^{r-s}\bb{H}^1\otimes(\wedge^2\bb{H}^1)^s\otimes\Omega^1$
	 under the map $(\ref{eq: map of sheaves})$ is $\tilde{\varphi}_1^*\epsilon_\ell G$.
	 A direct calculation reveals that
	 \[\begin{split}
	\text{Spl}_\text{ur}\zeta^*(\epsilon_{\underline{\ell}}G_j)&=(-1)^ss!\zeta^*\Psi_{x,\ell-t_L}\big(d_1^{\ell_1-2-s}(\xi^*\msf{h}) +d_1^{\ell_1-2-s}(\xi^*\msf{h}_1)+ d_2^{\ell_2-2-s}(\xi^*\msf{h}_2) \big)\\
	&=(-1)^ss!\Psi_{x-s-1,k-1+s}\zeta^*\big(d_1^{\ell_1-2-s}(\msf{h}) +d_1^{\ell_1-2-s}(\msf{h}_1)+ d_2^{\ell_2-2-s}(\msf{h}_2) \big)\\
	&=(-1)^ss!\Psi_{w,k-1+s}\zeta^*\big(d_1^{\ell_1-2-s}(\msf{h}) +d_1^{\ell_1-2-s}(\msf{h}_1)+ d_2^{\ell_2-2-s}(\msf{h}_2) \big)
	 \end{split}\]
	 is an element of $\in H^0(\scr{S}^\text{tor}_{K,\text{rig}}(\text{GL}_{2,\bb{Q}})^\text{ord},\underline{\omega}^{r-s}\otimes(\wedge^2\bb{H}^1)^{s}\otimes\Omega^1)=S^\dagger_{k,k-1+s}(K, E_\wp).$
\end{proof}

The diagram
\[\xymatrix{
H_\text{dR}^{\lvert\ell\rvert-3}(W_r)\times H_\text{dR}^{k-1}(W_r)\ar[d]^{\nu^*}\ar[rr]^\cup && H_\text{dR}^{\lvert\ell\rvert+k-4}(W_r)\ar[d]^{\nu^*}\\
H_\text{dR}^{\lvert\ell\rvert-3}(E^r)\times H_\text{dR}^{k-1}(E^r)\ar[rr]^\cup && H_\text{dR}^{\lvert\ell\rvert-3}(E^r)
}\]
commutes and by construction the pair of classes $(\nu^*\Upsilon({\omega}^\text{can}), \nu^*\varphi_2^*\eta)$ belongs to the rigid cohomology groups $\bb{H}^1(\scr{S}^\text{tor}_{K,\text{rig}}(\text{GL}_{2,\bb{Q}}),\text{DR}^\bullet(\cal{F}^{(k,k-1+s)}))\times \bb{H}^1(\scr{S}^\text{tor}_{K,\text{rig}}(\text{GL}_{2,\bb{Q}}),\text{DR}^\bullet(\cal{F}^{(k,k-1)}))$. 
\begin{lemma}\label{vanishing residues}
The pair of classes $(\nu^*\Upsilon({\omega}^\text{can}), \nu^*\varphi_2^*\eta)$ belongs to the subspace of rigid forms with vanishing residues at the lift of supersingular points, $H^1_\text{par}(\text{Sh}^\text{tor}_K(\text{GL}_{2,\bb{Q}}),\cal{F}^{(k,k-1+s)},\nabla)\times H^1_\text{par}(\text{Sh}^\text{tor}_K(\text{GL}_{2,\bb{Q}}),\cal{F}^{(k,k-1)},\nabla) $ . 
\end{lemma}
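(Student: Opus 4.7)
The plan is to observe that both classes in the pair arise, via algebraic de Rham cohomology of smooth proper varieties over $E_\wp$, from classical cohomology, and that classes of this form automatically have vanishing residues at the lift of supersingular points in the rigid-analytic modular curve.

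First I would handle $\nu^*\tilde{\varphi}_2^*\eta$. By Definition \ref{definition of classes} the class $\eta$ lies in $H^1_\text{par}(\mathbf{Sh}^\text{tor}_K(\text{GL}_{2,\bb{Q}})_{E_\wp},\cal{F}^{(k,k-1)},\nabla)^{\text{u.r.}}$, which by Proposition \ref{prop: elliptic comp} is cut out by $\theta_k$ inside $H^{k-1}_\text{dR}(W_{k-2}/E_\wp)$ and is, in particular, an algebraic de Rham class on the smooth projective variety $W_{k-2}$. The composition $\nu\circ\tilde{\varphi}_2\colon\scr{E}^r\to\scr{W}_{k-2}$ is a morphism of smooth projective schemes over $\cal{O}_{E,\wp}$, so $\nu^*\tilde{\varphi}_2^*\eta$ is again an algebraic de Rham class on $E^r$. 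Via the Leray spectral sequence for $\scr{E}^r\to\scr{Y}_{K'}$ it then descends to an algebraic class in $H^1_\text{dR}(Y_{K'},\text{DR}^\bullet(\cal{F}^{(k,k-1)}))$, whose image in $\bb{H}^1(\scr{S}^\text{tor}_{K,\text{rig}}(\text{GL}_{2,\bb{Q}}),\text{DR}^\bullet(\cal{F}^{(k,k-1)}))$ is parabolic in the required sense.

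The argument for $\nu^*\Upsilon(\omega^\text{can})$ is entirely analogous. By construction $\Upsilon(\omega)\in H^{\lvert\ell\rvert-3}_\text{dR}(W_r)$ is an algebraic de Rham class, obtained from the canonical lift $\tilde{\omega}^\text{can}=\tilde{\theta}_\ell^*\omega$ via the isomorphism $\iota\colon H^{\lvert\ell\rvert-3}_\text{dR}(W_r)\overset{\sim}{\longrightarrow}H^{\lvert\ell\rvert-2}_\text{fp}(\scr{W}_r,\lvert\ell\rvert-2-s)$, which is available because $\lvert\ell\rvert-2-s>\dim_{E_\wp}W_r$ forces the relevant step of the Hodge filtration to vanish. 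Pulling back along $\nu$ preserves algebraicity, and Leray for $\scr{E}^r\to\scr{Y}_{K'}$ embeds the resulting class into the rigid cohomology of the modular curve with coefficients in $\cal{F}^{(k,k-1+s)}$, again inside the parabolic subspace.

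The structural input I rely on, and the only real point to verify, is that a rigid cohomology class coming via Leray and the Baldassarri--Chiarellotto comparison from algebraic de Rham cohomology of a smooth proper model has no residue along the annuli excised around supersingular points; this is essentially a tautological identification of the parabolic subspace with the image of algebraic de Rham cohomology of the compactified Shimura variety. Once one has observed that both $\nu^*\tilde{\varphi}_2^*\eta$ and $\nu^*\Upsilon(\omega^\text{can})$ lift to algebraic de Rham classes on the smooth proper variety $E^r$, the vanishing of supersingular residues is automatic, and the lemma follows.
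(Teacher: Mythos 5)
You correctly observe that both classes originate from algebraic de Rham cohomology of the smooth proper $W_r$, but the step you dismiss as ``essentially tautological'' is precisely the content of the lemma, and it is not tautological. The classes $\nu^*\Upsilon(\omega^{\text{can}})$ and $\nu^*\varphi_2^*\eta$ are produced as classes on the modular curve only over a strict neighbourhood of the ordinary locus: the descent goes through the Leray spectral sequences for $\scr{E}^r\to\scr{Y}_{K'}$ and $\scr{A}^{\ell-4}\to\scr{X}_K$, i.e.\ over the formal schemes obtained by inverting a lift of the Hasse invariant, together with the projectors $\tilde{\theta}_\ell,\tilde{\theta}_k$, the isomorphism $\iota$ and the change of central character. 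The parabolic subspace of the overconvergent hypercohomology is not, by definition or by any formal argument, ``the image of algebraic de Rham cohomology of the compactified Shimura variety'': to place a given overconvergent class in it one must show its annular residues at the supersingular points vanish, and nothing in your write-up computes or constrains those residues. Algebraicity upstairs does not automatically survive the ordinary-locus Leray descent as global algebraicity on the curve; to make your route work you would have to redo the descent algebraically over the whole open modular curve, verify it is compatible with the construction actually used in the text, and then prove that a class extending across the supersingular residue discs has zero annular residue. None of this appears in your proposal, and the compatibility claim is exactly where the circularity sits, since ``lands in the parabolic subspace'' is the statement being proved.

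The paper closes this gap with a short purity argument which your proposal never invokes: since $W_r$ is smooth and proper with good reduction at $\wp$, Frobenius acts on $H^{k-1}_\text{dR}(W_r)$ (resp.\ $H^{\lvert\ell\rvert-3}_\text{dR}(W_r)$) with eigenvalues of complex absolute value $p^{(k-1)/2}$ (resp.\ $p^{(\lvert\ell\rvert-3)/2}$), whereas Frobenius acts on the targets of the supersingular residue maps with eigenvalues of absolute value $p^{k/2}$ (resp.\ $p^{k/2+s}=p^{(\lvert\ell\rvert-3)/2+1}$). The residue maps are Frobenius-equivariant, so the residues of $\nu^*\varphi_2^*\eta$ and $\nu^*\Upsilon(\omega^{\text{can}})$ must vanish; this is the criterion of (\cite{BDP}, Proposition 3.9) cited in the text. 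This weight mismatch is the actual mechanism by which ``coming from a smooth proper variety'' forces parabolicity; without it, or an honest extension-over-the-discs argument, your proof has a genuine gap.
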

\begin{proof}
The pair $(\nu^*\Upsilon({\omega}^\text{can}), \nu^*\varphi_2^*\eta)$ comes from a class of $H^1_\text{par}(\text{Sh}^\text{tor}_K(\text{GL}_{2,\bb{Q}}),\cal{F}^{(k,k-1+s)},\nabla)\times H^1_\text{par}(\text{Sh}^\text{tor}_K(\text{GL}_{2,\bb{Q}}),\cal{F}^{(k,k-1)},\nabla) $ because it has vanishing residues (\cite{BDP} Proposition 3.9). Indeed, by construction $\varphi_2^*\eta \in H_\text{dR}^{k-1}(W_r)$ (resp. $\Upsilon({\omega}^\text{can})\in H_\text{dR}^{\lvert\ell\rvert-3}(W_r)$) which is pure of weight $k-1$ (resp. $\lvert\ell\rvert-3$); hence $\nu^*\varphi_2^*\eta$ (resp. $\nu^*\iota^{-1}\varphi_1^*\omega^\text{can}$) lives in the subspace where $\text{Fr}_p$ acts with eigenvalues of complex absolute value $p^{(k-1)/2}$ (resp. $p^{(\lvert\ell\rvert-3)/2}$) while $\text{Fr}_p$ acts on the target of the residue map with eigenvalues of complex absolute value $p^{k/2}$ (resp. $p^{k/2+s}=p^{(\lvert\ell\rvert-3)/2+1}$).
\end{proof}
Lemma $\ref{vanishing residues}$ implies we can compute $(\ref{eq: syntomic AJ})$ using $p$-adic modular forms:
\begin{equation}\label{AJ in terms of p-adic modular forms}
\langle \Upsilon({\omega}^\text{can}), \varphi_2^*\eta\rangle=\text{tr}_{W_r}(\Upsilon({\omega}^\text{can})\cup_\text{dR}\varphi_2^*\eta)=\text{tr}_\text{par}(\nu^*\Upsilon({\omega}^\text{can})\cup_\text{dR} \nu^*\varphi_2^*\eta).
\end{equation}

\begin{lemma}\label{ordinary projector and pairing}
	Let $(\omega,\eta)\in H^1_\text{par}(\text{Sh}^\text{tor}_K(\text{GL}_{2,\bb{Q}}),\cal{F}^{(k,k-1+s)},\nabla)\times H^1_\text{par}(\text{Sh}^\text{tor}_K(\text{GL}_{2,\bb{Q}}),\cal{F}^{(k,k-1)},\nabla) $ be a pair such that $\text{Fr}_p\eta=\alpha\eta$ for $\alpha$ a $p$-adic unit, then $\langle\omega,\eta\rangle=\langle e_\text{n.o.}\omega,\eta\rangle$.
\end{lemma}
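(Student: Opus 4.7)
The plan is to decompose $\omega=e_{\text{n.o.}}\omega+\omega^{\text{ss}}$ with $\omega^{\text{ss}}:=(1-e_{\text{n.o.}})\omega$ and show that the non-ordinary part pairs trivially with $\eta$. Recall from Section \ref{subsec: HidaFamilies} that the decomposition $\mathbf{h}(K;O)=\mathbf{h}^{\text{n.o.}}(K;O)\oplus\mathbf{h}^{\text{ss}}(K;O)$ has the property that $U_p=\mathbf{T}(p)$ is topologically nilpotent on the semisimple factor, so $U_p^{n!}\omega^{\text{ss}}\to 0$ in the $p$-adic topology on $H^1_{\text{par}}$ as $n\to\infty$. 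Consequently $\langle U_p^{n!}\omega^{\text{ss}},\eta\rangle\to 0$.

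The next step is an adjointness identity for the Poincaré pairing on parabolic cohomology. The correspondence on $\mathbf{Sh}^\text{tor}_K(\text{GL}_{2,\bb{Q}})$ that realizes $U_p$ is, up to the diamond operator $\langle p\rangle$, transpose to the one realizing $V(p)$; therefore one has an identity of the form $\langle U_p\mu,\nu\rangle=\langle\mu,\langle p\rangle^{-1}V(p)\,\nu\rangle$ for all $\mu,\nu\in H^1_{\text{par}}$. Combining this with Lemma \ref{frobenius in cohomology}, which yields $V(p)=p^{-1}\text{Fr}_p$ on (the image of) parabolic cohomology, and with the hypothesis $\text{Fr}_p\eta=\alpha\eta$, we get
\[
\langle U_p\,\omega^{\text{ss}},\eta\rangle \;=\; p^{-1}\alpha\cdot\chi(p)^{-1}\,\langle \omega^{\text{ss}},\eta\rangle,
\]
where $\chi(p)$ is the value of the diamond character on $\eta$, a $p$-adic unit since $\eta$ lies in the nearly ordinary isotypic component associated with $\msf{f}^\ast$. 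Writing $u=\alpha\chi(p)^{-1}$, this unit scalar satisfies $\langle U_p^{n}\omega^{\text{ss}},\eta\rangle=(p^{-1}u)^{n}\langle\omega^{\text{ss}},\eta\rangle$.

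To conclude, take $n=m!$ and compare $p$-adic valuations. The left side has valuation tending to $+\infty$ because $U_p^{m!}\omega^{\text{ss}}\to 0$. The right side has valuation $-m!+\text{val}_p(\langle\omega^{\text{ss}},\eta\rangle)$ since $u$ is a $p$-adic unit. Equating the two forces $\text{val}_p(\langle\omega^{\text{ss}},\eta\rangle)\geq \text{val}_p(U_p^{m!}\omega^{\text{ss}})+m!$, and letting $m\to\infty$ gives $\langle \omega^{\text{ss}},\eta\rangle=0$, hence $\langle\omega,\eta\rangle=\langle e_{\text{n.o.}}\omega,\eta\rangle$. The main obstacle, and the only point that merits a careful write-up, is pinning down the adjointness relation between $U_p$ and $V(p)$ at the level of parabolic rigid cohomology with coefficients in $\cal{F}^{(k,k-1+s)}$; one must check that the weight twist does not introduce any extra $p$-adic factor beyond the diamond operator, which can be done directly on the BGG complex of Theorem \ref{th: dual BGG quasi-iso}.
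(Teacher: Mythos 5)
Your proof is correct in substance and hinges on the same mechanism as the paper's: convert the $\text{Fr}_p$-eigenvalue of $\eta$ into a relation between $\langle\omega,\eta\rangle$ and $\langle U_p\omega,\eta\rangle$, then exploit that $U_p$ is topologically nilpotent on $(1-e_{\text{n.o.}})H^1_{\text{par}}$. The packaging differs: you decompose $\omega$ and prove orthogonality of the semisimple part by a valuation contradiction, while the paper establishes $\langle\omega,\eta\rangle=\alpha^{-1}\langle U_0(p)\omega,\eta\rangle$ directly, iterates to $\langle\omega,\eta\rangle=\alpha^{-n!}\langle U_0(p)^{n!}\omega,\eta\rangle$, and takes the limit $n\to\infty$ using continuity of the pairing — a two-line computation that avoids the decomposition.

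The one step you flag as needing care, the adjointness, is where your proposal deviates from what the paper actually computes. Your claimed identity $\langle U_p\mu,\nu\rangle=\langle\mu,\langle p\rangle^{-1}V(p)\nu\rangle$ omits the power of $p$ coming from $\text{Fr}_p$ acting on the Tate twist $E_\wp(-r-1)$ in the target of the Poincar\'e pairing. Tracing through $\langle\omega,\text{Fr}_p\eta\rangle=p^{r+1}\langle\text{Fr}_p^{-1}\omega,\eta\rangle$ together with $\text{Fr}_p=pV(p)$ (Lemma \ref{frobenius in cohomology}) and $U_0(p)=p^rU(p)$ gives the correct relation $\langle U(p)\omega,\eta\rangle=p^{1-r}\langle\omega,V(p)\eta\rangle$, hence the clean identity $\langle U_0(p)\omega,\eta\rangle=\alpha\langle\omega,\eta\rangle$ with $\alpha$ exactly the unit eigenvalue; your version produces a constant $p^{-1}u$ that is off by a power of $p$. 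This happens not to matter for the conclusion — your valuation argument only needs the constant to have non-positive $p$-adic valuation, which it does either way — but the deferred ``careful write-up'' would, if carried out, recover the paper's identity, and with it the shorter direct-limit proof.
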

\begin{proof}
	We have the equalities of operators $\text{Fr}_p=pV(p)$ and $U_0(p)=p^rU(p)$, therefore the compoutation
	\[\begin{split}
	\langle\omega,\eta\rangle&=\alpha^{-1}\langle\omega,\text{Fr}_p\eta\rangle=\alpha^{-1}\text{Fr}_p\langle\text{Fr}_p^{-1}\omega,\eta\rangle\\
	&=\alpha^{-1}p^{r+1}\langle\text{Fr}_p^{-1}\omega,\eta\rangle=\alpha^{-1}p^{r+1}\langle p^{-1}U(p)\omega,\eta\rangle=\alpha^{-1}\langle U_0(p)\omega,\eta\rangle,
	\end{split}\]
	implies that $\langle\omega,\eta\rangle=\underset{n\to\infty}{\lim}\alpha^{-n!}\langle U_0(p)^{n!}\omega,\eta\rangle=\langle e_\text{n.o.}\omega,\eta\rangle$.
\end{proof}

\begin{theorem}\label{p-adic GZ}
	Let $L/\bb{Q}$ be a real quadratic extension and $M$ a positive integer. Consider $\breve{\msf{g}}\in S_{\ell,x}(M,L;\overline{\bb{Q}})$ a cuspform of either parallel weight $\ell=2t_L$ or non-parallel weight $\ell>2t_L$ over $L$ and $\breve{\msf{f}}\in S_{k,w}(M;\overline{\bb{Q}})$ an elliptic eigenform for the good Hecke operators.  Suppose that the weights $(\ell,k)$ are balanced and choose a prime $p$ splitting in $F$, $p\cal{O}_F=\frak{p}_1\frak{p}_2$, coprime to $M$, such that both cuspforms are $p$-nearly ordinary and the cycle $\Delta_{\ell,k}$ is defined. Then 
	\[
	\text{AJ}_p(\Delta_{\ell,k})(\pi_1^*\omega\cup\pi_2^*\eta)=s!(-1)^s\frac{1-\alpha_1\beta_1\alpha_2\beta_2(\alpha_{\msf{f}^*}^{-1}p^{-1})^2}{\prod_{\bfcdot,\star\in\{\alpha,\beta\}}(1-\bfcdot_1\star_2\alpha_{\msf{f}^*}^{-1}p^{-1})}\frac{\langle e_\text{n.o.}\zeta^*(d_1^{-1-s}\breve{\msf{g}}^{[\frak{p}_1]}),\breve{\msf{f}}^*\rangle}{\langle\msf{f}^*,\msf{f}^*\rangle}
	\]
	where $\omega$ and $\eta$ are the classes in Definition $\ref{definition of classes}$ and $s=\frac{\lvert\ell\rvert-k-2}{2}$. 
\end{theorem}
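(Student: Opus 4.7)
My starting point is $(\ref{AJ in terms of p-adic modular forms})$, which already recasts $\text{AJ}_p(\Delta_{\ell,k})(\pi_1^*\omega\cup\pi_2^*\eta)$ as the parabolic cohomology trace pairing $\text{tr}_\text{par}(\nu^*\Upsilon(\omega^\text{can})\cup_\text{dR}\nu^*\varphi_2^*\eta)$ on the modular curve. Since $\eta$ is a Frobenius eigenvector with $p$-adic unit eigenvalue $\alpha_{\msf{f}^*}p^{w-1}$, as computed in $(\ref{computation of the eigenvalue})$, Lemma $\ref{ordinary projector and pairing}$ lets me insert the nearly ordinary projector $e_{\text{n.o.}}$ in front of the first entry. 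Using the description $\tilde{\omega}^\text{can}=[\omega,\epsilon_\ell G]$ in the finite polynomial cohomology $\tilde{H}^{\lvert\ell\rvert-2}_{f,Q^2}(\scr{A}^{\ell-4},\lvert\ell\rvert-2-s)$, the unit root splitting formula for $\tilde{\varphi}_1^*\epsilon_\ell G$, and the identification of $\eta$ with $\overline{\omega_{\Psi_{w,k-1}(\breve{\msf{f}}^*)}}/\langle\msf{f}^*,\msf{f}^*\rangle$ from Definition $\ref{definition of classes}$, the cohomological pairing turns into a Petersson inner product on the modular curve. Evaluating the polynomial $Q^2$ on the Frobenius eigenvalue of $\eta$ contributes a scalar $P(\alpha_{\msf{f}^*}^{-1}p^{-1})^{-2}$, so that the overall expression becomes
\[
\text{AJ}_p(\Delta_{\ell,k})(\pi_1^*\omega\cup\pi_2^*\eta)=\frac{(-1)^ss!}{P(\alpha_{\msf{f}^*}^{-1}p^{-1})^2}\cdot\frac{\bigl\langle e_{\text{n.o.}}\zeta^*\bigl(d_1^{\ell_1-2-s}\msf{h}+d_1^{\ell_1-2-s}\msf{h}_1+d_2^{\ell_2-2-s}\msf{h}_2\bigr),\breve{\msf{f}}^*\bigr\rangle}{\langle\msf{f}^*,\msf{f}^*\rangle}.
\]

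The main technical step is to show that the contributions of $\msf{h}_1$ and $\msf{h}_2$ vanish under the nearly ordinary projection. By construction $\msf{h}_j\in S^\dagger_{s_{\tau_j}\cdot(\ell,x)}(K,L;E_\wp)$, a non-classical weight in the $\tau_j$-direction. By Hida's slope bound $(\ref{bound on slopes})$, any $U_0(\varpi_{\frak{p}_j})$-generalized eigenform of this weight has $p$-adic slope at least $\ell_{\tau_j}-1>0$. This positivity is preserved after applying the differential operator $d_j^{\ell_j-2-s}$ and the diagonal restriction $\zeta^*$, since $\zeta^*$ intertwines $U_0(\varpi_{\frak{p}_j})$ on $L$ with one factor of $U_0(p)$ on $\bb{Q}$, and consequently $e_{\text{n.o.}}$ annihilates these two terms. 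The splitting of $p$ in $L$ is essential here because it separates the $\frak{p}_1$- and $\frak{p}_2$-slope conditions; this is the delicate point of the argument.

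With only the $\msf{h}$-contribution surviving, I expand $\msf{h}=(a_1a_2-b_1b_2)(V(p))\,d_1^{1-\ell_1}\breve{\msf{g}}^{[\frak{p}_1,\frak{p}_2]}$, so that $d_1^{\ell_1-2-s}\msf{h}=(a_1a_2-b_1b_2)(V(p))\,d_1^{-1-s}\breve{\msf{g}}^{[\frak{p}_1,\frak{p}_2]}$. Lemma $\ref{thor}$ then lets me replace $\breve{\msf{g}}^{[\frak{p}_1,\frak{p}_2]}$ by $\breve{\msf{g}}^{[\frak{p}_1]}$ after $e_{\text{n.o.}}\zeta^*$, since the difference $\bigl(\breve{\msf{g}}^{[\frak{p}_1]}\bigr)_{|V(\varpi_{\frak{p}_2})U(\varpi_{\frak{p}_2})}$ is annihilated by $U(p)\zeta^*$. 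Finally, the identity $U(p)V(p)=\text{id}$ combined with Lemma $\ref{lemma: central character and V(p)}$ for the central character correction and the fact that $U(p)$ acts on $e_{\text{n.o.}}\breve{\msf{f}}^*$ via $\alpha_{\msf{f}^*}$ forces $V(p)$ to act on the nearly ordinary $\msf{f}^*$-isotypic component as the scalar $\alpha_{\msf{f}^*}^{-1}p^{-1}$. Evaluating $(a_1a_2-b_1b_2)$ at this scalar yields $\bigl(1-\alpha_1\beta_1\alpha_2\beta_2(\alpha_{\msf{f}^*}^{-1}p^{-1})^2\bigr)P(\alpha_{\msf{f}^*}^{-1}p^{-1})$, and cancelling one power of $P$ against the denominator $P^2$ from the first paragraph produces precisely the Euler factor displayed in the theorem, completing the proof.
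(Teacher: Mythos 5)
Your proposal follows the same high-level route as the paper's proof: start from $(\ref{AJ in terms of p-adic modular forms})$, use the canonical lift $\tilde{\omega}^\text{can}$, the unit-root splitting of $\tilde{\varphi}_1^*\epsilon_\ell G$, the Frobenius eigenvalue $(\ref{computation of the eigenvalue})$, Lemma $\ref{ordinary projector and pairing}$, the simplification via Lemma $\ref{thor}$, and the evaluation of the polynomials to produce the Euler factors. The only substantive divergence is your treatment of the vanishing of the $\msf{h}_1,\msf{h}_2$ terms, and that step is wrong as written.

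First, the indexing is reversed. Since $s_{\tau_j}$ has $\tau_j$-component $+1$ and $\tau_{j'}$-component $-1$ for $j'\neq j$, a form in $S^\dagger_{s_{\tau_j}\cdot(\ell,x)}$ carries \emph{classical} weight in the $\tau_j$-direction and the anomalous weight in the other direction. Consequently the bound $(\ref{bound on slopes})$ gives nothing for $U_0(\varpi_{\frak{p}_j})$: with $J=\{\tau_j\}$ and $I_{\frak{p}_j}=\{\tau_j\}$ the sum $\sum_{\tau\in I_{\frak{p}_j}\setminus J}(k_\tau-1)$ is empty. It only yields a positive slope bound for $U_0(\varpi_{\frak{p}_{j'}})$ with $j'\neq j$. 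Second, and more seriously, the claim that ``$\zeta^*$ intertwines $U_0(\varpi_{\frak{p}_j})$ on $L$ with one factor of $U_0(p)$ on $\bb{Q}$'' is not established and not literally true: $U_0(p)$ on the modular curve does not factor, and diagonal restriction has no simple intertwining property with $U$-operators (Proposition $\ref{prop: V operator}$ only handles $V$-operators, and Lemma $\ref{thor}$ exists precisely because this intertwining fails in general). So even after fixing the indices, a positive $U_0(\varpi_{\frak{p}_{j'}})$-slope over $L$ does not automatically transfer to a positive $U_0(p)$-slope over $\bb{Q}$ under $\zeta^*$; your sentence ``this is the delicate point'' names the gap but does not close it.

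The paper instead proves the vanishing by a combinatorial argument tied to the specific decomposition $P^2=(a_1a_2-b_1b_2)P_1P_2+P(b_2P_1+b_1P_2)$: each $\msf{h}_j$ is $P(V(p))[b_2\msf{g}_j^{(1)}+b_1\msf{g}_j^{(2)}]$ where $b_\iota$ is a polynomial in $V(p),V(\frak{p}_\iota)$ divisible by $V(\frak{p}_\iota)$ and $\msf{g}_j^{(\iota')}$ is $\frak{p}_{\iota'}$-depleted with $\iota\neq\iota'$, and Lemma $\ref{thor}$ shows $U(p)\zeta^*$ annihilates precisely such forms. You should replace the slope heuristic with this argument; the remaining steps of your proposal are a faithful reconstruction of the paper's proof.
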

\begin{proof}
	Equations $(\ref{eq: syntomic AJ})$ and $(\ref{AJ in terms of p-adic modular forms})$ gives us 
\[\text{AJ}_p(\Delta_{\ell,k})(\pi_1^*\omega\cup\pi_2^*\eta)=\text{tr}_\text{par}(\nu^*\Upsilon({\omega}^\text{can})\cup_\text{dR} \nu^*\varphi_2^*\eta)=\langle\nu^*\Upsilon({\omega}^\text{can}),  \nu^*\varphi_2^*\eta\rangle
\]
for $\langle\ ,\rangle$ the Poincar\'e pairing in parabolic cohomology  which lands in $E_\wp(-r-1)$, a one dimensional space on which $\text{Fr}_p$ acts as multiplication by $p^{r+1}$. The isomorphism $\iota: H_\text{dR}^{\lvert\ell\rvert-3}(W_r)\overset{\sim}{\to} H_{\text{f},Q^2}^{\lvert\ell\rvert-2}(\scr{W}_r,\lvert\ell\rvert-2-s)$ is given by $\iota(-)=[0,Q(\text{Fr}_p)^2(-)]$, therefore  $Q(\text{Fr}_p)^2\nu^*\Upsilon({\omega}^\text{can})=\tilde{\varphi}_1^*(\epsilon_{\ell}G)$.
On the one hand,
\[
\text{tr}_\text{par}(Q(\text{Fr}_p)^2\nu^*\Upsilon({\omega}^\text{can})\cup_\text{dR} \nu^*\varphi_2^*\eta)=Q(p^{r+1}\alpha_{\msf{f}^*}^{-1}p^{1-w})^2\langle\nu^*\Upsilon({\omega}^\text{can}),  \nu^*\varphi_2^*\eta\rangle,
\]
because we computed in $(\ref{computation of the eigenvalue})$ that $\text{Fr}_p\big(\nu^*\varphi_2^*\eta\big)=\alpha_{\msf{f}^*}p^{w-1} \big(\nu^*\varphi_2^*\eta\big)$. On the other hand,
\[\begin{split}
&\text{tr}_\text{par}(Q(\text{Fr}_p)^2\nu^*\Upsilon({\omega}^\text{can})\cup_\text{dR} \nu^*\varphi_2^*\eta)=\text{tr}_\text{par}(\tilde{\varphi}_1^*(\epsilon_{\ell}G)\cup\nu^*\varphi_2^*\eta)\\
	&=s!(-1)^s\frac{\langle \Psi_{w,k-1}e_\text{n.o.}\zeta^*\big(d_1^{\ell_1-2-s}(\msf{h}) +d_1^{\ell_1-2-s}(\msf{h}_1)+ d_2^{\ell_2-2-s}(\msf{h}_2) \big),\Psi_{w,k-1}(\breve{\msf{f}}^*)\rangle}{\langle\msf{f}^*,\msf{f}^*\rangle}\\
	&=s!(-1)^s\frac{\langle e_{\msf{f}^*\text{n.o.}}\zeta^*\big(d_1^{\ell_1-2-s}(\msf{h}) +d_1^{\ell_1-2-s}(\msf{h}_1)+ d_2^{\ell_2-2-s}(\msf{h}_2) \big),\breve{\msf{f}}^*\rangle}{\langle\msf{f}^*,\msf{f}^*\rangle}.
\end{split}\] 
Indeed, the class of $\tilde{\varphi}_1^*(\epsilon_{\ell}G)$ in $\bb{H}^1(\scr{S}^\text{tor}_{K,\text{rig}}(\text{GL}_{2,\bb{Q}}),\text{DR}^\bullet(\cal{F}^{(k,k-1+s)}))$ is represented by an overconvergent cuspform whose nearly ordinary projection is equal to $e_\text{n.o.}\text{Spl}_\text{ur}\tilde{\varphi}_1^*(\epsilon_{\ell}G)$ (see \cite{DR} Lemma 2.7), then Lemma $\ref{ordinary projector and pairing}$ justifies the computation.

We claim that $e_{\msf{f}^*,\text{n.o.}}\zeta^*d_j^{\ell_j-2-s}(\msf{h}_j)=0$ for $j=1,2$. Indeed,
 \[\begin{split}
e_{\msf{f}^*,\text{n.o.}}\zeta^*d_j^{\ell_j-2-s}(\msf{h}_j)&=e_{\msf{f}^*,\text{n.o.}}\zeta^*P(V(p))[d_j^{\ell_j-2-s}(b_2\msf{g}_j^{(1)}+b_1\msf{g}_j^{(2)})]\\
	&=P(\alpha_{\msf{f}^*}^{-1}p^{-1})e_{\msf{f}^*,\text{n.o.}}\zeta^*[d_j^{\ell_1-2-s}(b_2\msf{g}_j^{(1)}+b_1\msf{g}_j^{(2)})]=0
\end{split}\]
because the cuspform $\msf{g}_j^{(i)}$ is $\frak{p}_i$-depleted and $b_\iota$ can be written as a polynomial only in the variables $V(p), V(\frak{p}_\iota)$ divisible by $V(\frak{p}_\iota)$  (see Lemma $\ref{thor}$).

The cuspforms $e_{\msf{f}^*\text{n.o.}}\zeta^*(d_1^{\ell_1-2-s}\msf{h})$, $e_{\msf{f}^*,\text{n.o.}}\zeta^*(d_1^{-1-s}\breve{\msf{g}}^{[\frak{p}_1]})$ are very closely related:
\[\begin{split}
e_{\msf{f}^*,\text{n.o.}}\zeta^*(d_1^{\ell_1-2-s}(\msf{h}))&=e_{\msf{f}^*,\text{n.o.}}\zeta^*[a_1a_2-b_1b_2](V(p))(d_1^{-1-s}\breve{\msf{g}}^{[\frak{p}_1]})\\
	&=(1-\alpha_1\beta_1\alpha_2\beta_2(\alpha_{\msf{f}^*}^{-1}p^{-1})^2)\prod_{\bfcdot,\star\in\{\alpha,\beta\}}(1-\bfcdot_1\star_2\alpha_{\msf{f}^*}^{-1}p^{-1})e_{\msf{f}^*,\text{n.o.}}\zeta^*(d_1^{-1-s}\breve{\msf{g}}^{[\frak{p}_1, \frak{p}_2]})\\
	&=(1-\alpha_1\beta_1\alpha_2\beta_2(\alpha_{\msf{f}^*}^{-1}p^{-1})^2)\prod_{\bfcdot,\star\in\{\alpha,\beta\}}(1-\bfcdot_1\star_2\alpha_{\msf{f}^*}^{-1}p^{-1})e_{\msf{f}^*,\text{n.o.}}\zeta^*(d_1^{-1-s}\breve{\msf{g}}^{[\frak{p}_1]}),
\end{split}\]
where the last equality follows apllying Lemma $\ref{thor}$. Finally, the last bit we need to unravel is the polynomial $Q(p^{r+1}\alpha_{\msf{f}^*}^{-1}p^{1-w})^2$; we compute it to be
\[\begin{split}
Q(p^{r+1}\alpha_{\msf{f}^*}^{-1}p^{1-w})^2&=\prod_{\bfcdot,\star\in\{\alpha,\beta\}}(1-\bfcdot_1\star_2p^{x-\ell}p^{r+1}\alpha_{\msf{f}^*}^{-1}p^{1-w})^2\\
	&=\prod_{\bfcdot,\star\in\{\alpha,\beta\}}(1-\bfcdot_1\star_2\alpha_{\msf{f}^*}^{-1}p^{-n+\frac{m}{2}-1})^2=\prod_{\bfcdot,\star\in\{\alpha,\beta\}}(1-\bfcdot_1\star_2\alpha_{\msf{f}^*}^{-1}p^{-1})^2
\end{split}\] since under our assumptions $2n=m$. Hence, putting all together
	\[
	\text{AJ}_p(\Delta_{\ell,k})(\pi_1^*\omega\cup\pi_2^*\eta)=s!(-1)^s\frac{1-\alpha_1\beta_1\alpha_2\beta_2(\alpha_{\msf{f}^*}^{-1}p^{-1})^2}{\prod_{\bfcdot,\star\in\{\alpha,\beta\}}(1-\bfcdot_1\star_2\alpha_{\msf{f}^*}^{-1}p^{-1})}\frac{\langle e_\text{n.o.}\zeta^*(d_1^{-1-s}\breve{\msf{g}}^{[\frak{p}_1]}),\breve{\msf{f}}^*\rangle}{\langle\msf{f}^*,\msf{f}^*\rangle}.
	\]
\end{proof}

\begin{corollary}
	Let $L/\bb{Q}$ be a real quadratic field and $(\ell,k)$ a balanced triple. Let $p$ be a prime splitting in $L$ for which the generalized Hirzebruch-Zagier cycle $\Delta_{\ell,k}$ is defined. Then for all $(\text{P},\text{Q})\in\Sigma^\text{bal}_\text{cry}(\ell,k)$  we have
	\[
	\scr{L}_p(\breve{\cal{G}},\breve{\cal{F}})(\text{P},\text{Q})=\frac{(-1)^s}{s!\msf{E}(\msf{f}_\text{Q}^*)}\frac{\scr{E}_p(\msf{g}_\text{P},\msf{f}_\text{Q}^*)}{\scr{E}_{0,p}(\msf{g}_\text{P},\msf{f}_\text{Q}^*)}\text{AJ}_p(\Delta_{\ell,k})(\pi_1^*\omega_\text{P}\cup\pi_2^*\eta_\text{Q}).
	\]
\end{corollary}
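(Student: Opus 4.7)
The plan is to recognize the corollary as a direct algebraic consequence of Theorem~\ref{p-adic GZ} combined with formula $(\ref{eq: second expression p-adic L-function})$ for the value of the twisted triple product $p$-adic $L$-function: both identities feature the same depleted nearly-ordinary Petersson pairing, and their Euler prefactors are reciprocal up to the sign $(-1)^s$ and the factor $s!$.

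First I would unpack what $(\ref{eq: second expression p-adic L-function})$ says at a balanced point. Since $p$ splits in $L/\bb{Q}$ as $\frak{p}_1\frak{p}_2$ and $\Theta_{L/\bb{Q}}$ selects a single embedding $\tau_1:L\hookrightarrow\overline{\bb{Q}}$, the set $\cal{P}$ consists of a single prime, which $\iota_p$ sends to $\frak{p}_1$ after reindexing. At $(\text{P},\text{Q})\in\Sigma^\text{bal}_\text{cry}(\ell,k)$, the weight-matching condition $(\ell+2r)_{|\bb{Q}}=k$ together with $r_{\tau_2}=0$ forces $r=-(1+s)\tau_1$ because $\lvert\ell\rvert-k=2s+2$ from the balanced-triple definition. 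Consequently $\breve{\msf{g}}_\text{P}^{[\cal{P}]}=\breve{\msf{g}}_\text{P}^{[\frak{p}_1]}$ and $d^r=d_1^{-1-s}$, and $(\ref{eq: second expression p-adic L-function})$ reduces to
\[
\scr{L}_p(\breve{\cal{G}},\breve{\cal{F}})(\text{P},\text{Q})=\frac{1}{\msf{E}(\msf{f}_\text{Q}^*)}\cdot\frac{\big\langle e_\text{n.o.}\zeta^*(d_1^{-1-s}\breve{\msf{g}}_\text{P}^{[\frak{p}_1]}),\breve{\msf{f}}_\text{Q}^*\big\rangle}{\big\langle\msf{f}_\text{Q}^*,\msf{f}_\text{Q}^*\big\rangle}.
\]
The fact that $d^r$ with negative exponent makes sense on the $\cal{P}$-depleted form is standard: on the $q$-expansion it multiplies $\msf{a}_p(y,\msf{g})$ by $y_p^r$, which is a unit once depletion removes all $\xi$ with $y_\cal{P}\notin\cal{O}_{L,\cal{P}}^\times$.

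Next I would invoke Theorem~\ref{p-adic GZ} applied to $\breve{\msf{g}}_\text{P}$ and $\breve{\msf{f}}_\text{Q}$, and compare its Euler prefactor against Lemma~\ref{lemma: split}. Reading off the definitions one sees
\[
\frac{1-\alpha_1\beta_1\alpha_2\beta_2(\alpha_{\msf{f}_\text{Q}^*}^{-1}p^{-1})^2}{\prod_{\bfcdot,\star\in\{\alpha,\beta\}}\bigl(1-\bfcdot_1\star_2\alpha_{\msf{f}_\text{Q}^*}^{-1}p^{-1}\bigr)}=\frac{\scr{E}_{0,p}(\msf{g}_\text{P},\msf{f}_\text{Q}^*)}{\scr{E}_p(\msf{g}_\text{P},\msf{f}_\text{Q}^*)},
\]
so Theorem~\ref{p-adic GZ} gives
\[
\text{AJ}_p(\Delta_{\ell,k})(\pi_1^*\omega_\text{P}\cup\pi_2^*\eta_\text{Q})=s!\,(-1)^s\,\frac{\scr{E}_{0,p}(\msf{g}_\text{P},\msf{f}_\text{Q}^*)}{\scr{E}_p(\msf{g}_\text{P},\msf{f}_\text{Q}^*)}\cdot\frac{\big\langle e_\text{n.o.}\zeta^*(d_1^{-1-s}\breve{\msf{g}}_\text{P}^{[\frak{p}_1]}),\breve{\msf{f}}_\text{Q}^*\big\rangle}{\big\langle\msf{f}_\text{Q}^*,\msf{f}_\text{Q}^*\big\rangle}.
\]
Solving the last display for the Petersson pairing and substituting into the expression for $\scr{L}_p(\breve{\cal{G}},\breve{\cal{F}})(\text{P},\text{Q})$ above yields the claimed formula, completing the proof.

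I do not expect any genuine obstacle: the only non-trivial point is the bookkeeping that guarantees the parameters $\alpha_i,\beta_i,\alpha_{\msf{f}_\text{Q}^*}$ and the prime $p$ appear in Theorem~\ref{p-adic GZ} with the exact normalization used in Lemma~\ref{lemma: split} (this is a direct comparison, since both are stated with $q_\wp=p$), and the mild extension of $(\ref{eq: second expression p-adic L-function})$ to balanced points via negative-exponent $d$-operators acting on $\cal{P}$-depleted forms, as explained above.
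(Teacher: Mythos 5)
Your argument is correct and is exactly the paper's route: the published proof of this corollary simply cites formula $(\ref{eq: second expression p-adic L-function})$ together with Theorem $\ref{p-adic GZ}$, which is precisely the combination you carry out, with your extra bookkeeping (the identification $r=-(1+s)\tau_1$, the action of $d^r$ with negative exponent on the $\frak{p}_1$-depleted form, and the matching of the Euler prefactor with $\scr{E}_{0,p}/\scr{E}_p$ from Lemma $\ref{lemma: split}$) being the details the paper leaves implicit.
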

\begin{proof}
	It follows from the formula $(\ref{eq: second expression p-adic L-function})$ and Theorem $\ref{p-adic GZ}$.
\end{proof}


\bibliography{p3L_GZ}

\begin{thebibliography}{BBGK07}

\bibitem[AG05]{AG}
F.~Andreatta and E.~Z. Goren.
\newblock Hilbert modular forms: mod {$p$} and {$p$}-adic aspects.
\newblock {\em Mem. Amer. Math. Soc.}, 173(819):vi+100, 2005.

\bibitem[AI17]{preprintAI}
F.~{Andreatta} and A.~{Iovita}.
\newblock {Triple product p-adic L-functions associated to finite slope p-adic
  families of modular forms, with an appendix by Eric Urban}.
\newblock {\em ArXiv e-prints}, aug 2017.

\bibitem[BBGK07]{burgos}
Jan~H. Bruinier, Jos\'e~I. Burgos~Gil, and Ulf K\"uhn.
\newblock Borcherds products and arithmetic intersection theory on {H}ilbert
  modular surfaces.
\newblock {\em Duke Math. J.}, 139(1):1--88, 2007.

\bibitem[BC94]{BC}
Francesco Baldassarri and Bruno Chiarellotto.
\newblock Algebraic versus rigid cohomology with logarithmic coefficients.
\newblock In {\em Barsotti {S}ymposium in {A}lgebraic {G}eometry ({A}bano
  {T}erme, 1991)}, volume~15 of {\em Perspect. Math.}, pages 11--50. Academic
  Press, San Diego, CA, 1994.

\bibitem[BDP13]{BDP}
Massimo Bertolini, Henri Darmon, and Kartik Prasanna.
\newblock Generalized {H}eegner cycles and {$p$}-adic {R}ankin {$L$}-series.
\newblock {\em Duke Math. J.}, 162(6):1033--1148, 2013.
\newblock With an appendix by Brian Conrad.

\bibitem[Bes00]{Bes00}
Amnon Besser.
\newblock A generalization of {C}oleman's {$p$}-adic integration theory.
\newblock {\em Invent. Math.}, 142(2):397--434, 2000.

\bibitem[BS17]{BlancoSols}
I.~{Blanco-Chacon} and I.~{Sols}.
\newblock {p-adic Abel-Jacobi map and p-adic Gross-Zagier formula for Hilbert
  modular forms}.
\newblock {\em ArXiv e-prints}, aug 2017.

\bibitem[Col96]{ClassicColeman}
Robert~F. Coleman.
\newblock Classical and overconvergent modular forms.
\newblock {\em Invent. Math.}, 124(1-3):215--241, 1996.

\bibitem[DM91]{DM}
Christopher Deninger and Jacob Murre.
\newblock Motivic decomposition of abelian schemes and the {F}ourier transform.
\newblock {\em J. Reine Angew. Math.}, 422:201--219, 1991.

\bibitem[DR14]{DR}
Henri Darmon and Victor Rotger.
\newblock Diagonal cycles and {E}uler systems {I}: {A} {$p$}-adic
  {G}ross-{Z}agier formula.
\newblock {\em Ann. Sci. \'Ec. Norm. Sup\'er. (4)}, 47(4):779--832, 2014.

\bibitem[DR17]{DR2}
Henri Darmon and Victor Rotger.
\newblock Diagonal cycles and {E}uler systems {II}: {T}he {B}irch and
  {S}winnerton-{D}yer conjecture for {H}asse-{W}eil-{A}rtin {$L$}-functions.
\newblock {\em J. Amer. Math. Soc.}, 30(3):601--672, 2017.

\bibitem[Gan08]{Gan}
Wee~Teck Gan.
\newblock Trilinear forms and triple product epsilon factors.
\newblock {\em Int. Math. Res. Not. IMRN}, (15):Art. ID rnn058, 15, 2008.

\bibitem[Hid89]{nearlyHida}
Haruzo Hida.
\newblock On nearly ordinary {H}ecke algebras for {${GL}(2)$} over totally real
  fields.
\newblock In {\em Algebraic number theory}, volume~17 of {\em Adv. Stud. Pure
  Math.}, pages 139--169. Academic Press, Boston, MA, 1989.

\bibitem[Hid91]{pHida}
Haruzo Hida.
\newblock On {$p$}-adic {$L$}-functions of {${GL}(2)\times {GL}(2)$} over
  totally real fields.
\newblock {\em Ann. Inst. Fourier (Grenoble)}, 41(2):311--391, 1991.

\bibitem[Hid04]{Hida}
Haruzo Hida.
\newblock {\em {$p$}-adic automorphic forms on {S}himura varieties}.
\newblock Springer Monographs in Mathematics. Springer-Verlag, New York, 2004.

\bibitem[Hir64]{Hironaka}
Heisuke Hironaka.
\newblock Resolution of singularities of an algebraic variety over a field of
  characteristic zero. {I}, {II}.
\newblock {\em Ann. of Math. (2) {\bf 79} (1964), 109--203; ibid. (2)},
  79:205--326, 1964.

\bibitem[HK91]{HK}
Michael Harris and Stephen~S. Kudla.
\newblock The {C}entral {C}ritical {V}alue of a {T}riple {P}roduct
  {L}-{F}unction.
\newblock {\em Annals of Mathematics}, 133(3):605--672, 1991.

\bibitem[Ich08]{I}
Atsushi Ichino.
\newblock Trilinear forms and the central values of triple product
  {$L$}-functions.
\newblock {\em Duke Math. J.}, 145(2):281--307, 2008.

\bibitem[Ish17]{Ishikawa}
Isao Ishikawa.
\newblock On the construction of twisted triple product {$p$}-adic
  {$L$}-functions.
\newblock {\em Ph.D. thesis}, 2017.

\bibitem[Kat73]{p-adicpropertiesmodularschemes}
Nicholas~M. Katz.
\newblock {$p$}-adic properties of modular schemes and modular forms.
\newblock pages 69--190. Lecture Notes in Mathematics, Vol. 350, 1973.

\bibitem[Kat78]{KatzCM}
Nicholas~M. Katz.
\newblock {$p$}-adic {$L$}-functions for {CM} fields.
\newblock {\em Invent. Math.}, 49(3):199--297, 1978.

\bibitem[Kin98]{K}
Guido Kings.
\newblock Higher regulators, {H}ilbert modular surfaces, and special values of
  {$L$}-functions.
\newblock {\em Duke Math. J.}, 92(1):61--127, 1998.

\bibitem[Lan13]{Lan13}
Kai-Wen Lan.
\newblock {\em Arithmetic compactifications of {PEL}-type {S}himura varieties},
  volume~36 of {\em London Mathematical Society Monographs Series}.
\newblock Princeton University Press, Princeton, NJ, 2013.

\bibitem[Lan17]{HigherLan}
Kai-Wen Lan.
\newblock {Compactifications of {PEL}-type Shimura Varieties and Kuga families
  with ordinary loci}.
\newblock {\em preprint}, 2017.

\bibitem[Liu16]{YLiu}
Yifeng Liu.
\newblock Hirzebruch-{Z}agier cycles and twisted triple product {S}elmer
  groups.
\newblock {\em Invent. Math.}, 205(3):693--780, 2016.

\bibitem[LP]{BGG}
Kai-Wen Lan and P.~Polo.
\newblock {Dual BGG complexes for automorphic bundles}.
\newblock {\em preprint}.

\bibitem[LSZ16]{LoefflerSkinnerZerbes}
D.~{Loeffler}, C.~{Skinner}, and S.~L. {Zerbes}.
\newblock {Syntomic regulators of Asai--Flach classes}.
\newblock {\em ArXiv e-prints}, aug 2016.

\bibitem[Mil90]{Milne}
J.~S. Milne.
\newblock Canonical models of (mixed) {S}himura varieties and automorphic
  vector bundles.
\newblock In {\em Automorphic forms, {S}himura varieties, and {$L$}-functions,
  {V}ol.\ {I} ({A}nn {A}rbor, {MI}, 1988)}, volume~10 of {\em Perspect. Math.},
  pages 283--414. Academic Press, Boston, MA, 1990.

\bibitem[Miy71]{Miyake}
Toshitsune Miyake.
\newblock On automorphic forms on {${GL}\sb{2}$} and {H}ecke operators.
\newblock {\em Ann. of Math. (2)}, 94:174--189, 1971.

\bibitem[{Nek}]{ES-Nekovar}
J.~{Nekov\'ar}.
\newblock {Eichler-Shimura relations and semisimplicity of \'etale cohomology
  of quaternionic Shimura varieties}.
\newblock {\em preprint}.

\bibitem[Pra90]{prasadtrilinear}
Dipendra Prasad.
\newblock Trilinear forms for representations of {${GL}(2)$} and local
  {$\epsilon$}-factors.
\newblock {\em Compositio Math.}, 75(1):1--46, 1990.

\bibitem[Pra92]{epsilonprasad}
Dipendra Prasad.
\newblock Invariant forms for representations of {${GL}_2$} over a local field.
\newblock {\em Amer. J. Math.}, 114(6):1317--1363, 1992.

\bibitem[PSP08]{P-SP}
Dipendra Prasad and Rainer Schulze-Pillot.
\newblock Generalised form of a conjecture of {J}acquet and a local
  consequence.
\newblock {\em J. Reine Angew. Math.}, 616:219--236, 2008.

\bibitem[PSR87]{PS-R}
I.~Piatetski-Shapiro and Stephen Rallis.
\newblock Rankin triple {$L$} functions.
\newblock {\em Compositio Math.}, 64(1):31--115, 1987.

\bibitem[Rap78]{Rapoport}
M.~Rapoport.
\newblock Compactifications de l'espace de modules de {H}ilbert-{B}lumenthal.
\newblock {\em Compositio Math.}, 36(3):255--335, 1978.

\bibitem[Sch90]{Scholl}
A.~J. Scholl.
\newblock Motives for modular forms.
\newblock {\em Invent. Math.}, 100(2):419--430, 1990.

\bibitem[Shi78]{Shimura}
Goro Shimura.
\newblock The special values of the zeta functions associated with {H}ilbert
  modular forms.
\newblock {\em Duke Math. J.}, 45(3):637--679, 1978.

\bibitem[TX16]{Hilbert}
Yichao Tian and Liang Xiao.
\newblock {$p$}-adic cohomology and classicality of overconvergent {H}ilbert
  modular forms.
\newblock {\em Ast\'erisque}, (382):73--162, 2016.

\bibitem[Voe00]{Voe}
Vladimir Voevodsky.
\newblock Triangulated categories of motives over a field.
\newblock In {\em Cycles, transfers, and motivic homology theories}, volume 143
  of {\em Ann. of Math. Stud.}, pages 188--238. Princeton Univ. Press,
  Princeton, NJ, 2000.

\bibitem[Wil09]{Chownonprojective}
J\"org Wildeshaus.
\newblock Chow motives without projectivity.
\newblock {\em Compos. Math.}, 145(5):1196--1226, 2009.

\bibitem[Wil12]{Wieldeshaus}
J\"org Wildeshaus.
\newblock On the interior motive of certain {S}himura varieties: the case of
  {H}ilbert-{B}lumenthal varieties.
\newblock {\em Int. Math. Res. Not. IMRN}, (10):2321--2355, 2012.

\end{thebibliography}
\bibliographystyle{alpha}

\end{document}